\documentclass[11pt]{article}

%
% some macro packages
%

\usepackage{amssymb,latexsym}
\usepackage{amsmath,amscd}
\usepackage{theorem}
\usepackage[all]{xy}
\usepackage{url}

%\usepackage[notref,notcite]{showkeys}  %display labels

% page dimensions

\setlength{\topmargin}{1.2cm}
\setlength{\parindent}{10pt}
\setlength{\textwidth}{16cm}
\setlength{\textheight}{22.5cm}
\setlength{\hoffset}{-1.5cm}
\setlength{\voffset}{-2cm}

%
% title page
%

\title{\bf Principal actions of stacky Lie groupoids}

\author{Henrique Bursztyn\thanks{{\tt henrique@impa.br}}\\[0.1cm]
        Instituto Nacional de Matem\'atica Pura e Aplicada\\
        Estrada Dona Castorina 110 \\
        22460-320, Rio de Janeiro,  Brazil.
        \\[0.2cm]
        Francesco Noseda  \thanks{{\tt noseda@im.ufrj.br}}
        \\[0.1cm]
        Instituto de Matem\'atica - Universidade Federal do Rio de Janeiro\\
        Avenida Athos da Silveira Ramos 149 (Centro de Tecnologia - Bloco C)  \\
        %21941-909, Cidade Universit\'aria\\
        21941-909, Rio de Janeiro,  Brazil.
        \\[0.2cm]
        Chenchang Zhu  \thanks{{\tt zhu@uni-math.gwdg.de}}
        \\[0.1cm]
        Mathematisches Institut -
        Universit\"at G\"ottingen \\
        Bunsenstrasse 3-5 \\
         D-37073, G\"ottingen, Germany.}
\date{}

%
% some macros:
%

%general

% General font commands

\newcommand{\cl}[1]{\mathcal{#1}}
\newcommand{\mr}[1]{\mathrm{#1}}

% Specific font commands

\newcommand{\G}{\mathcal{G}}
\newcommand{\X}{\mathcal{X}}

            %category C

\newcommand{\cX}{\mathcal{X}}

                  %target
%\newcommand{\bs}{\mathbf{s}}                  %source
           %target on stacks
           %source on stacks
     %associativity functor

\newcommand{\CFG}{\mathrm{CFG}}
\newcommand{\Obj}{\mathrm{Obj}}

% Arrow commands

\newcommand{\ra}{\rightarrow}

\newcommand{\map}{\longrightarrow}
\newcommand{\rra}{\rightrightarrows}
\newcommand{\lmap}[1]{\stackrel{#1}{\longrightarrow}} % Labeled map
\newcommand{\then}{\Longrightarrow}

% groupoids, source, target

%\newcommand{\Grd}        {\mathcal{G}}

\newcommand{\sour}        {\mathsf{s}}
\newcommand{\tar}         {{\mathsf{t}}}
\newcommand{\inv}         {\mathsf{i}}
\newcommand{\mult}        {\mathsf{m}}
\newcommand{\un}          {\mathsf{1}}

% Symbols

\newcommand{\vep}{\varepsilon}

\newcommand{\bs}{\backslash}
 %end paragraph
\newcommand{\slbtimes}[3]{\,{}_{#1}\!\underset{#2}{\times}\!{}_{#3}\,} % super labeled times
\newcommand{\pq}{/_{\!p}\,} % prequotient

\newcommand{\scirc}{{\scriptstyle \circ}}
\newcommand{\sscirc}{{\scriptscriptstyle \circ}}
\newcommand{\hc}{\,{\scriptstyle \circ}\,} %horizontal composition

\newcommand{\act}{\mathsf{A}}
\newcommand{\ma}{\mathsf{a}}

\newcommand{\p}{\mathsf{p}}

\newcommand{\Proj}{\mathsf{r}}
\newcommand{\Q} {\widehat{\Delta}}
\newcommand{\q} {\mathsf{q}}
\newcommand{\stack} {\mathsf{b}}

%
% some new theorem environments:
%

\newtheorem{lemma} {Lemma} [section]
\newtheorem{proposition} [lemma] {Proposition}

\newtheorem{theorem} [lemma] {Theorem}
\newtheorem{corollary} [lemma] {Corollary}
\newtheorem{definition}[lemma] {Definition}

\theorembodyfont{\rm} %%% This is to typeset examples
                      % in roman; since some of the examples are very long,
                      % typesetting them in \it makes them hard to read

\newtheorem{example}[lemma] {Example}

\newtheorem{remark}[lemma]{Remark}

\newenvironment{proof}{{\sc Proof:}}{{\hspace*{\fill} $\square$\\}}

%%%%%%%% comments and questions

%

%

\numberwithin{equation}{section}

%
% text starts here
%

\begin{document}

\maketitle

\vspace{-6mm}

\begin{center}
%\date{October 30, 2015}
\texttt{\today}
\end{center}

\vspace{0mm}

\begin{abstract}
Stacky Lie groupoids are generalizations of Lie groupoids in which
the ``space of arrows'' of the groupoid is a differentiable stack.
In this paper, we consider actions of stacky Lie groupoids on
differentiable stacks and their associated quotients. We provide a
characteri\-zation of {\em principal actions} of stacky Lie groupoids, i.e.,
actions whose quotients are again differentiable
stacks in such a way that the projection onto the quotient is a
principal bundle. As an application, we extend the notion of Morita equivalence of Lie groupoids to the
realm of stacky Lie groupoids, providing examples that naturally
arise from non-integrable Lie algebroids.
\end{abstract}

%MSC2000 Subject Classification Numbers:
%Keywords:

%%%%%%%%%%%%%%%%%%%%%%%%%%%%%%%%%%%%%%%%%%%%%%%%%%%%%%%%%%%%%%%%%%%%%%%%%%%%
%%%%%%%%%%%%%%%%%%%%%%%%%%%%%%%%%%%%%%%%%%%%%%%%%%%%%%%%%%%%%%%%%%%%%%%%%%%%
\section{Introduction}
%%%%%%%%%%%%%%%%%%%%%%%%%%%%%%%%%%%%%%%%%%%%%%%%%%%%%%%%%%%%%%%%%%%%%%%%%%%%
%%%%%%%%%%%%%%%%%%%%%%%%%%%%%%%%%%%%%%%%%%%%%%%%%%%%%%%%%%%%%%%%%%%%%%%%%%%%

Lie groupoids have widespread use in several areas of mathematics,
and in recent years some of their ``higher'' versions have drawn
much attention, particularly in the study of higher categorical
structures, higher stacks and higher gauge theory, see e.g.
\cite{bala:2gp,basch,bartels,breen,brown,duskin,Moer,nss:1}. This
paper concerns \textit{stacky Lie groupoids}, which are
generalizations of Lie groupoids $\G\rightrightarrows M$ where $\G$
is allowed to be a differentiable stack, while $M$ is a smooth
manifold; more precisely, stacky Lie groupoids may be viewed as Lie 2-groupoids, i.e.,
2-truncations of Kan simplicial manifolds \cite{z:tgpd}. These objects were introduced in \cite{tz} in connection
with the problem of integrating Lie algebroids.

Recall that, while any finite-dimensional Lie algebra can be
integrated to a Lie group, not every Lie algebroid integrates to a
Lie groupoid, see \cite{cf} and references therein. Whenever a Lie algebroid is integrable, the
so-called ``path-space construction''
\cite{cattaneofelder,cf,severa} provides a concrete way to obtain an integrating
Lie groupoid. However, for a non-integrable Lie algebroid this construction only leads to a {\it topological}
groupoid. The starting point in \cite{tz}, see also \cite{WeInt}, is the observation that the topological groupoids arising in this way
naturally carry the structure of {\em
differentiable stacks}, and that this additional information allows
one to recover the underlying Lie-algebroid data; as a consequence,
\cite{tz} establishes a correspondence between (arbitrary) Lie
algebroids and \'etale stacky Lie groupoids, placing the usual
correspondence between integrable Lie algebroids and Lie groupoids
in a broader framework.

The main purpose of this paper is to study actions of stacky Lie
groupoids on differentiable stacks, with focus on the notion of {\em principality}, inspired by the usual study of  actions of Lie groupoids on manifolds. To put our results in context, recall that when a Lie groupoid $G$ acts on a manifold $X$, the orbit space $X/G$ generally fails to be a smooth manifold. An action is {\em principal}
if $X/G$ is again a manifold and, additionally, the quotient map $X\to X/G$ makes $X$ into a principal $G$-bundle with base $X/G$. A classical result in geometry asserts that an action is principal if and only if it is free and proper, and this completely describes $G$-actions that correspond to
principal bundles in the (finite-dimensional) smooth setting. Obtaining similar characterizations of principal actions (only in terms of the action itself, not involving the quotient) in other categories is often challenging,
see e.g. \cite[Sec.~9]{MZ} for a discussion of this problem in different contexts. This paper concerns this issue in the realm of differentiable stacks.

More specifically, given an action of a stacky Lie groupoid $\G$ on a differentiable stack $\X$, we first address the construction of the ``orbit space'' $\X/\G$, building on \cite{breen}. Our main result  (Theorem~\ref{TheDiagonalThenPrincipal}) then provides a complete
characterization of principal actions, i.e., those for which this quotient inherits the structure of
a {\em differentiable} stack in such a way that the natural projection $\X
\to \X/\G$ defines a principal $\G$-bundle:
\\

\noindent
\textbf{Theorem.} \textit{An action of a stacky Lie groupoid $\cl{G}$
on a differentiable stack $\cl{X}$ is principal if and only
if the action-projection map $\cl{X}\times_M \cl{G}\ra \cl{X}\times \cl{X}$
is {\em weakly representable}}.\\

\noindent
The {\em action-projection map} above is defined by two maps $\cl{X}\times_M \cl{G}\ra \cl{X}$, the first being
the natural projection while the second is the action map, see \eqref{eq:Delta}. The notion of {\em weak representability} of a morphism between differentiable stacks is introduced in Def.~\ref{def:weakep}.
The point to be remarked here is that the sole condition of weak representability of the action-projection map simultaneously encodes a freeness-type property of the action (that we call {\em 1-freeness}, see Def.~\ref{def1free}) and guarantees the differentiability of the quotient stack. We verify that this condition
automatically holds whenever $\G$ is an ordinary Lie groupoid (see Corollary~\ref{cor:ginot1}); in particular, this recovers the well-known fact that {\em any} smooth action of a Lie groupoid $G$ on a manifold $X$ is principal
(in the stacky sense) with base the quotient stack $[X/G]$. The proof of the previous theorem is presented in Section~\ref{sec:principal}. Just as in the classical smooth setting, our characterization of principal actions of stacky Lie groupoids provides a complementary approach to other existing viewpoints to higher principal bundles (defined e.g. through gluing local trivial data, or through classifying stacks),  as found e.g. in \cite{basch,bartels,nss:1,wockel}.

%
%\comment{+ The proof of the last theorem, which relies on ****, is presented in sect 5.
%The proof of this result turns out to be quite interesting and rather non-trivial. The traditional tools in differential geometry dealing with free and proper actions are not helpful here. One needs to invent new methods, combining the techniques from stack theory (such as how to define quotient stacks) and the geometrical intuition on free and proper action (which leads our definition of 1-free action in Def. 4.7).

%\comment{
%Moreover, it then turns out that our principal actions coincide with other existing versions of higher principal bundles, defined through gluing local trivial data, or through a differential topological point of view using classifying stacks [3, 4, +36, 51].
%+
%}

In the classical theory of Lie groupoids and its various applications, a central role is played by
the notion of {\em Morita equivalence} (see e.g. \cite{bx,MM,MM2}). One of the applications of our results is an extension of this notion to the realm of stacky Lie groupoids.
Indeed, a common approach to Morita equivalence of Lie groupoids uses principal bundles to define ``generalized morphisms'', in such a way that Morita equivalence is expressed by {\em biprincipal bibundles} (see e.g. \cite[Sec.~2]{MM2}). Using (bi)principal (bi)bundles of stacky Lie groupoids, we define Morita equivalence analogously, and the previous theorem is key in showing that
stacky biprincipal bibundles can be ``composed'', which leads
to our second main result:\\

\noindent
\textbf{Theorem.} \textit{Morita equivalence of stacky Lie groupoids is
an equivalence relation, which (faithfully) extends the
Morita equivalence relation for ordinary Lie groupoids.}\\

\noindent
See Theorem \ref{CorMorEqRel} and Proposition \ref{prop:equivusual} for details.
We remark that  another approach to categorified bibundles and Morita equivalence of 2-groupoids through simplicial methods is developed in \cite[Sec.~6]{duli}; for strict 2-groupoids, another viewpoint to Morita equivalence can be found in \cite{ginotstienon}.
%\comment{--------------------------------------------------------------- skip...
%** comment about the next main result: I just add one more new result of me and christian there, which shows that it can go high, thus it must be the correct notion!!
%------
%...developed in [29, Sec. 6];
%+
%this approach is then generalized to all higher groupoids in [Christian Blohmann and Chenchang Zhu, Morita equivalence for higher groupoids];
%+}

As a concrete illustration of the last result, in Section~\ref{subsec:ex} we consider transitive Lie algebroids $A_\omega \to M$ given as rank-one extensions of $TM$ corresponding to closed 2-forms $\omega\in \Omega^2(M)$, as in \cite[Ex.~3.7]{cf}.
The integrability of this class of Lie algebroids is governed by the group of periods  of $\omega$, $\Gamma\subset \mathbb{R}$. When $\Gamma$ is discrete, so that $\mathbb{R}/\Gamma \cong S^1$,
$A_\omega$ is integrable: in this case, the class of $\omega$ determines a principal $(\mathbb{R}/\Gamma)$-bundle over $M$,
and $A_\omega$ is identified with its Atiyah Lie algebroid. Its canonical Lie groupoid integration is Morita equivalent to the abelian group $\mathbb{R}/\Gamma$. Our theory allows us to extend this picture to the non-integrable case, i.e., when $\Gamma \subset \mathbb{R}$ is dense: in this case, we verify that the stacky Lie groupoid associated
with $A_\omega$ is Morita equivalent to the 2-group $[\mathbb{R}/\Gamma]$.

Much of our motivation for this work comes from Poisson geometry,
where the existing notion of Morita equivalence \cite{x2,x1} only
applies to {\em integrable} Poisson manifolds, i.e., Poisson
manifolds whose underlying Lie algebroids are integrable. A possible
approach to Morita equivalence of non-integrable Poisson manifolds
via stacks was suggested in \cite[Section~9.3]{bw2} (see also
\cite{bw3}), and this paper may be regarded as the first
foundational step in this direction. Indeed,  in light of the
correspondence between Lie algebroids and stacky Lie groupoids in
\cite{tz}, one should expect to have a description of Morita equivalence
of stacky Lie groupoids only in terms of Lie-algebroid data; this
could then be specialized to Poisson manifolds
(see \cite{tz2}). We plan to address these issues in subsequent
work.

For the reader's convenience, we outline the structure and content of the paper:

\begin{itemize}
\item Section \ref{sec:prelim} recalls the main definitions concerning
differentiable stacks and Lie groupoids and collects some technical
results used in the sequel. An important concept introduced in this
section is that of {\em weak representability} of a morphism of
differentiable stacks (Def.~\ref{def:weakep}), which plays a key role
in the study of principal actions.

\item Section \ref{sec:Grpd} recalls the notion of stacky Lie
groupoid and introduces actions of stacky Lie groupoids on
(differentiable) stacks, pointing out their key features. The notion
of {\em principal bundle} for stacky Lie groupoids is also
discussed in this section (Def.~\ref{DefPrincBundle}), along with
some of its basic properties and examples.

\item In Section \ref{sec:Quotients}, we define the ``quotient
space'' associated with an action of a stacky Lie groupoid
$\cl{G}\rra M$ on a differentiable stack $\cl{X}$. This quotient is
initially defined as a category fibred in groupoids, referred to as
the ``prequotient'' (see Prop.~\ref{PropPreqCfg}); its
stackification, denoted by $\cl{X}/\cl{G}$, is our object of
interest. The main properties of (pre)quotients are presented in
this section.

\item Section \ref{sec:principal} contains the main result of
the paper: Theorem~\ref{TheDiagonalThenPrincipal}, which provides a
charac\-teri\-zation of principal actions of stacky Lie groupoids, i.e.,
it gives a necessary and sufficient condition ensuring that an
action of a stacky Lie groupoid $\G$ on a differentiable stack $\X$
gives rise to a principal bundle $\X\to \X/\G$.
As an application, we show that the usual ``composition'' (or
``tensor product'') of principal bundles of Lie groupoids naturally
extends to stacky Lie groupoids (Prop.~\ref{prop:tensor}).

\item In Section~\ref{sec:morita}, following the usual theory of Lie groupoids, we consider
biprincipal bibundles, i.e., differentiable stacks carrying
commuting principal actions of two stacky Lie groupoids, one on the
right and the other on the left. These are the central objects
for the definition of {\em Morita equivalence} of stacky Lie
groupoids. We present a concrete example of Morita equivalence
arising from a non-integrable transitive Lie algebroid that generalizes
the usual Atiyah algebroid associated with a principal $S^1$-bundle.
We verify two key properties of our extended notion of Morita equivalence: that it is an equivalence relation (Thm.~\ref{CorMorEqRel}), and that it recovers  the classical one when
restricted to Lie groupoids (Prop.~\ref{prop:equivusual}).

\item The Appendices, organized in four sections, collect some
technical material, including proofs of auxiliary results needed
throughout the paper.
\end{itemize}

\smallskip

\noindent {\bf Acknowledgements.} We have benefited from discussions
with many colleagues, including A. Cabrera, M. Crainic, R.
Fernandes, M. Gualtieri, G. Ginot, E. Lerman, D. Li, R. Meyer, I. Moerdijk and
A. Weinstein. We are particularly indebted to Davide Stefani and
Matias del Hoyo for many helpful comments. We are
thankful to several institutions for hosting us during various
stages of this project, including the University of Toronto, IST,
IMPA, and G\"ottingen University. H.B. acknowledges the financial
support of Faperj and CNPq. C. Z. has been supported by the German
Research Foundation (Deutsche  Forschungsgemeinschaft (DFG)) through
the Institutional Strategy of the University of G\"ottingen.

\tableofcontents

%%%%%%%%%%%%%%%%%%%%%%%%%%%%%%%%%%%%%%%%%%%%%%%%%%%%%%%%%%%%%%%%%%%%%%%%%%%%%%%%%
\section{Preliminaries}\label{sec:prelim}

In this section, we collect basic facts about stacks used in the
sequel of the paper. Stacks have been extensively studied in
algebraic geometry, see e.g. \cite{SGA4,dm,lmb,v1,v2}; more
recently, there has been an increasing interest in stacks in the
categories of topological spaces and smooth manifolds, see e.g.
\cite{bx,metzler,pronk}. This paper focuses on stacks in the
differentiable category, in the spirit of \cite{bx,metzler}, where
details and proofs omitted here can be found; see also
Remark~\ref{rem:higher}.

Before moving on, we set up some notation. Given a category $\cl{X}$
and an object $x$ of $\cl{X}$, we will use either the notation
$x\in\mr{Obj}(\cl{X})$, or simply $x\in\cl{X}$. We denote the set of
morphisms from $x$ to $y$ by $\mathrm{Hom}_{\cl{X}}(x,y)$. For
2-categories we will often write compositions of 2-morphisms as
follows: \textit{horizontal compositions} are denoted by `$\circ$'
or juxtaposition (the same notation will be used for 1-morphisms),
and \textit{vertical compositions} by `$*$'. For example, if $A$,
$B$, $C$ are objects in a 2-category, and $a$, $b$, $c$, $d$ are
1-morphisms, and $\alpha$ and $\beta$ are 2-morphisms, we write
$b\scirc a=ba$, $\beta\scirc\alpha=\beta\alpha$ and
$\beta\ast\alpha$ for the compositions depicted in the diagrams
below:
$$\xymatrix{
A\ar@/^1pc/[r]^{a}_{}="1"\ar@/_1pc/[r]_{c}^{}="2"&B
\ar@/^1pc/[r]^{b}_{}="3"\ar@/_1pc/[r]_{d}^{}="4"&C
\ar"1";"2"^{\alpha}\ar"3";"4"^{\beta} }\quad \longmapsto \quad
\xymatrix{ A\ar@/^1pc/[r]^{ba}_{}="5"\ar@/_1pc/[r]_{dc}^{}="6"&C
\ar"5";"6"^{\beta\alpha} }
$$
$$\xymatrix{
A\ar@/^2pc/[r]^{a}_{}="1"\ar[r]_(0.35){b}^{}="2"\ar@/_2pc/[r]_{c}^{}="3"&B
\ar"1";"2"^{\alpha}\ar"2";"3"^{\beta}} \quad \longmapsto \quad
\xymatrix{
A\ar@/^1pc/[r]^{a}_(0.4){}="1"\ar@/_1pc/[r]_{c}^(0.4){}="2"&B.
\ar"1";"2"^{{\scriptscriptstyle \beta\ast\alpha}} }
$$

For identity morphisms and 2-morphisms we will use the notation
`$\mr{id}$' as follows:
$$ A\lmap{ \mr{id}_A} A \qquad\qquad
\xymatrix{
A\ar@/^1pc/[r]^{a}_(0.4){}="1"\ar@/_1pc/[r]_{a}^(0.4){}="2"&B.
\ar"1";"2"^{{\scriptscriptstyle \mr{id}_a}} }
$$
A square
$$\xymatrix{
A\ar[r]^-{a}\ar[d]_-{c}&B\ar[d]^-{b}\\
C\ar[r]_-{d}&D }
$$
is called {\em 2-commutative} if there is a given 2-isomorphism
$\alpha: dc\ra ba$, in which case we say that the square is
2-commutative {\em with respect to $\alpha$}.

%%%%%%%%%%%%%%%%%%%%%%%%%%%%%%%%%%%%%%%%%%%%%%%%%%%%%%%%%%%%%%%%%%%%%%%%%%%%%%%%%%
\subsection{Categories fibred in groupoids}\label{subsec:cfg}

%For the purposes of this paper, it will be convenient to adopt the
%following common point of view to stacks via categories fibred in
%groupoids.

Let $\cl{C}$ be the category of smooth manifolds\footnote{Manifolds
are not necessarily assumed to be Hausdorff, as this property fails
in many natural examples of spaces of arrows of Lie groupoids.}. We
endow $\cl{C}$ with the Grothendieck topology given by open covers.
We recall the definition of the (strict) 2-category of categories
fibred in groupoids over $\cl{C}$, denoted by $\CFG_\cl{C}$. A
\textbf{category fibred in groupoids} over $\cl{C}$, i.e., an object
in $\CFG_\cl{C}$, is a pair $(\X, \pi)$, where $\X$ is a category
and $\pi:\cl{X}\ra \cl{C}$ is a functor, satisfying the following
conditions:
\begin{enumerate}
\item[(i)]
Any diagram
$$\xymatrix{
&y\ar@{-}[d]\\
U\ar[r]_-{f}&V }
$$
can be completed to a commutative diagram
$$\xymatrix{
x\ar@{-}[d]\ar[r]^{a}&y\ar@{-}[d]\\
U\ar[r]_-{f}&V }
$$
where $f:U\ra V$ is a morphism in $\cl{C}$, $a:x\ra y$ is a morphism
in $\cl{X}$, the vertical lines mean that $\pi(x)=U$ and $\pi(y)=V$,
and the commutativity means that $\pi(a)=f$.

\item[(ii)] Any morphism $a:x\ra y$ in $\cl{X}$ is cartesian, i.e.,
for any commutative diagram of solid arrows as below,
$$\xymatrix{
z\ar@/^1pc/[rr]^{b}_(0.4){}\ar@{-}[d]\ar@{-->}[r]_-{c}
&x\ar[r]_-{a}\ar@{-}[d]&y\ar@{-}[d]\\
W\ar[r]_-{g}&U\ar[r]_-{f}&V }
$$
(i.e., $\pi(a)=f$ and $\pi(b)=fg$), there exists a unique $c$ that
makes the diagram commute (i.e., $ac=b$ and $\pi(c)=g$).
\end{enumerate}

If there is no risk of confusion, we simplify notation and denote a
category fibred in groupoids $(\X,\pi)$ simply by $\X$. We may also
use the notation $\pi_\X$ for $\pi$ if $\X$ is not clear from the
context.

If $\cl{X}$ is a category fibred in groupoids and $U$ is a manifold,
we define the {\bf fiber} of $\cl{X}$ over $U$, denoted  by either
$\cl{X}(U)$ or $\cl{X}_U$, as the category whose objects are the
objects $x$ of $\cl{X}$ that lie over $U$ (i.e., $\pi(x)=U$), and
whose morphisms $a: x\to y$ in $\cl{X}_U$ are those in $\cl{X}$ that
lie over the identity of $U$ (i.e., $\pi(a)=\mr{id}_U$). Conditions
(i) and (ii) above imply that the fibers of $\pi$ over any object of
$\cl{C}$ are groupoids (i.e., categories in which all the morphisms
are invertible). When $a: x\to y$ is a cartesian arrow, with
$f=\pi(a): U\to V$, we refer to $x$ as the {\bf pullback} of $y$ by
$f$. (Note that $x$ is uniquely defined, up to canonical
isomorphism, by $y$ and $f$.) We use the notation $y|_U$ or $f^*y$
for $x$. Given a morphism $b: y'\to y$ over ${\rm{id}}_V$, there is
an induced morphism $f^*b = b|_U : f^*y' \to f^*y$.

A {\bf morphism} between categories fibred in groupoids
$(\X_1,\pi_1)$ and $(\X_2,\pi_2)$ is a functor
$$
F:\cl{X}_1\ra \cl{X}_2
$$
such that $\pi_2 F=\pi_1$. The {\bf 2-morphisms} between
$F,F':\cl{X}_1\ra \cl{X}_2$ are the natural transformations
$\eta:F\ra F'$ such that $\pi_2(\eta(x):F(x)\to
F'(x))=\mr{id}_{\pi_1(x)}$ for any object $x\in\cl{X}_1$. We recall
that any 2-morphism in $\CFG_\cl{C}$ is an isomorphism with respect
to vertical composition.

Two categories fibred in groupoids $\cl{X}$ and $\cl{Y}$ are
\textbf{isomorphic} if there are morphisms $F:\cl{X}\ra\cl{Y}$ and
${F'}:\cl{Y}\ra \cl{X}$ such that the compositions $F {F'}$ and $F'
F$ are isomorphic to the corresponding identities. We recall that
$F$ is an isomorphism in this sense if and only if for any manifold
$U$ the restriction $F_U:\cl{X}_U\ra \cl{Y}_U$ is an equivalence of
categories.

Any manifold $X$ naturally gives rise to a category fibred in
groupoids, still denoted by $X$, whose fiber over a manifold $U$ is
given by $\mathrm{Hom}(U,X)$. A category fibred in groupoids
$\cl{X}$ is \textbf{representable} if there is a manifold $X$ (whose
associated category fibred in groupoids is) isomorphic to it.

We also recall the {\bf fibred product} of morphisms of categories
fibred in groupoids. Let
$$
F_i:\cl{X}_i\ra\cl{Y}
$$
be morphisms of categories fibred in groupoids, for $i=1,2$. The
objects of the fibred product $\cl{X}_1\times_\cl{Y}\cl{X}_2$ (we
will also use the notation $\X_1 \times_{F_1,\cl{Y},F_2} \X_2$) are
triples $(x_1,a,x_2)$ with $x_i\in\cl{X}_i$ and $a:F_1(x_1)\ra
F_2(x_2)$, where $x_1,x_2$ are assumed to lie over the same manifold
$U$, and $a$ lies over $\mr{id}_U$ (hence it is an isomorphism). A
morphism $(b_1,b_2): (x_1,a,x_2)\ra (x_1',a',x_2')$ is given by a
pair of morphisms $b_i: x_i\ra x_i'$ in $\cl{X}_i$, for $i=1,2$,
such that the diagram
$$
\xymatrix{
F_1(x_1)\ar[r]^-{a}\ar[d]_-{F_1(b_1)}&F_2(x_2)\ar[d]^-{F_2(b_2)}\\
F_1(x_1')\ar[r]_-{a'}&F_2(x_2') }
$$
commutes.

We recall that a diagram (in $\CFG_\cl{C}$)
\begin{equation}\label{eq:2cart}
\xymatrix{
\cl{W}\ar[r]^-{}\ar[d]_-{F_1'}&\cl{X}_1\ar[d]^-{F_1}\\
\cl{X}_2\ar[r]_-{F_2}&\cl{Y} }
\end{equation}
is called {\bf 2-cartesian} if it is 2-commutative and the induced
map from $\cl{W}$ to $\X_1\times_\cl{Y} \X_2$ is an isomorphism. In
this case, we refer to this square as a {\bf pullback} diagram, and
we say that $F_1'$ is the {\bf base change} of $F_1$ by $F_2$.
(Occasionally, we may refer to 2-cartesian diagrams just as
cartesian.)

A morphism $F:\X\ra \cl{Y}$ of categories fibred in groupoids is
said to be a \textbf{monomorphism} if, for any manifold $U$, the
restriction $F_U:\cl{X}_U\ra\cl{Y}_U$ of $F$ over $U$ is fully
faithful. The morphism $F$ is said to be an \textbf{epimorphism} if,
for any $U\in\cl{C}$ and any $y\in \cl{Y}_U$, there exists a cover
$(U_\alpha\ra U)_\alpha$ and, for any $\alpha$, there exists
$x_\alpha\in \X_{U_\alpha}$ such that $F(x_\alpha)\simeq
y_{|U_\alpha}$ in $\cl{Y}_{U_\alpha}$. For a morphism $F: X\ra Y$
between manifolds, the condition that $F$ is an epimorphism is
equivalent to the existence of local sections around any point of
$Y$. We also recall that being an epimorphism (resp. monomorphism)
is stable under composition and base change (i.e,, in a 2-cartesian
square \eqref{eq:2cart}, if $F_1$ has this property, then so does
$F_1'$).

A morphism $\cl{X}\ra\cl{Y}$ of categories fibred in groupoids is
called \textbf{representable} if, for any manifold $Y$ and morphism
$Y\ra\cl{Y}$, the fibred product $Y\times_\cl{Y} \cl{X}$ is
representable; this property is preserved under composition and base
change. Given a morphism $X\ra Y$ between manifolds, it is
representable if and only is it is a submersion (see e.g.
\cite[Lem.~71]{metzler}). As a consequence, for a representable
morphism  $\cl{X}\ra\cl{Y}$ of categories fibred in groupoids and a
morphism $Y\ra \cl{Y}$ from a manifold $Y$, the induced map
$Y\times_\cl{Y} \cl{X} \ra Y$ is automatically a submersion (so
representable morphisms are also referred to as {\bf representable
submersions}). It also follows that $\cl{X}\ra\cl{Y}$ is a
representable epimorphism if and only if  $Y\times_\cl{Y} \cl{X} \ra
Y$ is a surjective submersion of manifolds for all $Y\ra \cl{Y}$,
where $Y$ is a manifold.

%%%%%%%%%%%%%%%%%%%%%%%%%%%%%%%%%%%%%%%%%%%%%%%%%%%%%%%%%%%%%%%%%%%%
\subsection{Differentiable stacks}\label{subsec:stacks}

\begin{definition}
A category fibred in groupoids $\cl{X}$ is called a \textbf{stack}
if the following two conditions are satisfied:

\begin{enumerate}
\item[(A1)]  \label{itm:ax-1}  Given a manifold $U$ and
two objects $x, y$ in $\cX_U$, for every open cover $(U_\alpha\to
U)_\alpha$, and for every collection of isomorphisms $\phi_\alpha:
x_{|U_\alpha} \to y_{|U_\alpha}$ over $U_\alpha$ such that
$\phi_{\alpha |U_{\alpha\beta}}= \phi_{\beta |U_{\alpha\beta}}$,
there is a unique isomorphism $\phi: x\to y$ such that
$\phi_{|U_\alpha}=\phi_\alpha$. (Here $U_{\alpha\beta}=U_\alpha
\times_U U_\beta$.)

\item[(A2)]  \label{itm:ax-2}  Let $U$ be a manifold and
 $(U_\alpha\to U)_\alpha$ be an open cover. Let $x_\alpha$ be an object in $\cX_{U_\alpha}$,
and let $\phi_{\beta\alpha}: x_\alpha|_{U_{\alpha\beta}} \to
x_\beta|_{U_{\alpha\beta}}$ be morphisms over $U_{\alpha\beta}$
satisfying $\phi_{\alpha\beta}\circ \phi_{\beta\gamma}
=\phi_{\alpha\gamma}$ (over $U_{\alpha\beta\gamma}= U_\alpha\times_U
U_\beta \times_U U_\gamma$). Then there exist an object $x$ over
$U$, and isomorphisms $\phi_\alpha: x|_{U_\alpha}\to x_\alpha$ over
$U_\alpha$ such that $\phi_\beta = \phi_{\beta\alpha}\circ
\phi_\alpha$ (over $U_{\alpha\beta}$).
\end{enumerate}

A category fibred in groupoids is a \textbf{prestack} if it
satisfies (A1). Note that (A1) implies that $x$ in (A2) is unique up
to canonical isomorphism.
\end{definition}

A morphism between stacks is just a morphism of the underlying
categories fibred in groupoids. If $F:\cl{X}\ra\cl{Y}$ is a morphism
of stacks, then the definition of epimorphism given in
Section~\ref{subsec:cfg} is equivalent to the one given in
\cite[Def.~2.3]{bx}.

We will need the notion of stackification of a category fibred in
groupoids. We recall its main, well-known properties (see e.g.
\cite[Prop.~52, Lem.~53]{metzler}):
%\cite[Prop.~52]{metzler}:

\begin{proposition}\label{PropStackification}
Let $\cl{X}$ be a category fibred in groupoids. Then there is a
stack $\cl{X}^\sharp$, called the \textbf{stackification} of
$\cl{X}$, and a morphism $\stack:\cl{X}\ra\cl{X}^\sharp$ such that
that the following properties hold:
\begin{itemize}
\item[(i)] For any stack $\cl{S}$ and
morphism $F:\cl{X}\ra \cl{S}$ there is a pair $(F^\sharp,\zeta)$,
where $F^\sharp:\cl{X}^\sharp\ra \cl{S}$ and $\zeta: F^\sharp \stack
\lmap{\sim} F$.

\item[(ii)] Let $\cl{S}$ be a stack, $F_i: \cl{X}\ra \cl{S}$, $i=1,2$, be morphisms and
$\eta: F_1\lmap{\sim} F_2$. Let $F_i^\sharp: \cl{X}^\sharp\ra
\cl{S}$ and $\zeta_i: F_i^\sharp \stack \lmap{\sim} F_i$, $i=1,2$,
be as in $(i)$. Then there exists a unique $\eta^\sharp: F_1^\sharp
\lmap{\sim} F_2^\sharp$ such that $\eta
* \zeta_1=\zeta_2
*(\eta^\sharp \,\scirc\,\mr{id}_{\stack})$.

\end{itemize}
Moreover,

\begin{itemize}
\item[(iii)] If $\X$ is a prestack then the stackification map
$\stack:\cl{X}\ra\cl{X}^\sharp$ is a monomorphism and an
epimorphism.

\item[(iv)] Let $F:\cl{X}\ra\cl{Y}$ be a morphism of prestacks. Then, the
stackified map $F^\sharp:\cl{X}^\sharp\ra \cl{Y}^\sharp$ is an
isomorphism if and only if $F$ is a monomorphism and an epimorphism.

\end{itemize}
\end{proposition}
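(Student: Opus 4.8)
The plan is to recall the explicit construction of $\cl{X}^\sharp$ and then read off the four properties from it; since the references \cite{metzler} contain the construction, I would only sketch it. For a general category fibred in groupoids I would first pass to the \emph{associated prestack}, obtained by identifying morphisms so that axiom (A1) holds, and then apply the \emph{plus construction} to the resulting prestack. Concretely, an object of $\cl{X}^\sharp$ over a manifold $U$ is a descent datum: an open cover $(U_\alpha\ra U)_\alpha$, objects $x_\alpha\in\cl{X}_{U_\alpha}$, and isomorphisms $\phi_{\beta\alpha}$ over $U_{\alpha\beta}$ satisfying the cocycle identity of (A2); a morphism is a compatible family of morphisms of the local objects, taken up to common refinement of the covers. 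The map $\stack$ sends $x\in\cl{X}_U$ to the trivial descent datum over the cover $\{U\ra U\}$.

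For (i), given a stack $\cl{S}$ and $F:\cl{X}\ra\cl{S}$, I would define $F^\sharp$ on a descent datum by transporting it through $F$ to a descent datum in $\cl{S}$ and then gluing it to a genuine object of $\cl{S}_U$ by axiom (A2) for $\cl{S}$; axiom (A1) for $\cl{S}$ makes this assignment functorial and independent of the choices up to canonical isomorphism, and evaluating on trivial data yields the comparison $\zeta\colon F^\sharp\stack\lmap{\sim}F$. For (ii), a 2-morphism $\eta^\sharp$ between two such extensions is forced locally, over each member of a cover trivializing the descent data, by $\eta$ together with $\zeta_1,\zeta_2$; axiom (A1) for $\cl{S}$ glues these local components into a unique global $\eta^\sharp$, and the prescribed identity $\eta*\zeta_1=\zeta_2*(\eta^\sharp\,\scirc\,\mr{id}_{\stack})$ pins it down. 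The only tools needed here are the stack axioms (A1) and (A2) of the target $\cl{S}$.

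For (iii), assume $\cl{X}$ is a prestack. Then $\stack$ is a monomorphism because full faithfulness of $\stack_U$ on fibers is just a restatement of (A1) for $\cl{X}$, and it is an epimorphism because, by construction, every descent datum is, after restriction to the members $U_\alpha$ of its defining cover, isomorphic to the trivial datum $\stack(x_\alpha)$. For (iv), I would apply the universal property (i) to $\stack_{\cl{Y}}\circ F$, obtaining a canonical 2-isomorphism $\stack_{\cl{Y}}\circ F\cong F^\sharp\circ\stack_{\cl{X}}$, and then argue purely through cancellation properties of monomorphisms and epimorphisms, which are stable under composition (Section~\ref{subsec:cfg}). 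The rules I need are: the first factor of a monomorphism is a monomorphism; the second factor of an epimorphism is an epimorphism; if the second factor is a monomorphism and the composite is an epimorphism then the first factor is an epimorphism (this uses only that a fully faithful functor reflects isomorphisms); and, conversely, if the first factor is an epimorphism and the composite is a monomorphism then the second factor is a monomorphism. Granting these, together with (iii) so that $\stack_{\cl{X}}$ and $\stack_{\cl{Y}}$ are both mono and epi, both directions follow: if $F^\sharp$ is an isomorphism then $\stack_{\cl{Y}}\circ F$ is mono and epi, whence $F$ is mono (first factor) and epi (third rule); and if $F$ is mono and epi then $F^\sharp\circ\stack_{\cl{X}}$ is mono and epi, whence $F^\sharp$ is mono (fourth rule) and epi (second factor). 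A morphism of stacks that is both mono and epi is an isomorphism, so $F^\sharp$ is an isomorphism.

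I expect the main obstacle to be the fourth cancellation rule, used to pass from $\stack_{\cl{X}}$ epimorphism and $F^\sharp\circ\stack_{\cl{X}}$ monomorphism to $F^\sharp$ monomorphism. Unlike the others this is not purely formal: since a general object of $\cl{X}^\sharp$ is only \emph{locally} in the image of $\stack_{\cl{X}}$, checking full faithfulness of $F^\sharp$ requires constructing and gluing morphisms in the \emph{stacks} $\cl{X}^\sharp$ and $\cl{Y}^\sharp$ via axiom (A1), using full faithfulness of $\stack_{\cl{X}}$ to lift local morphisms back to $\cl{X}$ and the uniqueness in (A1) to verify agreement on overlaps.
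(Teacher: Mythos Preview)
The paper does not prove this proposition: it is stated as a recollection of well-known facts with a pointer to \cite[Prop.~52, Lem.~53]{metzler}, and no argument is given. Your sketch therefore goes beyond what the paper supplies, and it follows the standard route (prestackification followed by the descent/plus construction) one finds in that reference, so there is nothing to compare against.

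One point to tighten: your first cancellation rule, ``the first factor of a monomorphism is a monomorphism,'' is false in general. Take $f$ to be the inclusion of a point into a one-object groupoid with a nontrivial automorphism and $g$ the projection to a point; then $gf$ is fully faithful but $f$ is not full. What you actually use---and what is true---is that if both the composite $\stack_{\cl{Y}}\circ F$ and the second factor $\stack_{\cl{Y}}$ are monomorphisms, then $F$ is a monomorphism (fullness of $F$ then follows because $\stack_{\cl{Y}}$ is faithful). Since (iii) already gives you that $\stack_{\cl{Y}}$ is a monomorphism, the argument goes through once the hypothesis is stated correctly. Likewise, your fourth rule as you apply it also relies on $\stack_{\cl{X}}$ being a monomorphism, not merely an epimorphism (you say this in the final paragraph but not in the statement of the rule itself). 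The remaining rules and the final step---that a morphism of stacks which is both mono and epi is an isomorphism---are fine.
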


\begin{definition}\label{DefDiffStack}
A stack $\cl{X}$ is called \textbf{differentiable} if there exists a
representable epimorphism $X\ra\cl{X}$ from a manifold $X$. We call
such a morphism an \textbf{atlas} of $\cl{X}$.
\end{definition}

We mention some standard but important examples.

\begin{example}
\
\begin{itemize}
\item[(a)] The category fibred in groupoids associated with a manifold $X$
is a differentiable stack. The manifold $X$ itself is an atlas.

\item[(b)] If a Lie group $G$ acts on a manifold $X$, then there is an associated
{\em quotient stack} $[X/G]$, which is a differentiable stack for
which $X$ can be taken as an atlas. In particular, when $X$ is a
point, the quotient stack is called the {\bf classifying space} of
the Lie group $G$, and it is denoted by $BG$.

\item[(c)] The definition of quotient stack can be generalized to the
setting of Lie groupoids acting on manifolds. We will provide more
details and references in Section~\ref{subsec:LG}.
\end{itemize}
\end{example}

\begin{remark}\label{rem:higher}
We point out that there is a broader viewpoint to stacks through the
theory of {\em higher stacks} developed in
\cite{lurie,Rezk,toen-vezzosi:hagI}.
This theory unifies all levels of $n$-stacks, where $n$ is a
non-negative integer, or $\infty$.
In this hierarchy, a $0$-stack is a sheaf. Recall that a presheaf in
a category $\mathcal{C}$ is a contravariant functor from
$\mathcal{C}$ to the category of sets. If the category is endowed
with a Grothendieck topology, then one may define the so-called
local isomorphisms in the category of presheaves on $\mathcal{C}$.
Localizing with respect to local isomorphisms gives us the category
of sheaves on $\mathcal{C}$. This way of defining sheaves, going
back to \cite{SGA4}, is totally internal; one arrives at the concept
of sheaves without explicitly defining what they are.

For higher stacks, one passes from a category to a simplicial
enriched category and from sets to simplicial sets. In this
framework, one may consider geometric stacks
\cite{toen-vezzosi:hagII}, which extend what we call differentiable
stacks in this paper; roughly, these are the higher stacks
presentable by higher groupoids (see also
\cite{Pridham:higher-stack}). Although the theory is mostly driven
by algebraic geometric applications, the same ideas carry over to
differential geometry, see e.g. \cite{nss:2} (and the theory in fact
simplifies in the context of manifolds). From this perspective, a
stack is the ``stackification'' (analogous to the localization in
the case of sheaves) of a (higher) functor from the category of
manifolds to the bicategory of groupoids. Such a functor may be
explicitly expressed as a category fibred in groupoids, which is the viewpoint taken
in this paper.
\end{remark}

%%%%%%%%%%%%%%%%%%%%%%%%%%%%%%%%%%%%%%%%%%%%%%%%%%%%%%%%%%%%%%%%%%%%%
\subsection{Morphisms of differentiable stacks}

In the sequel, the following weak form of representability will be
central:

\begin{definition}\label{def:weakep}
A morphism of differentiable stacks $F: \X \ra \cl{Y}$ is called
{\bf weakly representable} if there exists an {\em atlas} $Y\to
\cl{Y}$ such that the fibred product $\X \times_{\cl{Y}} Y$ is
representable.
\end{definition}

\begin{proposition}\label{prop:wrep}
A morphism $F: \X \ra \cl{Y}$ is weakly representable if and only if
for all representable morphisms $U\to \cl{Y}$, where $U$ is a
manifold, the fibred product $\X \times_{\cl{Y}} U$ is
representable.
\end{proposition}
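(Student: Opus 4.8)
The plan is to prove both implications, the ``if'' direction being immediate and the ``only if'' direction resting on a descent argument. For ``if'', I would note that any atlas $Y\to\cl{Y}$ is in particular a representable morphism (a representable epimorphism), and that $\cl{Y}$ admits at least one atlas since it is a differentiable stack. Applying the hypothesis to this atlas $Y\to\cl{Y}$ directly yields that $\X\times_{\cl{Y}}Y$ is representable, which is precisely the weak representability of $F$.

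For ``only if'', fix an atlas $Y\to\cl{Y}$ with $Z:=\X\times_{\cl{Y}}Y$ representable, let $U\to\cl{Y}$ be an arbitrary representable morphism from a manifold, and set $\cl{W}:=\X\times_{\cl{Y}}U$. First I would form the manifold $P:=U\times_{\cl{Y}}Y$, which is representable because $Y\to\cl{Y}$ is representable and $U$ is a manifold; here the base change $P\to U$ of the atlas $Y\to\cl{Y}$ is a surjective submersion, while the base change $P\to Y$ of the representable morphism $U\to\cl{Y}$ is a submersion. The crucial point is then the chain of canonical isomorphisms
\[
\cl{W}\times_U P=(\X\times_{\cl{Y}}U)\times_U P\cong\X\times_{\cl{Y}}P\cong(\X\times_{\cl{Y}}Y)\times_Y P=Z\times_Y P,
\]
where the inner isomorphisms are standard manipulations of fibred products, the last one using that $P\to\cl{Y}$ factors through the atlas $Y$. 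Since $Z$ and $P$ are manifolds and $P\to Y$ is a submersion, the fibred product $Z\times_Y P$ is a manifold, so $Q:=\cl{W}\times_U P$ is representable. Note that this is exactly the step where the representability hypothesis on $U\to\cl{Y}$ is used.

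It then remains to descend representability from the cover $P\to U$ to $\cl{W}$. Since $P\to U$ is a surjective submersion it is a representable epimorphism, so its base change $Q=\cl{W}\times_U P\to\cl{W}$ is again a representable epimorphism, i.e.\ an atlas; hence $\cl{W}$ is a differentiable stack presented by the Lie groupoid $Q\times_{\cl{W}}Q\rra Q$. A further fibred-product computation gives $Q\times_{\cl{W}}Q\cong\cl{W}\times_U(P\times_U P)\cong Q\times_P(P\times_U P)$, which is again a manifold, and identifies this groupoid with the pullback to $\cl{W}$ of the pair groupoid $P\times_U P\rra P$ of the surjective submersion $P\to U$; in particular its two projections to $Q$ are submersions and $(s,t)\colon Q\times_{\cl{W}}Q\to Q\times Q$ is injective, so the groupoid is an equivalence relation. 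The main obstacle, and the heart of the argument, is to upgrade the conclusion from ``$\cl{W}$ is a differentiable stack'' to ``$\cl{W}$ is representable'': this is precisely the effectiveness of descent for manifolds along surjective submersions (Godement's criterion applied to the equivalence relation $Q\times_{\cl{W}}Q\rra Q$). Granting this, the manifold $Q$ equipped with its descent datum relative to $P\to U$ glues to a manifold $W$ with $\cl{W}\cong W$, which completes the argument.
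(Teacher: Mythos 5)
Your reduction agrees with the paper's up to the key step: you form $P=U\times_{\cl{Y}}Y$, correctly compute $Q:=\cl{W}\times_U P\cong \X\times_{\cl{Y}}P\cong Z\times_Y P$ and conclude that $Q$ is a manifold, so everything comes down to descending representability along the surjective submersion $P\to U$. The gap is in how you close that descent. You establish that the groupoid $R=Q\times_{\cl{W}}Q\rra Q$ has submersive source and target and injective $(s,t)$, call it an equivalence relation, and then invoke ``Godement's criterion / effectiveness of descent'' with the words ``granting this''. But the hypotheses you have actually verified do not imply the conclusion: a Lie groupoid with submersive (even \'etale) source and target and injective $(s,t)$ can present a non-representable stack. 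For instance, the action groupoid $\mathbb{Z}\ltimes S^1\rra S^1$ of an irrational rotation is an ``equivalence relation'' in exactly your sense, yet $[S^1/\mathbb{Z}]$ is not a manifold. What Godement's theorem requires is that $(s,t):R\to Q\times Q$ be an embedding onto a locally closed submanifold (closed, if one insists on a Hausdorff quotient; the paper's manifolds need not be Hausdorff, so local closedness is the relevant condition). In your situation this does hold --- one can see it by factoring $(s,t)$ through the embedded submanifold $Q\times_U P\subset Q\times P$ --- but you neither state nor prove it, and it is precisely the content of the step you defer; one would also still need to check that the resulting quotient manifold represents the stack $\cl{W}$, i.e.\ that $Q\to Q/R$ is a principal $R$-bundle.

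The paper closes the same gap by a more elementary route that avoids Godement entirely: it uses the local normal form of the submersion $P\to U$ to write $P$ locally as $U_\alpha\times F_\alpha$, observes that $(\X\times_{\cl{Y}}U_\alpha)\times F_\alpha\cong(\X\times_{\cl{Y}}P)\times_P(U_\alpha\times F_\alpha)$ is representable as an open substack of the manifold $\X\times_{\cl{Y}}P$, cancels the factor $F_\alpha$ by slicing at a point, and finally glues the representable open substacks $\X\times_{\cl{Y}}U_\alpha$ of $\X\times_{\cl{Y}}U$. Your strategy is salvageable, but as written the step you yourself identify as ``the heart of the argument'' is asserted rather than proved, and in the generality in which you state it (equivalence relation with injective $(s,t)$ and submersive projections) it is false.
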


\begin{proof}
The `if' part is clear since $\cl{Y}$ is assumed to have an atlas.
For the converse, take an atlas $Y\to \cl{Y}$ such that $\X
\times_{\cl{Y}} Y$ is representable, and a representable map $U\to
\cl{Y}$ from a manifold $U$. We have to show that $\X
\times_{\cl{Y}} U$ is representable. Taking the fibred product
$P=U\times_\cl{Y} Y$, the projection $P\to U$ is a surjective
submersion; moreover, $P\to Y$ is a submersion, so that $\X
\times_{\cl{Y}} P$ is representable. There exists an open cover
$(U_\alpha)_\alpha$ of $U$ and manifolds $(F_\alpha)_\alpha$ such
that $U_\alpha\times F_\alpha$ is open embedded in $P$ and the
restriction of $P\to U$ is the projection $U_\alpha\times
F_\alpha\to U_{\alpha}$. The fibred products $\X \times_{\cl{Y}}
U_\alpha$ form an open cover of $\X \times_{\cl{Y}} U$ (in the stack
sense) so that it is enough to show that each $\X \times_{\cl{Y}}
U_\alpha$ is representable. From the abstract properties of fibred
products we have
$$ (\X \times_{\cl{Y}} U_\alpha)\times {F_\alpha}=
(\X \times_{\cl{Y}} U_\alpha)\times_{U_\alpha} (U_\alpha\times
F_\alpha)= (\X \times_{\cl{Y}} P )\times_{P} (U_\alpha\times
F_\alpha),$$ so that $(\X \times_{\cl{Y}} U_\alpha)\times
{F_\alpha}$ is representable, and we conclude  that $\X
\times_{\cl{Y}} U_\alpha$ is representable, as required.
\end{proof}

It is clear that all representable morphisms of differentiable
stacks are weakly representable. Note that any map $F: X\to Y$
between manifolds is weakly representable, but it is representable
if and only if it is a submersion. In fact, a morphism $\X \to Y$
into a manifold $Y$ is weakly representable if and only if $\X$ is
representable. (Note that the notions of representable and weakly
representable coincide in the topological setting, see e.g.
\cite[Lemma~2.6(3)]{hein}).

\begin{proposition}\label{propwrf} Let $F:\cl{X}\ra\cl{Y}$ be a
weakly representable morphism of differentiable stacks. Then the
functor $F$ is faithful.
\end{proposition}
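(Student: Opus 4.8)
The plan is to prove faithfulness fibrewise: I will show that for every manifold $U$ the restriction $F_U : \X_U \to \cl{Y}_U$ is a faithful functor, which suffices. Indeed, any two arrows $a,b : x \to x'$ in $\X$ with $F(a) = F(b)$ necessarily lie over the same morphism $f = \pi_\X(a) = \pi_\X(b)$ of $\cl{C}$ (since $\pi_{\cl{Y}}F = \pi_\X$), and composing with the cartesian lift $f^* x' \to x'$ identifies them with arrows $x \to f^* x'$ over $\mathrm{id}_U$; because $F$ preserves cartesian arrows, the equality $F(a) = F(b)$ translates into the equality of the corresponding arrows over $\mathrm{id}_U$. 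Thus everything reduces to the fibres.

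So fix $U$, objects $x,x' \in \X_U$ and arrows $a,b : x \to x'$ over $\mathrm{id}_U$ with $F(a) = F(b) =: \gamma$; I want $a = b$. Since $\X$ is a stack, equality of morphisms is a local condition on $U$ (axiom (A1)), so I am free to pass to the members of an open cover. Let $p : Y \to \cl{Y}$ be an atlas for which $\X \times_{\cl{Y}} Y$ is representable, which is exactly the data provided by weak representability (Def.~\ref{def:weakep}). As $p$ is in particular an epimorphism, after refining to a suitable cover (and restricting $a,b,x,x'$ accordingly) I may assume that the object $F(x) \in \cl{Y}_U$ lifts through $p$, that is, there is $y_0 \in Y_U$ together with an isomorphism $\alpha : F(x) \to p(y_0)$ over $\mathrm{id}_U$.

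Now set $P := \X \times_{\cl{Y}} Y$ and consider the two objects $\tilde x := (x,\alpha,y_0)$ and $\tilde x' := (x', \alpha \circ \gamma^{-1}, y_0)$ of the fibre $P_U$. A direct check of the compatibility condition defining morphisms in a fibred product shows that both $(a, \mathrm{id}_{y_0})$ and $(b, \mathrm{id}_{y_0})$ are morphisms $\tilde x \to \tilde x'$ in $P_U$: indeed $(\alpha \circ \gamma^{-1}) \circ F(a) = \alpha$, and likewise with $b$, precisely because $F(a) = F(b) = \gamma$. The crucial point is now the following observation about $P$: since $P$ is representable, there is a manifold $\bar P$ and an equivalence of categories $P_U \simeq \bar P_U = \mathrm{Hom}(U, \bar P)$, and the target is a \emph{discrete} category; an equivalence to a discrete category forces $P_U$ to be \emph{thin}, i.e.\ there is at most one morphism between any two of its objects. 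Applying this to $\tilde x$ and $\tilde x'$ yields $(a,\mathrm{id}_{y_0}) = (b,\mathrm{id}_{y_0})$, whence $a = b$, as desired.

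The main subtlety -- and the step most easily gotten wrong -- is this last one. Representability of $P$ does \emph{not} say that the categories $P_U$ are discrete: an atlas typically carries objects with nontrivial automorphisms in the stack downstairs. What it gives is the weaker thinness property that between two fixed objects of $P_U$ there is at most one arrow, and it is exactly this property that encodes the faithfulness we are after. The remaining ingredients are routine: that $F$ preserves cartesian arrows (legitimizing the reduction to fibres), that the atlas $p$ is an epimorphism (so that $F(x)$ lifts locally), and the explicit description of objects and morphisms in the fibred product $\X \times_{\cl{Y}} Y$ recalled in Section~\ref{subsec:cfg}.
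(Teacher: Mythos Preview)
Your proof is correct and follows essentially the same approach as the paper's: both arguments pass to an open cover using the epimorphism property of the atlas $Y\to\cl{Y}$, build objects of the representable fibred product $\X\times_{\cl{Y}}Y$, and conclude by observing that such a representable stack has thin fibres. The only cosmetic difference is that the paper first uses the groupoid structure of $\X_U$ to reduce to showing that an automorphism $a:x\to x$ with $F(a)=\mathrm{id}_{F(x)}$ is the identity (so only one object $(x|_{U_\alpha},d_\alpha,y_\alpha)$ is needed in the fibred product), whereas you keep two parallel arrows $a,b$ and construct a second object $\tilde x'$; the content is identical.
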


\begin{proof}
It is well known that $F$ is faithful if (and only if) the functor
$F_U:\cl{X}_U\ra\cl{Y}_U$ is faithful for any manifold $U$. Since
$\cl{X}_U$ is a groupoid, it is enough to show that, for any arrow
$a:x\ra x$ in $\cl{X}_U$, if $F(a)=\mr{id}_{F(x)}$ then
$a=\mr{id}_x$. Fix such an arrow $a$.

Since $F$ is weakly representable, there is an atlas
$\pi:Y\ra\cl{Y}$ such that $\cl{X}\times_\cl{Y} Y$ is representable.
The object $x\in\cl{X}_U$ corresponds to a morphism $x:U\ra \cl{X}$.
The fibred product $U'=U\times_\cl{Y} Y$ is a manifold, and the
projection $U'\ra U$ is a surjective submersion. So we can take an
open cover $(U_\alpha)$ of $U$ with local sections $U_\alpha\ra U'$
of $U'\ra U$. For any value of the index $\alpha$, we get a
2-commutative diagram
$$\xymatrix{
& Y\ar[d]^{\pi}\\
U_\alpha\ar[ru]^-{y_\alpha}\ar[r]_-{F(x_{|U_\alpha})}&\cl{Y}. }$$ We
will show that $a_{|U_\alpha}=\mr{id}_{x_{|U_\alpha}}$, so that
$a=\mr{id}_x$ (since $\cl{X}$ is a stack), as desired. Interpreting
$y_\alpha$ as an object of $Y$ over $U_\alpha$, the 2-isomorphism
$d_\alpha$ that makes the above diagram 2-commute is interpreted as
an isomorphism $d_\alpha: F(x_{|U_\alpha})\ra \pi(y_\alpha)$ in
$\cl{Y}_U$, so that the triple $(x_{|U_\alpha},d_\alpha, y_\alpha)$
is an object of $\cl{X}\times_\cl{Y} Y$ over $U_\alpha$. The pair
$(a_{|U_\alpha},\mr{id}_{y_\alpha})$ is a morphism of
$(x_{|U_\alpha},d_\alpha, y_\alpha)$ in itself in the category
$(\cl{X}\times_\cl{Y} Y)_{U_\alpha}$; this follows from  the
hypothesis $F(a)=\mr{id}_{F(x)}$, which implies that
$F(a_{|U_\alpha})= \mr{id}_{F(x_{|U_\alpha})}$. Since
$\cl{X}\times_\cl{Y} Y$ is representable, there is a unique arrow of
$(x_{|U_\alpha},d_\alpha, y_\alpha)$ in itself, namely the identity.
It follows that $a_{|U_\alpha}=\mr{id}_{x_{|U_\alpha}}$ and the
proof is complete.

\end{proof}

\begin{definition}\label{def:weak}
A morphism $F:\cl{X}\ra \cl{Y}$ of differentiable stacks is called a
{\bf submersion} (resp. {\bf immersion}) if there exist atlases
$X\ra\cl{X}$ and $Y\ra\cl{Y}$ such that the induced map of manifolds
$Y\times_{\cl{Y}} X\ra Y$ is a submersion (resp. {\bf immersion}).
\end{definition}

The notion of immersion will be used only for \'etale stacks (see
Section~\ref{subsec:LG}).

One may verify that, for manifolds, the above definitions coincide
with the usual notions of submersion and immersion. Also, any
representable morphism is a submersion. Note, however, that
submersions (or immersions) need not be representable morphisms; in
fact, most submersions we will deal with are of the form $\X \to Y$,
where $Y$ is a manifold, and, as mentioned, such a map cannot be
representable unless $\X$ is.

%\comment{Remark alternative descriptions? a map $\X \ra \cl{Y}$ is
%submersion iff there are atlases such that map is covered by
%submersion $X_0\to Y_0$ ($X_0$ need not be the fiber product); for
%immersion, we have a similar characterization, by $X_0$ must be the
%fibred product $\X\times_{\cl{Y}}Y_0$...}

We will be particularly interested in submersions, and it will be
useful to have equivalent characterizations of such morphisms.

\begin{proposition}\label{prop:charactsub}
Given a morphism of differentiable stacks $F: \X\ra \cl{Y}$, the following are equivalent:
\begin{itemize}
\item[(a)] $F$ is a submersion.

\item[(b)] There exists an atlas $X\ra\cl{X}$ such
that the composition $X\ra\cl{Y}$ is representable.

\item[(c)] For all representable morphisms $U\ra\cl{X}$ and $V\ra\cl{Y}$
from manifolds $U$ and $V$, the induced map of manifolds
$V\times_{\cl{Y}} U\ra V$ is a submersion.

\item[(d)] For all representable morphisms $U\ra \cl{X}$ from a manifold
$U$, the composition $U\ra \cl{Y}$ is representable.

\end{itemize}
\end{proposition}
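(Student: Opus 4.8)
The plan is to show the four conditions equivalent by running around the cycle (c) $\Rightarrow$ (a) $\Rightarrow$ (b) $\Rightarrow$ (d) $\Rightarrow$ (c), isolating a single substantive ingredient — the descent of representability along surjective submersions — and treating everything else formally. The only general facts I would invoke are that atlases are representable epimorphisms, that representability of morphisms is stable under composition and base change, and that a representable morphism between manifolds is precisely a submersion (all recalled above).

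Two of the implications are immediate. For (c) $\Rightarrow$ (a), I would feed the chosen atlases $X\to\X$ and $Y\to\cY$ — which are in particular representable morphisms — into (c), obtaining that $Y\times_\cY X\to Y$ is a submersion of manifolds, which is exactly (a). For (d) $\Rightarrow$ (c), given representable maps $U\to\X$ and $V\to\cY$, condition (d) makes the composite $U\to\cY$ representable; hence $V\times_\cY U$ is a manifold (as $V$ is), and the projection $V\times_\cY U\to V$, being the base change of a representable morphism, is itself representable and therefore a submersion of manifolds.

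The two remaining implications both rest on the following descent lemma, which I regard as the heart of the argument: \emph{if $g\colon Z\to\cY$ is a morphism from a manifold $Z$, if $\pi\colon P\to Z$ is a surjective submersion of manifolds, and if the composite $g\circ\pi$ is representable, then $g$ itself is representable.} Granting this, (a) $\Rightarrow$ (b) runs as follows. Writing $c:=F\circ a$ for the composition with the chosen atlas $a\colon X\to\X$, I would take $P:=Y\times_\cY X$ with its projection $\pi\colon P\to X$, which is the base change of the atlas $Y\to\cY$ and hence a surjective submersion; by (a) the other projection $P\to Y$ is a submersion, so $c\circ\pi$ is $2$-isomorphic to the composite of this submersion with the atlas $Y\to\cY$ and is therefore representable. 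The lemma then yields that $c=F\circ a\colon X\to\cY$ is representable, which is (b). For (b) $\Rightarrow$ (d), let $u\colon U\to\X$ be any representable morphism and set $P:=U\times_\X X$; since $u$ is representable and $X$ is a manifold, $P$ is a manifold, the projection $\pi_U\colon P\to U$ is a surjective submersion (base change of the atlas $a$), and $\pi_X\colon P\to X$ is representable (base change of $u$). Then $F\circ u\circ\pi_U\cong c\circ\pi_X$ is representable, being a composition of representable morphisms, and the lemma applied with $g=F\circ u$ and $\pi=\pi_U$ gives that $F\circ u\colon U\to\cY$ is representable, i.e.\ (d).

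The main obstacle is the descent lemma itself. For a test morphism $V\to\cY$ from a manifold, the hypothesis makes $V\times_\cY P$ a manifold, and the base change of $\pi$ exhibits it as an atlas of the differentiable stack $V\times_\cY Z$; the real difficulty is to upgrade $V\times_\cY Z$ from a differentiable stack to an honest manifold. I would do this at the level of the presenting groupoid: the groupoid $(V\times_\cY P)\times_{V\times_\cY Z}(V\times_\cY P)\rightrightarrows V\times_\cY P$ is canonically identified with the base change of the equivalence relation $P\times_Z P\rightrightarrows P$ that presents the manifold $Z$. Since that relation is (by Godement's criterion) a closed embedded submanifold of $P\times P$ with submersive projections, and these properties are preserved under base change, Godement's criterion applies again and shows that $V\times_\cY Z$ is a manifold, whence $g$ is representable. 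This descent statement is essentially standard for differentiable stacks and can also be extracted from \cite{bx,metzler}; the one point requiring genuine care is the behaviour of Godement's criterion in the possibly non-Hausdorff setting allowed here, which I would handle by working locally over an open cover trivializing $\pi$.
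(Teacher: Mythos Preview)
Your argument is correct, but it is organized differently from the paper's and rests on a different key lemma.

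The paper proceeds in a star-shaped fashion: it notes that each of (b), (c), (d) trivially implies (a), and then proves (a) $\Rightarrow$ (b), (a) $\Rightarrow$ (c), (a) $\Rightarrow$ (d) separately. The implication (a) $\Rightarrow$ (c) is handled by a direct cube-diagram chase, while (a) $\Rightarrow$ (b) and (a) $\Rightarrow$ (d) both invoke Lemma~\ref{LemmaBX} (which is \cite[Lem.~2.2]{bx}, quoted without proof): if $Y\to\cY$ is an epimorphism from a manifold and $Y\times_\cY\X$ is a manifold mapping to $Y$ by a submersion, then $\X\to\cY$ is representable. Note that this lemma tests representability against an atlas of the \emph{target}.

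Your cycle (c) $\Rightarrow$ (a) $\Rightarrow$ (b) $\Rightarrow$ (d) $\Rightarrow$ (c) is more economical, since it bypasses the cube argument for (a) $\Rightarrow$ (c) entirely. Your descent lemma is a genuinely different statement from Lemma~\ref{LemmaBX}: it descends representability along a surjective submersion covering the \emph{domain}. Your proof via Godement is essentially correct and makes the argument self-contained rather than deferring to \cite{bx}. One small remark: your concern about closedness of $P\times_Z P\subset P\times P$ in the non-Hausdorff setting is unnecessary, since Godement's criterion only requires $R$ to be a submanifold with a submersive projection; closedness is needed only for Hausdorffness of the quotient, which the paper explicitly does not demand. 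So the Godement argument goes through directly, without the local-trivialization detour.
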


Submersions satisfy the following natural properties:

\begin{proposition}\label{prop:propsubm}
The following holds:

\begin{enumerate}
\item[(a)] The composition of submersions is a submersion.

\item[(b)] A base change of a submersion is a submersion.

\item[(c)] Let $F: \cl{X}\ra\cl{Y}$ and $F':\cl{Y}\ra \cl{Z}$ be morphisms
of differentiable stacks. If $F$ and $F' F$ are submersions and $F$
is an epimorphism, then $F'$ is a submersion.

\end{enumerate}
\end{proposition}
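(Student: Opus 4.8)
The plan is to deduce all three statements from the equivalent characterizations of submersions in Proposition~\ref{prop:charactsub}, which allow us to trade the existence of suitable atlases for the representability of compositions. For part (a), given submersions $F:\cl{X}\ra\cl{Y}$ and $F':\cl{Y}\ra\cl{Z}$, I would verify criterion (d) for $F'F$ directly. Take any representable morphism $U\ra\cl{X}$ from a manifold $U$; criterion (d) for $F$ shows that the composition $U\ra\cl{Y}$ is representable, and then criterion (d) for $F'$, applied to this representable morphism, shows that $U\ra\cl{Z}$ is representable. Since the latter composition is $F'F$ postcomposed with $U\ra\cl{X}$, this is precisely criterion (d) for $F'F$, so $F'F$ is a submersion.

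For part (c) I would exploit the hypothesis that $F:\cl{X}\ra\cl{Y}$ is an epimorphism. By criterion (b) for the submersion $F$, there is an atlas $a:X\ra\cl{X}$ with $Fa:X\ra\cl{Y}$ representable; since $a$ and $F$ are both epimorphisms, so is $Fa$, and hence $Fa:X\ra\cl{Y}$ is itself an atlas of $\cl{Y}$. Now $a$ is in particular representable, so criterion (d) for the submersion $F'F$ gives that the composition $X\ra\cl{Z}$, namely $(F'F)a=F'\hc(Fa)$, is representable. Thus $Fa:X\ra\cl{Y}$ is an atlas of $\cl{Y}$ whose composition with $F'$ is representable, and criterion (b) applied to $F'$ yields that $F'$ is a submersion.

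For part (b) I expect the main obstacle, since it requires identifying the correct atlas of the fibred product and checking it is a genuine manifold. Consider a submersion $F:\cl{X}\ra\cl{Y}$, a morphism $G:\cl{Z}\ra\cl{Y}$, and the base change $F':\cl{W}\ra\cl{Z}$ with $\cl{W}=\cl{Z}\times_{\cl{Y}}\cl{X}$. Using criterion (b) I would first fix an atlas $a:X\ra\cl{X}$ with $Fa:X\ra\cl{Y}$ representable, together with an atlas $q:Z\ra\cl{Z}$. The candidate atlas of $\cl{W}$ is $A:=Z\times_{\cl{Y}}X$ (formed via $Gq$ and $Fa$); it is representable because $Fa$ is representable and $Z$ is a manifold, and the morphism $A\ra\cl{W}$ is an atlas since it factors as the base change of $a$ along $Z\times_{\cl{Y}}\cl{X}\ra\cl{X}$ followed by the base change of $q$ along $F'$, each of which is an atlas. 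It then remains to show that the composition $A\ra\cl{W}\lmap{F'}\cl{Z}$, which equals $A\ra Z\lmap{q}\cl{Z}$, is representable: for any manifold $T\ra\cl{Z}$ one computes $T\times_{\cl{Z}}A\cong (T\times_{\cl{Z}}Z)\times_{\cl{Y}}X$, which is a manifold because $T\times_{\cl{Z}}Z$ is a manifold (base change of the atlas $q$) and $Fa$ is representable. Criterion (b) for $F'$ then concludes the argument.

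The only genuinely delicate points are the bookkeeping of fibred products in part (b) — in particular, checking that $A$ is representable rather than merely a differentiable stack, which is exactly where the representability of $Fa:X\ra\cl{Y}$ guaranteed by criterion (b) for $F$ is used — and, in part (c), the observation that $Fa$ is not just representable but an atlas of $\cl{Y}$, which is precisely where the epimorphism hypothesis on $F$ enters.
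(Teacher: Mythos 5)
Your proof is correct and follows essentially the same route as the paper's: all three parts are reduced to the characterizations (b) and (d) of Proposition~\ref{prop:charactsub}, and your arguments for (a) and (c) coincide with the paper's almost verbatim. In part (b) you are in fact slightly more careful than the paper, whose candidate atlas $\cl{X}'\times_{\cl{X}}X$ is a priori only a differentiable stack rather than a manifold; your additional pullback along an atlas $Z\to\cl{Z}$ of the base ensures that the candidate atlas $Z\times_{\cl{Y}}X$ is an honest manifold, which is exactly the point where the representability of $Fa:X\to\cl{Y}$ is needed.
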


The proofs of Propositions~\ref{prop:charactsub} and
\ref{prop:propsubm} can be found in Appendix~\ref{app:morphisms}.

%%%%%%%%%%%%%%%%%%%%%%%%%%%%%%%%%%%%%%%%%%%%%%%%%%%%%%%%%%%%%%%%%%%%%%%%%%%%%
\subsection{Lie groupoids and Hilsum-Skandalis maps}\label{subsec:LG}

We recall some basic facts about Lie groupoids and their relation
with differentiable stacks. See e.g. \cite{bx,MM,MM2} for more details,
and \cite{dH} for a geometric viewpoint.

A \textit{groupoid} is a category in which all the morphisms are
invertible. Hence a groupoid consists of a set $G_0$ of objects, a
set $G$ of morphisms, and structural maps satisfying suitable
compatibility conditions. We denote the source and target maps by
$s$ and $t$, we write $i$  for the inversion map, $1$ for the unit
map, and $m$ for the multiplication:
\begin{equation}\label{eq:structmaps1}
G\times_{s,G_0,t}G\lmap{m}{G}\lmap{s,t} G_0\lmap{1}{G}\lmap{i} {G},
\end{equation}
where
$$
G\times_{s,G_0,t}G = \{(g,h)\,|\, s(g)=t(h)\}.
$$
We will use the notation $m(g,h)=g\cdot h=gh$, $1(g_0)=1_{g_0}$ and
$i(g)=g^{-1}$. We may also denote the structural maps by $s_G$,
$t_G$ etc. if we need to be more specific. We denote a groupoid by
$G\rra G_0$, or simply by $G$ if there is no risk of confusion. In
this context, a  morphism is just a functor.

A \textbf{Lie groupoid} $G\rra G_0$  is a groupoid in the category
of smooth manifolds, such that source and target maps are
submersions (necessarily surjective). It is called {\bf \'etale} if
the source map (or, equivalently, the target map) is a local
diffeomorphism, or equivalently, if $G$ and $G_0$ have the same
dimension.

A right \textbf{action} of a Lie groupoid $G$ on a manifold $X$ is
defined by a pair of maps $a: X\ra G_0$ and
$$
X\times_{a,G_0,t} G\map X, \qquad (x,g)\mapsto x\cdot g=xg
$$
such that $a(xg)=s(g)$ and
$$
(xg)h=x(gh),\qquad  x1=x.
$$
We say that $G$ {\bf acts on $X$ along $a: X\to G_0$}, and the map
$a$ is often referred to as the {\bf moment map} of the action. A
$G$-{\bf equivariant map} between manifolds equipped with
$G$-actions is a map that commutes with moment maps and actions.

A right \textbf{$G$-bundle} is a manifold $P$ equipped with a right
$G$-action along $a:P \to G_0$ and a map
$$
r: P\ra S,
$$
where $S$ is a manifold, such that the action is {\bf on the fibers}
of $r$, i.e., $r(zg)= r(z)$ for composable $z\in P$ and $g\in G$. A
right $G$-bundle is \textbf{principal} if $r$ is a surjective
submersion and the induced map
\begin{equation}\label{eq:principalact}
P\times_{a,G_0,t} G\map P\times_S P, \qquad (z,g)\mapsto (z,zg),
\end{equation}
is a diffeomorphism. Similar definitions hold for left actions and
left bundles.

For a given right action of a Lie groupoid $G$ on a manifold $X$
along $a: X\to G_0$, there is an associated differentiable stack,
called the \textbf{quotient stack} and denoted by $[X/G]$.  The
objects of $[X/G]$ are principal right $G$-bundles equipped with a
$G$-equivariant map $P\ra X$, while the morphisms of $[X/G]$ are
morphisms of principal bundles (over different bases, in general)
that commute with the maps to $X$. Any $G$-equivariant map $f: X\to
Y$ naturally induces a morphism $[X/G]\to [Y/G]$.

\begin{remark}\label{rem:equivsub}
Note that the induced morphism of quotient stacks $[X/G]\to [Y/G]$
is a submersion (in the sense of Def.~\ref{def:weak}) if $f$ is.
\end{remark}

There is a map of stacks $X\ra [X/G]$, taking a smooth map of
manifolds $f: U\to X$ to
$$
P= (af)^*G= U \times_{a f,G_0,t} G,
$$
which is naturally a principal right $G$-bundle over $U$, equipped
with the equivariant map $P\to X$, $(u,g)\mapsto f(u)g$. The map
$X\ra [X/G]$ defines an atlas of the quotient stack. A particular
case of this construction is when $X=G_0$, equipped with its
canonical $G$-action: $(g_0,g)\mapsto s(g)$. In this case the
quotient stack is called the \textbf{classifying space} of the
groupoid $G$ and it is denoted by $BG$.

There is a close relation between Lie groupoids and differentiable
stacks endowed with an atlas. On the one hand, given a Lie groupoid
$G\rra G_0$, one considers the associated classifying space $BG$,
which comes with an atlas $G_0\ra BG$ fitting into the following
2-cartesian square:
$$
\xymatrix{
G\ar[r]^-{t}\ar[d]_-{s}&G_0\ar[d]\\
G_0\ar[r]&BG. }
$$
Conversely, given a differentiable stack $\cl{G}$ endowed with an
atlas $G_0\ra \cl{G}$, we define $G= G_0\times_\cl{G}G_0$, which has
an induced Lie groupoid structure over $G_0$ such that $BG$ is
canonically isomorphic to $\cl{G}$. The Lie groupoid $G\rra G_0$ is
called a \textbf{presentation} of $\cl{G}$ (and of $BG$). We say
that the differentiable stack $\cl{G}$ is {\bf \'etale} if it can be
presented by an \'etale Lie groupoid.

\begin{example}\label{ex:action}
Given a (right) action of a Lie groupoid $G \rra G_0$ on a manifold
$X$ along $a: X\to G_0$, the quotient stack $[X/G]$ is presented by
the {\it action} (or {\it translation}) groupoid $X \rtimes G$: its
space of arrows is $X \times_{a,G_0,t} G$, the source map is
$(x,g)\mapsto xg$, the target map is $(x,g)\mapsto x$, and the
multiplication is given by $(x,g)(y,h) = (x,gh)$.
\end{example}

A $G$-$H$-\textbf{bibundle} is defined by a manifold $P$ and Lie
groupoids $G\rra G_0$ and $H\rra H_0$  so that $P$ carries a left
$G$-action along $a: P \to G_0$ and on the fibers of $b$, and a
right $H$-action along $b: P\to H_0$ and on the fibers of $a$, in
such a way that the two actions commute. We represent such bibundle
by the diagram
\begin{equation}\label{eq:GHbib}
 \xymatrix{
G\ar@<0.5ex>[d]\ar@<-0.5ex>[d]&& H\ar@<0.5ex>[d]\ar@<-0.5ex>[d]\\
G_0&P\ar[l]^-{a}\ar[r]_-{b}&H_0 . }
\end{equation}
We will simply write $P$ for the bibundle. Two $G$-$H$ bibundles $P$
and $Q$ are \textbf{isomorphic} if there is a diffeomorphism $P\to
Q$ preserving the actions.

A bibundle \eqref{eq:GHbib} is called \textbf{right principal} if it
is a principal $H$-bundle over $G_0$ (in particular, $a$ is a
surjective submersion), and it is called \textbf{biprincipal} if it
is also a principal $G$-bundle over $H_0$ (so that  $b$ is also a
surjective submersion). We say that two groupoids $G$ and $H$ are
\textbf{Morita equivalent} if there exists a biprincipal
$G$-$H$-bibundle.

Given a right principal $H$-$K$-bibundle $Q$,
$$ \xymatrix{
H\ar@<0.5ex>[d]\ar@<-0.5ex>[d]&& K\ar@<0.5ex>[d]\ar@<-0.5ex>[d]\\
H_0&Q\ar[l]^-{}\ar[r]_-{}&K_0 , }
$$
and a right principal $H$-bundle $P$, the quotient of $P\times_{H_0}
Q$ by the $H$-action $(z,w)h=(z h,h^{-1}w)$ is a right principal
$K$-bundle, denoted by $P\otimes_H Q$. Moreover, if $P$ is a right
principal $G$-$H$-bibundle, so that we have
$$
\xymatrix{ G\ar@<0.5ex>[d]\ar@<-0.5ex>[d]&&
H\ar@<0.5ex>[d]\ar@<-0.5ex>[d]&&K\ar@<0.5ex>[d]\ar@<-0.5ex>[d]\\
G_0&P\ar[l]_-{}\ar[r]^-{}&H_0&Q\ar[l]_-{}\ar[r]^-{}&K_0, }
$$
then $P\otimes_H Q$ is naturally a right principal $G$-$K$-bibundle,
$$
\xymatrix{
G\ar@<0.5ex>[d]\ar@<-0.5ex>[d]&& K\ar@<0.5ex>[d]\ar@<-0.5ex>[d]\\
G_0&P\otimes_H Q\ar[l]^-{}\ar[r]_-{}&K_0. }
$$
If $P$ and $Q$ are biprincipal, then so is $P\otimes_H Q$, which
implies that Morita equivalence of Lie groupoids is a transitive
relation (the fact that it is symmetric and reflexive is
straightforward).

A {\bf Hilsum-Skandalis map} between Lie groupoids $G$ and $H$ is an
isomorphism class of right principal $G$-$H$-bibundles. Any groupoid
morphism $\phi: G\to H$ gives rise to a Hilsum-Skandalis map through
the right principal $G$-$H$-bibundle defined by the fibred product
\begin{equation}\label{eq:HSbb}
G_0\times_{\phi_0,H_0,t}H,
\end{equation}
with actions given by $g(g_0,h)=(t(g),\phi(g)h)$ and $(g_0,h)h' =
(g_0,hh')$. Since right principal bibundles represent
Hilsum-Skandalis maps, we also refer to them as {\bf
Hilsum-Skandalis bibundles}.

Given a right principal $G$-$H$-bibundle $Q$, there is an associated
morphism between the stacks $BG$ and $BH$, sending a principal right
$G$-bundle $P$ (that is, an object of $BG$) to the principal right
$H$-bundle $P\otimes_G Q$. In this way a Hilsum-Skandalis map from
$G$ to $H$ induces a morphism $BG\to BH$, defined up to
2-isomorphism. In particular, any groupoid morphism $\phi: G\to H$
gives rise to a morphism $BG\to BH$ via \eqref{eq:HSbb}; moreover,
this map $BG\to BH$ is an isomorphism whenever the groupoid morphism
$\phi$ is a {\bf weak equivalence}, that is, when the following two
conditions are satisfied  (see e.g. \cite[Sec.~5.4]{MM}): (1) the
map $s \circ \mathrm{pr}_2 : G_0 \times_{\phi_0, H_0, t} H\to H_0$
is a surjective submersion, and (2) the square
$$
\xymatrix{G \ar[r]_\phi \ar[d]^{(s,t)} & H \ar[d]_{(s,t)}\\
G_0\times G_0 \ar[r]_{\phi_0\times \phi_0} & H_0\times H_0}
$$
is cartesian.

Conversely, any morphism $BG\ra BH$ of stacks, taken up to
2-isomorphism, is presented by a unique Hilsum-Skandalis map between
the groupoids $G$ and $H$. Indeed, for a given morphism $F: BG\to
BH$, a representative of the corresponding Hilsum-Skandalis map is
given by the bibundle
$$
G_0\times_{BH} H_0,
$$
which can be identified with $F(G)$ as a principal right $H$-bundle
over $G_0$ (here we view $G$ as a principal $G$-bundle relative to
right multiplication); we denote by $r: F(G)\to G_0$ the projection.
The left $G$-action on $F(G)$ is given by $F(m): G\times_{s,G_0,r}
F(G)=s^*(F(G))\to F(G)$, where we view the multiplication $m:
G\times_{s,G_0,t} G = s^*G \to G$ as a morphism of principal right
$G$-bundles (covering the map $t: G\to G_0$).

Given Lie groupoids $X$, $G$, and $H$, and morphisms $a: X\to H$,
$b: G\to H$ so that either $a_0: X_0\to H_0$ or $b_0: G_0\to H_0$ is
a submersion, we denote by $X\times_H G$ the Lie groupoid given by
their {\bf weak fibred product} \cite[Sec.~5.3]{MM}: its space of
objects is $X_0\times_{a_0,H_0,s} H \times_{t,H_0,b_0} G_0$, its
space of arrows is $X \times_{sa,H_0,s} H \times_{t,H_0,sb} G$, and
multiplication is given by $(x,h,g)(x',h',g')=(xx', h', gg')$
(source and target maps are given by $(x,h,g)\mapsto (s(x),h,s(g))$
and $(x,h,g)\mapsto (t(x),b(g)ha(x)^{-1},t(g))$, respectively). Note
that there is a canonical map
\begin{equation}\label{eq:canmap}
B(X\times_H G) \to BX \times_{BH} BG,
\end{equation}
induced by the natural maps $B(X\times_H G) \to BX$ and $B(X\times_H
G) \to BG$, which correspond to the groupoid morphisms $X\times_H G
\to X$ and $X\times_H G \to G$. We recall the following fact:

\begin{proposition}\label{prop:Bprod}
The canonical map $B(X\times_H G) \to BX \times_{BH} BG$ in
\eqref{eq:canmap} is an isomorphism.
\end{proposition}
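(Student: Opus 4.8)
The plan is to prove that the weak fibred product groupoid $X\times_H G$ is a \emph{presentation} of the stack $BX\times_{BH}BG$, in a way compatible with the canonical map, so that $\Phi := B(X\times_H G)\to BX\times_{BH}BG$ is forced to be an isomorphism. Write $R\rra M$ for $X\times_H G$, so $M = X_0\times_{a_0,H_0,s}H\times_{t,H_0,b_0}G_0$ and $R = X\times_{sa,H_0,s}H\times_{t,H_0,sb}G$; by the correspondence between Lie groupoids and their classifying stacks recalled in Section~\ref{subsec:LG}, the atlas $M\to B(X\times_H G)$ satisfies $M\times_{B(X\times_H G)}M\cong R$. The first step is to identify $M$ with a fibred product of atlases over $BH$. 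Since the $2$-cartesian square relating $H$ to its atlas $H_0\to BH$ gives $H\cong H_0\times_{BH}H_0$ with projections $t$ and $s$, and since the composites $X_0\to BX\to BH$ and $G_0\to BG\to BH$ agree with $X_0\xrightarrow{a_0}H_0\to BH$ and $G_0\xrightarrow{b_0}H_0\to BH$, the defining fibred product of $M$ rearranges into a canonical isomorphism $M\cong X_0\times_{BH}G_0$. Under this identification an element $(x_0,h,g_0)$ of $M$ is sent to the object of $BX\times_{BH}BG$ consisting of the images of $x_0$ and $g_0$ in $BX$ and $BG$ together with the $2$-isomorphism over $BH$ encoded by $h$; chasing the definitions of the groupoid morphisms $X\times_H G\to X$ and $X\times_H G\to G$ shows this is exactly the composite $M\to B(X\times_H G)\xrightarrow{\Phi}BX\times_{BH}BG$.

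Next I would show that $M\to BX\times_{BH}BG$ is an atlas, and it is here that the hypothesis that either $a_0$ or $b_0$ is a submersion is essential. Assuming $a_0$ is a submersion, the map $X_0\to BH$ factors as $X_0\xrightarrow{a_0}H_0\to BH$, a composite of representable morphisms (a submersion between manifolds and an atlas), hence is representable. Therefore $M = X_0\times_{BH}G_0$ is representable, being a base change of a representable morphism along $G_0\to BH$, and the map $M\to BX\times_{BH}BG$, obtained by base-changing the atlases $X_0\to BX$ and $G_0\to BG$, is a representable epimorphism from a manifold, i.e. an atlas. If instead $b_0$ is a submersion, one argues symmetrically.

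Having a single manifold $M$ serving as an atlas on both sides, the final step is to compute the presentation $M\times_{BX\times_{BH}BG}M$ and match it with $R$. An isomorphism in $BX\times_{BH}BG$ between the images of two points of $M$ amounts to an arrow $\xi\in X$, an arrow $\gamma\in G$, and a compatibility condition over $BH$; unwinding the square identifying $H\cong H_0\times_{BH}H_0$ shows this compatibility forces precisely the relation $h^{(1)} = b(\gamma)\,h^{(2)}\,a(\xi)^{-1}$ between the $H$-components, so that $M\times_{BX\times_{BH}BG}M\cong R$ with matching source, target, and multiplication. Since $\Phi$ carries the atlas $M\to B(X\times_H G)$ to the atlas $M\to BX\times_{BH}BG$ and induces this isomorphism on presentations, it is an isomorphism of stacks.

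I expect the main obstacle to be the bookkeeping in this last step: correctly tracking the $2$-isomorphism data (the component $h$, and the induced isomorphisms $a(\xi)$ and $b(\gamma)$ in $BH$) through the fibred product, and verifying that the compatibility over $BH$ reproduces exactly the twisted target map $(x,h,g)\mapsto(t(x),b(g)\,h\,a(x)^{-1},t(g))$ of the weak fibred product. A more self-contained alternative would avoid presentations and instead check directly that $\Phi$ is both a monomorphism and an epimorphism in the senses of Section~\ref{subsec:cfg}, invoking that a morphism of stacks with both properties is an isomorphism; the same compatibility computation would then reappear as the verification of essential surjectivity on fibres.
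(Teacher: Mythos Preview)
Your approach is correct but genuinely different from the paper's. The paper constructs the inverse map explicitly at the level of objects: given $(P,\varphi,Q)\in BX\times_{BH}BG$ over $U$, it equips $P\times_U Q$ with a principal right $(X\times_H G)$-bundle structure over $U$, defining the moment map $(p,q)\mapsto(j(p),h,k(q))$ where $h\in H$ is extracted from $\varphi$ by writing $\varphi([p,1])=[q,h]$, and the action is $(z,z')(x,h,g)=(zx,z'g)$. Your route instead shows that both stacks share the common atlas $M\cong X_0\times_{BH}G_0$ with matching presentations, and deduces the isomorphism from that.

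Your argument is cleaner conceptually and makes the role of the submersion hypothesis more transparent. The paper's argument, however, yields an explicit formula for the inverse $\psi:BX\times_{BH}BG\to B(X\times_H G)$, and this formula is invoked verbatim later in the paper: the proof of Lemma~\ref{lem:HSfacts2} refers to ``the isomorphism described in the proof of Prop.~\ref{prop:Bprod}'' to identify $\psi F(X)$ with $F_1(X)\times_{X_0}F_2(X)$, and a similar identification appears in Section~\ref{SubsecCartSquare}. If you adopt your proof in place of the paper's, you would need to supply that explicit description separately where it is used. One small point to make precise in your final step: the inference ``same atlas with isomorphic presentations, hence $\Phi$ is an isomorphism'' is standard but deserves a sentence---e.g.\ that $\Phi$ is an epimorphism since $M\to BX\times_{BH}BG$ factors through it, and a monomorphism since $M\times_{B(X\times_H G)}M\to M\times_{BX\times_{BH}BG}M$ being an isomorphism forces the diagonal of $\Phi$ to be one.
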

\begin{proof}
We will describe the inverse map. Given $(P,\varphi, Q)$ in $BX
\times_{BH} BG$ over a manifold $U$, we have that $P\times_U Q$ is a
principal right $X\times_H G$ bundle over $U$ as follows. Let $j:
P\to X_0$ and $k: Q\to G_0$ be the moment maps for the actions on
$P$ and $Q$, and note that the images of $P$ and $Q$ in $BH$ are the
$H$-bundles $(P\times_{a_0j,H_0,t} H)/X$ and $(Q\times_{b_0k,H_0,t}
H)/G$, respectively. Denoting an object in $X\times_H G$ by
$(x_0,h,g_0) \in X_0\times_{H_0} H \times_{H_0} G_0$, the moment map
for the action on $P\times_U Q$ is the map
$$
(p,q)\mapsto (x_0,h,g_0),
$$
where $x_0=j(p)$, $g_0=k(q)$, and $h \in H$ is defined as follows:
given $(z,z')\in P\times_U Q $ and the $H$-equivariant map $\varphi:
(P\times_{a_0j,H_0,t} H)/X \to (Q\times_{b_0k,H_0,t} H)/G$, we let
$[z,1]$ denote the $X$-orbit of $(z,1) \in P\times_{H_0} H$, and $h$
is uniquely defined by the condition that $[z',h]=\varphi([z,1])$.
The right action of $X\times_H G$ on $P\times_U Q$ is
$(z,z')(x,h,g)=(zx,z'g)$.
\end{proof}

\subsubsection*{Some examples of Hilsum-Skandalis bibundles}

The next two lemmas give explicit examples of Hilsum-Skandalis
bibundles that we will need later, see
Sections~\ref{subsecTechnicalLemmas} and \ref{SubsecTensorBundles}.

\begin{lemma}\label{lem:HSfacts1}
Let $\G$ and $\X$ be differentiable stacks, presented by $G\rra G_0$
and $X\rra X_0$, and let $M$ be a manifold. Consider a  submersion
$\G\to M$ and a morphism $\X \to M$. Then the fibred product
$\X\times_M \G$ is a differentiable stack presented by $X\times_M
G\rra X_0\times_M G_0$, and the projection $\X\times_M \G \to \X$
corresponds to the Hilsum-Skandalis bibundle $G_0 \times_M X$, with
actions given by $(x,g)(g_0,x')= (t(g),x x')$ (for the moment map
$(g_0,x)\mapsto (t(x),g_0)$) and $(g_0,x')x=(g_0,x'x)$ (for the
moment map $(g_0,x)\mapsto s(x)$).
\end{lemma}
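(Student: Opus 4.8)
The plan is to present $\X\times_M\G$ as the classifying stack of a weak fibred product of Lie groupoids and then read off both assertions from Proposition~\ref{prop:Bprod} and the explicit Hilsum-Skandalis bibundle \eqref{eq:HSbb}.

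First I would check that the two morphisms to $M$ descend to strict groupoid morphisms into the unit groupoid $M\rra M$. Since $M$ is a manifold, hence a sheaf, a morphism $\X\to M$ is the same datum as a map $\mu_X\colon X_0\to M$ from the atlas satisfying the descent condition $\mu_X\circ s=\mu_X\circ t$ on the arrow manifold $X=X_0\times_\X X_0$; this is precisely a groupoid morphism $X\to M$. Likewise $\G\to M$ corresponds to an invariant map $\mu_G\colon G_0\to M$, and by Proposition~\ref{prop:charactsub} the submersion hypothesis on $\G\to M$ means that the composite of the atlas $G_0\to\G$ with $\G\to M$ is representable, i.e.\ that $\mu_G$ is a submersion of manifolds. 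Because $s_G$ and $\mu_G$ are submersions, the composite $\mu_G\circ s_G\colon G\to M$ is a submersion, so the fibred products $X_0\times_{\mu_X,M,\mu_G}G_0$ and $X\times_M G$ are smooth manifolds and $X\times_M G\rra X_0\times_M G_0$ is a genuine Lie groupoid (its source and target maps are submersions). Applying Proposition~\ref{prop:Bprod} with $H=M$ then gives $\X\times_M\G=BX\times_{BM}BG\cong B(X\times_M G)$, which proves that $\X\times_M\G$ is a differentiable stack presented by $X\times_M G\rra X_0\times_M G_0$.

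It remains to identify the Hilsum-Skandalis bibundle of the projection. The projection $\X\times_M\G\to\X$ is presented by the groupoid morphism $\phi\colon X\times_M G\to X$, $(x,g)\mapsto x$, so by \eqref{eq:HSbb} its associated (automatically right principal) bibundle is $(X_0\times_M G_0)\times_{\mr{pr}_1,X_0,t}X$, with left $(X\times_M G)$-action $\kappa\cdot((x_0,g_0),\chi)=(t(\kappa),\phi(\kappa)\chi)$ and right $X$-action by multiplication on the last factor. Since the defining constraint $\mr{pr}_1(x_0,g_0)=t(\chi)$ forces $x_0=t(\chi)$, I would identify this manifold with $G_0\times_M X$ via $((t(\chi),g_0),\chi)\leftrightarrow(g_0,\chi)$; transporting the two actions and their moment maps through this diffeomorphism yields precisely the formulas in the statement. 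The only real subtlety, and hence the main obstacle, is bookkeeping: verifying that the descent condition genuinely turns the stack morphisms into the groupoid morphisms needed to invoke Proposition~\ref{prop:Bprod}, and keeping the source/target and left/right conventions consistent so that the transported actions match $(x,g)(g_0,x')=(t(g),xx')$ and $(g_0,x')x=(g_0,x'x)$ exactly.
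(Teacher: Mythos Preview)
Your proposal is correct and follows essentially the same approach as the paper: both invoke Proposition~\ref{prop:Bprod} (after noting that the submersion hypothesis makes $G_0\to M$ a submersion so that $X\times_M G$ is a Lie groupoid) to obtain the presentation, then observe that the projection corresponds to the groupoid morphism $X\times_M G\to X$ and apply \eqref{eq:HSbb} to identify the bibundle as $(X_0\times_M G_0)\times_{X_0}X\cong G_0\times_M X$. Your version is slightly more explicit about why the stack morphisms into $M$ descend to groupoid morphisms and about how Proposition~\ref{prop:charactsub} gives the submersion on $G_0\to M$, but the logical skeleton is identical.
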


\begin{proof}
The assertion that $\X\times_M \G=BX\times_M BG$ is a differentiable
stack presented by $X\times_M G\rra X_0\times_M G_0$ is a
consequence of the canonical isomorphism $B(X\times_M G)\to BX
\times_M BG$ of Prop.~\ref{prop:Bprod} (the fact that $BG\to M$ is a
 submersion guarantees that $G_0\to M$ is a submersion, and hence
$X\times_M G$ is a Lie groupoid). We note that this isomorphism
commutes with the projections to $BX$, where the projection
$B(X\times_M G)\to BX$ is associated with the groupoid projection
$X\times_M G \to X$. We conclude (cf. \eqref{eq:HSbb}) that this
projection is represented by the Hilsum-Skandalis bibundle
$(X_0\times_M G_0)\times_{X_0} X \simeq G_0 \times_M X$, with the
actions described in the statement of the Lemma.
\end{proof}

\begin{lemma}\label{lem:HSfacts2}
Let $E_1$ and $E_2$ be Hilsum-Skandalis bibundles for stack
morphisms $F_1:BX\to BY$ and $F_2: BX\to BZ$, respectively. Assume
that one of the two maps is a submersion. Let $M$ be a manifold, and
suppose that we have maps $F_3: BY\to M$ and $F_4: BZ\to M$ so that
$F_3F_1 = F_4 F_2$. Then $E_1 \times_{X_0} E_2$ is a
Hilsum-Skandalis bibundle for the induced morphism $F: BX \to
BY\times_M BZ$.
\end{lemma}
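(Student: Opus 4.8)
The plan is to compute the Hilsum-Skandalis bibundle representing $F$ by evaluating $F$ on the tautological object of $BX$ and recognizing the result as $E_1\times_{X_0}E_2$. Recall from Section~\ref{subsec:LG} that for any morphism $G:BX\to B\cl{W}$ a representative of the associated Hilsum-Skandalis map is $G(X)$, where $X$ denotes the tautological principal right $X$-bundle over $X_0$ (bundle projection $t$, moment map $s$), and the left $X$-action on $G(X)$ is induced by $G(m)$, with the multiplication $m:s^{*}X\to X$ regarded as a morphism of principal right $X$-bundles covering $t:X\to X_0$. Since $E_1$ and $E_2$ represent $F_1$ and $F_2$, we have $F_1(X)\cong E_1$ and $F_2(X)\cong E_2$ as principal right $Y$- and $Z$-bundles over $X_0$, so everything reduces to identifying the object $F(X)$ together with its left $X$-action.

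The role of the submersion hypothesis is to guarantee that the weak fibred product $Y\times_M Z$ is a Lie groupoid, so that $BY\times_M BZ$ is presented by $B(Y\times_M Z)$ through the canonical isomorphism of Proposition~\ref{prop:Bprod}; write $\Psi$ for the inverse isomorphism built in its proof. The morphism $F$ is the one induced into the $2$-fibred product by $F_1$, $F_2$ and the $2$-isomorphism witnessing $F_3F_1=F_4F_2$; evaluating on the tautological bundle shows that $F(X)$ is the object $(E_1,\varphi,E_2)$ of $(BY\times_M BZ)_{X_0}$, where $\varphi$ is the equality $F_3(E_1)=F_4(E_2)$ in $\mr{Hom}(X_0,M)$ furnished by $F_3F_1=F_4F_2$ (the category $\mr{Hom}(X_0,M)$ being discrete, $\varphi$ carries no further data). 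Applying $\Psi$ with base $X_0$, and with the roles of the two factor groupoids and the base groupoid in Proposition~\ref{prop:Bprod} played by $Y$, $Z$, and $M$ respectively, transports $(E_1,\varphi,E_2)$ precisely to $E_1\times_{X_0}E_2$, with moment map $(e_1,e_2)\mapsto(\mu_1(e_1),\mu_2(e_2))\in Y_0\times_M Z_0$ and principal right action $(e_1,e_2)(y,h,z)=(e_1 y,e_2 z)$; its principality over $X_0$ is exactly the content of Proposition~\ref{prop:Bprod}.

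It remains to track the left $X$-action, and this is the only step requiring genuine care. As a morphism of stacks, $\Psi$ is functorial and sends $F(m)$ to $\Psi(F(m))$; since the canonical isomorphism of Proposition~\ref{prop:Bprod} commutes with the two projections to $BY$ and $BZ$, the morphism $\Psi(F(m))$ projects to $F_1(m)$ and $F_2(m)$, that is, to the left $X$-actions on $E_1$ and on $E_2$. Being determined by its two projections on the fibred product, it is therefore the diagonal action $x(e_1,e_2)=(x e_1,x e_2)$, defined along the projection to $X_0$ and commuting with the right $Y\times_M Z$-action. This exhibits $E_1\times_{X_0}E_2=(\Psi F)(X)$ as the right principal $X$-$(Y\times_M Z)$-bibundle representing $\Psi\circ F:BX\to B(Y\times_M Z)$, equivalently as a Hilsum-Skandalis bibundle for $F:BX\to BY\times_M BZ$, as claimed. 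The main obstacle is precisely this compatibility: rather than verifying the bibundle and principality axioms for $E_1\times_{X_0}E_2$ by hand, one leverages the naturality of Proposition~\ref{prop:Bprod} with respect to the two projections to conclude that the left action obtained is the diagonal one.
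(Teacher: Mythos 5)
Your proposal is correct and follows essentially the same route as the paper's (much terser) proof: identify $E_i=F_i(X)$, transport $F(X)$ through the inverse of the canonical isomorphism of Proposition~\ref{prop:Bprod} to obtain $F_1(X)\times_{X_0}F_2(X)$, and observe that the induced left $X$-action is the diagonal one. Your additional justification of the last point via compatibility with the two projections is a detail the paper leaves implicit, but it is the intended argument.
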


\begin{proof}
Using the identifications $E_1=F_1(X)$ and $E_2=F_2(X)$ and denoting
by $\psi: BY\times_M BZ \to B(Y\times_M Z)$ the isomorphism
described in the proof of Prop.~\ref{prop:Bprod}, we conclude that
$\psi F(X)=F_1(X)\times_{X_0} F_2(X)$. The natural left $X$-action
on $F_1(X)\times_{X_0} F_2(X)$ coincides with the diagonal action.
\end{proof}

%%%%%%%%%%%%%%%%%%%%%%%%%%%%%%%%%%%%%%%%%%%%%%%%%%%%%%%%%%%%%%%%%%%%%%%%%%%%%%%%%%%%
\section{Stacky Lie groupoids and actions}\label{sec:Grpd}

In this section we define stacky Lie groupoids and their actions on
differentiable stacks.

%%%%%%%%%%%%%%%%%%%%%%%%%%%%%%%%%%%%%%%%%%%%%%%%%%%%%%%%%%%%%%%%%%%%%%%%%%%%%%%%%%%%%%%%%%%%%%%%
\subsection{Stacky Lie groupoids}\label{subsecGrpdActions}

Let $M$ be a manifold\footnote{The manifold $M$ is assumed to be
Hausdorff and second countable.} and $\G$ be a category fibred in
groupoids over $\cl{C}$. The manifold $M$ is to be thought of as the
space of units of a groupoid structure on $\G$. We use the same
symbol $M$ for the differentiable stack associated with $M$. We will
consider the following data and conditions:

\begin{itemize}

\item[{\bf (g1)}] Morphisms $\sour$, $\tar$, $\un$, $\inv$, and $\mult$
(called {\it source}, {\it target}, {\it unit}, {\it inverse}, and
{\it multiplication} maps, respectively) as follows:
\begin{align*}
&\cl{G}\lmap{\sour,\tar} M\lmap{\un}\cl{G}\lmap{\inv} \cl{G},\\
& \cl{G} \times_{\sour,M,\tar}
%\slbtimes{\sour}{M}{\tar}
\cl{G}\lmap{\mult}\cl{G},\qquad
\mult(g,h)=g\cdot h=gh.
\end{align*}

For the identity and the inverse we will also use the notation
$$
\un(x)=1_x=1,\qquad\qquad \inv(g)=g^{-1}.
$$

The multiplication morphism is defined on the fibred product of
$\sour:\cl{G}\to M$ and $\tar:\cl{G}\to M$,
\begin{equation}\label{eq:fibredprod}
\xymatrix{
 \cl{G} \times_{\sour,M,\tar}
 %\slbtimes{\sour}{M}{\tar}
 \cl{G}\ar[r]^-{n_\sour}\ar[d]_-{n_\tar} &\cl{G}\ar[d]^-{\tar} \\
\cl{G} \ar[r]_-{\sour} & M, }
\end{equation}
which generalizes the space of composable arrows on Lie groupoids.

\item[{\bf (g2)}] The morphisms $\sour$, $\tar$, $\un$, $\inv$, and $\mult$ are assumed to satisfy the following
identities
\begin{eqnarray*}
 \sour \un &=&\mr{id}_M \\
\tar \un&=& \mr{id}_M\\
\sour \inv&=&\tar\\
\tar \inv&=&\sour\\
\sour \mult&=& \sour n_{\sour}\\
\tar \mult&=& \tar n_{\tar}.
\end{eqnarray*}

We also require axioms analogous to those of Lie groupoids, but now
in a weaker sense (see {\bf (g3)} below). Let us consider the
morphisms
\begin{itemize}
\item[$\bullet$] $\mult (\mr{id}\times \mult):
\cl{G} \times_{\sour,M,\tar} \cl{G}\times_{\sour,M,\tar}\cl{G} \map
\cl{G}$ and $\mult (\mult \times \mr{id}):
\cl{G}\times_{\sour,M,\tar}\cl{G}\times_{\sour,M,\tar}\cl{G} \map
\cl{G}$, encoding the two possible ways to compose three elements in
$\cl{G}$;

\item[$\bullet$] $\mult \langle \un \tar,\mr{id}\rangle: \cl{G}\map \cl{G}$ and $\mult \langle \mr{id},
\un \sour \rangle :\cl{G}\map \cl{G}$, encoding multiplication by
the identity on the left and on the right. Here we use the notation
$\langle \un \tar,\mr{id}\rangle: \cl{G} \to
\cl{G}\times_{\sour,M,\tar} \G$ for the map induced by $\un \tar:
\G\to \G$ and $\mr{id}: \G\to \G$.

\item[$\bullet$] $\mult \langle \inv,\mr{id}\rangle: \cl{G}\map \cl{G}$ and
$\mult \langle \mr{id}, \inv \rangle :\cl{G}\map \cl{G}$, encoding
multiplication by the inverse on the left and on the right.

\end{itemize}

\item[{\bf (g3)}] Five 2-isomorphisms $\alpha$, $\lambda$, $\rho$, $\i_l$
and $\i_r$,
\begin{align*}
& \alpha: \mult (\mr{id}\times \mult)\lmap{\sim} \mult(\mult \times \mr{id}), \\
&  \lambda: \mult\langle \un \tar,\mr{id}\rangle\lmap{\sim} \mr{id}, \;\;
\rho: \mult\langle \mr{id}, \un \sour \rangle\lmap{\sim} \mr{id},\\
&  \i_l: \mult \langle \inv,\mr{id}\rangle\lmap{\sim} \un \sour,
\;\; \i_r: \mult \langle \mr{id}, \inv \rangle\lmap{\sim} \un \tar.
\end{align*}

These 2-isomorphisms represent weaker forms of the associativity,
identity and inversion axioms on groupoids, respectively.

\item[{\bf (g4)}] The 2-isomorphisms $\alpha$, $\lambda$, $\rho$, $\i_l$
and $\i_r$ satisfy the \textit{higher coherence} conditions given by
the commutativity of the following diagrams, displayed with their
corresponding labels on the left\footnote{In order to simplify our
notation, we will often write expressions of the form $(gh)l$ simply
as $gh \cdot l$; in other words, we will implicitly assume the
priority of juxtaposition over ``$\cdot$''.} :
$$ (kghl):\qquad\xymatrix{
(kg\cdot h)l& kg \cdot hl\ar[l]_-{\alpha}&
k(g\cdot hl)\ar[l]_-{\alpha}\ar[d]^-{\mr{id}\cdot \alpha}\\
(k\cdot gh)l\ar[u]^-{\alpha\cdot \mr{id}}&&k(gh\cdot
l)\ar[ll]^-{\alpha} }
$$
$$(1gh):\qquad\xymatrix{
1\cdot gh\ar[r]^-{\alpha}\ar[d]_-{\lambda}&
1g\cdot h\ar[ld]^-{\lambda\cdot \mr{id}}\\
gh& } \qquad\qquad (g1h):\qquad \xymatrix{ g\cdot
1h\ar[r]^-{\alpha}\ar[d]_-{\mr{id}\cdot \lambda} &
g1\cdot h\ar[ld]^-{\rho\cdot \mr{id}}\\
gh
}
$$
$$(gh1):\qquad\xymatrix{
g\cdot h1\ar[r]^-{\alpha}\ar[d]_-{\mr{id}\cdot \rho}&
 gh\cdot 1\ar[ld]^-{\rho}\\
gh&
}\qquad\qquad
(gg^{-1}g):\qquad\xymatrix{
 g1\ar[r]^-{\rho}&g& 1g\ar[l]_-{\lambda}\\
g(g^{-1}\cdot g)\ar[rr]^-{\alpha} \ar[u]^-{\mr{id}\cdot \i_l}&&
(g\cdot g^{-1})g\ar[u]_-{\i_r\cdot \mr{id}} }
$$
where $k,g,h,l\in \cl{G}$ are such that the compositions make sense
and the $1$'s are the appropriate identities of $\cl{G}$.
\end{itemize}

\begin{definition} \label{defGroupoidinCFG}
A \textbf{groupoid} in $\CFG_\cl{C}$ is defined by a manifold $M$,
an object  $\cl{G}$ in $\CFG_\cl{C}$, and morphisms $\sour, \tar,
\un, \inv, \mult$ and 2-isomorphisms $\alpha, \lambda,\rho,
\i_l,\i_r$ as in ${\bf (g1), (g2), (g3), (g4)}$ above.
\end{definition}

Note that the first two identities in {\bf (g2)} imply that $\sour$ and
$\tar$ are {\em epimorphisms}.

\begin{remark}\label{rm:morphismM}
Since $M$ is a manifold (so, as a category fibred in
groupoids, it is fibred in sets), any two isomorphic morphisms into
$M$ must coincide. It follows that \eqref{eq:fibredprod}, which is
in principle a 2-fibred product, is a 1-fibred product; in
particular, \eqref{eq:fibredprod} commutes in the strict sense. For
the same reason, we require equalities (rather than
isomorphisms) in the identities in {\bf (g2)}.
\end{remark}

%\comment{remark added below}

\begin{remark}\label{rm:higher}
For the higher coherences {\bf (g4)}, we selected a set of conditions that we explicitly use throughout the paper and that generates other coherences, but is not meant to be minimal (i.e., it may contain redundancies). For more on higher coherences, see \cite{Kelly,Laplaza}.
\end{remark}

A groupoid in $\CFG_\cl{C}$ will be alternatively called a {\bf
cfg-groupoid}; we use the notation $\cl{G}\rra M$, or simply
$\cl{G}$. A {\bf cfg-group} is a cfg-groupoid for which the base
manifold $M$ is a point.

\begin{remark}\label{rem:restrict} Given a cfg-groupoid $\cl{G}\rra M$ and a smooth map $\iota:
N\to M$, we consider the fibred product
$$
\xymatrix{
 \cl{G}_N
 %\slbtimes{\sour}{M}{\tar}
 \ar[r]^-{}\ar[d]_-{} &N\times N\ar[d]^-{\mathrm{(\iota,\iota)}} \\
\cl{G} \ar[r]_-{\tar,\sour} & M\times M.}
$$
One may verify that $\cl{G}_N \rra N$ is naturally a cfg-groupoid,
with operations and higher coherences inherited from those for
$\cl{G}$.
\end{remark}

As a particular instance of Remark~\ref{rem:restrict}, we consider,
for each $x\in M$, the fibred product
\begin{equation}\label{eq:isotropy}
\xymatrix{
 \cl{G}_x
 %\slbtimes{\sour}{M}{\tar}
 \ar[r]^-{}\ar[d]_-{} &\{x\}\ar[d]^-{\mathrm{diag}} \\
\cl{G} \ar[r]_-{\tar,\sour} & M\times M, }
\end{equation}
which is is a cfg-group, called the {\bf isotropy} group of $\cl{G}$
at $x$.

One can also consider $\sour$-fibres, and analogously $\tar$-fibres,
defined for each $x\in M$ as the category fibred in groupoids
resulting from the fibred product
\begin{equation}\label{eq:sfib}
\xymatrix{
 \sour^{-1}(x)
 %\slbtimes{\sour}{M}{\tar}
 \ar[r]^-{}\ar[d]_-{} &\{x\}\ar[d]^-{} \\
\cl{G} \ar[r]_-{\sour} & M. }
\end{equation}

\begin{definition}\label{DefSLieGrpd}
 A \textbf{stacky groupoid} is a groupoid $\G\rra M$ in
 $\CFG_\cl{C}$ such that $\G$ is a stack.
A \textbf{stacky Lie groupoid} is a groupoid $\G\rra M$ in
$\CFG_\cl{C}$ such that $\G$ is a differentiable stack, and source
and target (epi)morphisms $\sour, \tar$ are  submersions.

A stacky Lie groupoid $\G\rra M$ is called {\bf \'etale} if $\G$ is
an \'etale differentiable stack.
\end{definition}

We have some important classes of examples.

\begin{example}\label{ex:BA}
A particular class of stacky Lie groups is given by the (strict)
{\em 2-groups}, defined as Lie groupoids $G\rra G_0$ where both $G$
and $G_0$ are Lie groups, and so that the Lie-group multiplication
and inversion maps define morphisms of Lie groupoids,
$$
\xymatrix{
G\times G \ar[r] \ar@<0.5ex>[d]\ar@<-0.5ex>[d] &
G\ar@<0.5ex>[d]\ar@<-0.5ex>[d]\\
G_0\times G_0\ar[r] & G_0,}\qquad\qquad \xymatrix{
G \ar[r] \ar@<0.5ex>[d]\ar@<-0.5ex>[d] & G\ar@<0.5ex>[d]\ar@<-0.5ex>[d]\\
G_0\ar[r] & G_0.}
$$
In this case $BG=[G_0/G]$ inherits the structure of a stacky Lie
group:
$$
\mult: BG\times BG \to BG, \;\;\qquad \inv: BG\to BG.
$$

As an example, consider a homomorphism of abelian Lie groups
$\varphi: A\to K$, and the action of $A$ on $K$ by $k \mapsto
k+\varphi(a)$. Then the action groupoid $A\ltimes K\rra K$ defines a
2-group, where the additional group structure on $A\times K$ is the
direct product. This is a special instance of the well-known fact
that 2-groups admit an equivalent description as {\em crossed
modules} \cite{Breen94} (see also \cite{ginotstienon,NW}). In particular, taking $K=\{e\}$, we see that $BA$ is a 2-group.
\end{example}

%\comment{remark added below}

\begin{remark} As shown in \cite{trzh}, every {\em \'etale} stacky Lie
group (connected, finite dimensional) can be strictified, i.e., it
is isomorphic to a 2-group as in the previous example (arising from
a crossed module). However, this is no longer true without the
\'etaleness condition: an example is given by the {\em string Lie
2-group}, which is a (non-\'etale) stacky Lie group obtained by a
central extension \cite{schpri} of a simply connected Lie group $G$
by $BS^1$; this stacky Lie group cannot be strictified by
finite-dimensional models, see \cite{bala:2gp}. The reader can find
more on stacky Lie groups e.g. in \cite{blohmann} (see also
\cite{bala:2gp,henriques}, and \cite{wockzhu} for infinite
dimensional examples arising from central extensions).
\end{remark}

\begin{example}
Extending the previous example, one may consider (strict)
2-groupoids; these are defined as {\em double Lie groupoids}
\cite{brown,mackenzie92} of the form
$$
\xymatrix{ G \ar@<0.5ex>[r]\ar@<-0.5ex>[r]
\ar@<0.5ex>[d]\ar@<-0.5ex>[d] &
M\ar@<0.5ex>[d]^-{\mr{id}}\ar@<-0.5ex>[d]_-{\mr{id}}\\
G_0\ar@<0.5ex>[r]\ar@<-0.5ex>[r] & M,}
$$
where the vertical groupoid on the right is the trivial groupoid; in
this case, similarly to the previous example, $BG$ inherits the
structure of a stacky Lie groupoid over $M$ (note that the source
and target maps $BG\rra M$ are submersions, as a consequence of
Prop.~\ref{prop:charactsub}(b)). For a description of 2-groupoids in
terms of crossed modules, see e.g. \cite{brown,ginotstienon}.
\end{example}

\begin{example}\label{ex:integration}
\'Etale stacky Lie groupoids naturally arise as global objects
associated with {\em Lie algebroids}. Given a Lie algebroid $A\to
M$, following \cite{tz}, the path-space construction of
\cite{cf,severa} considers the Banach manifold of $A$-paths along
with the (finite-codimensional) foliation determined by
$A$-homotopies. This leads to two natural \'etale stacky Lie
groupoids associated with $A$, depending on whether one models the
leaf space of $A$-paths by $A$-homotopies using the monodromy
groupoid or the holonomy groupoid of the foliation. These stacky Lie
groupoids are denoted by
$$
\G(A)\rra M, \;\; \mbox{ and } \; \mathcal{H}(A)\rra M,
$$
respectively.

Conversely, it is also shown in \cite{tz} that an \'etale stacky Lie
groupoid $\G \rra M$ defines a local Lie groupoid $G_{loc}\rra M$,
hence a Lie algebroid over $M$.
\end{example}

\begin{remark}\label{Fact1}
Given cfg-groupoids $\cl{G}_i\rra M_i$ for $i=1,2$, there is a
natural cfg-groupoid $ \cl{G}_1\times \cl{G}_2\rra M_1\times M_2$
that we call the {\em product groupoid} of the original groupoids.
If each $\cl{G}_i$ is a stacky Lie groupoid then so is the product
groupoid.
\end{remark}

%%%%%%%%%%%%%%%%%%%%%%%%%%%%%%%%%%%%%%%%%%%%%%%%%%%%%%%%%%%%%%%%%%%
\subsection*{Additional properties of stacky Lie groupoids}

For a stacky Lie groupoid $\G\rra M$, the $\sour$-fibres
\eqref{eq:sfib} (resp. $\tar$-fibres) are differentiable stacks.
Indeed, given an atlas $\pi: G_0\to \G$, the maps
\begin{equation}\label{eq:st0}
\sour_0 = \sour\circ \pi : G_0\to M,\;\;\; \tar_0 = \tar\circ \pi :
G_0\to M,
\end{equation}
are submersions, %(see e.g. Prop.~\ref{PropWeakSub1}(ii)),
so $\sour_0^{-1}(x)$ (resp. $\tar_0^{-1}(x)$) is a smooth manifold
for all $x\in M$, which is naturally an atlas for $\sour^{-1}(x)$
(resp. $\tar^{-1}(x)$).

\begin{lemma}\label{lem:constrank}
Let  $\G\rra M$ be an \'etale stacky Lie groupoid, and let $\pi:
G_0\to \G$ be an \'etale atlas. Then for any $x\in M$, the
restriction $\tar_0|_{\sour_0^{-1}(x)}: \sour^{-1}_0(x)\to M$ is a
constant-rank map.
\end{lemma}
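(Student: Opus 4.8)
The plan is to imitate the classical proof that, for an ordinary Lie groupoid, the target restricted to a source fibre has constant rank equal to that of the anchor, but to carry it out on the \'etale atlas, exploiting \'etaleness to lift the structure morphisms of $\G$ locally.

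\emph{Set-up.} Since $\pi$ is an (\'etale) atlas and $\sour,\tar$ are submersions, the maps $\sour_0,\tar_0$ are submersions and $\sour_0^{-1}(x)$ is a submanifold. Because $\pi$ is representable \'etale, any morphism from a manifold into $\G$ lifts, locally, to a smooth map into $G_0$ through $\pi$, and such a lift is a local diffeomorphism whenever the morphism is an isomorphism of stacks. Applying this to $\mult$, $\inv$ and $\un$ produces the local translations and unit sections used below. Moreover, a $2$-isomorphism between objects $\pi(a),\pi(b)$ is exactly an arrow of the presentation groupoid $G=G_0\times_\G G_0$, which is \'etale, and whose source and target are local diffeomorphisms of $G_0$ preserving $\sour_0$ and $\tar_0$. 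Hence replacing a point of $G_0$ by an isomorphic one does not change the local rank of $\tar_0$ along a source fibre.

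\emph{Right-translation reduction.} Fix $g\in\sour_0^{-1}(x)$ and put $y=\tar_0(g)$. Right multiplication by $\inv(g)$ is an isomorphism of stacks $\sour^{-1}(x)\to\sour^{-1}(y)$ (with inverse right multiplication by $g$) that commutes with $\tar$, by the coherence $\tar\,\mult=\tar\,n_\tar$ in {\bf (g2)}, and that sends $g$ to an object isomorphic to $1_y$, by the inversion $2$-isomorphism in {\bf (g3)}. Lifting it near $g$ gives a local diffeomorphism onto $\sour_0^{-1}(y)$ intertwining $\tar_0$ with $\tar_0$; combined with the remark above on isomorphic points, this yields
$$
\operatorname{rank}\bigl(\tar_0|_{\sour_0^{-1}(x)}\bigr)_g
=\operatorname{rank}\bigl(\tar_0|_{\sour_0^{-1}(y)}\bigr)_{u(y)},
$$
where $u(y)\in G_0$ is a local lift of the unit $1_y$.

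\emph{Identification with the anchor and conclusion.} At the unit one has $T_{u(y)}\sour_0^{-1}(y)=\ker d(\sour_0)_{u(y)}=A_y$, the fibre of the Lie algebroid $A$ of $\G$ (Example~\ref{ex:integration}), and $d\bigl(\tar_0|_{\sour_0^{-1}(y)}\bigr)_{u(y)}$ is precisely the anchor $\varrho_y\colon A_y\to T_yM$. Thus the rank of $\tar_0|_{\sour_0^{-1}(x)}$ at $g$ equals $\operatorname{rank}\varrho_{\tar_0(g)}$. Finally, $\tar_0(\sour_0^{-1}(x))$ is the single orbit $\cl{O}_x$ of $\G$ through $x$, and the rank of the anchor of a Lie algebroid is constant along orbits, since $\cl{O}_x$ is an integral leaf of the singular foliation $\operatorname{Im}\varrho$ with $T_y\cl{O}_x=\varrho(A_y)$ for every $y\in\cl{O}_x$. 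Hence $\operatorname{rank}\varrho_{\tar_0(g)}$ is independent of $g\in\sour_0^{-1}(x)$, and $\tar_0|_{\sour_0^{-1}(x)}$ has constant rank.

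\emph{Main obstacle.} The substantive difficulty is not this classical skeleton but the $2$-categorical bookkeeping: multiplication is associative and unital only up to the $2$-isomorphisms of {\bf (g3)}, and the lifts through $\pi$ exist only locally. I would need to verify carefully that these local lifts are genuine local diffeomorphisms preserving $\tar_0$, and that the unavoidable ambiguity ``up to isomorphism'' is always realized by an arrow of the \'etale groupoid $G$, so that it preserves ranks. Once this is settled, the classical argument transfers verbatim.
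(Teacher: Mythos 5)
Your set-up (lifting the structure morphisms locally through the \'etale atlas, and the observation that isomorphic points of $G_0$ are related by a local bisection of the \'etale presentation groupoid $G$, hence have the same local rank data) is sound and is exactly the infrastructure the paper uses. The gap is in the reduction itself. Your right translation by $\inv(g)$ sends the rank of $\tar_0|_{\sour_0^{-1}(x)}$ at $g$ to the rank of the anchor $\varrho$ at the point $\tar_0(g)$ --- a \emph{different} base point of $M$ for each $g$. To conclude you then need $\operatorname{rank}\varrho_{\tar_0(g)}=\operatorname{rank}\varrho_{\tar_0(g')}$ for all $g,g'\in\sour_0^{-1}(x)$, and you justify this by asserting that $\tar_0(\sour_0^{-1}(x))$ is an integral leaf of $\operatorname{Im}\varrho$. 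That assertion is not proved, and it is false in general: for an \'etale groupoid that is not ``source-connected'' (e.g.\ the action groupoid of a discrete group, whose algebroid is zero so that every leaf is a point while orbits are not), the target-image of a source fibre is not contained in a single leaf. What you actually need --- constancy of $\operatorname{rank}\varrho$ on $\tar_0(\sour_0^{-1}(x))$ --- is, after your right-translation step, \emph{equivalent} to the lemma, so invoking it is essentially circular. Nothing in the \'etale stacky setting rules out disconnected source fibres (indeed $G_0$ is typically a disjoint union of charts), so this cannot be waved away.

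The paper closes this gap by using \emph{left} translation instead: it fixes the single reference point $y=\epsilon_0(x)$ (a lift of the unit over $x$) and, for an arbitrary $z\in\sour_0^{-1}(x)$, builds from a local bisection $\Sigma$ through $z$ a local diffeomorphism $F_\Sigma$ of $G_0$ that preserves $\sour_0$-fibres, sends $y$ to $z$, and intertwines $\tar_0$ with a diffeomorphism $f_\Sigma$ of the base. This compares the rank at $z$ directly with the rank at the fixed point $y$ \emph{inside the same source fibre}, so no statement about how $\varrho$ varies over $M$ is needed (constancy of the anchor rank on the orbit then comes out as a corollary, not an input). Your argument can be repaired by adding exactly this left-translation step --- i.e.\ by also showing $\operatorname{rank}(\tar_0|_{\sour_0^{-1}(x)})_g=\operatorname{rank}\varrho_x$ via a local bisection through $g$ --- but as written the final step does not go through. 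Your identification of $d\tar_0|_{\ker d\sour_0}$ at the unit with the anchor is fine but, once the left-translation comparison is in place, it is no longer needed.
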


\begin{proof}
The unit map $\un: M\to \G$ gives rise (by a choice of local section
of the corresponding Hilsum-Skandalis bibundle) to a map
$\epsilon_0: V_x\to G_0$, where $V_x$ is an open neighborhood of
$x$, such that $\sour_0\circ \epsilon_0 = \mathrm{id}_M$ and
$\tar_0\circ \epsilon_0 = \mathrm{id}_M$; in particular,
$\epsilon_0$ is an immersion, so we can assume it is an embedding
for $V_x$ small enough.

Let $y = \epsilon_0(x) \in G_0$ and pick any $z\in \sour_0^{-1}(x)$,
so that $\sour_0(y)=\tar_0(y)=x = \sour_0(z)$. Let
$N=\sour_0^{-1}(x)$. We will show that $d\tar_0|_{T_yN}$ and
$d\tar_0|_{T_zN}$ have the same rank. The multiplication $\mult$
defines a Hilsum-Skandalis bibundle, and a choice of local section
$\sigma$ around $(z,y)\in G_0\times_{\sour_0,M,\tar_0}G_0$ leads to
a map $m_0: U_z\times_{\sour_0,M,\tar_0} U_y \to G_0$, where $U_z$
and $U_y$ are open neighborhoods of $z$ and $y$ in $G_0$,
respectively. Moreover, this local section can be chosen in such a
way that $m_0$ satisfies
\begin{equation}\label{eq:propm0}
m_0(z',\epsilon_0(s_0(z')))=z', \,\mbox{ for all }\, z'\in U_z.
\end{equation}
For that, one uses the higher morphism $\rho$ in \textbf{(g3)},
along with the same arguments as in \cite[Thm.~5.2]{tz}.
Additionally,
\begin{equation}\label{eq:m0Lg}
m_0(z,\cdot): \tar_0^{-1}(x)\cap U_y \to \tar_0^{-1}(\tar_0(z))
\end{equation}
is a local diffeomorphism around $y$. This follows from the
observation that, for any object $g$ in $\G$, left-multiplication by
$g$ gives rise to a morphism of stacks $L_g: \tar^{-1}(\sour(g)) \to
\tar^{-1}(\tar(g))$, which is an isomorphism. Hence, for $g=\pi(z)$,
$L_g$ defines a Morita bibundle $ \tar_0^{-1}(\sour_0(z))
\stackrel{a}{\leftarrow} E_{L_g} \stackrel{b}{\rightarrow}
\tar_0^{-1}(\tar_0(z))$ between the corresponding groupoid
presentations. The local section $\sigma$ induces a local section
$\sigma'$ of the moment map $a$ around $y$, and the resulting map
$b\circ \sigma'$, which is a local diffeomorphism (since the Morita
bibundle $E_{L_g}$ is \'etale), agrees with $m_0(z,\cdot)$.

Let us choose a submanifold $\Sigma \subset U_z$, with $z\in
\Sigma$, with the property that $\sour_0|_\Sigma$ and
$\tar_0|_{\Sigma}$ are open embeddings. By possibly restricting the
open neighborhoods, we can assume that
$\sour_0(U_z)=\sour_0(\Sigma)=\sour_0(U_y)=V_x$, and $\epsilon_0:
V_x\to G_0$ is an embedding. Consider the neighborhood of
$\tar_0(z)$ in $M$ given by $V_{\tar_0(z)}= \tar_0(U_z)$. Then we
have a diffeomorphism $f_\Sigma: V_x\to V_{\tar_0(z)}$ defined by
$f_\Sigma(x')= \tar_0(z')$, where $z'\in \Sigma$ is unique so that
$\sour_0(z')=x'$. We also have a map $F_\Sigma: \tar_0^{-1}(V_x)\cap
U_y \to \tar_0^{-1}(V_{\tar_0(z)})$ given by ``left
multiplication'': $F_\Sigma(y') = m_0(z',y')$, where $z'\in \Sigma$
is unique such that $\sour_0(z')=\tar_0(y')$. This map is such that
$F_\Sigma(y)= z$, and it preserves $s_0$-fibres: $\sour_0\circ
F_\Sigma = \sour_0$. Note also that $f_\Sigma \circ \tar_0 =
\tar_0\circ F_\Sigma$, so in order to conclude that the ranks of
$d\tar_0|_{T_yN}$ and $d\tar_0|_{T_zN}$ agree, it suffices to show
that $F_\Sigma$ is a local diffeomorphism around $y$. The tangent
spaces at $y$ (resp. $z$) to the submanifolds $\epsilon_0(V_x)$ and
$\tar_0^{-1}(x)$ (resp. $\Sigma$ and $\tar_0^{-1}(\tar_0(z))$) are
such that their direct sum is $T_yG_0$ (resp. $T_zG_0$), so the
result follows from the observation that, around $y$, both
$F_\Sigma|_{\epsilon_0(V_x)}: \epsilon_0(V_x)\to \Sigma$ and
$F_\Sigma|_{\tar_0^{-1}(x)\cap U_y} = m_0(z,\cdot):
\tar_0^{-1}(x)\cap U_y \to \tar_0^{-1}(\tar_0(z))$ are local
diffeomorphisms (cf. \eqref{eq:propm0} and \eqref{eq:m0Lg}).

\end{proof}

\begin{corollary}\label{cor:onto}
For  $\G\rra M$ a stacky Lie groupoid with \'etale atlas $\pi:
G_0\to \G$, if the restriction $\tar_0|_{\sour_0^{-1}(x)}:
\sour^{-1}_0(x)\to M$ is onto, then it is a submersion.
\end{corollary}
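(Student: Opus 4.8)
The plan is to combine the constant-rank property just established in Lemma~\ref{lem:constrank} with a measure-theoretic argument. Write $N=\sour_0^{-1}(x)$, which is a smooth manifold since $\sour_0$ is a submersion, and set $f=\tar_0|_N\colon N\to M$ and $m=\dim M$. By Lemma~\ref{lem:constrank} the map $f$ has constant rank, say $r$, throughout $N$. Since a constant-rank map of rank $m$ into an $m$-dimensional manifold is exactly a submersion, it suffices to show that $r=m$.

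First I would suppose, for contradiction, that $r<m$. The rank theorem then gives, around each point $p\in N$, a neighbourhood $U_p$ and charts in which $f$ is the standard projection onto the first $r$ coordinates; in particular $f(U_p)$ is contained in an embedded $r$-dimensional submanifold of $M$, and therefore has measure zero in $M$ (here I use $r<m$). The key step is to pass from this local statement to a global one, and this is where countability enters: the atlas $G_0$ is second countable, hence so is its submanifold $N$, which is therefore Lindel\"of. Choosing a countable subcover $(U_{p_i})_i$ of the cover $(U_p)_{p\in N}$, I get $f(N)=\bigcup_i f(U_{p_i})$, a countable union of measure-zero sets, so that $f(N)$ has measure zero in $M$. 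This contradicts the hypothesis that $f$ is onto, since $f(N)=M$ has positive measure. Hence $r=m$, and $f=\tar_0|_{\sour_0^{-1}(x)}$ is a submersion.

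I expect the main subtlety to lie precisely in this passage from local to global. The implication ``constant rank $+$ onto $\Rightarrow$ submersion'' genuinely fails for non-Lindel\"of domains: an uncountable disjoint union of lines maps onto the plane with constant rank $1$, yet is not a submersion. Thus the use of second countability of the source fibre $N$ is essential rather than cosmetic, and I would make sure to record it explicitly.

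Equivalently, one may route the same argument through Sard's theorem, which again requires $N$ to be second countable. The set of regular values of $f$ then has full measure in $M$, hence is nonempty; since $f$ is onto, some regular value lies in the image, forcing $\mathrm{rank}\,df=m$ at some point of $N$, and therefore everywhere by the constant-rank conclusion of Lemma~\ref{lem:constrank}. Either formulation yields $f$ as a submersion.
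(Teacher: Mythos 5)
Your proof is correct and is precisely the standard argument the paper leaves implicit: Corollary~\ref{cor:onto} is stated without proof as an immediate consequence of Lemma~\ref{lem:constrank}, and the intended reasoning is exactly ``constant rank plus surjectivity forces full rank,'' via the measure-zero (equivalently, Sard-type) argument you give. Your observation that second countability of $\sour_0^{-1}(x)$ is genuinely needed is well taken; it holds here because the paper's conventions relax only the Hausdorff condition on manifolds (see the footnote in Section~\ref{subsec:cfg}), not second countability.
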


We will refer to a stacky Lie groupoid admitting an \'etale atlas as
in Corollary~\ref{cor:onto} as {\bf transitive}.

\begin{remark}
More generally, the map in the previous corollary is a submersion
onto leaves of the underlying Lie algebroid.
\end{remark}

We can now show two key properties of \'etale stacky Lie groupoids.

\begin{proposition}\label{prop:etstackyprops}
Let $\G\rra M$ be an \'etale stacky Lie groupoid. The following
holds:
\begin{enumerate}
\item[(i)] The unit map $\un:M \to \cl{G}$ is an {\em immersion}.
\item[(ii)] $\G_x$ is an (\'etale) stacky Lie group for all $x\in M$,

\end{enumerate}
\end{proposition}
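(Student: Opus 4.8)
The plan is to prove the two statements separately, working throughout with an \'etale atlas $\pi\colon G_0\to\G$ and the submersions $\sour_0=\sour\pi$, $\tar_0=\tar\pi$. For (i) I would check the immersion property straight from Definition~\ref{def:weak}, using the identity atlas on $M$ and the atlas $\pi$ on $\G$: the induced map is the first projection $\mr{pr}_1\colon P\to G_0$ of the fibred product $P=G_0\times_{\pi,\G,\un}M$, which is a manifold because $\pi$ is representable. The second projection $\mr{pr}_2\colon P\to M$ is the base change of $\pi$ along $\un$; since $\pi$ is the atlas of an \'etale stack it is representable and \'etale (its pullbacks to manifolds are modelled on the \'etale projection $s\colon G\to G_0$ of the presentation $G=G_0\times_\G G_0$), so $\mr{pr}_2$ is a surjective local diffeomorphism. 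Composing the defining $2$-isomorphism $\pi\,\mr{pr}_1\cong\un\,\mr{pr}_2$ with $\sour$, and using that both sides are maps into the manifold $M$ (so the $2$-isomorphism collapses to an equality), yields $\sour_0\,\mr{pr}_1=\mr{pr}_2$. Differentiating, $d(\mr{pr}_2)_p=d\sour_0\circ d(\mr{pr}_1)_p$ is invertible for every $p\in P$, which forces $d(\mr{pr}_1)_p$ to be injective; hence $\mr{pr}_1$ is an immersion and $\un$ is an immersion. I stress that \'etaleness is essential here: for a general stacky Lie groupoid $\mr{pr}_2$ would only be a surjective submersion and this argument would break.

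For (ii), I would first observe that $\G_x=\G_{\{x\}}$ is the restriction of $\G$ along $\{x\}\hookrightarrow M$ in the sense of Remark~\ref{rem:restrict}, so it is automatically a cfg-groupoid over a point, i.e., a cfg-group, with operations and higher coherences inherited from $\G$ and with (trivially submersive) structure maps to the point. It then remains only to exhibit $\G_x$ as an \'etale differentiable stack. Writing $\G_x=\sour^{-1}(x)\times_{\tar,M}\{x\}$, I would use that $\sour^{-1}(x)$ has atlas $\sour_0^{-1}(x)$ and that, by Lemma~\ref{lem:constrank}, the map $\tar_0|_{\sour_0^{-1}(x)}\colon\sour_0^{-1}(x)\to M$ has constant rank, so that its fibre $G_0^x:=(\tar_0|_{\sour_0^{-1}(x)})^{-1}(x)$ is a submanifold of $G_0$. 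Because the target of the relevant maps is the manifold $M$, the fibred product $\sour_0^{-1}(x)\times_{\sour^{-1}(x)}\G_x$ is represented by $G_0^x$; being the base change of the atlas $\sour_0^{-1}(x)\to\sour^{-1}(x)$, the induced map $G_0^x\to\G_x$ is a representable epimorphism, hence an atlas, and $\G_x$ is a differentiable stack.

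For \'etaleness I would compute the presentation groupoid $G^x=G_0^x\times_{\G_x}G_0^x\rra G_0^x$. The inclusion $\{x\}\hookrightarrow M\times M$ is a monomorphism, hence so is $\G_x\to\G$, which gives $G^x\cong G_0^x\times_\G G_0^x$, the restriction of the \'etale presentation $G=G_0\times_\G G_0$ to arrows with endpoints in $G_0^x$. The crucial point is that any arrow $\gamma$ of $G$ satisfies $\sour_0 s(\gamma)=\sour_0 t(\gamma)$ and $\tar_0 s(\gamma)=\tar_0 t(\gamma)$, so the condition $s(\gamma)\in G_0^x$ already forces $t(\gamma)\in G_0^x$; therefore $G^x=s^{-1}(G_0^x)$. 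Since $s\colon G\to G_0$ is a local diffeomorphism ($\G$ being \'etale), so is its restriction $G^x\to G_0^x$, which gives $\dim G^x=\dim G_0^x$ and proves that $G^x\rra G_0^x$ is \'etale. This shows $\G_x$ is an \'etale stacky Lie group, completing (ii).

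The step I expect to be most delicate is the smoothness of $G_0^x$ (and of the presentation $G^x$): the map $(\tar_0,\sour_0)\colon G_0\to M\times M$ need not be transverse to the diagonal point $(x,x)$, so $G_0^x$ is not a transverse preimage. This is precisely where Lemma~\ref{lem:constrank} is indispensable, together with the general fact that a fibred product of manifolds over a manifold is always representable (the comparison $2$-isomorphism is forced to be an equality), which removes any transversality requirement when identifying the atlases used above.
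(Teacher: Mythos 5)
Your proposal is correct and follows essentially the same route as the paper: part (i) rests on the identity $\sour_0\circ\mathrm{pr}_1=\mathrm{pr}_2$ with $\mathrm{pr}_2$ \'etale (the paper phrases this via local sections of $\pi_M$, you via differentials, which is the same computation), and part (ii) uses Lemma~\ref{lem:constrank} to realize $\sour_0^{-1}(x)\cap\tar_0^{-1}(x)$ as a manifold serving as an atlas for $\G_x$, exactly as in the paper. The only difference is that you spell out the \'etaleness of the presentation groupoid $G^x\rra G_0^x$ (via the observation that arrows of $G$ preserve the fibres of $\sour_0$ and $\tar_0$), a verification the paper leaves implicit.
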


\begin{proof}

For $(i)$, it follows directly from Def.~\ref{def:weak} that it
suffices to show that, for an atlas $\pi: G_0\to \G$, the induced
map $f: M\times_{\un,\G,\pi} G_0 \to G_0$ is an immersion. Consider
the induced map $\pi_M: M\times_{\un,\G,\pi} G_0\to M$, which is a
surjective submersion. Taking any local section $\sigma$ of $\pi_M$
and using that $\sour\circ \un = \mathrm{id}_M$ and $\pi\circ f
\cong \un\circ \pi_M$, it follows that
\begin{equation}\label{eq:etale}
\sour\circ \pi (f(\sigma(x))) = x,
\end{equation}
for all $x$ in the domain of $\sigma$. The condition that $\G$ is
\'etale implies that we can take $\pi$ to be \'etale, hence $\pi_M$
is \'etale, and $\sigma$ is a local diffeomorphism. It then follows
from \eqref{eq:etale} that $f$ is an immersion.

For $(ii)$, note that by considering an \'etale atlas $\pi: G_0\to
\G$ in \eqref{eq:isotropy}, the natural map
$\sour_0^{-1}(x)\cap\tar_0^{-1}(x) \to \G_x$ defines an atlas as
long as $\sour_0^{-1}(x)\cap\tar_0^{-1}(x)$ is a manifold, which is
a direct consequence of Lemma~\ref{lem:constrank}.

\end{proof}

%%%%%%%%%%%%%%%%%%%%%%%%%%%%%%%%%%%%%%%%%%%%%%%%%%%%%

\subsection{Actions}\label{subsec:act}

Consider $\G\rra M$, a groupoid in $\CFG_\cl{C}$, as in
Definition~\ref{defGroupoidinCFG}, and let $\X \in
\mathrm{Obj}(\CFG_\cl{C})$. The definition of a (right) action of
$\G$ on $\X$ requires the following data and conditions:

\begin{itemize}
\item[{\bf (a1)}] Morphisms $\ma$ (the {\it moment map}) and $\act$ (the {\it action map}),
\begin{align*}
& \cl{X}\lmap{\ma} M, \\
& \cl{X} \times_{\ma,M,\tar}
%\slbtimes{\ma}{M}{\tar}
\cl{G}\lmap{\act} \cl{X},\;\;\;
\act(x,g)=x\cdot g=xg,
\end{align*}
where, in the definition of $\act$, we use the fibred product of
$\ma:\X\to M$ and $\tar:\G\to M$,
$$
\xymatrix{
 \cl{X} \times_{\ma,M,\tar}
 %\slbtimes{\ma}{M}{\tar}
 \cl{G}\ar[r]^-{n_\ma}\ar[d]_-{} &\cl{G}\ar[d]^-{\tar} \\
\cl{X} \ar[r]_-{\ma} & M. }
$$

\item[{\bf (a2)}] The morphisms $\ma$ and $\act$ are assumed to satisfy the following
identity:
$$
\ma\act=\sour n_\ma.
$$

We also require additional action axioms in weak form. Let us
consider the morphisms
\begin{itemize}
\item[$\bullet$] $\act (\mr{id}\times \mult): \cl{X}\slbtimes{\ma}{M}{\tar}\cl{G}\slbtimes{\sour}{M}{\tar}\cl{G}
\map \cl{X}$ and $\act(\act\times \mr{id}):
\cl{X}\slbtimes{\ma}{M}{\tar}\cl{G}\slbtimes{\sour}{M}{\tar}\cl{G}
\map \cl{X}$, encoding the two ways in which one can act on $\X$ by
two elements of $\G$,

\item[$\bullet$] $\act\langle \mr{id},\un \ma \rangle: \cl{X}\map \cl{X}$,
encoding multiplication by the identity.
\end{itemize}

\item[{\bf (a3)}] Two 2-isomorphisms
\begin{align*}
& \beta: \act(\mr{id}\times \mult)\lmap{\sim} \act(\act\times \mr{id}), \\
& \varepsilon: \act \langle \mr{id},\un \ma \rangle\lmap{\sim}
\mr{id}.
\end{align*}

These 2-isomorphisms represent weaker versions of the associativity
and multiplication by identity axioms for groupoid actions.

\item[{\bf (a4)}] The 2-isomorphisms $\beta$ and $\varepsilon$ satisfy the
\textit{higher coherence} conditions given by the commutativity of
the following diagrams, displayed with their corresponding labels on
the left (we follow the notation explained in {\bf (g4)}):
$$
(xghl):\qquad \xymatrix{ x(g\cdot
hl)\ar[r]^-{\beta}\ar[d]_-{\mr{id}\cdot \alpha}
&xg\cdot hl\ar[r]^-{\beta}& (xg\cdot h)l\\
x(gh\cdot l)\ar[rr]_-{\beta} &&(x\cdot gh)l\ar[u]^-{\beta\cdot
\mr{id}} }
$$
$$(x1g):\qquad\xymatrix{
x\cdot 1g\ar[r]^-{\beta}\ar[d]_-{\mr{id}\cdot \lambda} & x1\cdot
g\ar[ld]^-{\varepsilon\cdot \mr{id}}
\\xg&}
\qquad \qquad(xg1):\qquad \xymatrix{ x\cdot g1
\ar[r]^-{\beta}\ar[d]_-{\mr{id}\cdot \rho}
&xg\cdot 1\ar[ld]^-{\varepsilon}\\
 xg&
}
$$
where $x\in\cl{X}$ and $g,h,l\in\cl{G}$ are such that the
compositions in the diagrams make sense
and the $1$'s are the appropriate identities of $\cl{G}$.
\end{itemize}

\begin{definition} \label{defaction}
Let $\cl{G}\rra M$ be a groupoid in $\CFG_\cl{C}$ and $\cl{X} \in
\mathrm{Obj}(\CFG_{\cl{C}})$. A \textbf{right action} of $\cl{G}$ on
$\cl{X}$ is defined by morphisms $\ma$, $\act$ and 2-isomorphisms
$\beta$, $\varepsilon$ as in ${\bf (a1)}$, ${\bf (a2)}$, ${\bf
(a3)}$, ${\bf (a4)}$.
\end{definition}

%\comment{first sentence added below}

Regarding {\bf (a4)}, a similar observation to Remark~\ref{rm:higher} applies.
We will say that $\cl{G}$ {\bf acts on $\X$ along $\ma$}, or
alternatively that $\cl{G}$ acts on $\ma: \cl{X}\to M$, to make the
moment map explicit.  In this paper, we will be mostly interested in
actions of stacky Lie groupoids on differentiable stacks.

Given an action, we define its associated \textbf{action-projection map} as
\begin{equation}\label{eq:Delta}
\Delta=(\mathrm{pr}_1,\act): \cl{X} \times_{\ma,M,\tar} \cl{G}\map
\cl{X}\times\cl{X}.
\end{equation}
We will often denote this map by indicating how the functor acts on
objects: $(x,g)\mapsto (x,xg)$.

\begin{remark}[Left actions]
A left action of $\G \rra M$ on $\ma: \X\to M$ is defined
analogously: the action morphism in {\bf (a1)} is replaced by
$$
\cl{G} \times_{\sour,M,\ma}
\cl{X}\map \cl{X},
$$
in such a way that $\ma(gx)=\tar(g)$ (cf. {\bf (a2)}), and there
are 2-isomorphisms (cf. {\bf (a3)})
$$
(hg)\cdot x \lmap{\beta} h\cdot (gx), \qquad 1x\lmap{\varepsilon} x
$$
satisfying the following higher coherence conditions (cf. {\bf
(a4)}):
$$
(lhgx):\qquad \xymatrix{ (lh\cdot g)x\ar[r]^-{\beta}
&lh \cdot gx\ar[r]^-{\beta}& l(h\cdot gx)\\
(l\cdot hg)x\ar[rr]_-{\beta}\ar[u]^-{\alpha\cdot\mr{id}} &&l(hg\cdot
x)\ar[u]_-{\mr{id}\cdot \beta} }
$$
$$(g1x):\qquad\xymatrix{
g1\cdot x\ar[r]^-{\beta}\ar[d]_-{\rho\cdot\mr{id} } &g\cdot
1x\ar[ld]^-{ \mr{id}\cdot\varepsilon}
\\gx&}
\qquad \qquad(1gx):\qquad \xymatrix{ 1g\cdot
x\ar[r]^-{\beta}\ar[d]_-{\lambda\cdot\mr{id}}
&1\cdot gx\ar[ld]^-{\varepsilon}\\
gx&
}
$$
Similarly to \eqref{eq:Delta}, we have an action-projection map
$$
\Delta: \cl{G} \times_{\sour,M,\ma}
\cl{X}\map \cl{X}\times\cl{X}, \qquad\;\;\; (g,x)\mapsto (gx,x).
$$
\end{remark}

When $M$ is a point, similar notions of action (with varying levels
of strictness) have been considered e.g. in
\cite{bartels,breen,ginotnoohi,romagny,wockel}.
Categorified actions similar to Def.~\ref{defaction} are also
studied in \cite[Sec.~5]{duli} using simplicial methods.
%\comment{last sentence added}

We present here some examples, others will be discussed in
Section~\ref{subsec:stackprincipal}.

\begin{example}\label{LRMult}
A cfg-groupoid  $\cl{G}\rra M$ acts on itself on the right and on
the left by multiplication, with moment maps $\sour:\cl{G}\ra M$ and
$\tar:\cl{G}\ra M$ respectively. The associativity and identity
2-isomorphisms of these actions are induced by the corresponding
2-isomorphisms of the groupoid.
\end{example}

\begin{example}\label{ex:sfib}
Given an action of $\cl{G}$ on $\cl{X}$ along $\ma: \cl{X}\to M$ and
a smooth map $\iota: N\to M$, let $\cl{X}_N= N\times_M \cl{X}$. Then
there is natural action of $\cl{G}_N$ (see
Remark~\ref{rem:restrict}) on $\cl{X}_N$.

As a particular case, given a cfg-groupoid $\cl{G}\rra M$, let
$\cl{G}_x$ be the cfg-group defined by its isotropy at $x\in M$, as
in \eqref{eq:isotropy}. Then the groupoid multiplication restricts
to an action (on the right) of $\cl{G}_x$ on $\sour^{-1}(x)$.
\end{example}

\begin{example}\label{ActionXX}
Let the cfg-groupoid $\cl{G}\rra M$ act on the right on the
categories fibred in groupoids $\cl{X}_1$ e $\cl{X}_2$, along the
morphisms $\ma_1: \X_1\to M$ and $\ma_2: \X_2\to M$. The
2-isomorphisms associated with these actions are denoted by
$\beta_1$, $\varepsilon_1$, and $\beta_2$, $\varepsilon_2$,
respectively. Then there is an induced action over
$\cl{X}_1\times_M\cl{X}_2$ given by $ (x_1,x_2)g=(x_1g,x_2g).$ The
associativity 2-isomorphism is defined by
$$
(x_1,x_2)\cdot gh =(x_1\cdot gh,x_2\cdot
gh)\quad\lmap{\beta_1\times\beta_2}\quad (x_1g\cdot h, x_2g\cdot
h)=(x_1,x_2)g \cdot h,
$$
and the identity 2-isomorphism is defined by
$$
(x_1,x_2)1 =(x_11,x_21)\quad\lmap{\varepsilon_1\times\varepsilon_2}\quad
(x_1, x_2).
$$
\end{example}

%%%%%%%%%%%%%%%%%%%%%%%%%%%%%%%%%%%%%%%%%%%%%%%%%%%%%%%%%%%%%%%%%%%%%%%%%%
\subsection{Equivariant morphisms}\label{subsec:equiv}

For objects in $\CFG_{\cl{C}}$ endowed with actions, we consider a
natural notion of equivariant morphism.

\begin{definition}\label{DefGequivariant}
Let $\G\rra M$ be a cfg-groupoid acting  (on the right) on $\cl{X}_i
\in Obj(\CFG_{\cl{C}})$ (with action map $\act_i$), along the map
$\ma_i: \X_i\to M$, $i=1,2$. A morphism $F:\cl{X}_1 \ra\cl{X}_2$ is
\textbf{$\cl{G}$-equivariant} if $\ma_2 F = \ma_1$ and there is a
given 2-isomorphism $\delta: \act_2 \,\scirc (F\times
\mr{id})\lmap{\sim} F\,\scirc \,\act_1$ that makes the square
$$
\xymatrix{\cl{X}_1\times_M\cl{G}\ar[r]^-{\act_1}\ar[d]_-{F\times\mr{id}}
& \cl{X}_1\ar[d]^-{F}\\
\cl{X}_2\times_M\cl{G}\ar[r]_-{\act_2}& \cl{X}_2 }
$$
2-commute, and that satisfies  the higher coherence conditions
expressed by the commutativity of the following diagrams (with the
corresponding labels displayed on the left):
$$
(\delta\beta_1\beta_2):\qquad\qquad \xymatrix{ F(x)\cdot g_1g_2
\ar[r]^-{\beta_2}\ar[d]_-{\delta}& F(x)g_1\cdot
g_2\ar[r]^-{\delta\cdot\mr{id}}&
F(xg_1)g_2\ar[d]^-{\delta}\\
F(x\cdot g_1g_2)\ar[rr]_-{F(\beta_1)}&& F(xg_1\cdot g_2) }
$$
$$(\delta\varepsilon_1\varepsilon_2):\qquad \xymatrix{
 F(x)\cdot 1\ar[r]^-{\varepsilon_2}\ar[d]_-{\delta}
&F(x)\\
F(x\cdot 1)\ar[ur]_-{F(\varepsilon_1)}&}
$$
where $x\in\cl{X}_1$, $g_1,g_2\in\cl{G}$ are such that the
compositions make sense, $1$ is the appropriate groupoid identity,
$\beta_1,\beta_2$ are the associativity 2-isomorphisms of the
actions, and $\varepsilon_1, \varepsilon_2$ are the identity
2-isomorphisms of the actions (see Def.~\ref{defaction}). We will
often refer to $\delta$ as the {\em equivariance 2-isomorphism}.
\end{definition}

Clearly, the identity morphism is always $\cl{G}$-equivariant with
$\delta=\mr{id}_{\act}$. The following property is also naturally
expected.

\begin{lemma}\label{CompGequiv}
The composition of $\cl{G}$-equivariant morphisms is
$\cl{G}$-equivariant.
\end{lemma}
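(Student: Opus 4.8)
The plan is to equip the composite with the natural equivariance data, obtained by whiskering, and then to verify the two higher coherences by pasting those of the two factors. Let $F:\cl{X}_1\ra\cl{X}_2$ and $F':\cl{X}_2\ra\cl{X}_3$ be $\cl{G}$-equivariant, with equivariance 2-isomorphisms $\delta:\act_2\scirc(F\times\mr{id})\lmap{\sim}F\scirc\act_1$ and $\delta':\act_3\scirc(F'\times\mr{id})\lmap{\sim}F'\scirc\act_2$. Since $M$ is fibred in sets (Remark~\ref{rm:morphismM}), the moment-map identities are strict, so from $\ma_3 F'=\ma_2$ and $\ma_2 F=\ma_1$ we get $\ma_3(F'F)=\ma_1$; in particular $(F'F)\times\mr{id}=(F'\times\mr{id})\scirc(F\times\mr{id})$ is a well-defined morphism on $\cl{X}_1\times_M\cl{G}$.

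First I would define the equivariance 2-isomorphism of $F'F$ by whiskering and vertical composition,
\[
\delta'':=(\mr{id}_{F'}\scirc\delta)\ast(\delta'\scirc\mr{id}_{F\times\mr{id}}):\quad \act_3\scirc\bigl((F'F)\times\mr{id}\bigr)\lmap{\sim}(F'F)\scirc\act_1.
\]
Here $\delta'\scirc\mr{id}_{F\times\mr{id}}$ takes $\act_3\scirc(F'\times\mr{id})\scirc(F\times\mr{id})$ to $F'\scirc\act_2\scirc(F\times\mr{id})$, and $\mr{id}_{F'}\scirc\delta$ continues to $F'\scirc F\scirc\act_1$. As every 2-morphism in $\CFG_\cl{C}$ is invertible with respect to vertical composition, $\delta''$ is again a 2-isomorphism; moreover, being the pasting of the 2-commutative equivariance squares of $F$ and $F'$, it makes the equivariance square of $F'F$ 2-commute.

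It then remains to verify the two coherence conditions of Definition~\ref{DefGequivariant} for the pair $(F'F,\delta'')$. For the associativity condition, I would use that, by the very definition of $\delta''$, each of the two $\delta''$-edges of the pentagon for $F'F$ factors as a $\delta'$-part followed by an $F'$-image of a $\delta$-part; this factorization subdivides that pentagon. Invoking the interchange law together with the naturality of $\delta'$, the resulting regions are identified with the pentagon of $(F',\delta')$ evaluated at $(F(x),g_1,g_2)$ and with the pentagon of $(F,\delta)$ whiskered by $F'$, the former carrying the edge $\beta_3$ and the latter the edge $(F'F)(\beta_1)$, while the two copies of $F'(\beta_2)$ cancel. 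Since both constituent pentagons commute by hypothesis, so does the one for $F'F$. The identity triangle is handled identically, by pasting the triangle of $(F',\delta')$ with that of $(F,\delta)$ whiskered by $F'$. The only genuine difficulty is bookkeeping: arranging the several whiskerings and applying the interchange law so that the coherences of $F$ and $F'$ glue into those of $F'F$. No new geometric input is needed, and the statement follows as a purely formal 2-categorical consequence of the coherences in Definition~\ref{DefGequivariant}.
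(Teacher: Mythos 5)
Your proposal is correct and follows essentially the same route as the paper: the equivariance 2-isomorphism of the composite is defined by exactly the same whiskering formula $(\mr{id}_{F'}\scirc\delta)\ast(\delta'\scirc\mr{id}_{F\times\mr{id}})$, and the higher coherences are obtained by pasting those of the two factors. Your discussion of how the pentagons subdivide merely spells out details the paper leaves to the reader.
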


\begin{proof}
Given two equivariant morphims
$\cl{X}_1\lmap{F_1}\cl{X}_2\lmap{F_2}\cl{X}_3$, with corresponding
2-isomorphisms $\delta_1$ and $\delta_2$ (as in
Def.~\ref{DefGequivariant}), the composition $F_2F_1$ is equivariant
with respect to
$$
\delta=(\mr{id}_{F_2}\,\scirc\,\delta_1)* (\delta_2\,\scirc\,
\mr{id}_{F_1\times\mr{id}}).
$$
More explicitly, for any $(x,g)\in \cl{X}\times_M\cl{G}$, we have
$$
\delta: F_2F_1(x)\cdot g\lmap{\delta_2}F_2(F_1(x)\cdot g)
\lmap{F_2(\delta_1)} F_2F_1(x\cdot g).
$$
Higher coherences $(\delta\beta_1\beta_2)$ and $(\delta\varepsilon_1
\varepsilon_2)$ for $\delta$ follow from the  respective coherences
for $\delta_1$ and $\delta_2$.
\end{proof}

%%%%%%%%%%%%%%%%%%%%%%%%%%%%%%%%%%%%%%%%%%%%%%%%%%%%%%%%%%%%%%%%%%%%%
\subsection{Stacky principal bundles}\label{subsec:stackprincipal}

Let $\G\rra M$ be a cfg-groupoid acting (on the right) on $\cl{X}
\in \Obj(\CFG_{\cl{C}})$ along $\ma: \X \to M$. Let $\cl{S} \in
\Obj(\CFG_{\cl{C}})$ and $\Proj:\cl{X}\ra \cl{S}$ be a morphism. We
consider a natural compatibility between the action of $\G$ on $\X$
and the morphism $\Proj$ (see Section~\ref{subsec:LG} for a similar
notion in the context of Lie groupoids).

\begin{definition}\label{DefActOnFibers}
We say that $\cl{G}$ \textbf{acts on the fibers of} $\Proj:\X \to
\cl{S}$ if there is a given 2-isomorphism $\gamma: \Proj\,\scirc\,
\rm{pr}\ra \Proj\,\scirc\, \act$ that makes the square
\begin{equation}\label{eq:diagr}
\xymatrix{
\X\times_M\G\ar[r]^-{\act}\ar[d]_-{\rm{pr}}& \X\ar[d]^-{\Proj}\\
\X\ar[r]_-{\Proj}& \cl{S}}
\end{equation}
2-commute and that satisfies the higher coherence conditions
expressed by the commutativity of the following diagrams (with the
corresponding labels displayed on the left):
$$
(\gamma\beta):\quad\xymatrix{
\Proj(x)\ar[r]^-{\gamma}\ar[d]_-{\gamma}&\Proj(xg)\ar[d]^-{\gamma}\\
\Proj(x(gh))\ar[r]_-{\Proj(\beta)}&\Proj((xg)h),} \qquad
\qquad(\gamma\varepsilon):\quad\xymatrix{
\Proj(x)\ar[rd]^-{\mr{id}}\ar[d]_-{\gamma}&\\
\Proj(x\cdot 1)\ar[r]_-{\Proj(\varepsilon)}&\Proj(x), }
$$
where $x\in \cl{X}$ and $g,h\in\cl{G}$ are such that the
compositions make sense, $1\in\cl{G}$ denotes the appropriate
identity, $\beta$ is the associativity 2-isomorphism of the action,
and $\varepsilon$ is the identity 2-isomorphism of the action (see
${\bf (a3)}$). We will say that $\G$ acts on the fibers of $\Proj$
\emph{via} $\gamma$.
\end{definition}

A similar definition holds for left actions.

Note that, for any cfg-groupoid $\cl{G}\rra M$, the action by right
multiplication is on the fibers of $\tar:\cl{G}\ra M$, and the
action by left multiplication is on the fibers of $\sour:\cl{G}\ra
M$.

If $\cl{G}$ acts on the fibers of $\Proj : \cl{X}\to \cl{S}$, we may
consider the morphism
\begin{equation}\label{eq:actfibre}
\cl{X}\times_M \cl{G}\ra \cl{X}\times_{\cl{S}}\X,\qquad (x,g)\mapsto
(x, \gamma(x,g) ,xg).
\end{equation}

\newpage

\begin{remark} \label{RemActFibXG}\
\begin{itemize}
\item[(a)] If $\cl{S}=S$ is a manifold, then the condition for an action to
be on the fibers of $\Proj:\cl{X}\ra S$  is just the commutativity
of the square \eqref{eq:diagr}, as in this case the 2-isomorphism
$\gamma$ is necessarily equal to the identity and the higher
coherences $(\gamma\beta)$, $(\gamma\varepsilon)$ are automatically
satisfied (cf. Remark~\ref{rm:morphismM}).

\item[(b)]
If $\cl{G}$ acts on the fibers of $\Proj$ via $\gamma$ and
$\Proj':\cl{S}\ra\cl{S}'$ is a morphism, then $\cl{G}$ acts on the
fibers of $\Proj'\Proj$ via $\mr{id}_{\Proj'}\,\scirc\,\gamma$.

\end{itemize}

\end{remark}

We will be mostly interested in the context of differentiable
stacks. Let $\G \rra M$ be a stacky Lie groupoid acting (on the
right) on a differentiable stack $\cl{X}$ along the map $\ma:\X\to
M$.

\begin{definition}\label{DefPrincBundle}
We say that a morphism $\Proj:\cl{X}\ra \cl{S}$ defines a (right)
\textbf{principal $\G$-bundle} $\X$ over a differentiable stack
$\cl{S}$ if the following conditions are satisfied:
\begin{enumerate}
\item $\Proj:\cl{X}\ra \cl{S}$ is an epimorphism and a
submersion,
\item $\cl{G}$ acts on the fibers of $\Proj$,
\item The morphism $ \cl{X}\times_M \cl{G}\map \cl{X}\times_{\cl{S}}\X
$ (see \eqref{eq:actfibre}) is an isomorphism.
\end{enumerate}
Principal \textbf{left} $\cl{G}$-bundles are defined similarly.
\end{definition}

Note that if the differentiable stacks $\G$, $\X$ and $\cl{S}$ are representable,
the previous definition recovers the notion of principal bundle in the smooth category.

%\comment{added remark below (other viewpoints to principal bundles)}

\begin{remark}
Other viewpoints to smooth principal bundles naturally extend to higher settings.
For example, higher principal bundles are often studied through methods of homotopy theory  \cite{lurie, nss:1, fss}, generalizing the fact that smooth principal bundles can be defined (and classified) by maps into classifying spaces $BG$ of Lie group(oid)s by means of (homotopical) pullbacks. For 2-groups, descriptions of principal bundles in terms of local trivializations and cocycles can be found e.g. in \cite{basch,bartels,wockel} (\cite{NW} reconciles these viewpoints with Def.~\ref{DefPrincBundle} above).
\end{remark}

\begin{example}
The target morphism $\tar:\cl{G}\ra M$ of a stacky Lie groupoid is a
principal right $\cl{G}$-bundle under the action of multiplication
on the right.  The same is true for $\sour:\cl{G}\ra M$ and left
multiplication.
\end{example}

\begin{example} \label{ex:princBG}
Consider a (right) action of a Lie groupoid $G\rra G_0$ on a
manifold $X$ along $a: X\to G_0$. Then one may verify that $X$ is a
principal $G$-bundle over the quotient stack $[X/G]$.

In particular, any Lie groupoid $G\rra G_0$ is such that $G_0$ is a
principal $G$-bundle over $BG$.
\end{example}

\begin{example}\label{ex:HSprincbundle}
Consider Lie groupoids $X\rra X_0$ and $Y\rra Y_0$, and a right
principal $X$-$Y$ - bibundle $P$, i.e., a Hilsum-Skandalis bibundle
presenting a morphism $BX\to BY$. Note that the surjective
submersion $P\to X_0$ naturally gives rise to a morphism of quotient
stacks
$$
[X\bs P] \to [X\bs X_0]=BX
$$
which is a submersion and an epimorphism. One can check that $Y$
acts on $[X\bs P]$ on the fibres of this map, and makes $[X\bs P]$
into a principal $Y$-bundle over $BX$.

\end{example}

\begin{example}\label{ex:gerbe2}
When $\G$ is a 2-group as in Example~\ref{ex:BA}, principal
$\G$-bundles (with strict actions as in \cite[Sec.~6]{NW}) arise in
the description of (non-abelian) {\em gerbes}, see
\cite[Sect.~7]{NW} (cf. \cite{breen,Breen94}, see also \cite{ginotstienon}).
A simple example is the correspondence between $S^1$-gerbes and principal $BS^1$-bundles over a manifold $M$ (or, more generally, over a differentiable stack). Recall that one way to model $S^1$-gerbes is via {\em $S^1$-central extensions} (see e.g. \cite{bx}),
\[
\xymatrix{S^1\times X_0 \ar[r]^{\iota} \ar@<0.5ex>[d]\ar@<-0.5ex>[d]&
X \ar[r]
  \ar@<0.5ex>[d]\ar@<-0.5ex>[d]& Y \ar@<0.5ex>[d]\ar@<-0.5ex>[d] \\
X_0 \ar[r] & X_0 \ar[r] & X_0.}
\]
In such a case, the multiplication of $X\rightrightarrows X_0$ and
the embedding $\iota$ give us a map
\[ \xymatrix{ X\times S^1 \ar@<0.5ex>[d]\ar@<-0.5ex>[d] \ar[r] &X
  \ar@<0.5ex>[d]\ar@<-0.5ex>[d] \\
X_0 \times pt \ar[r] & X_0   }
\]
which defines an action of $BS^1$ on $BX=[X_0/X]$. The fact that $X$ is an
$S^1$-principal bundle over $Y$ allows one to verify that this $BS^1$-action makes
$BX$ into a principal $BS^1$-bundle over $BY=[X_0/Y]$, as in Def.~\ref{DefPrincBundle}.
This picture in fact extends to non-abelian gerbes \cite{slx}: in this case, a $G$-gerbe over a manifold $M$
is a principal $\G$-bundle over $M$, where $\G$ is the 2-group defined by the crossed module $G\to \mathrm{Aut}(G)$.
\end{example}

To construct more examples of principal bundles, let us consider a
cfg-groupoid $\G\rra M$ acting on $\X \in \Obj(\mathrm{CFG}_\cl{C})$
on the fibers of $\Proj: \X\to \cl{S}$, where $\cl{S}$ is in
$\Obj(\mathrm{CFG}_\cl{C})$. The next two propositions can be
directly verified.

\begin{proposition}\label{prop:princ1}
Consider a map of categories fibred in groupoids $\mathcal{S}'\to
\mathcal{S}$ and the fibred product $\X'= \X
\times_{\mathcal{S}}\mathcal{S}'$. The following holds:
\begin{enumerate}
\item There is an induced action of $\G$ on $\X'$ on the fibres
of the natural projection $\Proj': \X'\to \mathcal{S}'$ (the
``action on the first coordinate'').
\item If $\Proj$ is an epimorphism, so is $\Proj'$.
\item If the map $\G\times_M \X\to
\X\times_{\mathcal{S}}\X$  \eqref{eq:actfibre} is an isomorphism,
then so is the corresponding map $\G\times_M \X'\to
\X'\times_{\mathcal{S}'}\X'$.
\item If $\G$, $\X$, $\mathcal{S}$ and $\mathcal{S}'$ are
differentiable stacks and $\Proj$ is a  submersion, then so is
$\Proj'$.
\end{enumerate}
It follows that if $\Proj:\X\to \mathcal{S}$ is a principal
$\G$-bundle, then so is $\Proj':\X'\to \mathcal{S}'$.
\end{proposition}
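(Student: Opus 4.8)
The plan is to reduce as much as possible to the single observation that $\Proj':\X'\to\cl{S}'$ is the base change of $\Proj:\X\to\cl{S}$ along the given map $q:\cl{S}'\to\cl{S}$, since $\X'=\X\times_\cl{S}\cl{S}'$ fits into the corresponding 2-cartesian square. Granting this, (2) is immediate because epimorphisms are stable under base change (Section~\ref{subsec:cfg}), and (4) is immediate because submersions are stable under base change (Prop.~\ref{prop:propsubm}(b)); here $\X'$ is itself a differentiable stack, being a fibred product of differentiable stacks.

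For (1), I would first describe an object of $\X'$ over a manifold $U$ as a triple $(x,c,s')$ with $x\in\X_U$, $s'\in\cl{S}'_U$, and $c:\Proj(x)\xrightarrow{\sim}q(s')$, set $\ma'=\ma\circ\mathrm{pr}_\X$, and define the action on the first coordinate by
$$(x,c,s')\cdot g=\bigl(xg,\; c\circ\gamma(x,g)^{-1},\; s'\bigr),$$
which is well defined since $\gamma(x,g):\Proj(x)\to\Proj(xg)$ is the fiber $2$-isomorphism of Def.~\ref{DefActOnFibers}. The associativity and unit $2$-isomorphisms $\beta',\varepsilon'$ are then taken to be $\beta,\varepsilon$ acting on the first coordinate. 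The point to verify is that these define morphisms in $\X'$, i.e.\ that they are compatible with the structure isomorphisms $c$; this is exactly the content of the coherence diagrams $(\gamma\beta)$ and $(\gamma\varepsilon)$ of Def.~\ref{DefActOnFibers}. Because the $\cl{S}'$-component $s'$ is untouched, one has $\Proj'\circ\act'=\Proj'\circ\mathrm{pr}$ on the nose, so the fiber $2$-isomorphism $\gamma'$ of the new action is the identity and its coherences hold trivially; the higher coherences (a4) for $\beta',\varepsilon'$ descend from those for $\beta,\varepsilon$.

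For (3), I would establish the two canonical equivalences
$$\X'\times_M\G\;\simeq\;(\X\times_M\G)\times_\cl{S}\cl{S}',\qquad \X'\times_{\cl{S}'}\X'\;\simeq\;(\X\times_\cl{S}\X)\times_\cl{S}\cl{S}',$$
the first using $\ma'=\ma\circ\mathrm{pr}_\X$, and the second using the isomorphism $s_1'\to s_2'$ in a point of $\X'\times_{\cl{S}'}\X'$ to record a pair over a common $s'$ as a point of $\X\times_\cl{S}\X$ together with a comparison isomorphism to $q(s')$. A direct check on objects then shows that under these identifications the action-fiber map $\Phi':\X'\times_M\G\to\X'\times_{\cl{S}'}\X'$ (the instance for the action on $\X'$) is precisely the base change along $q$ of the corresponding map $\Phi:\X\times_M\G\to\X\times_\cl{S}\X$ of \eqref{eq:actfibre}, namely $((x,g),c,s')\mapsto(\Phi(x,g),c,s')$. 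Since the base change of an isomorphism is an isomorphism, (3) follows from the hypothesis on $\Phi$. The final assertion is then just the conjunction of (1)--(4): principality of $\Proj$ packages exactly the statements that $\Proj$ is an epimorphism and a submersion on whose fibers $\G$ acts with $\Phi$ an isomorphism, and (1)--(4) transport each of these properties to $\Proj'$.

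I expect the only genuinely non-formal part to be the bookkeeping in (1): checking that $\beta'$ and $\varepsilon'$ are well defined as $2$-isomorphisms in the fibred product $\X'$ and satisfy (a4) is where the fiber-action coherences $(\gamma\beta)$ and $(\gamma\varepsilon)$ are actually used, whereas (2), (3), and (4) are all instances of stability under base change.
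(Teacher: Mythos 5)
The paper offers no proof of this proposition (it is stated as something that ``can be directly verified''), and your write-up is a correct and complete instance of exactly that direct verification: the reduction of (2), (3), (4) to stability under base change along $\cl{S}'\to\cl{S}$ is sound, and your identification of where the coherences $(\gamma\beta)$ and $(\gamma\varepsilon)$ are genuinely needed --- namely to check that $\beta'$ and $\varepsilon'$ are morphisms in the fibred product $\X'$ --- is precisely the non-formal content of item (1). The only point to tighten is your parenthetical justification in (4) that $\X'$ is differentiable ``being a fibred product of differentiable stacks'': a 2-fibred product of differentiable stacks need not be differentiable in general, so here you should instead invoke the hypothesis that $\Proj$ is a submersion, which by Prop.~\ref{prop:charactsub}(b) lets you choose an atlas $X\to\X$ with $X\to\cl{S}$ representable, whence $X\times_{\cl{S}}S'$ is a manifold giving an atlas of $\X'$ for any atlas $S'\to\cl{S}'$.
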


\begin{example}\label{ex:restric1}
For a stacky Lie groupoid $\G\rra M$, it follows from the previous
proposition that left multiplication restricts to a principal
$\G$-bundle $ \sour^{-1}(x)\to \{x\}$, for each $x\in M$.
\end{example}

\begin{example}\label{ex:universalPB}
Consider  a morphism $F: \X \to \cl{Y}$ of differentiable stacks,
 and assume that $\cl{Y}$ is presented by $Y\rra Y_0$. Following
Example~\ref{ex:princBG}, we know that $Y_0$ is principal $Y$-bundle
over $\cl{Y}$, so it follows from the previous proposition that $F$
induces a principal $Y$-bundle over $\X$.
\end{example}

\begin{proposition}\label{prop:princ2}
Consider a smooth map $f: N\to M$, let $\G_N$ and $\cl{X}_N$ be as
in Example~\ref{ex:sfib}. The following holds:
\begin{enumerate}
\item The action of $\G_N$ on $\X_N$ is on the fibres of the natural map $\Proj_N: \X_N\to
\mathcal{S}$.
\item  If the map $\G\times_M \X\to
\X\times_{\mathcal{S}}\X$ \eqref{eq:actfibre} is an isomorphism,
then so is the corresponding map $\G_N \times_N \X_N\to \X_N
\times_{\mathcal{S}}\X_N$.
\item If $\X\to \mathcal{S}$ is a principal $\G$-bundle, $\G_N$
is a differentiable stack and $\Proj_N$ is a  submersion and an
epimorphism, then $\Proj_N:\X_N\to \mathcal{S}$ is a principal
$\G_N$-bundle.
\end{enumerate}
\end{proposition}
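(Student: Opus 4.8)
The plan is to verify the three items by systematically pulling back the data for the original $\G$-action along the projections $p:\X_N\to\X$ and $\G_N\to\G$ supplied by Remark~\ref{rem:restrict} and Example~\ref{ex:sfib}. First I would record the basic compatibility: these projections assemble into a morphism $\Phi:\X_N\times_N\G_N\to\X\times_M\G$, $(x,g,n,n')\mapsto(x,g)$, that intertwines the two actions, in the sense that $\Proj_N=\Proj\,p$, that $p\,\mathrm{pr}=\mathrm{pr}\,\Phi$, and that there is a canonical 2-isomorphism $p\,\act_N\cong\act\,\Phi$. Moreover, by the very way the action of $\G_N$ on $\X_N$ is constructed in Example~\ref{ex:sfib}, its associativity and unit 2-isomorphisms $\beta_N,\varepsilon_N$ are the whiskerings of $\beta,\varepsilon$ with $\Phi$.

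Granting this, item~(1) is immediate: whiskering the 2-isomorphism $\gamma$ of \eqref{eq:diagr} with $\Phi$ yields
$$\gamma_N:\Proj_N\,\mathrm{pr}=\Proj\,\mathrm{pr}\,\Phi\;\Longrightarrow\;\Proj\,\act\,\Phi\cong\Proj\,p\,\act_N=\Proj_N\,\act_N,$$
and the coherence diagrams $(\gamma_N\beta_N)$, $(\gamma_N\varepsilon_N)$ are precisely the whiskerings of $(\gamma\beta)$, $(\gamma\varepsilon)$, so they commute. This exhibits $\G_N$ as acting on the fibers of $\Proj_N$ via $\gamma_N$.

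For item~(2), the key point is that the square
$$
\xymatrix{
\X_N\times_N\G_N \ar[r]^-{\Phi}\ar[d] & \X\times_M\G \ar[d]\\
\X_N\times_{\cl{S}}\X_N \ar[r] & \X\times_{\cl{S}}\X
}
$$
with vertical maps the morphisms \eqref{eq:actfibre} for $\G_N$ and $\G$, and horizontal maps the forgetful projections, is 2-cartesian. Unwinding Remark~\ref{rem:restrict} and Example~\ref{ex:sfib}, an object of $\X_N\times_N\G_N$ is a pair $(x,g)\in\X\times_M\G$ together with points $n,n'\in N$ satisfying $f(n)=\ma(x)$ and $f(n')=\sour(g)=\ma(xg)$; the bottom-left corner records exactly this $N$-decoration on the pair $(x,xg)\in\X\times_{\cl{S}}\X$, so the induced comparison map to $\X_N\times_{\cl{S}}\X_N\times_{\X\times_{\cl{S}}\X}(\X\times_M\G)$ is an isomorphism. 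The left vertical map is then the base change of the right one along the bottom map, and since isomorphisms are stable under base change, the hypothesis that \eqref{eq:actfibre} for $\G$ is an isomorphism yields the same for $\G_N$.

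Finally, item~(3) is a direct combination against Definition~\ref{DefPrincBundle}: condition~(1) (epimorphism and submersion) is the hypothesis on $\Proj_N$, condition~(2) is item~(1) above, and condition~(3) follows from item~(2), once one observes that $\X\to\cl{S}$ being principal forces \eqref{eq:actfibre} for $\G$ to be an isomorphism; here $\X_N=N\times_M\X$ and $\G_N$ are differentiable stacks, so all objects lie in the appropriate category. I expect the only step requiring genuine care to be item~(2): one must check that the comparison square 2-commutes with the correct 2-isomorphism and is truly 2-cartesian, since this is exactly what licenses the base-change argument. The coherence verification in item~(1) is the other place to be attentive, but it reduces to whiskering the already-established diagrams $(\gamma\beta)$ and $(\gamma\varepsilon)$, and everything else is routine bookkeeping.
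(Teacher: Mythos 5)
Your proposal is correct and is exactly the direct verification the paper has in mind: the paper offers no written proof (it states only that Propositions~\ref{prop:princ1} and \ref{prop:princ2} ``can be directly verified''), and your argument --- whiskering $\gamma$, $\beta$, $\varepsilon$ along the forgetful map $\Phi:\X_N\times_N\G_N\to\X\times_M\G$ for item~(1), identifying both vertical maps as base changes along $N\times N\to M\times M$ (so that the comparison square is cartesian and isomorphisms are preserved) for item~(2), and assembling the hypotheses against Definition~\ref{DefPrincBundle} for item~(3) --- is the intended bookkeeping. No gaps.
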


\begin{example}\label{ex:restric2}
For an \'etale stacky Lie groupoid $\G\rra M$, fix $x\in M$ and
consider the stacky Lie group $\G_x$. Suppose, for simplicity, that
$\G$ is transitive (see Cor.~\ref{cor:onto}). It follows from the
previous proposition that right multiplication on $\G$ restricts to
a principal $\G_x$-bundle $\tar: \sour^{-1}(x)\to M$.

\end{example}

Our last result in this section gives the expected relation between
Examples~\ref{ex:HSprincbundle} and \ref{ex:universalPB} above.

\begin{proposition}\label{prop2BXfibprodY0}
Let $\cl{X}$ and $\cl{Y}$ be differentiable stacks, let $X\rra X_0$
and $Y\rra Y_0$ be Lie groupoids presenting them, and let $F:\X\ra
\cl{Y}$ be a morphism. Let $P$ be a right principal $X$-$Y$-bibundle
presenting $F$. Then $[X\bs P]$ fits into a canonical 2-cartesian
diagram
$$\xymatrix{
[X\bs P] \ar[r]\ar[d] &Y_0\ar[d]\\
\X\ar[r]_-{F}&\cl{Y}, }
$$
in such a way that the resulting isomorphism $[X\bs P] \cong
\X\times_{\cl{Y}}Y_0$ is an identification of $Y$-principal bundles
over $\X$.
\end{proposition}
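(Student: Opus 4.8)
The plan is to identify $[X\bs P]$ with the fibred product $\cl{Q}:=\X\times_{\cl{Y}}Y_0$ and then transport the tautological $2$-cartesian square defining $\cl{Q}$. Recall that $Y_0\to\cl{Y}$ is a principal $Y$-bundle (Example~\ref{ex:princBG}); base-changing it along $F$, Example~\ref{ex:universalPB} shows that the first projection $\cl{Q}\to\X$ is again a principal $Y$-bundle, and $\cl{Q}$ sits in a $2$-cartesian square together with its second projection $\cl{Q}\to Y_0$ and the maps $F\colon\X\to\cl{Y}$ and the atlas $Y_0\to\cl{Y}$. On the other hand, Example~\ref{ex:HSprincbundle} shows that $[X\bs P]\to\X$ is a principal $Y$-bundle as well. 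Hence it suffices to produce an isomorphism $[X\bs P]\cong\cl{Q}$ of principal $Y$-bundles over $\X$ that intertwines the two projections to $Y_0$; the desired $2$-cartesian square for $[X\bs P]$ then follows immediately by transport of structure.

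To construct this isomorphism I would first compare the two bundles after restriction to the atlas $\pi\colon X_0\to\X=BX$. The morphism $[X\bs P]\to\X$ is presented, at the level of atlases, by the left-$X$-equivariant surjective submersion $a\colon P\to X_0$ (the bundle projection of the right principal bibundle $P$), so its base change along $\pi$ is $a\colon P\to X_0$ itself, i.e.\ $\pi^{*}[X\bs P]\cong P$ as principal $Y$-bundles over $X_0$. For $\cl{Q}$, base change along $\pi$ yields $\pi^{*}\cl{Q}=X_0\times_{\cl{Y}}Y_0$, where $X_0\to\cl{Y}$ is $F\circ\pi$; by the explicit description of Hilsum--Skandalis maps recalled in Section~\ref{subsec:LG}, one has $X_0\times_{\cl{Y}}Y_0\cong F(X)$ as principal right $Y$-bundles over $X_0$. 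Since $P$ presents $F$ we have $F(X)\cong X\otimes_X P\cong P$, and therefore $\pi^{*}\cl{Q}\cong P\cong\pi^{*}[X\bs P]$.

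It remains to promote this into an isomorphism over $\X$, and this is the crux of the argument. A principal $Y$-bundle over $\X=BX$ is the same as a principal $Y$-bundle over $X_0$ equipped with descent data along $X\rra X_0$, namely a lift of the canonical left $X$-action on $X_0$. For $[X\bs P]$ the descent datum is, by construction, the left $X$-action on $P$ coming from the bibundle structure, while for $\cl{Q}$ it is the left $X$-action on $F(X)\cong P$ described in Section~\ref{subsec:LG} (induced by $F(m)$, with $m$ the multiplication of $X$ viewed as a morphism of right $Y$-bundles). The main obstacle is thus to check that these two left $X$-actions on $P$ agree; this is precisely the compatibility encoded in the reconstruction of the morphism $F$ from its representing bibundle $P$, so no new input is needed beyond unwinding that construction. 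Granting it, the isomorphism $\pi^{*}\cl{Q}\cong\pi^{*}[X\bs P]$ is $X$-equivariant and hence descends to an isomorphism $\cl{Q}\cong[X\bs P]$ over $\X$. Finally, under $\pi$ both projections to $Y_0$ are identified with the right $Y$-moment map $b\colon P\to Y_0$, so the descended isomorphism intertwines the maps to $Y_0$, which completes the proof.
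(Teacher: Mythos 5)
Your proposal is correct, but it follows a genuinely different route from the paper. The paper works entirely with groupoid presentations: it replaces $X\rra X_0$ by the Morita equivalent groupoid $X\ltimes P\rtimes Y\rra P$, over which $F$ becomes a strict groupoid morphism $b$ into $Y$, computes the weak fibred product $W=(X\ltimes P\rtimes Y)\times_Y Y_0$ explicitly, exhibits a weak equivalence $c:W\to X\ltimes P$, and then checks that $c$ is $Y$-equivariant; the identification with $\X\times_{\cl{Y}}Y_0$ comes from Prop.~\ref{prop:Bprod}. You instead observe that both $[X\bs P]$ and $\cl{Q}=\X\times_{\cl{Y}}Y_0$ are (representable) principal $Y$-bundles over $\X$, pull both back along the atlas $X_0\to\X$ to identify each with $P$, and descend. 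Your route is conceptually shorter and reuses the already-established dictionary between morphisms $BX\to BY$ and bibundles, but it relocates the real work into two steps that you only sketch: (i) the descent principle that an $X$-equivariant isomorphism $\pi^*\cl{Q}\cong\pi^*[X\bs P]$ over $X_0$ descends to an isomorphism over $\X$ compatible with the $2$-cells of the two squares (this is fine because both morphisms to $\X$ are representable, so the gluing datum over $X_0\times_\X X_0=X$ is a property rather than extra structure, and the cocycle condition is automatic); and (ii) the comparison of the two left $X$-actions on $P$. Point (ii) is exactly the content that the paper's equivariance check of $c$ carries, and it does unwind as you predict: with $F=-\otimes_X P$ one has $F(X)=X\otimes_X P\cong P$ via $[x,p]\mapsto xp$, and $F(m)$ sends $(x,[x',p])$ to $[xx',p]$, which under this identification is $x\cdot(x'p)$, i.e.\ the original bibundle action. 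You should write out (i) and (ii) explicitly to make the argument complete, but no new idea is missing.
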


\begin{proof}
We start by replacing $X\rra X_0$ with a Morita equivalent Lie
groupoid, denoted by $X\ltimes P\rtimes Y \rra P$, whose arrows are
triples  $(x,z,y)$ in $X\times P \times Y$ such that $s(x)=a_0(z)$
and $t(y)=b_0(z)$; source and target maps are
$$
s(x,z,y)=z,\qquad t(x,z,y)=xzy,
$$
and multiplication is defined by
$$
(x,z,y)(\bar{x},\bar{z},\bar{y})= (x\bar{x},\bar{z},\bar{y}y),
$$
where $z=\bar{x}\bar{z}\bar{y}$ is the composability condition. A
Morita $(X\ltimes P\rtimes Y)$-$X$ bibundle is given by
$P\times_{X_0}X$, with left action $(x,z,y)\cdot(z,\bar{x})=(x z y,
x\bar{x})$ and right action $(z,\bar{x}) x = (z,\bar{x}x)$.
Moreover, the composition of this Morita bibundle with $P$ gives
rise to a right principal $(X\ltimes P\rtimes Y)$-$Y$ bibundle,
which in fact corresponds to a morphism of groupoids,
$$
\xymatrix{ X\ltimes P\rtimes Y \ar[r]^-{b}
\ar@<0.5ex>[d]\ar@<-0.5ex>[d]&
Y\ar@<0.5ex>[d]\ar@<-0.5ex>[d]\\
P\ar[r]_-{b_0}&Y_0,}
$$
explicitly given by $b(x,z,y)=y^{-1}$. In other words, this map $b$
is a presentation for $F:\X\to \cl{Y}$ upon the identification
$B(X\ltimes P\rtimes Y)\cong  BX = \cl{X}$.

We will now compute the (weak) fibred product of groupoids
\begin{equation}\label{eq:W}
\xymatrix{
W \ar[r]\ar[d] &Y_0\ar[d]\\
X\ltimes P\rtimes Y \ar[r]_-{b}&Y ,}
\end{equation}
where $Y_0$ denotes the trivial groupoid $Y_0\rra Y_0$, and prove
that there is an identification $BW=B(X\ltimes P)$. Since
$B(X\ltimes P)=[X\bs P]$, this leads to an isomorphism $[X\bs
P]\cong \X\times_{\cl{Y}} Y_0$ (by Prop.~\ref{prop:Bprod}).

The fibred product $W$ in \eqref{eq:W} is described by objects
$(z,y,y_0)\in P\times Y\times Y_0$ such that $s(y)=b_0(z)$ and
$t(y)=y_0$, and arrows from $(z,y,y_0)$ to
$(\bar{z},\bar{y},\bar{y}_0)$ given by pairs $(x,y_1) \in X\times Y$
such that
$$
s(x)=a_0(z),\quad t(y_1)=b_0(z),\quad xzy_1=\bar{z},\quad
\bar{y}=yy_1,
$$
assuming that $\bar{y}_0=y_0$ (otherwise there are no arrows). The
multiplication between $(\bar{x},\bar{y}_1):
(z,y,y_0)\ra(\bar{z},\bar{y},y_0)$ and
$(x,y_1):(\bar{z},\bar{y},y_0)\ra (\tilde{z},\tilde{y},y_0)$ is
defined componentwise:
$$
(x,y_1)(\bar{x},\bar{y}_1)=(x\bar{x},\bar{y}_1y_1).
$$

We note that $W$ admits an alternative description: objects are
pairs $(z,y)$ such that $s(y)=b_0(z)$, arrows are given by
$(x,z,y,y_1)$ such that $s(x)=a_0(z)$, $t(y_1)=b_0(z)=s(y)$, and
source and target maps are:
$$
s(x,z,y,y_1)=(z,y),\qquad t(x,z,y,y_1)=(xzy_1,yy_1).
$$
For multiplication, we have
$$
(x,z,y,y_1)(\bar{x},\bar{z},\bar{y},\bar{y}_1)=
(x\bar{x},\bar{z},\bar{y},\bar{y}_1y_1),
$$
where $(\bar{x}\bar{z}\bar{y}_1,\bar{y}\bar{y}_1)=(z,y)$ is the
condition for composability.

Using this alternative description of $W$, we define a groupoid
morphism
\begin{equation}\label{eq:c}
c:W\map X\ltimes P
\end{equation}
as follows: on objects, we let
$$
c_0(z,y)=zy^{-1},
$$
and on arrows we set
$$
c(x,z,y, y_1)=(x,zy^{-1}).
$$
We leave to the reader to check that $c_0$ is a surjective
submersion (which follows from $P$ being right principal), and that
$c$, as groupoid morphism, is fully faithful. This shows that $c$ is
a weak equivalence, hence induces the desired identification.

To see that the identification
\begin{equation}\label{eq:inducedc}
BW \cong \X\times_{\cl{Y}}Y_0\stackrel{\sim}{\to} [X\bs P]
\end{equation}
induced by $c$ is $Y$-equivariant, we first notice that the
$Y$-action on $BW \cong \X\times_{\cl{Y}}Y_0$ is presented by the
groupoid morphism $W\times_{Y_0} Y \to W$ given on objects and
arrows by
$$
((z,y), \bar{y})\mapsto (z,\bar{y}^{-1}y),\;\;\;
((x,z,y,y_1),\bar{y})\mapsto (x,z,\bar{y}^{-1}y, y_1),
$$
respectively. Similarly, the $Y$-action on $[X\bs P]$ is presented
by the groupoid morphism $(X\ltimes P) \times_{Y_0} Y \to (X\ltimes
P)$ where $Y$ acts on the $P$ factor only. (Here  $W\times_{Y_0} Y$
and $(X\ltimes P) \times_{Y_0} Y$ denote the fibred product of
groupoids viewing $Y$ as the trivial groupoid $Y\rra Y$). One may
directly check that $c$ \eqref{eq:c} is $Y$-equivariant, which
implies the  same property for \eqref{eq:inducedc}.
\end{proof}

%%%%%%%%%%%%%%%%%%%%%%%%%%%%%%%%%%%%%%%%%%%%%%%%%%%%%%%%%%%%%%%%%%%%%%%%%%%%%%%%%%%%%%%%%%%%%
\section{Quotients}\label{sec:Quotients}

%\subsection{Quotients by groupoid actions}\label{subsecQuotient}

Let a cfg-groupoid $\G\rra M$ act on $\X \in \Obj(\CFG_{\cl{C}})$.
Our goal in this section is to make appropriate sense of the
quotient of $\X$ by the $\G$-action. We start by defining a natural
category fibred in groupoids $\X\pq\G$ associated with the action,
referred to as the \textit{prequotient}; its stackification will
then be defined as the {\it quotient} stack $\X/\G$.

%%%%%%%%%%%%%%%%%%%%%%%%%%%%%%%%%%%%%%%%%%%%%%%%%%%%%%%%%%%%%%%%
\subsection{The (pre)quotient of an action}\label{subsecQuotient}

The definition of the prequotient given below is a natural
generalization of the construction given in \cite{breen} when $M$ is
a point.

For a (right) action of a cfg-groupoid $\G\rra M$ on a category
fibred in groupoids $\X$ along $\ma: \X\to M$, we will define a
category $\X\pq\G$ and a functor $\pi_{\X\pq\G}:\X\pq\G\ra \cl{C}$
to the category of manifolds as follows. The objects of $\cl{X}\pq
\cl{G}$ are given by
\begin{equation}\label{eq:objpre}
\Obj(\cl{X}\pq\cl{G}):= \Obj(\cl{X}).
\end{equation}
For $x,y\in \Obj(\cl{X})$, the set of morphisms in $\cl{X}\pq\cl{G}$
between $x$ and $y$ is
\begin{equation}\label{eq:morpre}
\mr{Hom}_{\cl{X}\pq\cl{G}}(x,y):=\big\{(g,b): g\in\cl{G},\;
\ma(x)=\tar(g),\; b:x\cdot g\rightarrow y \mbox{ in }\cl{X}\big\}
/\sim
\end{equation}
where the equivalence relation $\sim$ results from identifying two
pairs $(g,b)$ and $(\bar{g},\bar{b})$ if there exists $j:
g\stackrel{\sim}{\rightarrow} \bar{g}$ in $\cl{G}$ such that
$\tar(j)=\mr{id}_{\ma(x)}$ and the following triangle commutes:
\begin{equation}\label{eq:triang}
\xymatrix{
x\cdot g\ar[rd]^-{b}\ar[d]_-{\mr{id}_x\cdot j}&\\
x\cdot \bar{g}\ar[r]_-{\bar{b}}&y.
}
\end{equation}
Considering the structure functors $\pi_\cl{G}:\cl{G}\to \cl{C}$ and
$\pi_{\cl{X}}:\cl{X}\to \cl{C}$, note that
$\pi_\cl{G}(g)=\pi_\cl{G}(\bar{g})=\pi_\cl{X}(x)$ and
$\pi_\cl{G}(j)=\mr{id}_{\pi_{\cl{X}}(x)}$. We denote the equivalence
class of $(g,b)$ in $\mr{Hom}_{\cl{X}\pq\cl{G}}(x,y)$ by $[g,b]$.
The composition of morphisms,
$$
x\lmap{[g,b]}y\lmap{[h,c]} z,
$$
is defined as follows. Let $\mu:U\rightarrow V$ be the morphism in
$\cl{C}$ given by $\pi_\X (b)$.  Since $\pi_\G(h)=V$, we may take a
morphism $\mu_h:\mu^*h\map h$ in $\cl{G}$ over $\mu$ representing
the pull back of $h$ along $\mu$ (this involves a choice, unique up
to canonical isomorphism). Since $\ma(y)=\tar(h)$ and $M$ is fibred
in sets, it follows that $\tar(\mu^*h)=\sour(g)$ and
$\ma(b)=\tar(\mu_h)$. Hence it makes sense to compose $[h,c]$ and
$[g,b]$ by
\begin{equation}\label{eq:comppre}
[h,c][g,b]:=[g\cdot \mu^*h, \;c\circ (b\cdot \mu_h)\circ \beta(x,g,
\mu^*h)],
\end{equation}
where $\beta$ is defined in {\bf (a3)}  of Section~\ref{subsec:act},
and this definition does not depend on the choices of
representatives $(g,b)$ and $(h,c)$, nor on the choices of $\mu^*h$
and $\mu_h$. For each $x\in \Obj(\cl{X})$, there is an associated
identity $\mr{id}_x\in \mr{Hom}_{\cl{X}\pq\cl{G}}(x,x)$ given by
\begin{equation}\label{eq:idpre}
\mr{id}_x:=[\un_{\ma(x)},\; \varepsilon(x)],
\end{equation}
with $\varepsilon$ defined in {\bf (a3)}.
Proposition~\ref{PropPreqCfg} below asserts that $\cl{X}\pq\cl{G}$
is indeed a category, with objects \eqref{eq:objpre} and morphisms
\eqref{eq:morpre}. Moreover, the maps $\Obj(\cl{X}\pq\cl{G}) \to
\Obj(\cl{C})$, sending objects $x$ to $\pi_\cl{X}(x)$, and
$\mr{Hom}_{\cl{X}\pq\cl{G}}(x,y) \to
\mr{Hom}_{\cl{C}}(\pi_\cl{X}(x),\pi_\cl{X}(y))$, given by
\begin{equation}\label{eq:func}
[g,b] \mapsto \pi_\X(b)
\end{equation}
(the reader may verify that this is well defined), define a functor
\begin{equation}\label{eq:projpre}
\pi_{\cl{X}\pq\cl{G}}: \cl{X}\pq\cl{G}\map \cl{C}.
\end{equation}

\begin{proposition}\label{PropPreqCfg}
The definitions in \eqref{eq:objpre}, \eqref{eq:morpre},
\eqref{eq:comppre} and \eqref{eq:idpre}  make $\X\pq\G$ into a
category and $\pi_{\cl{X}\pq\cl{G}}$ into a functor, in such a way
that $(\X\pq\G, \pi_{\cl{X}\pq\cl{G}})$ is a category fibred in
groupoids over $\cl{C}$.
\end{proposition}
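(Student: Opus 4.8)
The plan is to verify, in order, that \eqref{eq:objpre}--\eqref{eq:idpre} make $\cl{X}\pq\cl{G}$ a category, that $\pi_{\cl{X}\pq\cl{G}}$ is a functor, and finally that the axioms (i)--(ii) defining a category fibred in groupoids hold. Each verification is a diagram chase in the (groupoid) fibers of $\cl{X}$ and $\cl{G}$, and the only structural inputs are the action coherences {\bf (a2)}--{\bf (a4)} and the groupoid coherences {\bf (g2)}--{\bf (g4)}. I will use throughout that $M$ is fibred in sets, so that the target/source conditions appearing in \eqref{eq:morpre} and \eqref{eq:comppre} are strict equalities (cf. Remark~\ref{rm:morphismM}).

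I would first treat the well-definedness of composition \eqref{eq:comppre}. The auxiliary data $\mu^*h$ and $\mu_h$ are unique up to a canonical isomorphism over $\mr{id}_U$; multiplying this isomorphism by $g$ and using the naturality of $\act$ and $\beta$ produces an arrow $j$ exactly as in \eqref{eq:triang} relating the two candidate composites, so they represent the same class. For independence of the representatives, given $j_1:g\to\bar g$ and $j_2:h\to\bar h$ witnessing $(g,b)\sim(\bar g,\bar b)$ and $(h,c)\sim(\bar h,\bar c)$, the arrow $j_1\cdot\mu^*j_2: g\cdot\mu^*h\to\bar g\cdot\mu^*\bar h$ is the required witness, and the triangle \eqref{eq:triang} commutes by naturality of $\beta$ together with the defining relations for $j_1$ and $j_2$.

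Associativity of composition is the technical core, and I expect it to be the main obstacle. For $x\lmap{[g,b]}y\lmap{[h,c]}z\lmap{[k,d]}w$ one computes both bracketings: in each, the $\cl{G}$-component is a triple product of $g$ and (iterated pullbacks of) $h$ and $k$, the two products being related by the associator $\alpha$ of {\bf (g3)}, while the two $\cl{X}$-components are identified by the pentagon-type coherence $(xghl)$ of {\bf (a4)}. The delicate point is the bookkeeping of pullbacks: the two composites involve pullbacks along $\pi_{\cl{X}}(b)$ and $\pi_{\cl{X}}(c)$ taken in different orders, so one must first arrange that these can be chosen compatibly inside $\cl{G}\times_{\sour,M,\tar}\cl{G}$ and then check that the induced isomorphism of $\cl{G}$-components is precisely the instance of $\alpha$ occurring in $(xghl)$, so that the coherence cell closes the diagram. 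The identity laws are comparatively direct: $\mr{id}_y\circ[g,b]=[g,b]$ follows from $(xg1)$ (using $\rho$ and $\varepsilon$) and $[g,b]\circ\mr{id}_x=[g,b]$ from $(x1g)$ (using $\lambda$ and $\varepsilon$), once the relevant pullback of $\un$ is identified with the appropriate identity of $\cl{G}$.

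Functoriality of $\pi_{\cl{X}\pq\cl{G}}$ is immediate from \eqref{eq:func}: since $\beta$ and the $\un$-components lie over identities and $\act$ covers its first projection, the $\cl{X}$-component of \eqref{eq:comppre} projects to $\pi_{\cl{X}}(c)\circ\pi_{\cl{X}}(b)$, and identities are sent to identities. For axiom (i), given $y$ over $V$ and $f:U\to V$, I take $x=f^*y$ with cartesian arrow $a:x\to y$ in $\cl{X}$ and set the lift to be $[\un_{\ma(x)},\,a\circ\varepsilon(x)]:x\to y$, which lies over $f$. For axiom (ii), I would show every $[g,b]:x\to y$ is cartesian: given a test arrow $[e,p]:z\to y$ over $fg'$, the $\cl{G}$-component of the sought filler is forced to be a pullback of $e\cdot g^{-1}$, built from $\inv$ and the inversion coherences $\i_l,\i_r$, while its $\cl{X}$-component is produced by the cartesianity of the arrows of $\cl{X}$; uniqueness follows from the relation $\sim$ together with the uniqueness of cartesian fillers in $\cl{X}$. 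Combining (i) and (ii) shows that $(\cl{X}\pq\cl{G},\pi_{\cl{X}\pq\cl{G}})$ is a category fibred in groupoids, completing the argument.
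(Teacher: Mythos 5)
Your proposal is correct and follows essentially the same route as the paper's proof (Appendix~\ref{app:preq}): well-definedness and associativity of \eqref{eq:comppre} via compatible choices of pullbacks with the associator $\alpha$ serving as the witness for $\sim$ and the coherences $(xghl)$, $(xg1)$, $(x1g)$ closing the diagrams, functoriality of $\pi_{\cl{X}\pq\cl{G}}$ from $\pi_\X(\beta)=\pi_\X(\varepsilon)=\mr{id}$, and the cartesian lift $[\un_{\ma(x)},\,a\circ\varepsilon(x)]$ for axiom (i). The only cosmetic divergence is in axiom (ii): the paper reduces it to showing that every morphism over an identity is invertible, exhibiting the explicit inverse built from $\inv$, $\i_r$, $\beta$ and the coherence $(gg^{-1}g)$, whereas you verify cartesianness of an arbitrary $[g,b]$ directly; both reductions are standard and rest on the same inversion data.
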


The proof of this proposition is discussed in
Appendix~\ref{app:preq}.

\begin{definition}\label{def:pquot}
The category fibred in groupoids $\X\pq\G$ is the
\textbf{prequotient} of $\X$ by the action of $\G$.
%\begin{definition}\label{def:quot}
The \textbf{quotient} $\cl{X}/\cl{G}$ of $\cl{X}$ by the  action of
$\cl{G}$ is the stackification of the prequotient $\X\pq\G$.
%\end{definition}
\end{definition}

There is a natural projection functor
\begin{equation}\label{eq:quotproj}
\q: \cl{X}\map \cl{X}\pq\cl{G}
\end{equation}
defined by
%\begin{align*}
\begin{equation}\label{eqnProjExpl}
\q(x):=x,\qquad \q(b):=[\un_{\ma(x)},b\scirc \varepsilon (x)],
%\end{align*}
\end{equation}
for $x\in \Obj(\cl{X})$ and $b:x\to y$ morphism in $\cl{X}$. The
fact that $\q$ respects composition follows from $(xg1)$ in ${\bf
(a4)}$, while $\q$ respects the identity by definition. Since
$\pi_\X(\varepsilon)=\mr{id}$, it follows that $\q$ commutes with
the projections to $\cl{C}$. This shows the next result.

\begin{proposition}
The functor $\q$ is a morphism of categories fibred in groupoids.
\end{proposition}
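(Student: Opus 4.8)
The plan is to check the two conditions defining a morphism of categories fibred in groupoids for the functor $\q$ of \eqref{eqnProjExpl}: that $\q$ is a genuine functor (preserving identities and composition), and that it intertwines the structure functors to $\cl{C}$, i.e. $\pi_{\X\pq\G}\scirc\q=\pi_\X$. The intertwining property is immediate on objects, where $\q$ is the identity, and on morphisms it follows from \eqref{eq:func} together with the fact that $\varepsilon$ is a $2$-isomorphism lying over identities: indeed $\pi_{\X\pq\G}(\q(b))=\pi_\X(b\scirc\varepsilon(x))=\pi_\X(b)$, since $\pi_\X(\varepsilon(x))=\mr{id}$. Preservation of identities is equally direct, as $\q(\mr{id}_x)=[\un_{\ma(x)},\,\mr{id}_x\scirc\varepsilon(x)]=[\un_{\ma(x)},\varepsilon(x)]$ is precisely the identity of $\X\pq\G$ defined in \eqref{eq:idpre}.

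The substance of the argument is therefore preservation of composition, $\q(c\scirc b)=\q(c)\scirc\q(b)$ for $x\lmap{b}y\lmap{c}z$ in $\X$. I would expand $\q(c)\scirc\q(b)$ by feeding $\q(b)=[\un_{\ma(x)},\,b\scirc\varepsilon(x)]$ and $\q(c)=[\un_{\ma(y)},\,c\scirc\varepsilon(y)]$ into the composition rule \eqref{eq:comppre}. Setting $\mu=\pi_\X(b)$, the essential observation is that units pull back to units: $\mu^*\un_{\ma(y)}=\un_{\ma(x)}$, with the cartesian arrow $\mu_h\colon\un_{\ma(x)}\to\un_{\ma(y)}$ obtained by applying $\un$ to the canonical arrow $\ma(x)\to\ma(y)$. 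Hence the $\G$-component of $\q(c)\scirc\q(b)$ is the product $\un_{\ma(x)}\cdot\un_{\ma(x)}$, and its representing arrow in $\X$ is $(c\scirc\varepsilon(y))\scirc\big((b\scirc\varepsilon(x))\cdot\mu_h\big)\scirc\beta(x,\un_{\ma(x)},\un_{\ma(x)})$.

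To match this with $\q(c\scirc b)=[\un_{\ma(x)},\,(c\scirc b)\scirc\varepsilon(x)]$, I would use the equivalence relation of \eqref{eq:morpre} with the arrow $j=\lambda(\un_{\ma(x)})\colon\un_{\ma(x)}\cdot\un_{\ma(x)}\to\un_{\ma(x)}$; it satisfies $\tar(j)=\mr{id}$ because $\tar\mult=\tar\, n_{\tar}$ by {\bf (g2)} and $M$ is fibred in sets. What remains is to verify that the triangle \eqref{eq:triang} commutes, and this is the heart of the matter. I would simplify its long leg in three steps: functoriality of the action map $\act$ splits $(b\scirc\varepsilon(x))\cdot\mu_h$ as $(b\cdot\mu_h)\scirc(\varepsilon(x)\cdot\mr{id})$; naturality of $\varepsilon$ along $b$ rewrites $\varepsilon(y)\scirc(b\cdot\mu_h)$ as $b\scirc\varepsilon(x)$; and finally the multiplication-by-identity coherence $(x1g)$ of {\bf (a4)} (with $g=\un_{\ma(x)}$) turns $(\varepsilon(x)\cdot\mr{id})\scirc\beta(x,\un_{\ma(x)},\un_{\ma(x)})$ into $\mr{id}_x\cdot\lambda(\un_{\ma(x)})$. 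After these reductions both legs of \eqref{eq:triang} equal $(c\scirc b)\scirc\varepsilon(x)\scirc(\mr{id}_x\cdot j)$, completing the verification.

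The main obstacle is exactly this coherence bookkeeping: one must keep precise track of the $2$-isomorphisms $\beta$, $\varepsilon$, $\lambda$ and of the cartesian arrows witnessing $\mu^*\un_{\ma(y)}=\un_{\ma(x)}$, and confirm that the triangle \eqref{eq:triang} closes on the nose. By contrast, independence of the auxiliary choices (of representatives $(g,b)$, $(h,c)$ and of the pullback $\mu^*h$) need not be revisited here, as it is already part of the well-definedness of composition established in Proposition~\ref{PropPreqCfg}.
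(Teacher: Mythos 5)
Your proposal is correct and is essentially a fully worked-out version of the paper's own (very terse) argument, which likewise reduces everything to $\pi_\X(\varepsilon)=\mr{id}$ for compatibility with the projections, the definition \eqref{eq:idpre} for identities, and a unit-absorption coherence from ${\bf (a4)}$ for composition. The only discrepancy is that the paper attributes the composition step to $(xg1)$ whereas your reduction correctly invokes $(x1g)$ (since the map appearing after $\beta(x,\un_{\ma(x)},\un_{\ma(x)})$ is $\varepsilon(x)\cdot\mr{id}$, not $\varepsilon(x\cdot\un_{\ma(x)})$), so your bookkeeping is, if anything, the more precise one.
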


We keep the notation
\begin{equation}\label{eq:quotmap2}
\q: \X \to \X/\G
\end{equation}
for the morphism induced by \eqref{eq:quotproj} and the
stackification $\X\pq\G \to \X/\G$ (see
Prop.~\ref{PropStackification}).

\begin{remark}\label{ex:actfib}
Note that any action of $\cl{G}$ on $\X$ is on the fibers of the
prequotient map $\cl{X}\ra \cl{X}\pq\cl{G}$ via $\gamma_0$ defined
by
$$
\gamma_0(x,g):=[g,\mr{id}_{xg}]: x\ra xg.
$$
Here the higher coherences $(xg1)$ and $(x1g)$ of ${\bf (a4)}$ are
used to verify that $\gamma_0$ is indeed a natural transformation,
and $(xg1)$ is used to prove  conditions $(\gamma\beta)$ and
$(\gamma\varepsilon)$ of Def.~\ref{DefActOnFibers}. It follows from
Remark~\ref{RemActFibXG}(b) that $\cl{G}$ also acts on the fibers of
$\cl{X}\ra\cl{X}/\cl{G}$.
\end{remark}

\begin{example}[The representable case]\label{ex:quotientst}
Suppose that $\G$ and $\X$ are representable, i.e., isomorphic to
manifolds $G$ and $X$. Then the action of $\G$ on $\X$ boils down to
an ordinary Lie groupoid action of $G\rra M$ on $X$, along $a: X\to
M$. Let us denote $M$ by $G_0$.

In this case, the prequotient $X\pq G$ is isomorphic in
$\CFG_{\cl{C}}$ to the category fibred in groupoids $[X/G]_p$, whose
objects are trivial principal $G$-bundles equipped with an
equivariant map to $X$, and whose morphisms are morphisms of bundles
commuting with the map to $X$. Indeed, an object of $[X/G]_p$ is
equivalent to a manifold $U$ (thought of as the base of the bundle)
equipped with a morphism $c:U \ra X$. More explicitly, the
corresponding $G$-bundle is
$$
U \times_{a c, G_0, t} G \stackrel{\mathrm{pr}}{\ra} U,
$$
and the map $U\times_{G_0} G\ra X$ is given by $(z,g)\mapsto
c(z)\cdot g$. Viewing $X$ as a category fibred in groupoids, the
pair $(U,c)$ is the same as an object in $X$, and hence an object in
$X\pq G$ (since $\Obj(X\pq G)=\Obj(X)$).

Similarly, a morphism in $[X/G]_p$ between the objects $x$, $x'$,
corresponding to $c: U\ra X$ and $c': U'\ra X$, is equivalent to a
pair of smooth maps $\mu: U\ra U'$ and $\nu: U\ra G$ such that $a  c
= t   \nu$ and $c \cdot \nu = c'  \mu$ (here $c\cdot \nu$ is defined
by the $G$-action on $X$). More explicitly, the bundle map $U
\times_{G_0} G\ra U'\times_{G_0} G$ is given by $(z,g)\mapsto
(\mu(z), \nu(z)^{-1}g)$. We must show that giving such $\mu$ and
$\nu$ is the same as giving a morphism in $X\pq G$ between $x$ and
$x'$. This follows from the definition of prequotient. Indeed,  an
object $g$ of $G$ (viewed in $\CFG_\cl{C}$) is the same as a smooth
map $\nu: \tilde{U}\ra G$, and by \eqref{eq:morpre} we have the
condition $a c=t  \nu$, which implies that $\tilde{U}= U$. The
object $x \cdot g$ of $X$ is given by $c \cdot \nu: U\ra X$, and a
morphism $b: x\cdot g\ra x'$ in $X$ is the same as a smooth map
$\mu: U\ra U'$ such that $c'  \mu= c\cdot \nu$.

Following Def.~\ref{def:pquot}, the quotient of $X$ by $G$, given by
the stackification of $[X/G]_p$, is just the usual quotient stack of
an action of a Lie groupoid, as described in
Section~\ref{subsec:LG}.
\end{example}

%%%%%%%%%%%%%%%%%%%%%%%%%%%%%%%%%%%%%%%%%%%%%%%%%%%%%%%%%%%%%%%%%%%%%%%%%%%%%%%%%%%%%
\subsection{Conditions for the prequotient to be a prestack}\label{subsec:StackQuotient}

Let a cfg-groupoid $\G\rra M$ act on a category fibred in groupoids
$\X$ (on the right, along $\ma:\X\to M$), and consider the
prequotient $\X\pq\G \in \Obj(\CFG_{\cl{C}})$. We will be
particularly interested in the case where $\X\pq\G$ is a prestack.
The following are simple consequences of this fact:

\begin{proposition}\label{propXtoX/Gepi/Gprestackthen}
Suppose that the prequotient $\X\pq \G \in \Obj(\CFG_{\cl{C}})$ is a
prestack. Then
\begin{itemize}
\item[(a)] the  morphism $\q: \cl{X}\map \cl{X}/\cl{G}$ in \eqref{eq:quotmap2} is an epimorphism;

\item[(b)] the natural morphism $ \cl{X}\times_{\cl{X}\pq\cl{G}}\cl{X}\map \cl{X}\times_{\cl{X}/\cl{G}}\cl{X}$
is an isomorphism.
\end{itemize}
\end{proposition}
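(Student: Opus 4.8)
The plan is to factor the quotient map through the prequotient and exploit the universal properties of stackification recorded in Proposition~\ref{PropStackification}. Throughout, write $\stack:\X\pq\G \to \X/\G$ for the stackification morphism, so that the map $\q:\X\to \X/\G$ of \eqref{eq:quotmap2} is the composition of the projection $\q:\X\to \X\pq\G$ from \eqref{eq:quotproj} with $\stack$.

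For $(a)$, I would argue that each of the two factors is an epimorphism and then invoke the stability of epimorphisms under composition. The projection $\q:\X\to \X\pq\G$ is the identity on objects: indeed $\Obj(\X\pq\G)=\Obj(\X)$ and $\q(x)=x$, so for any manifold $U$ and any $y\in(\X\pq\G)_U$ one may take the trivial cover $U\to U$ and lift $y$ to itself in $\X_U$. Hence this projection is an epimorphism. Since $\X\pq\G$ is assumed to be a prestack, Proposition~\ref{PropStackification}(iii) gives that $\stack$ is a monomorphism and an epimorphism. The composition $\q=\stack\,\scirc\,\q$ is therefore an epimorphism, as required.

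For $(b)$, the key observation is that the comparison morphism in question is induced by $\stack$, and that $\stack$ is a monomorphism by Proposition~\ref{PropStackification}(iii). More generally, I claim that whenever $F':\cl{Y}\to\cl{Z}$ is a monomorphism of categories fibred in groupoids and $F:\X\to\cl{Y}$ is any morphism, the natural morphism $\X\times_{\cl{Y}}\X \to \X\times_{\cl{Z}}\X$ (formed using $F$ and $F'F$, respectively) is an isomorphism. Applying this with $F=\q$, $\cl{Y}=\X\pq\G$ and $F'=\stack$, $\cl{Z}=\X/\G$ then yields the statement. To prove the claim I would check that the induced functor on fibres over each manifold $U$ is an equivalence: an object of $(\X\times_{\cl{Y}}\X)_U$ is a triple $(x_1,a,x_2)$ with $a:F(x_1)\to F(x_2)$ an isomorphism over $U$, and since $F'_U$ is fully faithful it sets up a bijection between such isomorphisms $a$ and isomorphisms $F'_U(a):F'F(x_1)\to F'F(x_2)$ (full faithfulness both preserves and reflects isomorphisms). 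Faithfulness of $F'_U$ translates verbatim the commuting-square condition defining morphisms of the two fibred products, so the comparison functor is fully faithful and surjective on objects, hence essentially surjective, i.e. an equivalence. By the criterion that a morphism is an isomorphism iff it restricts to an equivalence over each $U$, the claim follows.

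The computations are all routine; the only point requiring care is the bookkeeping of which morphisms define the two fibred products, and verifying that the comparison functor is genuinely the one induced by $\stack$ acting on the isomorphism component of each triple while fixing the $\X$-components. Once the general monomorphism lemma is isolated as above, both parts reduce immediately to Proposition~\ref{PropStackification}(iii), together with the stability of epimorphisms under composition.
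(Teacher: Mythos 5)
Your proof is correct and follows essentially the same route as the paper: part (a) is the same two-step argument (the projection $\X\to\X\pq\G$ is surjective on objects, hence an epimorphism, and the stackification map is an epimorphism by Proposition~\ref{PropStackification}(iii)), while for part (b) the paper simply declares the claim ``standard'' and your general lemma---that a monomorphism $F':\cl{Y}\to\cl{Z}$ induces an isomorphism $\X\times_{\cl{Y}}\X\to\X\times_{\cl{Z}}\X$, applied to $\stack$---is exactly the standard fact being invoked, correctly verified fibrewise.
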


\begin{proof}
The morphism $\cl{X}\ra \cl{X}\pq\cl{G}$ is surjective at the level
of objects, hence it is an epimorphism. The morphism
$\cl{X}\pq\cl{G}\ra \cl{X}/\cl{G}$ is an epimorphism, see
Prop.~\ref{PropStackification} (iii), and part $(a)$ follows. Part
$(b)$ is  standard.
\end{proof}

We will now discuss conditions guaranteeing that the prequotient
$\X\pq\G$ is a prestack.

\begin{definition}\label{def1free}
The action of $\cl{G}$ on $\cl{X}$ is said to be \textbf{1-free} if,
for all $x\in \Obj(\cl{X})$, the section functor of the action,
$$
\act(x,-): \tar^{-1}(\ma(x))\map \cl{X},
$$
is faithful. (Here $\tar^{-1}(\ma(x))$ is the fiber of the functor
$\tar$ over the object $\ma(x)$.)
\end{definition}
In other words, the action is 1-free if for all $j,j':g\rightarrow
\bar{g}$ in $\cl{G}$ with $\tar(j)=\tar(j')=\mr{id}_{\ma(x)}$, we
$$
\mr{id}_x\cdot j=\mr{id}_x\cdot j'\quad \then \quad j=j'
$$
(note the formal analogy with the usual set-theoretic notion of freeness, but now applied to 1-morphisms, hence the terminology). It is straightforward to verify that if the action-projection map
$\Delta$ \eqref{eq:Delta} of the action is faithful, then the action
is 1-free.

The following is the main result of this section.

\begin{proposition}\label{prop1freeThenPrestack}
If $\G$ is a stacky groupoid, $\X$ is a prestack, and the action of
$\G$ on $\X$ is 1-free (in particular, if the action-projection map
is faithful), then $\X\pq\G$ is a prestack.
\end{proposition}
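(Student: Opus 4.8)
The plan is to verify directly that $\X\pq\G$ satisfies the prestack axiom \textbf{(A1)}. Since $\Obj(\X\pq\G)=\Obj(\X)$ and restriction in the prequotient is given (via the fibred structure of Prop.~\ref{PropPreqCfg}) by $[g,b]\mapsto[g|_{U_\alpha},b|_{U_\alpha}]$, while the action commutes with restriction because $\act$ is a morphism of categories fibred in groupoids, the task reduces to the following. Given $x,y\in\X_U$, an open cover $(U_\alpha\to U)_\alpha$, and isomorphisms $\phi_\alpha=[g_\alpha,b_\alpha]\colon x|_{U_\alpha}\to y|_{U_\alpha}$ with $\phi_\alpha|_{U_{\alpha\beta}}=\phi_\beta|_{U_{\alpha\beta}}$, I would produce a unique $\phi\colon x\to y$ over $U$ restricting to the $\phi_\alpha$. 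Unwinding the agreement condition through the equivalence relation \eqref{eq:triang}, the equality $[g_\alpha|,b_\alpha|]=[g_\beta|,b_\beta|]$ over $U_{\alpha\beta}$ furnishes isomorphisms $j_{\alpha\beta}\colon g_\alpha|_{U_{\alpha\beta}}\to g_\beta|_{U_{\alpha\beta}}$ in $\G$ with $\tar(j_{\alpha\beta})=\mr{id}$ and $b_\beta\scirc(\mr{id}_x\cdot j_{\alpha\beta})=b_\alpha$.

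The key point, and the only place the hypotheses genuinely interact, is that \emph{1-freeness makes the $j_{\alpha\beta}$ canonical}. Indeed, since the action is 1-free, the section functor $\act(x,-)\colon\tar^{-1}(\ma(x))\to\X$ is faithful; as the $b_\bullet$ are isomorphisms in the groupoid fibers of $\X$, each $j_{\alpha\beta}$ is the \emph{unique} solution of its defining equation. This uniqueness immediately yields the cocycle condition $j_{\alpha\gamma}=j_{\beta\gamma}\scirc j_{\alpha\beta}$ on triple overlaps: using functoriality of $\act(x,-)$ one checks that both $j_{\alpha\gamma}$ and $j_{\beta\gamma}\scirc j_{\alpha\beta}$ satisfy $b_\gamma\scirc(\mr{id}_x\cdot(-))=b_\alpha$, hence they coincide. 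This is precisely the cocycle condition needed to apply axiom \textbf{(A2)} to the stack $\G$.

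Now I would glue. Applying \textbf{(A2)} for $\G$ to the descent datum $(g_\alpha,j_{\alpha\beta})$ produces $g\in\G_U$ together with isomorphisms $\theta_\alpha\colon g|_{U_\alpha}\to g_\alpha$ compatible with the $j_{\alpha\beta}$ (i.e.\ $\theta_\beta=j_{\alpha\beta}\scirc\theta_\alpha$ on $U_{\alpha\beta}$). Because $M$ is fibred in sets (Remark~\ref{rm:morphismM}), the object $\tar(g)$ restricts compatibly to $\ma(x)|_{U_\alpha}$, forcing $\tar(g)=\ma(x)$, so $x\cdot g$ is defined over $U$. Setting $c_\alpha:=b_\alpha\scirc(\mr{id}_x\cdot\theta_\alpha)\colon (x\cdot g)|_{U_\alpha}\to y|_{U_\alpha}$, the relation $\theta_\beta=j_{\alpha\beta}\theta_\alpha$ together with $b_\beta\scirc(\mr{id}_x\cdot j_{\alpha\beta})=b_\alpha$ shows that the $c_\alpha$ agree on overlaps. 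Since $\X$ is a prestack, axiom \textbf{(A1)} for $\X$ glues them to a unique isomorphism $c\colon x\cdot g\to y$ over $U$. Then $\phi:=[g,c]$ is a morphism $x\to y$ in $(\X\pq\G)_U$, and using $j=\theta_\alpha$ in \eqref{eq:triang} one sees $\phi|_{U_\alpha}=[g|_{U_\alpha},c_\alpha]=[g_\alpha,b_\alpha]=\phi_\alpha$, giving existence.

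For uniqueness, suppose $\phi=[g,c]$ and $\phi'=[g',c']$ both restrict to the $\phi_\alpha$. Exactly as above, 1-freeness yields \emph{unique} comparison isomorphisms $k_\alpha\colon g|_{U_\alpha}\to g'|_{U_\alpha}$ with $\tar(k_\alpha)=\mr{id}$ and $c'\scirc(\mr{id}_x\cdot k_\alpha)=c$; uniqueness forces $k_\alpha=k_\beta$ on overlaps, so axiom \textbf{(A1)} for $\G$ glues them to $k\colon g\to g'$ over $U$. Then $\tar(k)=\mr{id}$ (again since $M$ is fibred in sets), and $c'\scirc(\mr{id}_x\cdot k)$ and $c$ agree on the cover, so \textbf{(A1)} for $\X$ gives $c'\scirc(\mr{id}_x\cdot k)=c$; hence $(g,c)\sim(g',c')$ and $\phi=\phi'$. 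The hard part, conceptually, is the second paragraph: recognizing that faithfulness of the section functor is exactly the ingredient that rigidifies the a priori non-canonical transition morphisms into honest cocycles and thereby makes descent along the stack $\G$ possible; once that is in place, everything else is a bookkeeping of the stack and prestack axioms.
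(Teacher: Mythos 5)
Your proof is correct and follows essentially the same route as the paper's: unwind the equivalence relation defining morphisms in $\X\pq\G$ to extract transition isomorphisms in $\G$, use 1-freeness to force the cocycle (resp.\ agreement) conditions on overlaps, then invoke the stack axioms for $\G$ and the prestack axiom for $\X$ to glue. The only cosmetic difference is that you phrase the 1-freeness step as uniqueness of the solution $j$ of $b_\beta\scirc(\mr{id}_x\cdot j)=b_\alpha$, whereas the paper cancels the $b$'s directly and applies 1-freeness to the resulting equality $\mr{id}_x\cdot j=\mr{id}_x\cdot j'$; these are the same argument.
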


\begin{proof}
Consider a manifold $U$, an open cover $(U_\alpha\map U)_\alpha$,
and consider objects $x,x'\in (\cl{X}\pq\cl{G})_U=\cl{X}_U$. We must
verify the following:
\begin{itemize}
\item[(i)] Given morphisms in $(\cl{X}\pq\cl{G})_U$,
$$
[g,b],[\bar{g},\bar{b}]: x\map x',
$$
such that  $[g,b]_{|U_\alpha}= [\bar{g},\bar{b}]_{|U_\alpha}$ for
each $\alpha$,
 then $[g,b]=[\bar{g},\bar{b}]$.

\item[(ii)] Given
 $[g_\alpha, b_\alpha]: x_{|U_\alpha}\rightarrow
x'_{|U_\alpha}$ morphisms in $(\cl{X}\pq\cl{G})_{U_\alpha} $ such
that $[g_\alpha, b_\alpha]_{|U_{\alpha\beta}} =[g_\beta,
b_\beta]_{|U_{\alpha\beta}}$ for all $\alpha$ and $\beta$, then
there exists $[g,b]:x\rightarrow x'$ in $(\cl{X}\pq\cl{G})_U$ such
that, for all $\alpha$, $[g,b]_{|U_\alpha}=[g_\alpha, b_\alpha]$.
\end{itemize}

Verifying (i) amounts to proving that there exists $j: g\lmap{\sim}
\bar{g}$ in $\cl{G}$ such that $\tar(j)=\mr{id}_{\ma(x)}$ and
$\bar{b}\circ (\mr{id}_x\cdot j)=b$. Here $g,\bar{g}\in \cl{G}$ and
$\ma(x)=\tar(g)=\tar(\bar{g})$; moreover, $b:x\cdot g\rightarrow x'$
and $\bar{b}:x\cdot \bar{g} \rightarrow x'$ are arrows in $\cl{X}$
over the identity of $U$, hence isomorphisms. Notice also that
$[g,b]_{|U_\alpha} =[g_{|U_\alpha}, b_{|U_\alpha}]$ by Remark
\ref{rempullbackmorphisminXG}.

The assumption that $[g,b]_{|U_\alpha}=
[\bar{g},\bar{b}]_{|U_\alpha}$ says that, for all $\alpha$, there is
a $j_\alpha: g_{|U_\alpha}\lmap{\sim} \bar{g}_{|U_\alpha}$ such that
$\tar(j_\alpha)=\mr{id}_{\ma(x_{|U_\alpha})}$ and
\begin{eqnarray}\label{eqn1}
\bar{b}_{|U_\alpha}\circ (\mr{id}_{x_{|U_\alpha}}\cdot j_\alpha)=
b_{|U_\alpha}.
\end{eqnarray}
Restricting this last equality to $U_{\alpha\beta}$ and using the
fact that $\bar{b}$ is an isomorphism, it follows that, for all
$\alpha$ and $\beta$,
$$
\mr{id}_{x_{|U_{\alpha\beta}}}\cdot {j_\alpha}_{|U_{\alpha\beta}}=
\mr{id}_{x_{|U_{\alpha\beta}}}\cdot {j_\beta}_{|U_{\alpha\beta}}.
$$
Since the action is 1-free, it follows that
${j_\alpha}_{|U_{\alpha\beta}}={j_\beta}_{|U_{\alpha\beta}}$, and
since $\cl{G}$ is a prestack it follows that there exists $j:
g\rightarrow \bar{g}$ in $\cl{G}$ such that $\pi_\G(j)=\mr{id}_U$
and $j_{|U_\alpha}=j_\alpha$. Note that $j$ is an isomorphism. Using
that $\pi_\G(j)=\mr{id}_U$ and $M$ is fibred in sets, we see that
$\tar(j)=\mr{id}_{\ma(x)}$. By \eqref{eqn1} and
 $j_{|U_\alpha}=j_\alpha$, we have that
$$
\big(\bar{b}\circ (\mr{id}_x\cdot j)\big)_{|U_\alpha}=b_{|U_\alpha}.
$$
Since $\cl{X}$ is a prestack, it follows that $\bar{b}\circ
(\mr{id}_x\cdot j)=b$, and we conclude the proof of (i).

To prove (ii), we notice that the $b_\alpha$'s are isomorphisms and
that $[g_\alpha, b_\alpha]_{|U_{\alpha\beta}}=
[{g_\alpha}_{|U_{\alpha\beta}}, {b_\alpha}_{|U_{\alpha\beta}}]$.
Since $[g_\alpha, b_\alpha]_{|U_{\alpha\beta}} =[g_\beta,
b_\beta]_{|U_{\alpha\beta}}$, there are $j_{\alpha\beta}
:{g_\beta}_{|U_{\alpha\beta}}\lmap{\sim}
{g_\alpha}_{|U_{\alpha\beta}}$ in $\cl{G}$ such that
$\tar(j_{\alpha\beta})=\mr{id}_{\ma(x_{|U_{\alpha\beta}})}$ and
$$
{b_\alpha}_{|U_{\alpha\beta}}\circ (\mr{id}_{x_{|U_{\alpha\beta}}}
\cdot j_{\alpha\beta} )={b_\beta}_{|U_{\alpha\beta}}.
$$
Restricting the above equality to $U_{\alpha\beta\gamma}$ and using
the fact that the $b_\alpha$'s are isomorphisms, we conclude that
\begin{eqnarray*}
\mr{id}_{x_{|U_{\alpha\beta\gamma}}}\cdot
 {j_{\alpha\gamma}}_{|U_{\alpha\beta\gamma}}&=&
{b_\alpha}^{-1}_{|U_{\alpha\beta\gamma}}\circ
{b_\gamma}_{|U_{\alpha\beta\gamma}} \\ &=&
 {b_\alpha}^{-1}_{|U_{\alpha\beta\gamma}}\circ
{b_\beta}_{|U_{\alpha\beta\gamma}} \circ
{b_\beta}^{-1}_{|U_{\alpha\beta\gamma}}\circ
{b_\gamma}_{|U_{\alpha\beta\gamma}} \\ &=&
\mr{id}_{x_{|U_{\alpha\beta\gamma}}}\cdot(
 {j_{\alpha\beta}}_{|U_{\alpha\beta\gamma}} \circ
{j_{\beta\gamma}}_{|U_{\alpha\beta\gamma}}) .
\end{eqnarray*}
It follows from the action being 1-free that
${j_{\alpha\gamma}}_{|U_{\alpha\beta\gamma}} =
{j_{\alpha\beta}}_{|U_{\alpha\beta\gamma}} \circ
{j_{\beta\gamma}}_{|U_{\alpha\beta\gamma}}$, and using that $\cl{G}$
is a stack it follows that there exists $g\in \cl{G}_U$ and
isomorphisms
$$
\varphi_\alpha: g_{|U_\alpha} \lmap{\sim} g_\alpha
$$
in $\cl{G}_{U_\alpha}$ such that
$$
j_{\alpha\beta}\circ {\varphi_\beta}_{|U_{\alpha\beta}}=
{\varphi_\alpha}_{|U_{\alpha\beta}}.
$$
Notice that $\tar(\varphi_\alpha)= \mr{id}_{\ma(x_{|U_\alpha})}$. We
now use the fact that $M$, viewed as a category fibred in groupoids,
is automatically a prestack to prove that
$$
\tar(g)=\ma(x).
$$
Indeed, for all $\alpha$ we have $(\tar
g)_{|U_\alpha}=\tar(g_{|U_\alpha})=\tar(g_\alpha)=
\ma(x_{|U_\alpha}) =(\ma x)_{|U_\alpha} $, where in the second
equality we used the fact that $M$ is fibred in sets.

Since $\tar(\varphi_\alpha)= \mr{id}_{\ma(x_{|U_\alpha})}$, we can
define an isomorphism in $\cl{X}_{U_\alpha}$ by
$$
\overline{b}_\alpha:=b_\alpha\circ (\mr{id}_{x_{|U_\alpha}}\cdot
\varphi_\alpha): (x\cdot g)_{|U_\alpha}= x_{|U\alpha}\cdot
g_{|U\alpha}\lmap{\sim} x'_{|U_\alpha}.
$$
We have that
\begin{eqnarray*}
 {\overline{b}_\alpha}_{|U_{\alpha\beta}}&=&
{b_\alpha}_{|U_{\alpha\beta}}\circ (\mr{id}_{x_{|U_{\alpha\beta}}}
\cdot {\varphi_\alpha}_{|U_{\alpha\beta}})\\ &=&
{b_\alpha}_{|U_{\alpha\beta}}\circ
\big(\mr{id}_{x_{|U_{\alpha\beta}}}\cdot (j_{\alpha\beta}\circ
{\varphi_\beta}_{|U_{\alpha\beta}})\big)\\&=&
{b_\alpha}_{|U_{\alpha\beta}}\circ (
\mr{id}_{x_{|U_{\alpha\beta}}}\cdot j_{\alpha\beta})\circ
(\mr{id}_{x_{|U_{\alpha\beta}}}\cdot
 {\varphi_\beta}_{|U_{\alpha\beta}})\\&=&
{b_\beta}_{|U_{\alpha\beta}}\circ
(\mr{id}_{x_{|U_{\alpha\beta}}}\cdot
{\varphi_\beta}_{|U_{\alpha\beta}})\\&=&
{\overline{b}_\beta}_{|U_{\alpha\beta}}.
\end{eqnarray*}

Since $\cl{X}$ is a prestack, there exists a morphism in $\cl{X}_U$,
$$
b:x\cdot g\map x',
$$
such that  $b_{|U_\alpha}=\overline{b}_\alpha$, for all $\alpha$.
This produces a morphism in $(\cl{X}/\cl{G})_U$:
$$
[g,b]: x\map x'.
$$
The existence of the isomorphisms $\varphi_\alpha$ implies that
$[g,b]_{|U_\alpha}= [g_\alpha,b_\alpha]$, and this concludes  the
proof of (ii).
\end{proof}

%%%%%%%%%%%%%%%%%%%%%%%%%%%%%%%%%%%%%%%%%%%%%%%%%%%%%%%%%%%%%%%%%%%%%%%%%%%%%%%%%%%%
\subsection{The universal property of
(pre)quotients}\label{subsec:universal}

Let $\G\rra M$ be a cfg-groupoid acting (on the right) on $\X \in
\Obj(\CFG_{\cl{C}})$.

\begin{proposition}\label{LemmaXGtoS}
The quotient map $\q: \X \to \X\pq \G$  has the following universal
property:
\begin{itemize}
\item[(i)] Suppose that $\G$ acts on $\X$ on the fibers of a morphism
$\Proj:\cl{X}\ra \cl{S}$ in $\CFG_{\cl{C}}$ via $\gamma$ (see
Def.~\ref{DefActOnFibers}). Then there exists a pair
$(\Phi,\varphi)$, where
$$
\Phi:\cl{X}\pq\cl{G}\ra\cl{S},
$$
$\varphi: \Phi\q\lmap{\sim} \Proj$,
and such that
$$
\gamma*(\varphi\,\scirc\,\mr{id}_{\mathrm{pr}})=
(\varphi\,\scirc\,\mr{id}_{\act})*(\mr{id}_\Phi\,\scirc\,\gamma_0),
$$
where $\mathrm{pr}: \X\times_M \G \to \X$ is the natural projection,
$\act: \X\times_M \G \to \X$ is the action map, and $\gamma_0$
defined in Rem.~\ref{ex:actfib}. In other words, this last condition
means that for any $(x,g)\in \cl{X}\times_M\cl{G}$ the following
diagram of morphisms in $\cl{S}$ commutes:
$$
(\varphi\gamma):\qquad \xymatrix{
\Phi(x)\ar[r]^-{\varphi}\ar[d]_-{\Phi[g,\mr{id}_{xg}]}&\Proj(x)\ar[d]^-{\gamma}\\
\Phi(xg)\ar[r]_-{\varphi}&\Proj(xg)}
$$
(We notice that the left vertical map is $\Phi(\gamma_0)$.)

\item[(ii)] The pair $(\Phi,\varphi)$ is unique up to canonical
isomorphism, in the following sense. Let $\cl{G}$ act on the fibers
of another morphism $\bar{\Proj}:\cl{X}\ra\cl{S}$ via
$\bar{\gamma}$, and let $\varrho: \Proj \ra\bar{\Proj}$ be a
2-isomorphism such that for any $(x,g)\in \cl{X}\times_M\cl{G}$ the
following diagram of morphisms in $\cl{S}$ commutes:
$$
(\varrho\gamma):\qquad \xymatrix{
\Proj(x)\ar[r]^-{\varrho}\ar[d]_-{\gamma}&\bar{\Proj}(x)\ar[d]^-{\bar{\gamma}}\\
\Proj(xg)\ar[r]_-{\varrho}&\bar{\Proj}(xg).}
$$
Let $(\overline{\Phi},\overline{\varphi})$ be a pair where
$\overline{\Phi}:\cl{X}\pq\cl{G}\ra\cl{S}$, $\overline{\varphi}:
\overline{\Phi} \q \lmap{\sim} \overline{\Proj}$, and such that
condition $(\varphi\gamma)$ is satisfied. Then there exists a unique
$\psi: \Phi\lmap{\sim} \overline{\Phi}$ such that, for any
$x\in\cl{X}$, the following square commutes:
$$
(\psi\varphi\varrho):\qquad\qquad \xymatrix{
\Phi(x)\ar[r]^-{\psi}\ar[d]_-{\varphi}&\overline{\Phi}(x)\ar[d]^-{\bar{\varphi}}\\
\Proj(x)\ar[r]_-{\varrho}&\bar{\Proj}(x). }
$$
\end{itemize}
\end{proposition}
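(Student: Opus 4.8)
The plan is to write down an explicit formula for $\Phi$ and to reduce every verification to the coherence diagrams $(\gamma\beta)$, $(\gamma\varepsilon)$ of Definition~\ref{DefActOnFibers} together with the naturality of $\gamma$. Since $\Obj(\X\pq\G)=\Obj(\X)$, I would set $\Phi(x):=\Proj(x)$ on objects, and on a morphism $[g,b]\colon x\to y$ (so $b\colon x\cdot g\to y$ in $\X$) I would set $\Phi([g,b]):=\Proj(b)\circ\gamma(x,g)$, the composite $\Proj(x)\lmap{\gamma(x,g)}\Proj(xg)\lmap{\Proj(b)}\Proj(y)$. First I would check independence of the representative: if $j\colon g\lmap{\sim}\bar g$ witnesses $(g,b)\sim(\bar g,\bar b)$, naturality of $\gamma$ along the arrow $(\mr{id}_x,j)$ of $\X\times_M\G$ gives $\gamma(x,\bar g)=\Proj(\mr{id}_x\cdot j)\circ\gamma(x,g)$, and precomposing with $\Proj(\bar b)$, together with $\bar b\circ(\mr{id}_x\cdot j)=b$, yields $\Phi([\bar g,\bar b])=\Phi([g,b])$. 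Preservation of identities is exactly $(\gamma\varepsilon)$, which reads $\Proj(\varepsilon(x))\circ\gamma(x,1)=\mr{id}_{\Proj(x)}$; and $\Phi$ commutes with the projections to $\cl{C}$ because $\gamma(x,g)$ lies over an identity.

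The main computation is functoriality on composites, and I expect it to be the chief obstacle. For $x\lmap{[g,b]}y\lmap{[h,c]}z$ with $\mu=\pi_\X(b)$ and the chosen $\mu_h\colon\mu^*h\to h$, the composite is $[g\cdot\mu^*h,\,c\circ(b\cdot\mu_h)\circ\beta(x,g,\mu^*h)]$. Applying $\Phi$ and distributing $\Proj$ over the three factors, I would first use $(\gamma\beta)$ at $(x,g,\mu^*h)$ to rewrite $\Proj(\beta(x,g,\mu^*h))\circ\gamma(x,g\cdot\mu^*h)=\gamma(xg,\mu^*h)\circ\gamma(x,g)$, and then use naturality of $\gamma$ along the arrow $(b,\mu_h)\colon(xg,\mu^*h)\to(y,h)$ of $\X\times_M\G$ to rewrite $\Proj(b\cdot\mu_h)\circ\gamma(xg,\mu^*h)=\gamma(y,h)\circ\Proj(b)$. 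These two substitutions collapse $\Phi([h,c][g,b])$ into $\Proj(c)\circ\gamma(y,h)\circ\Proj(b)\circ\gamma(x,g)=\Phi([h,c])\circ\Phi([g,b])$. The delicate point is matching the pulled-back $\mu^*h$ against both ingredients at once; once they are isolated it is a direct diagram chase. Finally I would observe that this $\Phi$ satisfies $\Phi\q=\Proj$ on the nose: on objects trivially, and on an arrow $b$ of $\X$ one has $\Phi(\q(b))=\Proj(b)\circ\Proj(\varepsilon(x))\circ\gamma(x,1)=\Proj(b)$, again by $(\gamma\varepsilon)$. Hence I may take $\varphi:=\mr{id}_\Proj$, and condition $(\varphi\gamma)$ reduces to $\Phi([g,\mr{id}_{xg}])=\gamma(x,g)$, which is immediate from the definition of $\Phi$.

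For part (ii) the key structural fact is that every arrow of $\X\pq\G$ factors as $[g,b]=\q(b)\circ\gamma_0(x,g)$, where $\gamma_0(x,g)=[g,\mr{id}_{xg}]$. I would verify this from the composition formula, using coherence $(xg1)$ (which yields $\varepsilon(xg)\circ\beta(x,g,1)=\mr{id}_x\cdot\rho(g)$, with $\rho$ the right-unit $2$-isomorphism) to identify the resulting class with $[g,b]$. Given the competing data $(\overline{\Phi},\overline{\varphi})$ and the $2$-isomorphism $\varrho$, I would define $\psi(x):=\overline{\varphi}(x)^{-1}\circ\varrho(x)\circ\varphi(x)$; this is forced by the square $(\psi\varphi\varrho)$, which gives uniqueness, and each $\psi(x)$ lies over an identity since $\varphi,\overline{\varphi},\varrho$ do (every $2$-morphism in $\CFG_\cl{C}$ being invertible).

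It then remains to prove that $\psi$ is natural, and because $\Phi$ and $\overline{\Phi}$ are functors that respect the factorization above, it suffices to check naturality separately on arrows of the form $\q(b)$ and on $\gamma_0(x,g)$. On $\q(b)$, after rewriting $\overline{\Phi}(\q(b))=\overline{\varphi}(y)^{-1}\circ\bar{\Proj}(b)\circ\overline{\varphi}(x)$ via naturality of $\overline{\varphi}$ and using $\Phi\q=\Proj$, the required identity reduces exactly to the naturality of $\varrho\colon\Proj\to\bar{\Proj}$ along $b$. On $\gamma_0(x,g)$, after rewriting $\overline{\Phi}([g,\mr{id}_{xg}])=\overline{\varphi}(xg)^{-1}\circ\bar{\gamma}(x,g)\circ\overline{\varphi}(x)$ via the barred version of $(\varphi\gamma)$, it reduces precisely to the hypothesis $(\varrho\gamma)$. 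This establishes both existence and uniqueness of $\psi$, completing the argument; specializing to $\bar{\Proj}=\Proj$, $\bar{\gamma}=\gamma$, $\varrho=\mr{id}$ recovers uniqueness of $(\Phi,\varphi)$ up to canonical isomorphism.
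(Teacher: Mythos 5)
Your proposal is correct and follows essentially the same route as the paper's proof: the same explicit formula $\Phi(x)=\Proj(x)$, $\Phi[g,b]=\Proj(b)\circ\gamma(x,g)$ with $\varphi=\mr{id}_\Proj$, the same use of $(\gamma\beta)$, $(\gamma\varepsilon)$ and naturality of $\gamma$ for functoriality, and the same factorization $[g,b]=\q(b)\circ\gamma_0(x,g)$ via $(xg1)$ to reduce naturality of $\psi$ to naturality of $\varrho$ and condition $(\varrho\gamma)$. No gaps.
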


\begin{proof}
We start with the existence of $(\Phi,\varphi)$.

We define the morphism $\Phi:\cl{X}\pq\cl{G}\ra \cl{S}$ as follows.
For any $x\in\mr{Obj}(\cl{X}\pq\cl{G}) =\mr{Obj}(\cl{X})$, we set
$\Phi(x):=\Proj(x)$. If $[g,b]:x\ra y$ is a morphism in
$\cl{X}\pq\cl{G}$ then we define $\Phi[g,b]:=\Proj(b)\gamma$,
\begin{equation}\label{eq:defgb}
\Proj(x)\lmap{\gamma}\Proj(xg)\lmap{\Proj(b)} \Proj(y).
\end{equation}
Using the naturality of $\gamma$ one shows  that $\Phi[g,b]$ is
independent of the representative chosen for $[g,b]$. The fact that
$\Phi$ sends composition of morphisms to the composition of the
images follows by the higher coherence $(\gamma\beta)$ of
Definition~\ref{DefActOnFibers}. Similarly, $\Phi(\mr{id})=\mr{id}$
as a consequence of the higher coherence $(\gamma\varepsilon)$.
Hence $\Phi$ is a functor that is a morphism of categories fibred in
groupoids. It follows from the definition of $\Phi$ and the higher
coherence $(\gamma\varepsilon)$ that $\Phi \q=\Proj$. Hence we can
choose $\varphi:=\mr{id}_\Proj$, and $(\varphi\gamma)$ is satisfied
because, by definition of $\Phi$, we have
$\Phi[g,\mr{id}_{xg}]=\gamma$. This defines $(\Phi,\varphi)$.

We now prove the uniqueness of $(\Phi,\varphi)$. First, condition
$(\psi\varphi\varrho)$ in the statement of the lemma defines $\psi$
uniquely. We must verify that $\psi: \Phi \to \overline{\Phi}$
defined in this way is indeed a natural transformation. Let
$[g,b]:x\ra y$ be a morphism in $\cl{X}\pq\cl{G}$, and consider the
following diagram:
$$
\xymatrix{
\Phi(x)\ar[rr]^-{\Phi[g,b]}\ar[dd]_-{\varphi}\ar[rd]_-{\Phi[g,\mr{id}]}&&
\Phi(y)\ar[dd]^-{\varphi}\\
&\Phi(xg)\ar[d]_-{\varphi}\ar[ur]_-{\Phi(\q(b))}&\\
\Proj(x)\ar[r]_-{\gamma}\ar[d]_-{\varrho}&\Proj(xg)\ar[r]^-{\Proj(b)}\ar[d]_-{\varrho}&
\Proj(y)\ar[d]^-{\varrho}\\
\bar{\Proj}(x)\ar[r]_-{\bar{\gamma}}&\bar{\Proj}(xg)\ar[r]^-{\bar{\Proj}(b)}&\bar{\Proj}(y)\\
&\bar{\Phi}(xg)\ar[u]_-{\bar{\varphi}}\ar[rd]^-{\bar{\Phi}(\q(b))}&\\
\bar{\Phi}(x)\ar[rr]_-{\bar{\Phi}[g,b]}\ar[uu]^-{\bar{\varphi}}\ar[ru]^-{\bar{\Phi}[g,\mr{id}]}&&
\bar{\Phi}(y)\ar[uu]_-{\bar{\varphi}}.\\
}
$$
Since $\psi=(\overline{\varphi})^{-1}\varrho \varphi$, we have to
prove that the outer rectangle commutes. Commutativity of the right
trapezia follows from naturality of $\varphi$ (the upper trapezium)
and of $\bar{\varphi}$ (the lower one). Commutativity of the left
trapezia follows from condition $(\varphi\gamma)$ applied to
$\varphi$ (the upper one) and to $\overline{\varphi}$ (the lower
one). Commutativity of the upper and bottom triangles follows from
$[g,b]=\q(b)[g,\mr{id}]$, which in turn follows from the higher
coherence $(xg1)$ of ${\bf (a4)}$, Definition~\ref{defaction}. The
left middle square commutes by condition $(\varrho\gamma)$, and the
right middle one by naturality of $\varrho$. This completes the
proof of the uniqueness of $(\Phi,\varphi)$.
\end{proof}

The universal property described in Prop.~\ref{LemmaXGtoS} still
holds when $\X\pq \G$ is replaced by its stackification $\X/\G$, as
long as $\cl{S}$ is a stack.

\begin{corollary}\label{CorolXGtoS}
Suppose that a cfg-groupoid $\G$ acts on $\X \in \Obj(\CFG_\cl{C})$
on the fibers of a morphism $\Proj:\X \to \cl{S}$ such that $\cl{S}$
is a stack. Then there exists a pair $(\Phi,\varphi)$, where $\Phi:
\X/\G \to \cl{S}$ and $\varphi: \Phi\q\lmap{\sim} \Proj$, with $\q:
\X \to \X/\G$ defined in \eqref{eq:quotmap2}, satisfying the
properties described in $(i)$ and $(ii)$ of the previous
proposition\footnote{To formulate properties $(i)$ and $(ii)$ of
Prop.~\ref{LemmaXGtoS} in the present context, we note that in
diagram $(\varphi\gamma)$ one has to substitute $\Phi(x)$ and
$\Phi(xg)$ with $\Phi(\q(x))$ and $\Phi(\q(xg))$ respectively, and
$\Phi[g,\mr{id}_{xg}]$ by $\Phi(\gamma)$, where $\gamma$ is the
2-isomorphism that makes $\cl{G}$ act on the fibers of $\q$.
Moreover, in the diagram $(\psi\varphi\varrho)$, one has to
substitute $\Phi(x)$ and $\overline{\Phi}(x)$ by $\Phi(\q(x))$ and
$\overline{\Phi}(\q(x))$, respectively.}.
\end{corollary}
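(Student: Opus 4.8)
The plan is to obtain $(\Phi,\varphi)$ by composing two universal properties: that of the prequotient (Prop.~\ref{LemmaXGtoS}) and that of the stackification (Prop.~\ref{PropStackification}). Write $\q': \X\to \X\pq\G$ for the prequotient projection \eqref{eq:quotproj} and $\stack: \X\pq\G\to \X/\G$ for the stackification map, so that $\q=\stack\scirc\q'$. Since $\G$ acts on the fibers of $\Proj: \X\to\cl{S}$ via $\gamma$, Prop.~\ref{LemmaXGtoS}(i) produces $\Phi': \X\pq\G\to\cl{S}$ together with $\varphi': \Phi'\q'\lmap{\sim}\Proj$ satisfying the coherence $(\varphi\gamma)$ at the prequotient level. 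Because $\cl{S}$ is a stack, Prop.~\ref{PropStackification}(i) applied to $\Phi'$ yields $\Phi:=(\Phi')^\sharp: \X/\G\to\cl{S}$ and a 2-isomorphism $\zeta: \Phi\stack\lmap{\sim}\Phi'$. I would then set
$$
\varphi:=\varphi'*(\zeta\,\scirc\,\mr{id}_{\q'}): \Phi\q=\Phi\stack\q'\lmap{\sim}\Phi'\q'\lmap{\sim}\Proj,
$$
which provides the required pair.

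To check property (i), recall from Rem.~\ref{ex:actfib} and Rem.~\ref{RemActFibXG}(b) that $\G$ acts on the fibers of $\q$ via $\mr{id}_{\stack}\scirc\gamma_0$, so the left vertical arrow of the stackified square $(\varphi\gamma)$ is $\Phi$ applied to this 2-isomorphism. I would paste the naturality square of $\zeta$ along $\gamma_0(x,g)=[g,\mr{id}_{xg}]$ on top of the square $(\varphi\gamma)$ already known to hold for $(\Phi',\varphi')$; since $\Phi'(\gamma_0)=\Phi'[g,\mr{id}_{xg}]=\gamma$ by the construction in Prop.~\ref{LemmaXGtoS}, the two pieces combine to yield the desired commutativity.

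For the uniqueness in (ii), I would descend to the prequotient and lift back. Given $(\overline{\Phi},\overline{\varphi})$ with $\overline{\Phi}:\X/\G\to\cl{S}$ and $\varrho:\Proj\to\overline{\Proj}$ satisfying $(\varrho\gamma)$, set $\overline{\Phi}':=\overline{\Phi}\stack$, so that $\overline{\varphi}$ reads directly as a prequotient datum $\overline{\varphi}':\overline{\Phi}'\q'\lmap{\sim}\overline{\Proj}$ under $\q=\stack\q'$. Prop.~\ref{LemmaXGtoS}(ii) then gives a unique $\psi':\Phi'\lmap{\sim}\overline{\Phi}'$ realizing the prequotient square $(\psi\varphi\varrho)$, and Prop.~\ref{PropStackification}(ii) lifts $\psi'$ uniquely to $\psi:\Phi\lmap{\sim}\overline{\Phi}$, characterized by $\psi'*\zeta=\overline{\zeta}*(\psi\,\scirc\,\mr{id}_{\stack})$ with $\overline{\zeta}=\mr{id}_{\overline{\Phi}\stack}$. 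A diagram chase then shows that $\psi$ satisfies the stackified square $(\psi\varphi\varrho)$, and its uniqueness follows by combining the uniqueness of $\psi'$ with the uniqueness of the lift.

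The hard part is purely bookkeeping: the stackification introduces the auxiliary 2-isomorphism $\zeta$ (and forces one to view $\overline{\Phi}$ as a stackification of $\overline{\Phi}\stack$), which must be threaded consistently through the coherence diagrams $(\varphi\gamma)$ and $(\psi\varphi\varrho)$. The main point to be careful about is that the uniqueness of $\psi'$ at the prequotient level and the uniqueness of the lift through the stackification combine to give genuine uniqueness of $\psi$ on $\X/\G$; once the diagrams are assembled, each verification reduces to a routine paste of naturality squares and the already-established coherences.
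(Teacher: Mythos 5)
Your proposal is correct and is essentially the paper's own argument: the paper's proof of this corollary simply says it "follows from Prop.~\ref{LemmaXGtoS} and the properties of stackification (Prop.~\ref{PropStackification})", which is exactly the factorization $\q=\stack\scirc\q'$ you use, with the existence and uniqueness clauses of the two universal properties threaded together through the auxiliary 2-isomorphism $\zeta$. Your write-up just makes explicit the bookkeeping that the paper leaves to the reader.
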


\begin{proof}
The proof follows from Prop.~\ref{LemmaXGtoS} and the properties of
stackification, see Prop.~\ref{PropStackification} (one has to
replace $\gamma_0$ by the 2-isomorphism $\gamma$ that makes $\G$ act
on the fibers of $\X \to \X / \G$, as in Remark~\ref{ex:actfib}).
\end{proof}

\begin{corollary}\label{cor:principal}
If $\X$ is a principal $\G$-bundle over a differentiable stack
$\cl{S}$, then $\cl{S}$ is canonically isomorphic to $\X/\G$.
\end{corollary}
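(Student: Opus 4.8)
The plan is to produce an explicit morphism $\Phi:\X/\G\to\cl{S}$ from the universal property of the quotient and then show that it is an isomorphism by checking that it is simultaneously an epimorphism and a monomorphism. Since $\X$ is a principal $\G$-bundle over $\cl{S}$, condition~(2) of Definition~\ref{DefPrincBundle} says that $\G$ acts on the fibers of $\Proj:\X\to\cl{S}$; as $\cl{S}$ is a differentiable stack (hence a stack), Corollary~\ref{CorolXGtoS} yields a pair $(\Phi,\varphi)$ with $\Phi:\X/\G\to\cl{S}$ and $\varphi:\Phi\,\q\lmap{\sim}\Proj$. This $\Phi$ is the candidate isomorphism, and its canonicity will follow from the uniqueness clause~(ii) of Corollary~\ref{CorolXGtoS}.

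Next I would record that the action is 1-free. Indeed, condition~(3) of Definition~\ref{DefPrincBundle} provides an isomorphism $\X\times_M\G\to\X\times_{\cl{S}}\X$ from \eqref{eq:actfibre}, and composing it with the faithful projection $\X\times_{\cl{S}}\X\to\X\times\X$ recovers the action-projection map $\Delta$ of \eqref{eq:Delta}; hence $\Delta$ is faithful and the action is 1-free. By Proposition~\ref{prop1freeThenPrestack} the prequotient $\X\pq\G$ is therefore a prestack, so that $\q:\X\to\X/\G$ is an epimorphism (Proposition~\ref{propXtoX/Gepi/Gprestackthen}(a)) and the natural map $\X\times_{\X\pq\G}\X\to\X\times_{\X/\G}\X$ is an isomorphism (Proposition~\ref{propXtoX/Gepi/Gprestackthen}(b)). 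Showing $\Phi$ epi is then immediate: $\Phi\,\q$ is isomorphic to the epimorphism $\Proj$, and if a composite $\Phi\,\q$ is an epimorphism so is $\Phi$ (given $s\in\cl{S}_U$, pull it back along $\Phi\,\q$ on a cover to objects of $\X$ and push them to $\X/\G$ via $\q$).

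The core of the argument is that $\Phi$ is a monomorphism, which I would establish with the fibered-product criterion. The action being on the fibers of $\q$ (Remark~\ref{ex:actfib}) identifies $\X\times_M\G$ with $\X\times_{\X\pq\G}\X$, and, combined with the previous paragraph, yields an isomorphism $\X\times_M\G\cong\X\times_{\X/\G}\X$; together with the isomorphism $\X\times_M\G\cong\X\times_{\cl{S}}\X$ of condition~(3) and the compatibility $(\varphi\gamma)$ of Corollary~\ref{CorolXGtoS}, it follows that the map $\X\times_{\X/\G}\X\to\X\times_{\cl{S}}\X$ induced by $\Phi$ is an isomorphism. This induced map is precisely the base change of the diagonal $\X/\G\to\X/\G\times_{\cl{S}}\X/\G$ along the epimorphism $\q\times\q:\X\times_{\cl{S}}\X\to\X/\G\times_{\cl{S}}\X/\G$; since being an isomorphism can be detected after base change along an epimorphism, the diagonal is an isomorphism, i.e. $\Phi$ is a monomorphism. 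Being both a monomorphism and an epimorphism, $\Phi$ is an isomorphism of stacks (Proposition~\ref{PropStackification}(iv)), giving the canonical identification $\cl{S}\cong\X/\G$.

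I expect the main obstacle to lie in the monomorphism step, and within it the clean identification $\X\times_M\G\cong\X\times_{\X\pq\G}\X$: essential surjectivity uses the relation $[g,b]=\q(b)\,[g,\mr{id}_{xg}]$, while faithfulness genuinely requires 1-freeness (applied to the automorphism $c^{-1}\bar{c}$ of $g$, whose image under the action functor equals $\mr{id}_{xg}$). The descent statement converting the fibered-product isomorphism into monomorphicity of $\Phi$ is standard for stacks but should be invoked explicitly, as it is the step that has no purely formal substitute.
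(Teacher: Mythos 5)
Your proof is correct and reaches the same checkpoints as the paper's (faithfulness of $\Delta$ from condition (3) of Definition~\ref{DefPrincBundle}, hence 1-freeness and the prestack property of $\X\pq\G$; reduction via Proposition~\ref{PropStackification}(iv) to showing $\Phi$ is an epimorphism and a monomorphism; canonicity from the uniqueness clause of the universal property), but it handles the decisive monomorphism step by a genuinely different route. The paper verifies full faithfulness of $\Phi_U$ by hand at the prequotient level: given $a:\Proj(x)\to\Proj(y)$ it manufactures $[g,b]$ with $\Phi[g,b]=a$ from essential surjectivity of $\X\times_M\G\to\X\times_{\cl{S}}\X$ and gets uniqueness from its fullness, never invoking $\X\times_{\X/\G}\X$. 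You instead chain $\X\times_M\G\cong\X\times_{\X\pq\G}\X\cong\X\times_{\X/\G}\X$ (Propositions~\ref{propQFaithful} and \ref{propXtoX/Gepi/Gprestackthen}(b)) against the isomorphism of condition (3) to conclude that $\X\times_{\X/\G}\X\to\X\times_{\cl{S}}\X$ is an isomorphism, identify this map as the base change of the diagonal of $\Phi$ along the epimorphism $\X\times_{\cl{S}}\X\to(\X/\G)\times_{\cl{S}}(\X/\G)$, and finish by descent of isomorphisms along epimorphisms. This buys a proof with no element chasing, at the cost of two ingredients the paper's argument does not need: Proposition~\ref{propQFaithful} (proved in the paper but used elsewhere) and the descent lemma for isomorphisms, which is standard for stacks but nowhere stated in the paper --- you rightly flag that it must be invoked explicitly, and you should likewise write out the 2-commutativity of the triangle $\X\times_M\G\to\X\times_{\X/\G}\X\to\X\times_{\cl{S}}\X$ against the map of condition (3), which is exactly where $(\varphi\gamma)$ is consumed. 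Your treatment of the epimorphism property and of 1-freeness coincides with the paper's.
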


\begin{proof}
Let $\Proj: \cl{X}\to \cl{S}$ be the projection map of the principal
bundle (see Def.~\ref{DefPrincBundle}), so that $\cl{G}$ acts on its
fibres. As mentioned in the previous corollary, there is an induced
morphism $\cl{X}/\cl{G}\ra \cl{S}$; we take it as the morphism
induced by the morphism $\Phi: \cl{X}\pq\cl{G}\ra \cl{S}$
constructed in the proof of Prop.~\ref{LemmaXGtoS}. We must show
that $\cl{X}/\cl{G}\ra \cl{S}$ is an isomorphism.

Note that the fact that the map $\X\times_M\G \to
\X\times_{\cl{S}}\X$ in \eqref{eq:actfibre} is an isomorphism
(condition $3.$ in Def.~\ref{DefPrincBundle}) implies that the
$\G$-action on $\X$ is 1-free (indeed, since the natural map
$\X\times_{\cl{S}}\X \to \X\times \X$ is faithful, so is the
action-projection map, cf. \eqref{eq:factor} below, which implies
1-freeness), so $\cl{X}\pq \cl{G}$ is a prestack
(Prop.~\ref{prop1freeThenPrestack}). Hence, by
Prop.~\ref{PropStackification} (iv), it is enough to show that
$\Phi$ is a monomorphism and an epimorphism.

The fact that $\Phi$ is an epimorphism follows directly from the
fact that $\Proj$ is. We will show that $\Phi$ is a monomorphism,
i.e., that for any manifold $U$ the fiber of $\Phi$ at $U$, i.e.,
the functor
 $\Phi_U: (\cl{X}\pq\cl{G})_U\ra \cl{S}_U$,
is fully faithful. In what follows, all the objects and morphisms
are understood to be over $U$; in particular, all morphisms are
isomorphisms.

Let $x,y\in\mr{Obj}(\cl{X}\pq\cl{G})=\mr{Obj}(\cl{X})$, and let
$a:\Proj(x)\ra \Proj(y)$  be a morphism in $\cl{S}$ (recall that
$\Phi(x)=\Proj(x)$ and $\Phi(y)=\Proj(y)$). We have to show that
there exists a unique morphism $[g,b]:x\ra y$ in $\cl{X}\pq\cl{G}$
such that $\Phi[g,b]=a$.

\textit{Existence of $[g,b]$}: The triple $(x,a,y)$ is an object of
$\cl{X}\times_\cl{S}\cl{X}$. Since we assume that the canonical
morphism $ \cl{X}\times_M \cl{G}\ra \cl{X}\times_{\cl{S}}\cl{X}$ is
an isomorphism, there exists $(\bar{x},g)\in\cl{X}\times_M \cl{G}$
and an isomorphism in $\cl{X}\times_{\cl{S}}\X$ between
$(\bar{x},\bar{\gamma},\bar{x}g)$ and $(x,a,y)$, where
$\bar{\gamma}$ is induced by the isomorphism that makes $\cl{G} $
act on the fibres of $\Proj$; in other words, we have isomorphisms $
c:\bar{x}\ra x$ and ${b}':\bar{x}g\ra y$ in $\cl{X}$ such that the
square
$$
\xymatrix{\Proj(\bar{x})\ar[r]^-{\Proj(c)}\ar[d]_-{\bar{\gamma}}
& \Proj(x)\ar[d]^-{a}\\
\Proj(\bar{x}g)\ar[r]_-{\Proj({b}')}& \Proj(y) }
$$
commutes. Defining $b:= {b}'\,\scirc\,(c\cdot \mr{id})^{-1}: xg\ra
y$, we conclude that $\Phi[g,b]=a$ (cf. \eqref{eq:defgb}).

\textit{Uniqueness of $[g,b]$}: Let $[\bar{g},\bar{b}]:x\ra y $ be
another morphism such that $\Phi[\bar{g},\bar{b}]=a$. Then
$(\mr{id},\bar{b}^{-1}b): (x,\gamma,xg)\ra
(x,\bar{\gamma},x\bar{g})$ is a morphism in
$\cl{X}\times_\cl{S}\cl{X}$ between the images of
$(x,g),(x,\bar{g})\in \Obj(\cl{X}\times_M\cl{G})$; notice that this
holds since $\Proj(\bar{b})\bar{\gamma}=\Proj(b)\gamma$, as both
compositions are equal to $a$. Since $ \cl{X}\times_M \cl{G}\ra
\cl{X}\times_{\cl{S}}\cl{X}$ is an isomorphism, there exists a
morphism $(c,j):(x,g)\ra (x,\bar{g})$ in $\cl{X}\times_M\cl{G}$
whose image in $\cl{X}\times_\cl{S}\cl{X}$ is
$(\mr{id},\bar{b}^{-1}b)$. In particular, $c=\mr{id}$ and
$(\mr{id}\cdot j)=\bar{b}^{-1}b$. We conclude that
$[g,b]=[\bar{g},\bar{b}]$.
\end{proof}

As a consequence of the last corollary, we note that any principal
$\G$-bundle $\X$ over a differentiable stack $\cl{Y}$, with
projection $\q_\mathcal{Y}: \X\to \mathcal{Y}$, satisfies the same
universal property as the quotient $\q: \cl{X}\ra\cl{X}/\cl{G}$
(cf. Corollary~\ref{CorolXGtoS}):

\begin{corollary}\label{propUnivPropPrincBund}
Let $\X$ be a principal $\G$-bundle with projection $\q_\cl{Y}:
\cl{X}\ra \cl{Y}$. Suppose that $\G$ acts on $\X$ on the fibers of a
morphism $\Proj: \X\to \cl{S}$ via $\gamma$, where $\cl{S}$ is a
differentiable stack. Then:
\begin{itemize}
\item[(i)] There exists a pair $(\Phi,\varphi)$, where $\Phi: \cl{Y}\to
\cl{S}$ and $\varphi: \Phi \q_\cl{Y} \stackrel{\sim}{\to} \Proj$ are
as in $(i)$ of Prop.~\ref{LemmaXGtoS} (suitably adapted to the
present context, similarly to the footnote of
Cor.~\ref{CorolXGtoS}).

%\footnote{To formulate properties $(i)$ and $(ii)$ of
%Prop.~\ref{LemmaXGtoS} in the present context, we note that in
%diagram $(\varphi\gamma)$ one has to substitute $\Phi(x)$ and
%$\Phi(xg)$ with $\Phi(\q_{\cl{Y}}(x))$ and $\Phi(\q_{\cl{Y}}(xg))$
%respectively, and $\Phi[g,\mr{id}_{xg}]$ by $\Phi(\gamma_0)$, where
%$\gamma_0$ is the 2-isomorphism that makes $\cl{G}$ act on the
%fibers of $\q_\cl{Y}:\cl{X}\ra\cl{Y}$. Moreover, in the diagram
%$(\psi\varphi\varrho)$, one has to substitute $\Phi(x)$ and
%$\overline{\Phi}(x)$ by $\Phi(\q_{\cl{Y}}(x))$ and
%$\overline{\Phi}(\q_{\cl{Y}}(x))$, respectively.}.

\item[(ii)] The pair $(\Phi,\varphi)$ is unique in the sense of $(ii)$ of
the same proposition.
\end{itemize}
\end{corollary}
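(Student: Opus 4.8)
The plan is to reduce the statement to the universal property of the quotient already established in Corollary~\ref{CorolXGtoS}, by transporting it along the canonical identification of $\cl{Y}$ with $\X/\G$. First I would invoke Corollary~\ref{cor:principal}: since $\X$ is a principal $\G$-bundle over the differentiable stack $\cl{Y}$ with projection $\q_\cl{Y}$, there is a canonical isomorphism $\Xi: \X/\G \lmap{\sim} \cl{Y}$. Inspecting the construction in that corollary, $\Xi$ is the morphism induced (via stackification) by the functor $\Phi: \X\pq\G \to \cl{Y}$ built in Prop.~\ref{LemmaXGtoS} applied to $\Proj = \q_\cl{Y}$, and that construction yields $\Phi\q = \q_\cl{Y}$. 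Hence, writing $\q: \X \to \X/\G$ for the quotient map \eqref{eq:quotmap2}, the isomorphism $\Xi$ intertwines $\q$ with $\q_\cl{Y}$ up to a canonical $2$-isomorphism, i.e.\ $\Xi\,\scirc\,\q \cong \q_\cl{Y}$. Moreover, under $\Xi$ the action of $\G$ on the fibers of $\q_\cl{Y}$ (part of the principal bundle structure, condition $2$ in Def.~\ref{DefPrincBundle}) corresponds precisely to the canonical action of $\G$ on the fibers of $\q$ described in Remark~\ref{ex:actfib}.

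Next I would apply Corollary~\ref{CorolXGtoS} to the given action of $\G$ on the fibers of $\Proj: \X \to \cl{S}$ via $\gamma$, using that $\cl{S}$ is a stack. This produces a pair $(\Phi_0, \varphi_0)$, with $\Phi_0: \X/\G \to \cl{S}$ and $\varphi_0: \Phi_0\,\q \lmap{\sim} \Proj$, satisfying the analogues of properties $(i)$ and $(ii)$ for the quotient $\X/\G$. To obtain the claim, I set $\Phi := \Phi_0\,\scirc\,\Xi^{-1}: \cl{Y} \to \cl{S}$; combining $\varphi_0$ with the canonical $2$-isomorphism $\Xi^{-1}\,\scirc\,\q_\cl{Y} \cong \q$ of the previous paragraph then yields the required $2$-isomorphism $\varphi: \Phi\,\q_\cl{Y} \lmap{\sim} \Proj$, and one checks that the resulting pair satisfies the compatibility diagram $(\varphi\gamma)$ because the corresponding diagram holds for $(\Phi_0,\varphi_0)$ and $\Xi$ is compatible with the fiber actions. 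This establishes part $(i)$.

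For part $(ii)$, uniqueness is transported along $\Xi$ as well. Given a second action on the fibers of $\bar{\Proj}: \X \to \cl{S}$ via $\bar{\gamma}$, a $2$-isomorphism $\varrho$ satisfying $(\varrho\gamma)$, and a pair $(\overline{\Phi},\overline{\varphi})$ with $\overline{\Phi}: \cl{Y} \to \cl{S}$ satisfying the adapted $(\varphi\gamma)$, I would precompose $\Phi$ and $\overline{\Phi}$ with $\Xi$ to obtain pairs over $\X/\G$ to which the uniqueness clause $(ii)$ of Corollary~\ref{CorolXGtoS} applies. That clause supplies a unique $2$-isomorphism $\Phi_0 \lmap{\sim} \overline{\Phi}\,\scirc\,\Xi$ fitting into the analogue of $(\psi\varphi\varrho)$; conjugating it back along $\Xi$ delivers the unique $\psi: \Phi \lmap{\sim} \overline{\Phi}$ required here, with the stated square commuting after the identification $\cl{Y}\cong\X/\G$.

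The only genuinely delicate point, and hence the step I would treat most carefully, is verifying that the canonical identification $\Xi$ of Corollary~\ref{cor:principal} is simultaneously compatible with the projections (so that $\Xi\,\scirc\,\q \cong \q_\cl{Y}$) and with the fiber $\G$-actions (so that $\gamma$ for $\q_\cl{Y}$ matches the canonical $\gamma$ of Remark~\ref{ex:actfib} under $\Xi$). Both compatibilities are in fact built into the construction of $\Xi$ in the proof of Corollary~\ref{cor:principal}, so the argument reduces to careful bookkeeping of the relevant horizontal and vertical compositions of $2$-isomorphisms, rather than any new geometric input.
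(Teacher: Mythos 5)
Your proposal is correct and is exactly the argument the paper intends (the paper states this corollary without a written proof, but the preceding sentence makes clear it is to be deduced by transporting the universal property of Cor.~\ref{CorolXGtoS} along the canonical identification $\cl{Y}\cong\X/\G$ of Cor.~\ref{cor:principal}). Your key observation — that the isomorphism $\Xi$ built in the proof of Cor.~\ref{cor:principal} satisfies $\Phi\q=\q_\cl{Y}$ at the prequotient level and $\Phi[g,\mr{id}_{xg}]=\gamma$, so it is automatically compatible with both the projections and the fiber actions — is precisely the point that makes the reduction work.
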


%%%%%%%%%%%%%%%%%%%%%%%%%%%%%%%%%%%%%%%%%%%%%%%%%%%%%%%%%%%%%%%%%%%%%
\subsection{Equivariant maps and (pre)quotients}

This section discusses the relation between equivariant maps and
maps of (pre)quotients.

Consider a cfg-groupoid $\cl{G}$ acting on $\cl{X}_i \in
\Obj(\CFG_\cl{C})$, $i=1,2$, and let $F:\cl{X}_1\ra\cl{X}_2$ be a
$\G$-equivariant morphism, with associated equivariance
2-isomorphism $\delta$ (Def.~\ref{DefGequivariant}). Let
$\gamma_0^i$ be the 2-isomorphism associated with the
$\cl{G}$-action on the fibers of $\q_i: \cl{X}_i\to \cl{X}_i\pq
\cl{G}$, as in Remark~\ref{ex:actfib}.

The next proposition makes precise the fact that $F$ induces a
morphism $\cl{X}_1\pq \cl{G} \to \cl{X}_2\pq \cl{G}$.

\begin{proposition}\label{PropfoverG}
The following holds:

\begin{itemize}
\item[(i)] There is a morphism $\Phi:\cl{X}_1\pq \cl{G} \ra\cl{X}_2\pq \cl{G}$ and a 2-isomorphism
$\varphi: \Phi\q_1\lmap{\sim}\q_2 F$ satisfying the following higher
coherence condition: for any $(x,g)\in\cl{X}_1\times_M\cl{G}$, the
diagram
$$
(\varphi\gamma\delta):\qquad\qquad \xymatrix{
\Phi\q_1(x)\ar[rr]^-{\Phi(\gamma^1_0)}\ar[d]_-{\varphi}
&&\Phi\q_1(xg)\ar[d]^-{\varphi}\\
\q_2F(x)\ar[r]_-{\gamma^2_0}&\q_2(F(x)g)\ar[r]_{\q_2({\delta})}&\q_2
F(xg)}
$$
commutes.

\item[(ii)] Let $F,\overline{F}:\cl{X}_1\ra\cl{X}_2$ be $\G$-equivariant morphisms,
with associated equivariance 2-isomorphisms $\delta$ and
$\bar{\delta}$, respectively, and let $(\Phi,\varphi)$ and
$(\overline{\Phi},\overline{\varphi})$ be corresponding pairs as in
$(i)$. Then, for any  $\varsigma: F\lmap{\sim} \overline{F}$ such
that, for all $(x,g)\in\cl{X}_1\times_M\cl{G}$, the diagram
$$
(\varsigma \delta):\qquad \qquad \xymatrix{
F(x)g\ar[r]^-{\delta}\ar[d]_-{\varsigma\cdot\mr{id}}
&F(xg)\ar[d]^-{\varsigma}\\
\overline{F}(x)g\ar[r]_-{\bar{\delta}}&\overline{F}(xg)}
$$
commutes, there exists a unique $\psi: \Phi
\lmap{\sim}\overline{\Phi}$ such that, for all $x\in\cl{X}_1$, the
following diagram commutes:
$$
(\psi\varsigma):\qquad \qquad \xymatrix{ \Phi
\q_1(x)\ar[r]^-{\psi}\ar[d]_-{\varphi}
&\overline{\Phi}\q_1(x)\ar[d]^-{\bar{\varphi}}\\
\q_2F(x)\ar[r]_-{\q_2(\varsigma)}&\q_2\overline{F}(x). }
$$
\end{itemize}
\end{proposition}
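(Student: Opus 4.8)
The plan is to obtain both parts as direct applications of the universal property of the prequotient, Proposition~\ref{LemmaXGtoS}, taking $\cl{S}=\cl{X}_2\pq\cl{G}$ and $\Proj=\q_2 F\colon\cl{X}_1\to\cl{X}_2\pq\cl{G}$. Note that $\cl{S}$ here is only a category fibred in groupoids, which is all that Proposition~\ref{LemmaXGtoS} requires. The first task is to make $\cl{G}$ act on the fibers of $\q_2 F$. The natural candidate is obtained by concatenating the fiber-action $\gamma_0^2$ of $\q_2$ (Remark~\ref{ex:actfib}) with the equivariance $2$-isomorphism $\delta$ of $F$ (Definition~\ref{DefGequivariant}): for $(x,g)\in\cl{X}_1\times_M\cl{G}$ I would set $\gamma(x,g):=\q_2(\delta)\,\scirc\,\gamma_0^2(F(x),g)$, the composite
$$
\q_2 F(x)\lmap{\gamma_0^2}\q_2(F(x)g)\lmap{\q_2(\delta)}\q_2 F(xg).
$$
First I would check that $\gamma$ is a natural transformation $\q_2 F\,\scirc\,\mathrm{pr}\to\q_2 F\,\scirc\,\act_1$, which follows at once from the naturality of $\gamma_0^2$ and of $\delta$.

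The main obstacle is verifying the two higher coherence conditions $(\gamma\beta)$ and $(\gamma\varepsilon)$ of Definition~\ref{DefActOnFibers} for this $\gamma$ (with $\beta=\beta_1$ and $\varepsilon=\varepsilon_1$ the structure $2$-isomorphisms of the $\cl{G}$-action on $\cl{X}_1$). Here one must paste together the coherence $(\gamma\beta)$ already available for the fiber-action $\gamma_0^2$---now read off for the triple $F(x),g,h$ of the $\cl{X}_2$-action, and hence involving the associativity $2$-isomorphism $\beta_2$---with the equivariance coherence $(\delta\beta_1\beta_2)$, which is precisely what converts the $\beta_2$-terms produced by $\gamma_0^2$ into the $\beta_1$-term $\q_2 F(\beta_1)$ required on the bottom edge. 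The identity coherence $(\gamma\varepsilon)$ is obtained analogously by combining $(\gamma\varepsilon)$ for $\gamma_0^2$ with $(\delta\varepsilon_1\varepsilon_2)$, using $\q_2 F(\varepsilon_1)\scirc\q_2(\delta)=\q_2(\varepsilon_2)$. Granting these, $\cl{G}$ acts on the fibers of $\q_2 F$ via $\gamma$, and Proposition~\ref{LemmaXGtoS}(i) supplies a pair $(\Phi,\varphi)$ with $\Phi\colon\cl{X}_1\pq\cl{G}\to\cl{X}_2\pq\cl{G}$ and $\varphi\colon\Phi\q_1\lmap{\sim}\q_2 F$ satisfying condition $(\varphi\gamma)$; unwinding the definition of $\gamma$, and noting that $\Phi[g,\mr{id}_{xg}]=\Phi(\gamma^1_0)$, this is exactly the diagram $(\varphi\gamma\delta)$ in the statement. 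This proves $(i)$.

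For $(ii)$ I would invoke the uniqueness clause Proposition~\ref{LemmaXGtoS}(ii) with $\Proj=\q_2 F$, $\bar{\Proj}=\q_2\overline{F}$, and $\varrho:=\q_2(\varsigma)\colon\q_2 F\lmap{\sim}\q_2\overline{F}$. The only point to check is that $\varrho$ is compatible with the two fiber-actions, i.e.\ that the square $(\varrho\gamma)$ commutes. Expanding $\gamma$ and $\bar{\gamma}$, this follows by first using the naturality of $\gamma_0^2$ to commute $\q_2(\varsigma)$ past $\gamma_0^2$, and then applying $\q_2$ to the hypothesis $(\varsigma\delta)$, which reads $\overline{\delta}\scirc(\varsigma\cdot\mr{id})=\varsigma\scirc\delta$. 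Once $(\varrho\gamma)$ holds, Proposition~\ref{LemmaXGtoS}(ii) returns a unique $\psi\colon\Phi\lmap{\sim}\overline{\Phi}$ making $(\psi\varphi\varrho)$ commute; since $\varrho=\q_2(\varsigma)$, that diagram is precisely $(\psi\varsigma)$, which finishes the argument.
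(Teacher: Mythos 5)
Your proposal is correct and follows essentially the same route as the paper's proof: you define the same composite fiber-action $\q_2(\delta)\scirc\gamma_0^2$, verify its coherences via $(\delta\beta_1\beta_2)$ and $(\delta\varepsilon_1\varepsilon_2)$, and then invoke the universal property of the prequotient (Proposition~\ref{LemmaXGtoS}) for both parts. Your explicit verification of the compatibility square $(\varrho\gamma)$ in part (ii) is a detail the paper leaves implicit, but it is the right check and is carried out correctly.
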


\newpage

\begin{proof}

To prove $(i)$, let us define
$\widetilde{\gamma}_2:=(\mr{id}_{\q_2}\scirc \delta)*
(\gamma^2_0\scirc \mr{id}_{F\times\mr{id}})$; more explicitly, for
$(x,g)\in\cl{X}_1\times_M\cl{G}$ we have $\widetilde{\gamma}_2 :=
\widetilde{\gamma}_2(x,g)=\q_2(\delta)\scirc \gamma^2_0(F(x),g)$,
$$
\widetilde{\gamma}_2: \q_2(F(x))\lmap{\gamma_0^2} \q_2(F(x)
g)\lmap{\q_2(\delta)} \q_2(F(xg)).
$$
One may check that  $\widetilde{\gamma}_2: \q_2 F \mathrm{pr}_{\X_1}
\to \q_2 F  {\act_1}$  makes $\cl{G}$ act on the fibers of $\q_2 F$,
i.e., that $\widetilde{\gamma}_2$ satisfies the higher coherences
described in Definition~\ref{DefActOnFibers}. Indeed, one directly
checks that condition $(\gamma\beta)$ for $\widetilde{\gamma}_2$
follows from the same condition for $\gamma^2_0$ and condition
$(\delta\beta_1\beta_2)$ of Definition~\ref{DefGequivariant}.
Moreover, condition $(\gamma\varepsilon)$ for $\widetilde{\gamma}_2$
follows from the same condition for $\gamma^2_0$ and condition
$(\delta\varepsilon_1\varepsilon_2)$.

Next, we apply the universal property of the quotient map
$\cl{X}_1\ra \cl{X}_1\pq \cl{G}$ to the morphism $\q_2 F$ using
Prop.~\ref{LemmaXGtoS}. It follows that there exists
$(\Phi,\varphi)$, where $\Phi:\cl{X}_1\pq \cl{G}\ra\cl{X}_2\pq
\cl{G}$ and $\varphi: \Phi \q_1\ra \q_2 F$, satisfying the higher
coherence given in part $(i)$ of Prop.~\ref{LemmaXGtoS}. This
condition is equivalent to condition $(\varphi\gamma\delta)$, so
$(i)$ follows.

The assertion in $(ii)$ is a restatement of part $(ii)$ of
Prop.~\ref{LemmaXGtoS} (with $\varrho = \mr{id}_{\q_2}\scirc
\varsigma: \q_2 F \to \q_2 \overline{F}$).
\end{proof}

The construction taking a $\G$-equivariant morphism $F: \X_1\to
\X_2$ to a morphism $\Phi: \cl{X}_1\pq \cl{G} \to \cl{X}_2\pq
\cl{G}$ preserves compositions and takes the identity to the
identity. We state the precise result about compositions, leaving
the proof to the reader.

\begin{proposition}\label{PropEquivComp}
Let $\G$ be a cfg-groupoid acting on $\cl{X}_i\in
\Obj(\CFG_\cl{C})$, for $i=1,2,3$,
and let $F_1:\cl{X}_1\ra\cl{X}_2$ and $F_2:\cl{X}_2\ra\cl{X}_3$ be
$\G$-equivariant morphisms, with associated equivariance
2-isomorphisms $\delta_1$ and $\delta_2$. Let $(\Phi_1,\varphi_1)$
and $(\Phi_2,\varphi_2)$ be pairs as in
Proposition~\ref{PropfoverG}$(i)$ corresponding  to $F_1$ and $F_2$,
respectively. Then the pair
$$
(\Phi_2\Phi_1, (\varphi_2\hc\mr{id}_{F_1})
*(\mr{id}_{\Phi_2}\hc \varphi_1))
$$
corresponds to $F_2 F_1$.
\end{proposition}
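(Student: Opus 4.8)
The plan is to show that the explicit pair $(\Phi,\varphi):=\big(\Phi_2\Phi_1,\,(\varphi_2\hc\mr{id}_{F_1})*(\mr{id}_{\Phi_2}\hc\varphi_1)\big)$ is a legitimate pair for the composite $F_2F_1$ in the sense of Prop.~\ref{PropfoverG}$(i)$; the uniqueness clause Prop.~\ref{PropfoverG}$(ii)$ (applied with $\varsigma=\mr{id}$) then identifies it, up to canonical isomorphism, with the pair that $(i)$ associates to $F_2F_1$, which is exactly the meaning of ``corresponds to $F_2F_1$''. By Lemma~\ref{CompGequiv}, $F_2F_1$ is $\G$-equivariant with equivariance $2$-isomorphism $\delta=(\mr{id}_{F_2}\hc\delta_1)*(\delta_2\hc\mr{id}_{F_1\times\mr{id}})$, so that $\delta(x,g)$ is the composite $F_2F_1(x)\cdot g\lmap{\delta_2}F_2(F_1(x)g)\lmap{F_2(\delta_1)}F_2F_1(xg)$. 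First I would record that $\varphi$ is a genuine $2$-isomorphism $\Phi\q_1\lmap{\sim}\q_3(F_2F_1)$: evaluated at $x$ it is the composite $\varphi_2(F_1(x))\hc\Phi_2(\varphi_1(x))$, which is a composite of isomorphisms (every $2$-morphism in $\CFG_\cl{C}$ is invertible) with the correct source $\Phi_2\Phi_1\q_1(x)$ and target $\q_3F_2F_1(x)$. It then remains only to verify the higher coherence $(\varphi\gamma\delta)$ for $(\Phi,\varphi)$ with this $\delta$.

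The coherence square to be checked, at $(x,g)\in\cl{X}_1\times_M\cl{G}$, has top edge $\Phi_2\Phi_1(\gamma_0^1)$, vertical edges $\varphi$, and bottom edge $\q_3(\delta)\hc\gamma_0^3$. I would verify it by pasting, inserting the intermediate functor $\Phi_2\q_2F_1$ so that each vertical $\varphi$ splits as $\mr{id}_{\Phi_2}\hc\varphi_1$ followed by $\varphi_2\hc\mr{id}_{F_1}$. The top half, from $\Phi_2\Phi_1\q_1$ to $\Phi_2\q_2F_1$, is obtained by applying the functor $\Phi_2$ to the coherence $(\varphi\gamma\delta)$ for $(\Phi_1,\varphi_1)$ at $(x,g)$; since a functor preserves commutative diagrams this half commutes, and its lower edge is $\Phi_2\big(\q_2(\delta_1)\hc\gamma_0^2\big)$, where $\gamma_0^2$ is taken at $(F_1(x),g)$. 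The bottom half, from $\Phi_2\q_2F_1$ to $\q_3F_2F_1$, I would split through $\Phi_2\q_2(F_1(x)g)$ into two squares: the left one, with edges $\Phi_2(\gamma_0^2)$, $\varphi_2$, $\varphi_2$, $\q_3(\delta_2)\hc\gamma_0^3$, is precisely the coherence $(\varphi\gamma\delta)$ for $(\Phi_2,\varphi_2)$ at the point $(F_1(x),g)$; the right one, with edges $\Phi_2\q_2(\delta_1)$, $\varphi_2$, $\varphi_2$, $\q_3F_2(\delta_1)$, is the naturality square of the $2$-isomorphism $\varphi_2\colon\Phi_2\q_2\to\q_3F_2$ applied to the morphism $\delta_1(x,g)\colon F_1(x)g\to F_1(xg)$ of $\cl{X}_2$. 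Pasting these three commuting squares produces the required square, since the three bottom edges compose to $\q_3\big(F_2(\delta_1)\big)\hc\q_3(\delta_2)\hc\gamma_0^3=\q_3(\delta)\hc\gamma_0^3$, while the verticals compose to $\varphi$ and the two inner edges ($\Phi_2(\q_2(\delta_1)\hc\gamma_0^2)$ and $\Phi_2\q_2(\delta_1)\hc\Phi_2(\gamma_0^2)$) agree by functoriality of $\Phi_2$.

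I do not expect any conceptual difficulty here: the only ingredients are functoriality of $\Phi_2$, naturality of $\varphi_2$, and the two given coherences. The genuine obstacle is purely organizational bookkeeping: one must keep careful track of the three distinct evaluation points $(x,g)$, $(F_1(x),g)$ and $(F_2F_1(x),g)$ at which $\gamma_0^1,\gamma_0^2,\gamma_0^3$ and $\delta_1,\delta_2$ are taken, and of which whiskering (left by $\Phi_2$ versus right by $F_1$) produces each edge, so that the three sub-squares glue along genuinely matching edges rather than merely similar-looking ones. Provided this matching is set up precisely, the diagram chase closes with no further input.
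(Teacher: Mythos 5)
Your proposal is correct, and since the paper explicitly leaves the proof of this proposition to the reader, your argument supplies exactly the intended verification: pasting $\Phi_2$ applied to the coherence square for $(\Phi_1,\varphi_1)$, the coherence square for $(\Phi_2,\varphi_2)$ evaluated at $(F_1(x),g)$, and the naturality square of $\varphi_2$ at $\delta_1(x,g)$, with the bottom edges assembling to $\q_3(\delta)\hc\gamma_0^3$ for the composite equivariance $2$-isomorphism $\delta$ of Lemma~\ref{CompGequiv}. The bookkeeping of evaluation points and whiskerings is handled correctly, so nothing further is needed.
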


\begin{remark}\label{RemIsoIso}
\
\begin{itemize}
\item[(a)] The above functoriality property of the induced map implies that
if $F$ is an isomorphism then so is $\Phi$.
%\end{remark}

%\begin{remark}\label{RemfoverGquot}
\item[(b)] It is immediate (using Corollary \ref{CorolXGtoS}) that
Propositions \ref{PropfoverG}, \ref{PropEquivComp}, and part (a)
above still work if we consider the quotient $\cl{X}/\cl{G}$ instead
of the prequotient.
\end{itemize}
\end{remark}

We finally observe that passing from equivariant maps to maps of
quotients preserves the monomorphism and epimorphism conditions.

\begin{proposition}\label{prop:equivmonoepi}
Let $F:\X_1\ra\cl{X}_2$ be a $\G$-equivariant map between cfg's,
where $\G$ is a cfg-groupoid. Let $\Phi:\X_1\pq \G\ra \cl{X}_2\pq\G$
be the induced map on the prequotients. If $F$ is a monomorphism
(resp. an epimorphism) then so is $\Phi$.
\end{proposition}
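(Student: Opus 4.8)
The plan is to run everything through an explicit description of the induced functor $\Phi$. First I will record that, because $\mr{Obj}(\X_1\pq\G)=\mr{Obj}(\X_1)$ and $\mr{Obj}(\X_2\pq\G)=\mr{Obj}(\X_2)$, the functor $\Phi$ from Proposition~\ref{PropfoverG} acts on objects by $\Phi(x)=F(x)$ and on morphisms by
\[
\Phi[g,b]=[g,\,F(b)\scirc\delta(x,g)],
\]
where $\delta$ is the equivariance 2-isomorphism of $F$ (Def.~\ref{DefGequivariant}) and $[g,b]\colon x\to y$ is represented by $g\in\G$ with $\tar(g)=\ma_1(x)$ and $b\colon x\cdot g\to y$. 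I will derive this formula by unwinding the universal-property construction of $\Phi$, using $\widetilde\gamma_2(x,g)=\q_2(\delta(x,g))\scirc\gamma_0^2(F(x),g)$ together with the identity $[g,c]=\q_2(c)\scirc\gamma_0^2(F(x),g)$ from the proof of Prop.~\ref{LemmaXGtoS}. Both assertions will then be checked directly from this description.

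For the monomorphism case I must show that for every manifold $U$ the fiber functor $\Phi_U$ is fully faithful; throughout, all objects and arrows lie over $U$, hence all arrows are isomorphisms. To prove faithfulness, I will assume $\Phi[g,b]=\Phi[\bar g,\bar b]$ as arrows $F(x)\to F(y)$; by \eqref{eq:morpre} there is $k\colon g\lmap{\sim}\bar g$ with $\tar(k)=\mr{id}_{\ma_1(x)}$ and $F(\bar b)\scirc\delta(x,\bar g)\scirc(\mr{id}_{F(x)}\cdot k)=F(b)\scirc\delta(x,g)$. The key move is to apply naturality of $\delta$ at the arrow $(\mr{id}_x,k)$ of $\X_1\times_M\G$, giving $\delta(x,\bar g)\scirc(\mr{id}_{F(x)}\cdot k)=F(\mr{id}_x\cdot k)\scirc\delta(x,g)$; substituting and cancelling the isomorphism $\delta(x,g)$ leaves $F(\bar b\scirc(\mr{id}_x\cdot k))=F(b)$, whence $\bar b\scirc(\mr{id}_x\cdot k)=b$ by faithfulness of $F$, so $j=k$ witnesses $[g,b]=[\bar g,\bar b]$. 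For fullness, given $[h,c]\colon F(x)\to F(y)$, I will take $g=h$ and look for $b\colon x\cdot h\to y$ with $F(b)=c\scirc\delta(x,h)^{-1}$; such $b$ exists by fullness of $F$, and then $\Phi[h,b]=[h,\,F(b)\scirc\delta(x,h)]=[h,c]$.

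For the epimorphism case the argument is lighter, since objects of a prequotient are those of the underlying cfg, and $\q_2\colon\X_2\to\X_2\pq\G$, being a morphism of cfg's, preserves cartesian arrows; hence the pullback of $z\in(\X_2\pq\G)_U$ along any $U_\alpha\to U$ agrees, as an object of $\X_2\pq\G$, with the pullback computed in $\X_2$. Given $z$, I will apply the epimorphism hypothesis for $F$ to the object $z$ of $\X_2$, obtaining a cover $(U_\alpha\to U)_\alpha$ and objects $x_\alpha\in(\X_1)_{U_\alpha}=(\X_1\pq\G)_{U_\alpha}$ together with isomorphisms $\phi_\alpha\colon F(x_\alpha)\lmap{\sim}z_{|U_\alpha}$ in $(\X_2)_{U_\alpha}$. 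Then $\q_2(\phi_\alpha)$ is an isomorphism $\Phi(x_\alpha)=F(x_\alpha)\lmap{\sim}z_{|U_\alpha}$ in $(\X_2\pq\G)_{U_\alpha}$, exhibiting $\Phi$ as an epimorphism.

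The hard part will be the faithfulness step: I must carefully thread the equivalence relation defining the prequotient's morphisms through the naturality square for $\delta$, converting the identification $\Phi[g,b]=\Phi[\bar g,\bar b]$ into a genuine equality of arrows in $\X_1$ before faithfulness of $F$ can be invoked. Pinning down the explicit formula for $\Phi[g,b]$ from its abstract definition is a necessary preliminary, but once it is in place the remaining verifications are routine.
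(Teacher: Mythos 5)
Your proposal is correct and follows essentially the same route as the paper's proof: both reduce everything to the explicit description $\Phi[g,b]=[g,F(b)\scirc\delta(x,g)]$ (the paper writes this as $[g,F(b)]$ ``after the identification of $F(x)\cdot g$ with $F(x\cdot g)$ by the equivariance of $F$''), handle epimorphisms by noting the prequotient projections are the identity on objects, and deduce faithfulness and fullness of $\Phi_U$ from those of $F_U$ via naturality of $\delta$. Your version merely makes explicit the equivariance isomorphism and the naturality square that the paper leaves implicit.
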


\begin{proof}
Since the canonical projections $\cl{X}_i\ra \cl{X}_i\pq\G$ are the
identity on objects, it follows immediately that if $F$ is an
epimorphism then so is $\Phi$. For the remaining part of the proof
we fix a manifold $U$, and we assume that the morphisms considered
are over $\mr{id}_U$.

We first prove that if $F_U$ is faithful, than so is $\Phi_U$. Take
two morphisms $[g,b],[\bar{g},\bar{b}]: x\ra x'$ in $\X_1\pq\G$, and
assume that $\Phi[g,b]=\Phi[\bar{g},\bar{b}]$. Recall that $b:x\cdot
g\ra x'$ is a morphism in $\X_1$. Since $\Phi[g,b]=[g,F(b)]$ (after
the identification of $F(x)\cdot g$ with $F(x'\cdot g)$ by the
equivariance of $F$) then there exists an isomorphism $j:g\ra
\bar{g}$ such that $F(\bar{b})\circ (\mr{id}_{F(x)}\cdot j) =F(b)$.
Then, again using the equivariance of $F$, $F(\bar{b}\circ
(\mr{id}_{x}\cdot j))=F(b)$. Since $F_U$ is faithful then
$\bar{b}\circ (\mr{id}_{x}\cdot j)=b$, hence
$[g,b]=[\bar{g},\bar{b}]$.

We now verify that if $F_U$ is full, than so is $\Phi_U$. Take
objects $x,x'\in\Obj(\X_1\pq\G)=\Obj(\X_1)$ and a morphism $[g,c]:
F(x)\ra F(x')$ in $\cl{X}_2\pq \G$. By the equivariance of $F$ we
can interpret $c: F(x\cdot g)\ra F(x')$ as a morphism between the
images via $F$ of objects in $\X_1$. Since $F_U$ is full, there
exists $b: x\cdot g\ra x'$ in $\cl{X}_1$ such that $F(b)=c$. We
conclude $\Phi[g,b]=[g,c]$, and we are done.

\end{proof}

%%%%%%%%%%%%%%%%%%%%%%%%%%%%%%%%%%%%%%%%%%%%%%%%%%%%%%%%%%%%%%%%%%%%%%%%%%%%%
\subsection{Prequotients and the action-projection map}\label{subsec:actproj}

Let us consider a cfg-groupoid $\G\rra M$ acting on $\X \in
\Obj(\CFG_{\mathcal{C}})$ (on the right), and let $\Delta: \cl{X}
\times_{\ma,M,\tar} \cl{G}\map \cl{X}\times\cl{X}$,
$\Delta(x,g)=(x,xg)$, be the associated action-projection map. Using
the projection $\q: \X \to \X\pq \G$, we see that $\Delta$ induces a
morphism (cf. \eqref{eq:actfibre})
\begin{equation}\label{eq:actproj}
\Q :\cl{X} \times_{\ma,M,\tar}
\cl{G}\map \cl{X} \times_{\X\pq\G}
\cl{X},
\end{equation}
such that the diagram
\begin{equation}\label{eq:factor}
\xymatrix{
 \cl{X}\times_{M} \cl{G}\ar[r]^-{\Q}\ar[rd]_-{\Delta}&
\cl{X}\times_{\cl{X}\pq\cl{G}}\cl{X}\ar[d]^-{}\\
& \cl{X}\times\cl{X} }
\end{equation}
2-commutes.

\begin{proposition}\label{propQFaithful} The following holds:
\begin{enumerate}
 \item[(a)] $\Q$ is full and essentially surjective.
 \item[(b)] If $\Delta$ is faithful, then so is $\Q$.
\end{enumerate}
Hence, if $\Delta$ is faithful then $\Q$ is an isomorphism.
\end{proposition}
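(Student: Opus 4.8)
The plan rests on the explicit description of $\Q$ coming from \eqref{eq:actproj} and \eqref{eq:actfibre}: on objects $\Q(x,g)=(x,\gamma_0(x,g),xg)$ with $\gamma_0(x,g)=[g,\mr{id}_{xg}]$ (Remark~\ref{ex:actfib}), while on a morphism $(b,j):(x,g)\to(x',g')$ of $\X\times_M\G$ it returns $(b,\act(b,j))$. I would dispose of part (b) and the final claim first, as they are formal. The triangle \eqref{eq:factor} exhibits $\Delta$ as $p\scirc\Q$ up to the given 2-isomorphism, where $p:\X\times_{\X\pq\G}\X\to\X\times\X$ is the canonical projection; since a functor whose post-composite with $p$ is faithful is itself faithful, and $p\scirc\Q\cong\Delta$, faithfulness of $\Delta$ immediately gives that of $\Q$, which is (b). Granting (a) (verified fiberwise below), the restriction $\Q_U$ is then full, faithful and essentially surjective, hence an equivalence of categories, and by the fiberwise criterion for isomorphisms of categories fibred in groupoids $\Q$ is an isomorphism.

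For essential surjectivity, I fix a manifold $U$ and an object $(x,\alpha,y)$ of $\X\times_{\X\pq\G}\X$ over $U$, so that $\alpha:x\to y$ is an isomorphism in $(\X\pq\G)_U$, say $\alpha=[g,b]$ with $\tar(g)=\ma(x)$ and $b:xg\lmap{\sim}y$. Then $(\mr{id}_x,b)$ is an isomorphism $\Q(x,g)=(x,[g,\mr{id}_{xg}],xg)\to(x,[g,b],y)=(x,\alpha,y)$: its compatibility condition is exactly the identity $[g,b]=\q(b)\scirc[g,\mr{id}_{xg}]$, which holds by the coherence $(xg1)$ of {\bf (a4)}, precisely as used in the proof of Prop.~\ref{LemmaXGtoS}. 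Hence every object is isomorphic to one in the image of $\Q$.

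The crux is fullness. I fix $U$ and a morphism $(c,d):\Q(x,g)\to\Q(x',g')$ over $\mr{id}_U$, so $c:x\to x'$ and $d:xg\to x'g'$ in $\X$, subject to the defining compatibility in $\X\pq\G$,
$$[g',\mr{id}_{x'g'}]\scirc\q(c)=\q(d)\scirc[g,\mr{id}_{xg}].$$
Using $[g,b]=\q(b)\scirc[g,\mr{id}_{xg}]$, the right-hand side equals $[g,d]$. For the left-hand side I would invoke the naturality of $\gamma_0$ (which makes $\G$ act on the fibers of $\q$, Remark~\ref{ex:actfib}) applied to the morphism $(c,\mr{id}_{g'}):(x,g')\to(x',g')$ of $\X\times_M\G$ (a legitimate object since $\tar(g')=\ma(x')=\ma(x)$); this yields
$$[g',\mr{id}_{x'g'}]\scirc\q(c)=\q(\act(c,\mr{id}_{g'}))\scirc[g',\mr{id}_{xg'}]=[g',\act(c,\mr{id}_{g'})].$$
Equating the two expressions gives $[g,d]=[g',\act(c,\mr{id}_{g'})]$, and unwinding the equivalence relation \eqref{eq:morpre}--\eqref{eq:triang} produces an isomorphism $j:g\lmap{\sim}g'$ in $\G$ with $\tar(j)=\mr{id}_{\ma(x)}$ and $\act(c,\mr{id}_{g'})\scirc(\mr{id}_x\cdot j)=d$. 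By functoriality of $\act$ the left-hand side is $\act(c,j)$, so $d=\act(c,j)$; since moreover $\tar(j)=\mr{id}_{\ma(x)}=\ma(c)$, the pair $(c,j)$ is a morphism $(x,g)\to(x',g')$ of $\X\times_M\G$ with $\Q(c,j)=(c,\act(c,j))=(c,d)$, proving fullness.

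The main obstacle is this fullness step, and specifically the recognition that the left-hand side of the compatibility equation can be rewritten as $[g',\act(c,\mr{id}_{g'})]$: this reduction is what converts an a priori opaque equation in the prequotient into a statement to which the defining equivalence relation applies directly, yielding the required arrow $j$ in $\G$. Once that reformulation is in place --- via the naturality of $\gamma_0$ together with the identity $[g,b]=\q(b)\scirc[g,\mr{id}_{xg}]$ --- the remaining checks (independence of representatives and the functoriality manipulations for $\act$) are routine, and (b) together with the final conclusion follow formally from the factorization \eqref{eq:factor}.
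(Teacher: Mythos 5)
Your proof is correct and follows the same overall architecture as the paper's: part (b) and the final claim come from the factorization \eqref{eq:factor}, and part (a) is proved by exhibiting explicit preimages for objects and morphisms of $\cl{X}\times_{\cl{X}\pq\cl{G}}\cl{X}$. Where you differ is in the execution of part (a). The paper treats an arbitrary morphism $(c_1,c_2)$ lying over an arbitrary $\mu=\pi_\X(c_1):U\to V$, which forces it to introduce the pullback $\mu^*\overline{g}$ with a cartesian arrow $\mu_{\overline{g}}$ and to assemble the required arrow $j$ by hand as $\mu_{\overline{g}}\circ\lambda(\mu^*\overline{g})\circ j'\circ(\rho g)^{-1}$, checking the relevant triangles directly against the coherences $(xg1)$ and $(x1g)$; likewise its essential-surjectivity step re-derives the commutativity of the key square from scratch. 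You instead reduce to morphisms over $\mr{id}_U$ and let two facts already established in the paper do the work: the identity $[g,b]=\q(b)\circ[g,\mr{id}_{xg}]$ (recorded in the proof of Prop.~\ref{LemmaXGtoS}) and the naturality of $\gamma_0$ (Remark~\ref{ex:actfib}), so the coherences enter only through these citations. This buys a shorter, more conceptual argument at the cost of one point left implicit: that fullness of a morphism of categories fibred in groupoids may be checked on fibers. That reduction is valid here --- every arrow of a category fibred in groupoids is cartesian, so a morphism over $\mu$ factors as a cartesian lift composed with a morphism over an identity, and in any case the final conclusion only needs each $\Q_U$ to be an equivalence --- but it deserves one explicit sentence. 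With that sentence added, your argument is complete.
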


\begin{proof}
The claim in $(b)$ follows from the factorization \eqref{eq:factor}.

Let us prove $(a)$. To verify that $\Q$ is full, let us consider
objects
$$
(x,g),(\overline{x},\overline{g})\in  \cl{X}\times_{M}\cl{G},
$$
and a morphism between the images of these objects in
$\cl{X}\times_{\cl{X}\pq\cl{G}}\cl{X}$:
$$
(c_1,c_2): (x,  [g, \mr{id}_{x\cdot g}], x\cdot g)
\map(\overline{x}, [\overline{g}, \mr{id}_{\overline{x} \cdot
\overline{g}}], \overline{x}\cdot \overline{g}).
$$
We have to prove the existence of $(b,j):(x,g)\rightarrow
(\overline{x},\overline{g})$ in $\cl{X}\times_{M} \cl{G}$ such that
$(b, b\cdot j)=(c_1, c_2)$. Since necessarily $b=c_1$ then, given
$(c_1, c_2)$, we are looking for a morphism in $\cl{G}$,
$$
j:g\map\overline{g},
$$
such that $\tar(j)=\ma(c_1)$ and $c_2=c_1\cdot j$.

In order to proceed we observe that $c_1:x\rightarrow \overline{x}$
and $c_2: x\cdot g\rightarrow \overline{x}\cdot\overline{g}$ are
morphisms in $\cl{X}$ with
$$
\pi_\X(c_1)=\pi_\X(c_2)=: \mu,
$$
and that the square
$$
\xymatrix{ x\ar[d]_-{[\un_{\ma(x)}, c_1\sscirc \varepsilon
(x)]}\ar[r]^-{[g, \mr{id}_{x\cdot g}]}
&x\cdot g\ar[d]^-{[\un_{\ma(x\cdot g)}, c_2\sscirc \varepsilon (x\cdot g)]} \\
\overline{x}\ar[r]_-{[\overline{g}, \mr{id}_{\overline{x} \cdot
\overline{g}}]}
 & \overline{x}\cdot \overline{g}
}
$$
commutes in $\cl{X}\pq\cl{G}$. This implies that there exists
$$
j': g\cdot \un_{\ma(x\cdot g)} \lmap{\sim} \un_{\ma(x)}\cdot
\mu^*\overline{g}
$$
such that $\tar(j')=\mr{id}_{\ma(x)}$ and the diagram
\begin{equation}\label{eq:diag}
\xymatrix{ x\cdot(g\cdot \un_{\ma(x\cdot g)})\ar[d]_-{\mr{id}\cdot
j'}\ar[r]^-{\beta}&
(x\cdot g)\cdot \un_{\ma(x\cdot g)}\ar[d]^-{\varepsilon}&\\
x\cdot (\un_{\ma(x)}\cdot \mu^*\overline{g})\ar[d]_-{\beta}& x\cdot
g\ar[r]^-{c_2}& \overline{x}\cdot
\overline{g}\\
(x\cdot \un_{\ma(x)})\cdot
\mu^*\overline{g}\ar[r]_-{\varepsilon\cdot \mr{id}} & x\cdot
\mu^*\overline{g}\ar[ru]_-{c_1\cdot \mu_{\overline{g}}}& }
\end{equation}
commutes, where $\mu_{\overline{g}}:\mu^*\overline{g}\rightarrow
\overline{g}$ is a cartesian arrow in $\cl{G}$ over $\mu$. Since
$\un_{\ma(x\cdot g)}=\un_{\sour(g)}$ and
$\un_{\ma(x)}=\un_{\tar(\mu^*\overline{g})}$, we can consider the
following diagram of isomorphisms in $\cl{G}$:
$$
g\lmap{(\rho g)^{-1}} g\cdot \un_{\ma(x\cdot g)}\lmap{j'}
\un_{\ma(x)}\cdot \mu^*\overline{g}
\lmap{\lambda(\mu^*\overline{g})}\mu^*\overline{g}\lmap{\mu_{\overline{g}}}\overline{g},
$$
and define $j$ to be the above composition:
$$
j:=\mu_{\overline{g}}\circ \lambda(\mu^*\overline{g})\circ j'\circ
(\rho g)^{-1}.
$$
Since $\tar(\mu_{\overline{g}})=\ma(c_1)$,
$\tar(\lambda(\mu^*\overline{g}))= \tar(j')=\tar(\rho
g)^{-1}=\mr{id}_{\ma(x)}$, then $\tar(j)=\ma(c_1)$. We are left with
checking that $c_2= c_1\cdot j$, and this follows by completing the
diagram in \eqref{eq:diag} to
$$
\xymatrix{ x\cdot(g\cdot \un_{\ma(x\cdot g)})\ar[d]_-{\mr{id}\cdot
j'}\ar[r]^-{\beta} \ar[rd]_-{\mr{id}\cdot \rho}&
(x\cdot g)\cdot \un_{\ma(x\cdot g)}\ar[d]^-{\varepsilon}&\\
x\cdot (\un_{\ma(x)}\cdot
\mu^*\overline{g})\ar[d]_-{\beta}\ar[rd]^-{\mr{id}\cdot \lambda}&
x\cdot g\ar[r]^-{c_2}& \overline{x}\cdot
\overline{g}\\
(x\cdot \un_{\ma(x)})\cdot
\mu^*\overline{g}\ar[r]_-{\varepsilon\cdot \mr{id}} & x\cdot
\mu^*\overline{g}\ar[ru]_-{c_1\cdot \mu_{\overline{g}}}& }
$$
noticing that the two triangles commute by the higher coherences
$(xg1)$ and $(x1g)$ in ${\bf (a4)}$, all the arrows but possibly
$c_2$ and $(c_1\cdot \mu_{\overline{g}})$ are isomorphisms, and
$c_2= (c_1\cdot \mu_{\overline{g}})\circ (\mr{id}\cdot \lambda)\circ
(\mr{id}\cdot j')\circ (\mr{id}\cdot\rho^{-1})= c_1\cdot j$. This
completes the proof that $\Q$ is full.

It remains to show that $\Q$ is essentially surjective. Given an
object
$$
(x_1, [g,b], x_2)\in \Obj(\cl{X}\times_{\cl{X}\pq\cl{G}}\cl{X}),
$$
we have to show that there exists an object
$$
(x,h)\in \Obj(\cl{X}\times_{M}\cl{G})
$$
and an isomorphism
$$
(c_1,c_2): (x, [h, \mr{id}_{x\cdot h}], x\cdot h)\lmap{\sim}(x_1,
[g,b], x_2)\;\; \mbox{ in }\;\;
\cl{X}\times_{\cl{X}\pq\cl{G}}\cl{X}.
$$
Let
$$
x:=x_1, \;\;\; h:=g,
$$
and
$$
c_1:=\mr{id}_{x_1}:x_1\map x_1 \qquad c_2:=b:x_1\cdot g\map x_2
$$
in $\X$. The first condition to be verified is that the diagram
$$
\xymatrix{ x_1\ar[d]_-{[\un_{\ma(x_1)}, \varepsilon
(x_1)]}\ar[r]^-{[g, \mr{id}_{x_1\cdot g}]}
&x_1\cdot g\ar[d]^-{[\un_{\ma(x_1\cdot g)}, b\sscirc \varepsilon (x_1\cdot g)]} \\
x_1\ar[r]_-{[g,b ]}
 & x_2
}
$$
commutes in $\cl{X}\times_{\cl{X}\pq\cl{G}}\cl{X}$. To do that, we
have to show that there exists an isomorphism in $\cl{G}$,
$$
j: \un_{\ma(x_1)}\cdot g\lmap{\sim} g\cdot \un_{\ma(x_1\cdot g)},
$$
such that $\tar(j)=\mr{id}_{\ma(x_1)}$ and
\begin{equation}\label{eq:diag2}
\xymatrix{ x_1\cdot (\un_{\ma(x_1)}\cdot
g)\ar[r]^-{\beta}\ar[dd]_-{\mr{id}\cdot j}&
(x_1\cdot \un_{\ma(x_1)})\cdot g\ar[d]^-{\varepsilon\cdot \mr{id}}&\\
&x_1\cdot g\ar[r]^-{b}& x_2\\
x_1\cdot (g\cdot \un_{\ma(x_1\cdot g)})\ar[r]_-{\beta}&
(x_1\cdot g)\cdot \un_{\ma(x_1\cdot g)}\ar[u]_-{\varepsilon}&\\
}
\end{equation}
commutes. Since $\un_{\ma(x_1)}=\un_{\tar(g)}$ and
$\un_{\ma(x_1\cdot g)}=\un_{\sour(g)}$, we can define
$$
j:= (\rho g)^{-1}\cdot \lambda g: \un_{\ma(x_1)}\cdot g\lmap{\sim}
g\lmap{\sim} g\cdot \un_{\ma(x_1\cdot g)}.
$$
Since $\rho (g)$ and $\lambda (g)$ are sent to the identity by
$\pi_\G$, it follows that $\tar(j)=\mr{id}_{\ma(x_1)}$. The
commutativity of the diagram \eqref{eq:diag2}  is shown by the
diagram
 $$
 \xymatrix{
x_1\cdot (\un_{\ma(x_1)}\cdot
g)\ar[r]^-{\beta}\ar[dd]_-{\mr{id}\cdot j} \ar[rd]_-{\mr{id}\cdot
\lambda}&
(x_1\cdot \un_{\ma(x_1)})\cdot g\ar[d]^-{\varepsilon\cdot \mr{id}}&\\
&x_1\cdot g\ar[r]^-{b}& x_2\\
x_1\cdot (g\cdot \un_{\ma(x_1\cdot
g)})\ar[r]_-{\beta}\ar[ru]^-{\mr{id}\cdot \rho}&
(x_1\cdot g)\cdot \un_{\ma(x_1\cdot g)}\ar[u]_-{\varepsilon}&\\
}
$$
noticing that the upper and lower triangles commute by the higher
coherences $(x1g)$ and $(xg1)$ of ${\bf (a4)}$ in
Def.~\ref{defaction}. This proves that $(c_1,c_2)$ is a morphism in
$\cl{X}\times_{\cl{X}\pq\cl{G}}\cl{X}$, hence an isomorphism since
$c_1$ and $c_2$ are over an identity map in $\mathcal{C}$. This
concludes the proof that $\Q$ is essentially surjective.
\end{proof}

%%%%%%%%%%%%%%%%%%%%%%%%%%%%%%%%%%%%%%%%%%%%%%%%%%%%%%%%%%%%%%%%%%%%%%%%%%%%%%%%%%%%%%%%

\section{Principal actions of stacky Lie
groupoids}\label{sec:principal}

This section concerns actions that give rise to stacky principal bundles.

\begin{definition}\label{def:principal}
If $\G$ is a stacky Lie groupoid and $\X$ is a differentiable stack,
we call a $\G$-action on $\X$ {\bf principal} if $\X/\G$ is a
differentiable stack and $\X$ is a principal $\G$-bundle over
$\X/\G$.
\end{definition}

Since we know that $\G$ always acts on $\X$ on the fibres of the
quotient map $\q: \X\ra \cl{X}/\cl{G}$ (see Remark~\ref{ex:actfib}),
the previous definition amounts to showing that $\X/\G$ is a
differentiable stack such that $\q$ is an epimorphism and a
submersion, and that the natural map \eqref{eq:actfibre},
\begin{equation}\label{eq:apquot}
\cl{X} \times_{M} \cl{G}\map \cl{X} \times_{\X/\G} \cl{X},
\end{equation}
is an isomorphism.

%\comment{extended paragraph below (further explanations were added to the introduction)}

We will provide in this section a simple characterization of
principal actions, which could be thought of as parallel to the characterization of principal actions in the
smooth category by free and proper actions.

%%%%%%%%%%%%%%%%%%%%%%%%%%%%%%%%%%%%%%%%%%%%%%%%%%%%%%%%%%%%%%%%
\subsection{The main theorem: characterization of principal actions}\label{subsec:main}

Recall the notion of weak representability from
Def.~\ref{def:weakep}. The following is our main result.

\begin{theorem}\label{TheDiagonalThenPrincipal}
Let $\G\rra M$ be a stacky Lie groupoid acting (on the right) on a
differentiable stack $\X$ along $\ma: \X\to M$.
\begin{itemize}
\item[(a)] If the action-projection map $\Delta: \X\times_M\G\ra \X\times \X$
is weakly representable, then the $\G$-action on $\X$ is principal.

\item[(b)] Conversely, suppose $\X$ is a principal $\G$-bundle over
a differentiable stack $\cl{S}$. Then the action-projection map of
the action is weakly representable, and $\cl{S}$ is canonically
isomorphic to $\cl{X}/\cl{G}$.
\end{itemize}
\end{theorem}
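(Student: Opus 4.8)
The plan is to treat the two implications separately, with part (b) the quicker one. For part (b) the canonical identification $\cl{S}\cong\X/\G$ is immediate from Corollary~\ref{cor:principal}, so the only substance is weak representability of $\Delta$. Condition~3 of Definition~\ref{DefPrincBundle} gives an isomorphism $\X\times_M\G\cong\X\times_\cl{S}\X$ under which $\Delta$ becomes the natural map $\nu:\X\times_\cl{S}\X\to\X\times\X$; since weak representability is unaffected by precomposition with an isomorphism, it suffices to check $\nu$. Choosing an atlas $\pi:X\to\X$, the product $X\times X\to\X\times\X$ is an atlas, and the fibred product $(\X\times_\cl{S}\X)\times_{\X\times\X}(X\times X)$ is just $X\times_\cl{S}X$. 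Because $\Proj$ is a submersion and $\pi$ is representable, the composite $X\to\cl{S}$ is representable by Proposition~\ref{prop:charactsub}(d), whence $X\times_\cl{S}X$ is a manifold; this is exactly what Definition~\ref{def:weakep} demands.

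For part (a), assume $\Delta$ is weakly representable. First I would extract from Proposition~\ref{propwrf} that $\Delta$ is faithful, hence (as observed after Definition~\ref{def1free}) the action is $1$-free, so Proposition~\ref{prop1freeThenPrestack} makes $\X\pq\G$ a prestack. This unlocks Proposition~\ref{propXtoX/Gepi/Gprestackthen}: $\q:\X\to\X/\G$ is an epimorphism and $\X\times_{\X\pq\G}\X\lmap{\sim}\X\times_{\X/\G}\X$. Faithfulness of $\Delta$ also makes $\Q$ an isomorphism by Proposition~\ref{propQFaithful}, and composing the two isomorphisms identifies the map \eqref{eq:apquot} with an isomorphism $\X\times_M\G\cong\X\times_{\X/\G}\X$ --- precisely condition~3 of Definition~\ref{DefPrincBundle}. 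Under this identification the two projections of $\X\times_{\X/\G}\X$ onto $\X$ become $\mathrm{pr}_1$ and $\act$; the former is the base change of the submersion $\tar$ along $\ma$, and the latter is $\mathrm{pr}_1$ composed with the shear isomorphism $(x,g)\mapsto(xg,g^{-1})$, so both are submersions by Proposition~\ref{prop:propsubm}.

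The crux of part (a) is then to produce an atlas for $\X/\G$. Here I would use weak representability in full force: the isomorphisms above identify $\Delta$ with the natural map $\nu:\X\times_{\X/\G}\X\to\X\times\X$ (cf.\ \eqref{eq:factor}), which is therefore weakly representable, so pulling back along the atlas $X\times X\to\X\times\X$ shows $R:=X\times_{\X/\G}X$ is a manifold. Pulling the two projections computed above back along $\pi$ (and using Proposition~\ref{prop:propsubm}(a),(b)) shows the source and target $R\rra X$ are surjective submersions, so $R\rra X$ is a Lie groupoid. The standard correspondence between differentiable stacks and Lie groupoid presentations (Section~\ref{subsec:LG}) then lets me conclude that the epimorphism $\q\pi:X\to\X/\G$ is an atlas presenting $\X/\G\cong BR$; in particular $\X/\G$ is differentiable and $\q\pi$ is a submersion. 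Finally, since $\pi$ is a submersive epimorphism and $\q\pi$ is a submersion, Proposition~\ref{prop:propsubm}(c) gives that $\q$ is a submersion, and together with Remark~\ref{ex:actfib} all three conditions of Definition~\ref{DefPrincBundle} hold; by Definition~\ref{def:principal} the action is principal.

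I expect the main obstacle to be the bootstrapping in the last paragraph: deducing that $\X/\G$ is a genuine differentiable stack (rather than some higher object) from the representability of the single fibred product $X\times_{\X/\G}X$. The point to be careful about is that this representability, together with the submersivity of the projections $R\rra X$ and the epimorphism property of $\q\pi$, is exactly the data certifying $X\to\X/\G$ as an atlas via the Lie-groupoid/stack dictionary --- and this is precisely where weak representability (as opposed to mere faithfulness, which only yields the prestack and the isomorphism \eqref{eq:apquot}) becomes indispensable.
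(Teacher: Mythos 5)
Your proof is correct, and for part (a) it takes a genuinely more direct route than the paper's. Part (b) is essentially the paper's argument. For part (a), both proofs begin identically (faithfulness of $\Delta$ via Prop.~\ref{propwrf}, hence $1$-freeness, the prestack property of $\X\pq\G$, the epimorphism property of $\q$, and the isomorphism \eqref{eq:apquot} via Props.~\ref{propQFaithful} and \ref{propXtoX/Gepi/Gprestackthen}), but they diverge at the construction of the atlas. The paper presents the action map by a Hilsum--Skandalis bibundle $E$, proves the rather lengthy Proposition~\ref{propGEFiberedProduct} identifying $[G\backslash E]$ with $X_0\times_{\X/\G}X_0$, and then feeds Lemmas~\ref{lemmaXGXEFiberedProduct} and \ref{lemmaIsoXtimesGE} into the weak representability of $\Delta$; the submersion condition comes from the explicit map $t_p a\colon E\to X_0$. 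You instead obtain $X\times_{\X/\G}X\cong(\X\times_M\G)\times_{\X\times\X}(X\times X)$ purely formally from \eqref{eq:apquot} and \eqref{eq:factor}, and get the submersion condition from the fact that $\mathrm{pr}_1$ is a base change of $\tar$ while $\act$ differs from it by the shear equivalence $(x,g)\mapsto(xg,g^{-1})$. This is shorter and cleaner; what the paper's detour buys is the explicit presentation $[G\backslash E]$, which is reused in Corollary~\ref{cor:ginot1}, Lemma~\ref{LemmaProductWeakRep} and Proposition~\ref{prop:tensor}. Two small points of precision: first, ``pulling back along the atlas $X\times X\to\X\times\X$'' needs Proposition~\ref{prop:wrep}, since weak representability only posits that \emph{some} atlas works, and you are testing against the specific representable morphism $X\times X\to\X\times\X$; second, the ``standard correspondence'' of Section~\ref{subsec:LG} runs from atlases to groupoids, so the step certifying $\q\pi\colon X\to\X/\G$ as an atlas should cite Lemma~\ref{LemmaBX} (whose hypotheses — representability of $X\times_{\X/\G}X$, submersivity of a projection, and the epimorphism property of $\q\pi$ — you have indeed verified). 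Neither point is a mathematical gap.
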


\begin{proof}

\noindent{\bf Part (a)}: Notice that by Prop.~\ref{propwrf} we know
that the action-projection map $\Delta$ is automatically faithful,
so the quotient map $\q: \X \to \X/\G$ is an epimorphism as a
consequence of Propositions~\ref{prop1freeThenPrestack} and
\ref{propXtoX/Gepi/Gprestackthen}(a), and the map \eqref{eq:apquot}
is an isomorphism as a result of Propositions~ \ref{propQFaithful}
and \ref{propXtoX/Gepi/Gprestackthen}(b).
So, to conclude part (a), it remains to show that $\X/\G$ is a
differentiable stack so that $\q: \X \to \X/\G$ is a submersion.

Let $ G\rra G_0$ and $X\rra X_0$ be Lie groupoids presenting $\G$
and $\X$. We will prove that the morphism
\begin{equation}\label{eq:f}
X_0\ra \cl{X}/\cl{G},
\end{equation}
given by the composition $X_0\ra \X\ra \cl{X}/\cl{G}$, is
representable. Assuming this fact, and recalling that the quotient
map $\q: \cl{X}\ra \cl{X}/\cl{G} $ is an epimorphism, we see that
\eqref{eq:f} is an epimorphism, so it is an atlas for
$\cl{X}/\cl{G}$, i.e., this quotient is differentiable. Moreover,
Prop.~\ref{prop:charactsub}, part (b), implies that $\q$ is a
submersion.

So we are left with verifying the representability of the morphism
\eqref{eq:f}, and according to Lemma \ref{LemmaBX} it suffices to
prove that:
\begin{enumerate}
\item The fibred product $X_0\times_{\cl{X}/\cl{G}} X_0$ is represented by a
manifold, say $V$.
\item The induced map $V\ra X_0$ is a submersion. (There are two such induced
maps; it is enough to prove the statement for one of them.)
\end{enumerate}

The proofs of conditions $1.$ and $2.$ above follow from Proposition
\ref{propGEFiberedProduct} and Lemmas~\ref{lemmaXGXEFiberedProduct}
and \ref{lemmaIsoXtimesGE}, presented in Sections
\ref{subsecTechnicalLemmas} and \ref{SubsecCartSquare} below. We
outline the steps.

To prove $1.$, let $E$ be the (total space of the) Hilsum-Skandalis
bibundle associated with the action map $\X\times_M \G \to \X$.
Recall that the fact that the action-projection map  is faithful
implies that the action is 1-free. So we can use Proposition
\ref{propGEFiberedProduct} below to conclude that $E$ carries a left
action of the Lie groupoid $G \rra G_0$ in such a way that the
quotient stack $[G\backslash E]$ is described as a fibred product:
$$
\xymatrix{
[G\backslash E]\ar[r]\ar[d]&X_0\ar[d]\\
X_0 \ar[r] &\cl{X}/\cl{G}. }
$$
The conclusion in $1.$ follows if we show that $[G\bs E]$ is
representable.

Lemma \ref{lemmaXGXEFiberedProduct} shows that we have a 2-cartesian
diagram
$$
\xymatrix{[X\times_M G\bs X\times_{X_0} E]\ar[r]\ar[d]& X_0\times X_0\ar[d]\\
\cl{X}\times_M \cl{G}\ar[r]_-{\Delta}& \cl{X}\times\cl{X}, }
$$
while Lemma \ref{lemmaIsoXtimesGE} gives an isomorphism
$$
[X\times_M G\bs X\times_{X_0} E]\lmap{\sim} [G\bs E].
$$
From the assumption that $\Delta$ is weakly representable, it
follows that $[X\times_M G\bs X\times_{X_0} E]$ is representable,
and hence so is $[G\bs E]$. This concludes the proof of $1.$

In order to prove $2.$, we will see in
Section~\ref{subsecTechnicalLemmas} that we have a commutative
diagram
$$
\xymatrix{
E\ar[rd]_-{}\ar[r]& [G\bs E]\ar[d]^-{}\\
& X_0 }
$$
in which the map $E\to X_0$ is a submersion, see Remark
\ref{remtpasubmersion}. Since the horizonal map is surjective (as a
map between manifolds), it follows that the vertical map is a
submersion, as desired.

%%%%

\smallskip

\noindent{\bf Part (b):} In order to verify that the
action-projection map $\Delta: \X \times_M \G \to \X\times \X$ is
weakly representable, let $U$ be a manifold, and let $U\ra \cl{X}$
be a representable morphism (e.g. an atlas). We have a cartesian
diagram
$$
\xymatrix{U\times_\cl{S} U\ar[r]\ar[d]& U\times U\ar[d]\\
\cl{X}\times_\cl{S} \cl{X}\ar[r]_-{}& \cl{X}\times\cl{X}. }
$$
Since $\cl{X}\ra \cl{S}$ is assumed to be a submersion, the
composition $U\ra \cl{S}$ is representable, so $Z:= U\times_\cl{S}
U$ is representable. Using that the map $ \cl{X}\times_M\cl{G}\ra
\cl{X}\times_\cl{S} \cl{X}$ (induced by $\Delta$) is an isomorphism,
we see that $Z$ fits into a cartesian diagram
$$
\xymatrix{Z\ar[r]\ar[d]& U\times U\ar[d]\\
\cl{X}\times_M \cl{G}\ar[r]_-{\Delta}& \cl{X}\times\cl{X}, }
$$
and we conclude that $\Delta$ is weakly representable.

The assertion that $\cl{S}$ is canonically identified with $\X/\G$
is proven in Cor.~\ref{cor:principal}.
\end{proof}

An important instance of Theorem~\ref{TheDiagonalThenPrincipal} is
when the stacky Lie groupoid $\G\rra M$ is representable, i.e.,
isomorphic to an ordinary Lie groupoid. In this case the conditions
in the theorem are automatically satisfied:

\begin{corollary}\label{cor:ginot1}
Let $\G\rra M$ be a Lie groupoid, and suppose that it acts on a
differentiable stack $\X$. Then $\X/\G$ is a differentiable stack
and the $\G$-action on $\X$ is principal over it.
\end{corollary}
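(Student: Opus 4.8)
The plan is to deduce the corollary directly from part~(a) of Theorem~\ref{TheDiagonalThenPrincipal}. An ordinary Lie groupoid $\G\rra M$ is in particular a (representable) stacky Lie groupoid, so it suffices to verify that its action-projection map $\Delta=(\mathrm{pr}_1,\act):\X\times_M\G\to\X\times\X$ is weakly representable in the sense of Def.~\ref{def:weakep}; the theorem then yields at once that $\X/\G$ is differentiable and that $\X$ is a principal $\G$-bundle over it. Thus the whole argument reduces to exhibiting a single atlas of $\X\times\X$ along which the pullback of $\Delta$ is representable.

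First I would fix a Lie groupoid $X\rra X_0$ presenting $\X$, with atlas $p:X_0\to\X$, so that $p\times p:X_0\times X_0\to\X\times\X$ is an atlas. The decisive point --- and the only place where representability of $\G$ enters --- is that, when $\G=G$ is an honest manifold, the first projection $\mathrm{pr}_1:\X\times_M G\to\X$ is a representable morphism: it is the base change of $\tar:G\to M$ along $\ma:\X\to M$, and $\tar$ is a submersion between manifolds, hence representable, and representability is stable under base change. For a genuinely stacky $\G$ this fails, which is precisely why the corollary requires $\G$ to be representable.

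Then I would compute the pullback $(\X\times_M G)\times_{\Delta,\,\X\times\X,\,p\times p}(X_0\times X_0)$ in two steps, using that a pullback over a product factors through the two coordinates. Pulling back the first component $\mathrm{pr}_1$ along $p$ gives $P:=(\X\times_M G)\times_{\mathrm{pr}_1,\X,p}X_0\cong X_0\times_{\ma p,M,\tar}G$, which is a manifold because $\mathrm{pr}_1$ is representable. The remaining pullback is $P\times_{\act',\X,p}X_0$, where $\act':P\to\X$, $(x_0,g)\mapsto p(x_0)g$, is the action map inherited by $P$; since $p:X_0\to\X$ is an atlas, hence a representable morphism, and $P$ is a manifold, this fibred product is representable. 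Therefore the total pullback is representable, so $\Delta$ is weakly representable, and the corollary follows from Theorem~\ref{TheDiagonalThenPrincipal}(a).

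I expect the only real content to be the identification of $\mathrm{pr}_1$ as representable together with the bookkeeping of the iterated fibred product; there is no genuine obstacle once $\G$ is a manifold. This reflects the informal assertion in the introduction that \emph{any} smooth action of a Lie groupoid (on a manifold, and more generally on a differentiable stack) is automatically principal.
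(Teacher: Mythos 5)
Your proof is correct, and it reaches the same destination as the paper's by a more self-contained route. Both arguments reduce the corollary to Theorem~\ref{TheDiagonalThenPrincipal}(a) and verify weak representability of $\Delta$ by showing that its pullback along the atlas $X_0\times X_0\to\X\times\X$ is a manifold; the difference is in how that pullback is computed. The paper recycles the machinery from the proof of the main theorem: by Lemmas~\ref{lemmaXGXEFiberedProduct} and \ref{lemmaIsoXtimesGE} the pullback is identified with $[G\backslash E]$, where $E$ is the Hilsum-Skandalis bibundle of the action map, and then the trivial presentation $G=G_0=\G$ makes the $G$-action on $E$ trivial, so $[G\backslash E]=E$ is a manifold. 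You instead argue directly: $\mathrm{pr}_1:\X\times_M G\to\X$ is representable because it is a base change of the submersion $\tar:G\to M$ (this is exactly where representability of $\G$ enters, as you note), so the first-coordinate pullback is the manifold $X_0\times_{\ma p,M,\tar}G$, and the second-coordinate pullback against the representable atlas $p:X_0\to\X$ is then automatically representable. In fact the two computations produce the same manifold --- the total space $E$ of the bibundle presenting the action map is precisely $(X_0\times_M G)\times_{\X}X_0$ --- so your version can be read as an unpacking of the paper's, with the advantage of not requiring Lemmas~\ref{lemmaXGXEFiberedProduct} and \ref{lemmaIsoXtimesGE} or Proposition~\ref{prop2BXfibprodY0}, at the cost of redoing the iterated fibred-product bookkeeping by hand.
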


\begin{proof}
We must check that the action-projection map is weakly
representable. Note that by Lemmas~\ref{lemmaXGXEFiberedProduct} and
\ref{lemmaIsoXtimesGE} (cf. proof above), it suffices to show that
$[G\backslash E]$ is representable, where $E$ is the
Hilsum-Skandalis bibundle presenting the action map $\X\times_\G
\G\to \X$, and $G\rra G_0$ presents $\G$. By the assumption on $\G$
being representable, we may take $G=G_0=\G$ (the trivial groupoid),
in which case $[G\backslash E] = E$.
\end{proof}

When the Lie groupoid in the previous corollary is a Lie group, this
recovers \cite[Thm.~0.2]{ginotnoohi}. When both $\G$ and $\X$ are
representable, we are in the situation of an ordinary Lie groupoid
action of $G\rra M$ on a manifold $X$, in which case $\X/\G$ is the
quotient stack $[X/G]$, and we recover the well-known fact that $X$
is a principal $G$-bundle over it (see Example~\ref{ex:princBG}).

%%%%%%%%%%%%%%%%%%%%%%%%%%%%%%%%%%%%%%%%%%%%%%%%%%%%%%%%%%%%%%%%%%%%%%%%%%%%%%%%%
\subsection{Proofs of lemmas}\label{subsecTechnicalLemmas}

We now present the lemmas used to prove
Theorem~\ref{TheDiagonalThenPrincipal}.

Let a stacky Lie groupoid $\cl{G}\rra M$ act (on the right) on the
differentiable stack $\X$. As in Section~\ref{subsec:main}, let
$G\rra G_0$ and $X\rra X_0$ be presentations of $\cl{G}$ and
$\cl{X}$, respectively. We identify $\cl{G}$ with $BG$ and $\cl{X}$
with $BX$, and use the following notation: the action of $BG$ on
$BX$ is along $\p : BX\to M$, and $\sour,\tar : BG \to M$ are the
source and target maps.
The compositions of these structure maps with the atlas maps
$X_0\rightarrow BX$ and $G_0\rightarrow BG$  give rise to groupoid
morphisms
\begin{equation}\label{eq:grpm}
\xymatrix{
X\ar[r]^-{\bar{p}}\ar@<0.5ex>[d]^-{t_X}\ar@<-0.5ex>[d]_-{s_X}&
M\ar@<0.5ex>[d]\ar@<-0.5ex>[d]&&
G\ar[r]^-{\bar{s}}\ar@<0.5ex>[d]^-{t_G}\ar@<-0.5ex>[d]_-{s_G}&
M\ar@<0.5ex>[d]\ar@<-0.5ex>[d]&&
G\ar[r]^-{\bar{t}}\ar@<0.5ex>[d]^-{t_G}\ar@<-0.5ex>[d]_-{s_G}&
M\ar@<0.5ex>[d]\ar@<-0.5ex>[d]\\
X_0 \ar[r]_-{p} &M&& G_0\ar[r]_-{s}&M&&G_0\ar[r]_-{t}&M
}
\end{equation}
where $M\rra M$ is the trivial groupoid, and the following
equalities hold:
$$
\bar{p}=p \circ s_X=p \circ t_X,\qquad \bar{s}=s \circ s_G=s \circ
t_G,\qquad \bar{t}=t\circ s_G=t\circ t_G.
$$

The fibred product
$$
BX\times_M BG:=BX \times_{\p, M, \tar} BG
$$
is presented by the groupoid $X \times_{\bar{p}, M, \bar{t}} G\rra
X_0 \times_{p, M, t} G_0$ (the ``1-categorical'' fibred product, as
in \cite{MM}), that we write more simply as
$$
X\times_M G \rra  X_0\times_M G_0.
$$
The action morphism
$$
BX\times_M BG\map BX
$$
is presented by a right principal bibundle that we denote by
\begin{equation}\label{eq:Eab}
\xymatrix{
X\times_M G \ar@<0.5ex>[d]\ar@<-0.5ex>[d] &&X\ar@<0.5ex>[d]\ar@<-0.5ex>[d]\\
X_0\times_M G_0 &E\ar[l]^-{a}\ar[r]_-{b}&X_0. }
\end{equation}

Let $p_t$ and $t_p$ denote the following natural maps:
\begin{equation}\label{eq:pttp}
\xymatrix{
 X_0\times_M G_0\ar[r]^-{p_t}\ar[d]_-{t_p}& G_0\ar[d]^t\\
X_0\ar[r]_-{p}& M. }
\end{equation}
We will consider a $G$-action on $E$ along the map $p_t  a:E\to
G_0$, given by
\begin{equation}\label{eq:GE}
g\cdot e:=(1_{t_pa(e)},g)\cdot e,
\end{equation}
for $g\in G$ and $e\in E$ with $s_G(g)=p_ta(e)$, where on the
right-hand side we use the $(X\times_M G)$-action on $E$.

Since $G$ acts on the fibers of $b:E\rightarrow X_0$ and
$t_pa:E\rightarrow X_0$, we have induced morphisms between stacks,
$$ \tilde{b}:[G\backslash E]\map X_0,\;\;\;
\widetilde{t_pa}:[G\backslash E]\map X_0,
$$
fitting into the following commutative triangles:
$$
\xymatrix{
E\ar[rd]_-{b}\ar[r]& [G\bs E]\ar[d]^-{\tilde{b}}\\
& X_0 }\qquad\qquad \xymatrix{
E\ar[rd]_-{t_pa}\ar[r]& [G\bs E]\ar[d]^-{\widetilde{t_pa}}\\
& X_0.
}
$$
More explicitly, the induced morphisms are defined as follows. Given
an object $(P,c)$ in $[G\bs E]$, i.e., a left principal $G$-bundle
$P$ and a $G$-equivariant morphism $c:P\map E$,
$$ \xymatrix{
P\ar[d]\ar[r]^-{c}&E\\
U& }$$ then $\tilde{b}(P,c):U\rightarrow X_0$ is the unique morphism
that makes the following diagram commute:
$$ \xymatrix{
P\ar[d]\ar[r]^-{c}&E\ar[d]^-{b}\\
U\ar[r]& X_0 . }
$$
Given a morphism in $[G\bs E]$,
$$
\xymatrix{
P\ar[d]\ar[r]&P'\ar[d]\\
U\ar[r]_-{\mu}&U' }
$$
its image under $\tilde{b}$ is given by $\mu$. The morphism
$\widetilde{t_pa}$ is defined similarly.

\begin{remark}\label{remtpasubmersion}
Note that $t_pa:E\ra X_0$ is a submersion since both $a$ and $t_p$
are (as a result of $E$ being right principal and  $t_p$ being a
base change of $t:G_0\ra M$, which is a submersion since $\tar
:\cl{G}\ra M$ is a submersion).
\end{remark}

The next proposition is the first key result in the proof of part
(a) of Theorem~\ref{TheDiagonalThenPrincipal}.

\begin{proposition}\label{propGEFiberedProduct}
If  the action of $BG$ on $BX$ is 1-free, then there is a canonical
2-isomorphism making the following square 2-cartesian:
$$\xymatrix{
[G\backslash E]\ar[r]^-{\tilde{b}}\ar[d]_-{\widetilde{t_pa}}&X_0\ar[d]\\
X_0 \ar[r] &BX/BG.
}$$
%(more precisely, there is a canonical
%2-isomorphism that makes the square 2-cartesian).
\end{proposition}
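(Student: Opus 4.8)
The plan is to exhibit a comparison morphism $\Theta\colon [G\backslash E]\to X_0\times_{BX/BG}X_0$ and to prove it is an isomorphism, after first using $1$-freeness to replace the quotient by the prequotient. Since the action is $1$-free, Proposition~\ref{prop1freeThenPrestack} guarantees that $BX\pq BG$ is a prestack, so by Proposition~\ref{propXtoX/Gepi/Gprestackthen}(b) the canonical map $BX\times_{BX\pq BG}BX\to BX\times_{BX/BG}BX$ is an isomorphism; pulling back along the atlas $X_0\to BX$ in both factors yields
$$ X_0\times_{BX/BG}X_0\;\cong\;X_0\times_{BX\pq BG}X_0 . $$
Thus it suffices to identify $[G\backslash E]$ with $X_0\times_{BX\pq BG}X_0$ compatibly with the two projections, which should correspond to $\widetilde{t_pa}$ and $\tilde b$.

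The first step is to read off, from the bibundle $E$, exactly the data that an object of the prequotient fibred product requires. A direct computation with the $(X\times_M G)$-action \eqref{eq:GE} shows that $t_pa\colon E\to X_0$ is $G$-invariant, while $p_ta\colon E\to G_0$ is $G$-equivariant for the canonical left $G$-action on $G_0$ (indeed $p_ta(g\cdot e)=t_G(g)$); hence $p_ta$ descends to a morphism $\widetilde{p_ta}\colon[G\backslash E]\to BG$, using that $G_0\to BG$ presents $BG$ as the quotient of this canonical action. Moreover, since $E$ presents the action morphism $\act\colon BX\times_M BG\to BX$ as a right-principal $X$-bundle over $X_0\times_M G_0$, each $e\in E$ with $a(e)=(x_0,g_0)$ and $b(e)=y_0$ encodes a canonical isomorphism $x_0\cdot[g_0]\xrightarrow{\sim}y_0$ in $BX$, where $[g_0]$ is the image of $g_0$ under the atlas $G_0\to BG$. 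I would then define $\Theta$ on an object $(P,c)\in[G\backslash E](U)$ by sending it to the triple $\big(\widetilde{t_pa}(P,c),\,[\,\widetilde{p_ta}(P,c),\,b_{(P,c)}\,],\,\tilde b(P,c)\big)$, where the middle entry is the prequotient morphism assembled from the pointwise isomorphisms $b_{c(p)}$ as $p$ ranges over $P$; the $G$-equivariance of $c$ is precisely what lets this family glue to a single isomorphism $\widetilde{t_pa}(P,c)\cdot\widetilde{p_ta}(P,c)\to\tilde b(P,c)$ over $U$. The same construction, read in $BX/BG$, produces the canonical $2$-isomorphism rendering the stated square $2$-commutative (acting by a groupoid element does not change the class, so $\q$ of this isomorphism identifies $\q\,\widetilde{t_pa}$ with $\q\,\tilde b$).

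The second step is to check that $\Theta$ is an isomorphism of stacks, which I would verify by showing it is fully faithful and essentially surjective on each fibre $(\cdot)_U$. For essential surjectivity, given a triple $(x_1,[g,b],x_2)$ the element $g\in BG(U)$ amounts to a (left) principal $G$-bundle $P\to U$ with equivariant map to $G_0$; the moment-map condition of {\bf (a2)} forces $\p(x_1)=\tar(g)$, so $x_1\cdot g$ is defined, and the isomorphism $b\colon x_1\cdot g\to x_2$ together with the presentation of $\act$ by $E$ determines a $G$-equivariant map $c\colon P\to E$ with $p_tac$ recovering the map to $G_0$, $t_pac$ recovering $x_1$, and $bc$ recovering $x_2$. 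Full faithfulness translates the equivalence relation $\sim$ defining $\mathrm{Hom}$ in the prequotient into isomorphisms of the pairs $(P,c)$, and this is where $1$-freeness enters decisively: it ensures that an identification $j\colon g\xrightarrow{\sim}\bar g$ over $\mathrm{id}$ compatible with the $b$'s is uniquely recovered from, and uniquely determines, the corresponding morphism of $G$-bundles, so that no collapsing or spurious morphisms arise.

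The hard part will be the gluing/descent step in the construction of $\Theta$: turning the pointwise isomorphisms $b_{c(p)}\colon x_1\cdot[g_0]\to y_0$ into a single global isomorphism $x_1\cdot g\to x_2$ over $U$, and verifying its independence of the chosen representatives as well as its $G$-equivariance, so that it genuinely descends to $[G\backslash E]$. This requires unwinding the definition of the action map $\act$ and its Hilsum--Skandalis presentation $E$ carefully, in tandem with the projections \eqref{eq:pttp}. Once this compatibility is established, essential surjectivity follows by reversing the construction and full faithfulness is a formal consequence of $1$-freeness and the prestack property, completing the identification $[G\backslash E]\cong X_0\times_{BX/BG}X_0$ and hence the $2$-cartesianness of the square.
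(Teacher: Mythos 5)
Your plan follows the paper's proof essentially step for step: use $1$-freeness together with Propositions~\ref{prop1freeThenPrestack} and \ref{propXtoX/Gepi/Gprestackthen}(b) to replace $X_0\times_{BX/BG}X_0$ by $X_0\times_{BX\pq BG}X_0$, describe the latter explicitly in terms of prequotient morphisms $[Q,\varphi]$, match these with $G$-orbits of points of $E$ via the Hilsum--Skandalis presentation of the action map, and invoke $1$-freeness a second time for faithfulness. The only organizational difference is that the paper factors through the prestack $[G\backslash E]_p$ of trivial bundles and then stackifies (so it only needs the key identification $\hat{a}^*X\cdot f^*G\cong U\times_{Y_0}E$ for trivial bundles $f^*G$ --- which is precisely the ``hard part'' you defer --- and proves the comparison functor is a monomorphism and epimorphism rather than an equivalence on fibres), whereas you work with arbitrary principal bundles directly; both routes are sound and rest on the same technical lemma.
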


Since the proof of this result is lengthy, we will present it
separately in Section~\ref{SubsecCartSquare}.

%\begin{proof}
%The proof is given in Subsection \ref{SubsecCartSquare}.
%\end{proof}

We now consider a (left) action of the Lie groupoid $X\times_M G\rra
X_0\times_M G_0$ on
$$
X\times_{X_0} E:= X\times_{t_X, X_0, t_p a} E
$$
along the map
$$
X\times_{X_0} E \map  X_0\times_{M} G_0, \qquad (x,e)\mapsto a(e),
$$
as follows:
$$
(x',g)\cdot(x,e)=(x' x, (x',g)e).
$$
The space $X\times_{X_0} E$ also carries a right action of $X\times
X$, along the map $(s_X,b)$, by $(x,e)\cdot (x_1,x_2)=(xx_1, ex_2)$,
making it into a bibundle
\begin{equation}\label{eq:bibundle}
\xymatrix{
X\times_M G \ar@<0.5ex>[d]\ar@<-0.5ex>[d] &&X\times X\ar@<0.5ex>[d]\ar@<-0.5ex>[d]\\
X_0\times_M G_0 &X\times_{X_0}E\ar[l]^-{}\ar[r]_-{}&X_0\times X_0. }
\end{equation}

\begin{lemma}\label{lemmaHSofDiagonal}
The bibundle \eqref{eq:bibundle} defines a Hilsum-Skandalis map
corresponding to the action-projection map $\Delta:\cl{X}\times_M
\cl{G}\rightarrow \cl{X}\times\cl{X}$.
%The Hilsum-Skandalis bibundle of the diagonal of the action:
%$\Delta:\cl{X}\times_M \cl{G}\rightarrow \cl{X}\times\cl{X}$, is
%given by:
%$$ \xymatrix{
%X\times_M G \ar@<0.5ex>[d]\ar@<-0.5ex>[d] &&X\times X\ar@<0.5ex>[d]\ar@<-0.5ex>[d]\\
%X_0\times_M G_0 &X\times_{X_0}E\ar[l]^-{}\ar[r]_-{}&X_0\times X_0
%}$$
%where the action of $X\times_M G$ is described above and the action of
%$X\times X$ is given by: $(x,e)\cdot (x_1,x_2)=(xx_1, ex_2)$.
\end{lemma}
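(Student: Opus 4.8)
The plan is to realize $\Delta=(\mathrm{pr}_1,\act)$ as the morphism into a product of stacks induced by its two components, and to identify the corresponding Hilsum--Skandalis bibundle by combining Lemmas~\ref{lem:HSfacts1} and~\ref{lem:HSfacts2}. Write $\mathcal{W}:=\X\times_M\G=BX\times_M BG$, which by Lemma~\ref{lem:HSfacts1} is presented by $W:=X\times_M G\rra X_0\times_M G_0$, while the target $\X\times\X=BX\times BX$ is presented by the product groupoid $X\times X\rra X_0\times X_0$. Under \eqref{eq:Delta}, $\Delta$ is exactly the morphism $\mathcal{W}\to\X\times\X$ whose two components are the first projection $\mathrm{pr}_1:\mathcal{W}\to\X$ and the action map $\act:\mathcal{W}\to\X$, so it suffices to identify the Hilsum--Skandalis bibundles of these two components and then assemble them.

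For the second component this is immediate: by definition (see \eqref{eq:Eab}) the bibundle $E$, with moment maps $a:E\to X_0\times_M G_0$ and $b:E\to X_0$, presents $\act$. For the first component I would invoke Lemma~\ref{lem:HSfacts1} with the submersion $\tar:\G\to M$ and the moment map $\p:\X\to M$: it yields that $\mathrm{pr}_1:\mathcal{W}\to\X$ is presented by the bibundle $E_1:=G_0\times_M X$, with left moment map $(g_0,\eta)\mapsto(t_X(\eta),g_0)$ to $X_0\times_M G_0$ and right moment map $(g_0,\eta)\mapsto s_X(\eta)$ to $X_0$. I would also record here that $\mathrm{pr}_1$ is a submersion, being the base change of the submersion $\tar$ along $\p$ (Prop.~\ref{prop:propsubm}(b)); this is the hypothesis needed to apply Lemma~\ref{lem:HSfacts2}.

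With these two bibundles in hand, I would apply Lemma~\ref{lem:HSfacts2} in the special case where its manifold ``$M$'' is a point (so that $BY\times_M BZ$ is the plain product $\X\times\X$ and the compatibility $F_3F_1=F_4F_2$ holds trivially), taking $W$ as source groupoid (so that the fibred product in the lemma is over $X_0\times_M G_0$), $F_1=\mathrm{pr}_1$, $F_2=\act$, $E_2=E$. Since $\mathrm{pr}_1$ is a submersion, the lemma gives that $E_1\times_{X_0\times_M G_0}E$ is a Hilsum--Skandalis bibundle for the induced morphism, which is precisely $\Delta$. The remaining step is to match this bibundle with \eqref{eq:bibundle}: using that the left moment map of $E_1$ is $(g_0,\eta)\mapsto(t_X(\eta),g_0)$ and that of $E$ is $a$, one checks that the assignment $((g_0,\eta),e)\mapsto(\eta,e)$, with $g_0=p_t(a(e))$ and $t_X(\eta)=t_p(a(e))$ forced, identifies $E_1\times_{X_0\times_M G_0}E$ with $X\times_{t_X,X_0,t_p a}E=X\times_{X_0}E$. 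Under this identification the diagonal left $W$-action $(\xi,g)\cdot((g_0,\eta),e)=\big((t_G(g),\xi\eta),(\xi,g)e\big)$ furnished by Lemma~\ref{lem:HSfacts2} becomes $(\xi,g)\cdot(\eta,e)=(\xi\eta,(\xi,g)e)$, and the right $X\times X$-action becomes $(\eta,e)\cdot(\eta_1,\eta_2)=(\eta\eta_1,e\eta_2)$, exactly the actions in \eqref{eq:bibundle}. The only genuinely delicate point is this final bookkeeping identification of the moment maps and of the diagonal action; everything else is a direct citation of the two lemmas.
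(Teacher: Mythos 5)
Your proposal is correct and follows exactly the route the paper takes: decompose $\Delta$ into the projection $\X\times_M\G\to\X$ (whose Hilsum--Skandalis bibundle $G_0\times_M X$ is supplied by Lemma~\ref{lem:HSfacts1}) and the action map (presented by $E$), then assemble them via Lemma~\ref{lem:HSfacts2} with the target manifold taken to be a point. The paper's proof is just these two citations; you additionally carry out the identification $(G_0\times_M X)\times_{X_0\times_M G_0}E\cong X\times_{X_0}E$ and the matching of moment maps and actions, which the paper leaves implicit, and that bookkeeping is done correctly.
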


\begin{proof}
The proof follows from the fact that the Hilsum-Skandalis bibundle
corresponding to the projection $\X\times_M \G\ra \X$ is
$$ \xymatrix{
X\times_M G \ar@<0.5ex>[d]\ar@<-0.5ex>[d] &&X\ar@<0.5ex>[d]\ar@<-0.5ex>[d]\\
X_0\times_M G_0 &G_0\times_{M}X\ar[l]^-{}\ar[r]_-{}&X_0, }
$$
as described in Lemma~\ref{lem:HSfacts1}. The result is now an
application of Lemma~\ref{lem:HSfacts2}.
\end{proof}

\begin{lemma}\label{lemmaXGXEFiberedProduct}
There is a canonical 2-cartesian square
$$
\xymatrix{[X\times_M G\bs X\times_{X_0} E]\ar[r]\ar[d]& X_0\times X_0\ar[d]\\
\cl{X}\times_M \cl{G}\ar[r]_-{\Delta}& \cl{X}\times\cl{X}.
}$$
\end{lemma}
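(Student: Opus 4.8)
The plan is to recognize the desired square as a direct instance of Proposition~\ref{prop2BXfibprodY0}, feeding in the explicit Hilsum-Skandalis bibundle supplied by Lemma~\ref{lemmaHSofDiagonal}. First I would fix presentations of the three stacks along the bottom and right of the square. As recorded after \eqref{eq:grpm}, the fibred product $\cl{X}\times_M\cl{G}=BX\times_M BG$ is presented by $X\times_M G\rra X_0\times_M G_0$ (cf.\ Prop.~\ref{prop:Bprod}), while $\cl{X}\times\cl{X}=BX\times BX$ is presented by the product groupoid $X\times X\rra X_0\times X_0$; crucially, the manifold of objects of this product groupoid is exactly $X_0\times X_0$, the top-right corner of the square, and the atlas $X_0\times X_0\to\cl{X}\times\cl{X}$ together with $\Delta$ are the two maps meeting at $\cl{X}\times\cl{X}$.

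Next I would invoke Lemma~\ref{lemmaHSofDiagonal}, which states that the bibundle \eqref{eq:bibundle} is a right principal $(X\times_M G)$-$(X\times X)$-bibundle presenting the action-projection map $\Delta\colon\cl{X}\times_M\cl{G}\to\cl{X}\times\cl{X}$. This is precisely the input needed to apply Proposition~\ref{prop2BXfibprodY0}, with its $\cl{X}$ taken to be $\cl{X}\times_M\cl{G}$, its $\cl{Y}$ taken to be $\cl{X}\times\cl{X}$, its morphism $F$ taken to be $\Delta$, its bibundle $P$ taken to be $X\times_{X_0}E$, and its $Y_0$ taken to be $X_0\times X_0$. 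Since the quotient stack $[X\times_M G\bs X\times_{X_0}E]$ is exactly the object $[X\bs P]$ of that proposition---the backslash denoting the quotient by the \emph{left} action of $X\times_M G$, namely the left action described just before \eqref{eq:bibundle}---the proposition produces the canonical $2$-cartesian diagram in the statement, together with the identification $[X\times_M G\bs X\times_{X_0}E]\cong(\cl{X}\times_M\cl{G})\times_{\cl{X}\times\cl{X}}(X_0\times X_0)$ as principal bundles.

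The genuine mathematical content is already packaged into the two cited results, so I do not anticipate a serious obstacle; the remaining work is essentially bookkeeping. The one point that warrants care, and which I would check explicitly, is the compatibility of conventions: that the left $(X\times_M G)$-action on $X\times_{X_0}E$ used to form the quotient stack is the same left action that makes $X\times_{X_0}E$ left principal in Lemma~\ref{lemmaHSofDiagonal}, and that the two maps emanating from the square's target $\cl{X}\times\cl{X}$ (the atlas $X_0\times X_0\to\cl{X}\times\cl{X}$ and the morphism $\Delta$) coincide with those produced by Proposition~\ref{prop2BXfibprodY0} under the substitution above.
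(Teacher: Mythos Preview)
Your proposal is correct and follows exactly the paper's own proof, which consists of the single sentence ``The proof follows from Proposition~\ref{prop2BXfibprodY0} and Lemma~\ref{lemmaHSofDiagonal}.'' Your elaboration of the substitutions and the bookkeeping checks is precisely what that sentence unpacks to.
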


\begin{proof}
The proof follows from Proposition ~\ref{prop2BXfibprodY0} and
Lemma~\ref{lemmaHSofDiagonal}.
\end{proof}

\begin{lemma}\label{lemmaIsoXtimesGE}
There is an isomorphism
$$
[X\times_M G\bs X\times_{X_0} E]\lmap{\sim}  [G\bs E].
$$
\end{lemma}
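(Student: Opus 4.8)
The plan is to realise the asserted isomorphism as the one induced on classifying stacks by a \emph{weak equivalence} of Lie groupoids between the translation groupoid $G\ltimes E$ presenting $[G\bs E]$ (with the $G$-action \eqref{eq:GE}) and the translation groupoid $(X\times_M G)\ltimes(X\times_{X_0}E)$ presenting $[X\times_M G\bs X\times_{X_0}E]$. Recall from Section~\ref{subsec:LG} that a weak equivalence of Lie groupoids induces an isomorphism of the corresponding classifying stacks, so it suffices to produce one. Write $O:=X\times_{X_0}E$ for the object manifold of the second groupoid and $\mathrm{Ar}$ for its space of arrows.

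First I would define a smooth functor $\Theta\colon G\ltimes E\to (X\times_M G)\ltimes O$ built from the unit section of $X$. On objects, $\Theta_0\colon E\to O$ is $e\mapsto(1_{t_pa(e)},e)$, which is well defined since $t_X(1_{t_pa(e)})=t_pa(e)$, and is an embedding. On arrows, $\Theta(g,e):=\big((1_{t_pa(e)},g),(1_{t_pa(e)},e)\big)$. A direct check, using the definition \eqref{eq:GE} of the $G$-action as multiplication by $(1_{t_pa(e)},g)$, shows that $\Theta$ preserves source, target, units and composition, so it is a morphism of Lie groupoids.

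Next I would verify the two conditions characterising a weak equivalence. For essential surjectivity one must show that $s\circ\mathrm{pr}_2\colon E\times_{\Theta_0,O,t}\mathrm{Ar}\to O$ is a surjective submersion. The key point is that the constraint ``target lies in $\Theta_0(E)$'' forces the $X$-component $x'$ of an arrow with source $(x,e)$ to satisfy $x'x=1$, i.e.\ $x'=x^{-1}$; such an arrow is therefore of the form $\big((x^{-1},g'),(x,e)\big)$ with $s_G(g')=p_ta(e)$. Hence $E\times_{\Theta_0,O,t}\mathrm{Ar}$ is identified with $O\times_{(x,e)\mapsto p_ta(e),\,G_0,\,s_G}G$, and under this identification $s\circ\mathrm{pr}_2$ becomes the projection to $O$, which is a base change of the source map $s_G\colon G\to G_0$; since $s_G$ is a surjective submersion, so is this projection. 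For full faithfulness one must check that the square with vertical maps $(s,t)$ and horizontal maps $\Theta$ and $\Theta_0\times\Theta_0$ is cartesian. Set-theoretically, an arrow of $(X\times_M G)\ltimes O$ between $\Theta_0(e_1)$ and $\Theta_0(e_2)$ is forced (by composability, which gives $t_pa(e_1)=t_pa(e_2)$) to have unit $X$-component and to equal $\Theta(g,e_1)$ for a unique $g$ with $g\cdot e_1=e_2$; the resulting correspondence $(g,e_1)\leftrightarrow\text{arrow}$ is a bijection and, by the explicit formulas, a diffeomorphism onto the fibred product. This gives the cartesian square.

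The main obstacle is the full-faithfulness step, where the bookkeeping of the fibred-product constraints (which entries are units, and the matching of the moment maps $a$, $t_pa$, $p_ta$ under the source/target conventions of the two translation groupoids) must be tracked carefully to see that such an arrow carries no information beyond its $G$-component $g$. Once essential surjectivity and full faithfulness are established, $\Theta$ is a weak equivalence, and the induced isomorphism $[G\bs E]\lmap{\sim}[X\times_M G\bs X\times_{X_0}E]$ (whose inverse is the map in the statement) follows. Throughout, Remark~\ref{remtpasubmersion} and the right-principality of $E$ guarantee that the relevant fibred products are smooth manifolds.
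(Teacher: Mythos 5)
Your proof is correct and follows essentially the same route as the paper: both realize the asserted isomorphism as the one induced by a weak equivalence between the two translation groupoids presenting the quotient stacks. The only difference is the direction of the functor — the paper goes from $(X\times_M G)\ltimes(X\times_{X_0}E)$ to $G\ltimes E$ via $(x,e)\mapsto (x^{-1},1)e$ on objects, whereas you go the other way via the unit section $e\mapsto (1_{t_p a(e)},e)$; the two are quasi-inverse to one another, so either yields the statement.
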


\begin{proof}
Both quotient stacks are presented by the translation groupoids  of
the corresponding actions. The translation groupoid presenting
$[X\times_M G\bs X\times_{X_0} E]$ is
$$
(X\times_M G)\underset{X_0\times_M G_0}{\times}
(X\times_{X_0} E)\rra X\times_{X_0} E
$$
where source and target maps are given, respectively, by
$$
(x',g,x,e)\stackrel{}\mapsto (x,e),\;\;\;
(x',g,x,e)\stackrel{}\mapsto (x' x,(x',g)e)
$$
and multiplication by
$$
(x_1',g_1,x_1,e_1)\cdot (x_2',g_2,x_2,e_2)=(x_1'
x_2',g_1g_2,x_2,e_2).
$$
The translation groupoid of $[G\bs E]$ is $ G\times_{G_0}
 E\rra  E$, with source and target maps given, respectively, by
$$
(g,e)\stackrel{}\mapsto e,\;\;\; (g,e)\stackrel{}\mapsto (1,g)e,
$$
and multiplication
$$
(g,e)\cdot (g',e')=(gg',e').
$$
The assertion in the lemma  follows from the fact that there is a
weak equivalence (as in \cite[Sec.~5.4]{MM}, see
also comments below Eq. (\ref{eq:HSbb}))
between the two translation groupoids,
$$
(X\times_M G)\underset{X_0\times_M G_0}{\times} (X\times_{X_0} E)
\to G\times_{G_0} E,
$$
given on objects by $(x,e)\mapsto (x^{-1},1)e$ (using the action of
$X\times_M G$ on $E$), and on arrows by $(x',g,x,e)\mapsto (g,
(x^{-1},1)e)$.
%, which is defined by (from LHS to RHS):
%$$ \begin{array}{ll}
%\mbox{Objects:}& (x,e)\mapsto (x^{-1},1)e\\\noalign{\medskip}
%\mbox{Morphisms:}& (y,g,x,e)\mapsto (y, (x^{-1},1)e).
%\end{array}$$
\end{proof}

%%%%%%%%%%%%%%%%%%%%%%%%%%%%%%%%%%%%%%%%%%%%%%%%%%%%%%%%%%%%%%%%%%%%%%%%%%%%%%%%%%%%%%%

\subsection{Proof of Proposition~\ref{propGEFiberedProduct}}\label{SubsecCartSquare}

%This subsection is dedicated to  the proof of Proposition \ref{propGEFiberedProduct}.
%We use the notation of Subsection \ref{subsecTechnicalLemmas}.

For the proof of Prop.~\ref{propGEFiberedProduct}, we keep the
notation of Section~\ref{subsecTechnicalLemmas}.

Since we assume that the action of $\G=BG$ on $\X=BX$ is 1-free, it
follows from Propositions~\ref{propXtoX/Gepi/Gprestackthen}$(b)$ and
\ref{prop1freeThenPrestack} that
$$
X_0\times_{BX/BG} X_0=X_0\times_{BX\pq BG} X_0.
$$
We will work with the latter stack, for which we have a more
explicit description: objects are triples $(\widehat{a},
[Q,\varphi], \widehat{b})$, where $\widehat{a}, \widehat{b}: U\to
X_0$, $U$ is a manifold, and $[Q,\varphi]: \widehat{a}^*X \to
\widehat{b}^*X$ is a morphism over $U$ in the prequotient $BX\pq BG$
(as in \eqref{eq:morpre}); recall, in particular, that $Q$ is a
principal $G$-bundle over $U$ such that $\widehat{a}^*X\cdot Q$ is
defined ($\widehat{a}^*X\cdot Q$ is the $X$-bundle defined by the
action $BX\times_M BG\to BX$), and $\varphi: \widehat{a}^*X\cdot Q
\to \widehat{b}^*X$ is an isomorphism of principal $X$-bundles over
$U$. We keep the notation $\widehat{a}^*X$ for the pullback by
$\widehat{a}$ of $X$ viewed as a principal $X$-bundle through
multiplication on the right (similarly for $\widehat{b}^*X$).
Morphisms in $X_0\times_{BX\pq BG} X_0$ are pairs
$$
(\mu_a,\mu_b): (\widehat{a}, [Q,\varphi], \widehat{b}) \to
(\widehat{a}_1, [Q_1,\varphi_1], \widehat{b}_1)
$$
where $\mu_a, \mu_b: U\to U_1$ are such that
$\widehat{a}=\widehat{a}_1 \mu_a$, $\widehat{b}=\widehat{b}_1 \mu_b$
and the diagram
$$
\xymatrix{
\hat{a}^*X\ar[rr]^-{[Q,\varphi]}\ar[d]&&\hat{b}^*X\ar[d]\\
\hat{a}_1^*X\ar[rr]_-{[Q_1,\varphi_1]}&&\hat{b}_1^*X }
$$
commutes in $BX\pq BG$ (the vertical maps in the diagram are those
naturally induced by $\mu_a$ and $\mu_b$).

Consider the Hilsum-Skandalis bibundle $E$ corresponding to the
action map, see Section~\ref{subsecTechnicalLemmas}. Let $[G\bs
E]_p$ be the subcategory of $[G\bs E]$ as in
Example~\ref{ex:quotientst}, which is a prestack whose
stackification is $[G\bs E]$.

 The proof of
Prop.~\ref{propGEFiberedProduct} consists of defining categories
fibred in groupoids and morphisms,
$$
\cl{A}_1\map\cl{A}_2\map \cl{A}_3:= X_0\times_{BX\pq BG} X_0,
$$
such that
\begin{itemize}
\item $\cl{A}_1$ is canonically isomorphic to $[G\bs E]_p$,
\item the morphism $\cl{A}_1\ra\cl{A}_2$ is a (strict) isomorphism,
\item $\cl{A}_2\ra \cl{A}_3$ is a monomorphism and an epimorphism.
\end{itemize}
By Prop.~\ref{PropStackification} (iv), it follows that the
stackification $[G\bs E]\ra X_0\times_{BX\pq BG} X_0$ of the induced
morphism $[G\bs E]_p\ra X_0\times_{BX\pq BG} X_0$ is an isomorphism,
and this proves the proposition.

\subsubsection*{The category $\cl{A}_1$}

The objects of $\cl{A}_1$ are triples $(U,f,c)$, where $U$ is a
manifold and $f:U\ra G_0$ and $c:U\ra E$ are morphisms such that
$p_t  a  c=f$, for $a$ in \eqref{eq:Eab} and $p_t$ in
\eqref{eq:pttp}. A morphism
\begin{equation}\label{eq:hv1}
(\mu,\nu):(U,f,c)\ra (U_1,f_1,c_1)
\end{equation}
is defined by maps $\mu: U\ra U_1$ and $\nu: U\ra G$ such that $s_G
\nu = f_1\mu$, $t_G \nu =f$ and $\nu \cdot(c_1 \mu)=c$. Here
``$\cdot$" denotes the action of $G$ on $E$ (the same notation is
used for multiplication in $G$). The composition of morphisms is
given by
$$
(\mu_1,\nu_1)(\mu, \nu)=(\mu_1\mu, \nu\cdot (\nu_1\mu)).
$$
The identity of an object $(U,f,c)$ is given by $(\mr{id}_U, 1_G
f)$. The category $\cl{A}_1$ is isomorphic (as a fibred category) to
 $[G\bs E]_p$ (cf. Example~\ref{ex:quotientst}, replacing $X$ by $E$ and $a$ by $p_t  a$).

\subsubsection*{The category $\cl{A}_2$}
%\noindent
%\textbf{The category $\cl{C}$.}

An object in $\cl{A}_2$ is of the form
$(U,\widehat{a},\widehat{b},f,\varphi)$, where $U$ is a manifold,
$\widehat{a},\widehat{b}:U\ra X_0$, $f:U\ra G_0$ are morphisms such
that $p  \widehat{a}=t f$, and $\varphi: \hat{a}^*X\cdot f^*G\ra
\hat{b}^*X$ is an isomorphism in $BX(U)$. A morphism in $\cl{A}_2$
from $A=(U,\hat{a},\hat{b},f,\varphi)$ to
$A_1=(U_1,\hat{a}_1,\hat{b}_1,f_1,\varphi_1)$ is a pair
\begin{equation}\label{eq:hv2}
(\mu, \nu): A\ra A_1
\end{equation}
where $\mu:U\ra U_1$, $\nu:U\ra G$ and $\widehat{a}_1 \mu
=\widehat{a}$, $\widehat{b}_1 \mu=\widehat{b}$, $s_G \nu =f_1 \mu$,
$t_G \nu =f$, and
\begin{equation}\label{eq:diagmorp}
\xymatrix{
\widehat{a}^*X\cdot f^*G\ar[r]^-{\varphi}\ar[d]& \hat{b}^*X\ar[d]\\
\widehat{a}_1^*X\cdot f_1^*G\ar[r]_-{\varphi_1}& \hat{b}_1^*X }
\end{equation}
commutes, where the map $f^*G = U\times_{G_0} G \ra f_1^*G =
U_1\times_{G_0}G$ is given by $(u,g)\mapsto (\mu(u),\nu(u)^{-1}g)$.
The maps $\hat{a}^*X\to \hat{a}_1^*X$ and $\hat{b}^*X\to
\hat{b}_1^*X$ are naturally induced by $\mu$. Composition and
identities in the category $\cl{A}_2$ are defined similarly to
$\cl{A}_1$.

%\noindent
%\textbf{Functor $\cl{B}\ra\cl{C}$: objects.}

\subsubsection*{The functor $\cl{A}_1\ra\cl{A}_2$}
%\begin{itemize}
%\item {\bf Objects:}

At the level of objects, we consider the following assignment
$\Obj(\cl{A}_1)\to \Obj(\cl{A}_2)$:
\begin{equation}\label{eq:objbij}
(U,f,c)\mapsto (U, \widehat{a}, \widehat{b}, f, \varphi),
\end{equation}
where $\widehat{a}=t_pac$ and $\widehat{b}=bc$; we now describe how
$\varphi$ is defined.
%This assignment turns out to be a bijection.

In order to define $\varphi:  \hat{a}^*X\cdot f^*G \to \hat{b}^*X$,
we use the fact that $\hat{a}^*X\cdot f^*G$ can be described as a
fibred product as follows. Let
$$
Y_0:= X_0\times_{p,M,t}G_0.
$$

\begin{lemma}
The principal (right) $X$-bundle $\hat{a}^*X\cdot f^*G$ over $U$ can
be identified with the left vertical arrow of the fibred product
$$
\xymatrix{ U\times_{Y_0} E \ar[r]^-{}\ar[d]
& E\ar[d]^-{a}\\
 U\ar[r]_-{(\hat{a},f)}& Y_0,
}
$$
where $U\times_{Y_0} E$ is equipped with an $X$-action along the map
$U\times_{Y_0} E\rightarrow X_0$, $(u,e)\mapsto b(e)$, given by
$(u,e)x=(u,ex)$.
\end{lemma}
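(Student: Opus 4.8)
The strategy is to unwind the definition of the product $\hat{a}^*X\cdot f^*G$ in terms of the Hilsum--Skandalis bibundle $E$ and to recognize the result as a pullback of $E$. By definition, $\hat{a}^*X\cdot f^*G$ is the image of the object $(\hat{a}^*X,f^*G)$ of $BX\times_M BG$ under the action morphism $BX\times_M BG\to BX$, which by construction is presented by $E$ (see Section~\ref{subsecTechnicalLemmas}). First I would use Proposition~\ref{prop:Bprod} to pass from $BX\times_M BG$ to $B(X\times_M G)$; under this isomorphism the object $(\hat{a}^*X,f^*G)$ corresponds to the principal right $(X\times_M G)$-bundle $\hat{a}^*X\times_U f^*G$ over $U$. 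Setting $\psi:=(\hat{a},f)\colon U\to Y_0$ (which is well defined since $p\hat{a}=tf$), a direct comparison of the defining fibred products identifies $\hat{a}^*X\times_U f^*G$ with the tautological bundle $\psi^*(X\times_M G)$, that is, the pullback along $\psi$ of $X\times_M G$ regarded as a principal right $(X\times_M G)$-bundle over $Y_0$ via its target map. Since the action morphism is presented by $E$, this gives
$$
\hat{a}^*X\cdot f^*G\;\cong\;\bigl(\hat{a}^*X\times_U f^*G\bigr)\otimes_{X\times_M G}E\;=\;\psi^*(X\times_M G)\otimes_{X\times_M G}E.
$$

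The crucial step is then a general identity for the tautological bundle: for any right principal $H$-$K$-bibundle $Q$ with left moment map $\ell\colon Q\to H_0$ and any smooth map $\psi\colon U\to H_0$, one has
$$
\psi^*H\otimes_H Q\;\cong\;\psi^*Q\;=\;U\times_{\psi,H_0,\ell}Q,
$$
the isomorphism being induced by $\bigl((u,h),w\bigr)\mapsto(u,h\cdot w)$, where $h\cdot w$ uses the left $H$-action on $Q$. This map is well defined on $\psi^*H\times_{H_0}Q$, is invariant under the diagonal $H$-action $\bigl((u,h),w\bigr)h'=\bigl((u,hh'),(h')^{-1}w\bigr)$ defining the tensor product, and descends to a fibrewise bijection because $\psi^*H$ is principal. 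Applying this with $H=X\times_M G$, $K=X$, $Q=E$ and $\ell=a$, I would obtain
$$
\hat{a}^*X\cdot f^*G\;\cong\;\psi^*E\;=\;U\times_{(\hat{a},f),Y_0,a}E\;=\;U\times_{Y_0}E.
$$

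It remains to check that this is an isomorphism of principal right $X$-bundles over $U$. Tracing the residual right $X$-action through the chain of identifications, it comes entirely from the right $X$-action on $E$, and since the left $(X\times_M G)$- and right $X$-actions on $E$ commute, it descends to the action $(u,e)x=(u,ex)$ on $U\times_{Y_0}E$ with moment map $(u,e)\mapsto b(e)$, exactly as in the statement. I expect the only real (and routine) difficulty to lie in the bookkeeping for the tautological identity: one must keep careful track of the several moment maps and composability conditions entering $\psi^*H\times_{H_0}Q$, and of the source/target conventions for $X\times_M G$ recorded in \eqref{eq:grpm}, in order to verify well-definedness, $H$-invariance, and bijectivity of the descended map. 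No input beyond Proposition~\ref{prop:Bprod} and the definition of the tensor product of bibundles is required.
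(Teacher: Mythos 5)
Your proposal is correct and follows essentially the same route as the paper: identify $(\hat{a}^*X,f^*G)$ with the principal $(X\times_M G)$-bundle $\hat{a}^*X\times_U f^*G$ via Proposition~\ref{prop:Bprod}, recognize this as the pullback $U\times_{Y_0}(X\times_M G)$, and then compute the tensor product with $E$ to get $U\times_{Y_0}E$. Your explicitly stated ``tautological bundle'' identity $\psi^*H\otimes_H Q\cong\psi^*Q$ is exactly the computation $((U\times_{Y_0}Y)\times_{Y_0}E)/Y=U\times_{Y_0}E$ carried out in the paper, just packaged as a general statement.
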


\begin{proof}
The pair of principal bundles
$$
(\hat{a}^*X, f^*G)\in BX(U)\times_{M(U)}BG(U)
$$
is identified, via the isomorphism
$BX(U)\times_{M(U)}BG(U)=B(X\times_M G) (U)$ (see
Prop.~\ref{prop:Bprod}), with the principal $(X\times_M G)$-bundle
$\widehat{a}^*X\times_U f^*G$. Since $\widehat{a}^*X=U\times_{X_0}X$
and $f^*G=U\times_{G_0}G$, one can check the identification
$\widehat{a}^*X\times_U f^*G=U\times_{Y_0}Y$, where $Y=X\times_M G$.
Hence
$$
\widehat{a}^*X\cdot f^*G=(\widehat{a}^*X\times_U f^*G)\otimes_Y
E=((U\times_{Y_0}Y)\times_{Y_0} E)/Y=U\times_{Y_0} E.
$$
The reader may verify that the structure maps of the resulting
$X$-bundle are as in the statement.
\end{proof}

Since $\widehat{b}^*X$ is a trivial principal $X$-bundle, the
isomorphism $\varphi^{-1}: \widehat{b}^*X \to \hat{a}^*X\cdot f^*G$
is determined by a section $\sigma$ of $\hat{a}^*X\cdot f^*G$ over
$U$ such that $b d  \sigma=\widehat{b}$, where
$$
d: \hat{a}^*X\cdot f^*G = U\times_{Y_0} E  \to E
$$
is the natural projection (note that $bd$ is the moment map of
$\hat{a}^*X\cdot f^*G$). We now define $\varphi$ by declaring that
$\varphi^{-1}$ is determined by the unique section $\sigma$ of
$\hat{a}^*X\cdot f^*G$ over $U$ such that
$$
d \sigma=c.
$$

This last equation shows that $c$ and $\sigma$ uniquely determine
each other, hence \eqref{eq:objbij} is a bijection.

Given two objects $(U,f,c)$ and $(U_1,f_1,c_1)$ of $\cl{A}_1$, we
now consider a map from the set of morphisms between these two
objects and the set of morphisms between their images,
$(U,\hat{a},\hat{b},f,\varphi)$ and
$(U_1,\hat{a}_1,\hat{b}_1,f_1,\varphi_1)$, in $\cl{A}_2$.
The map sends a morphism $(\mu,\nu)$ in $\cl{A}_1$ (as in
\eqref{eq:hv1}) to $(\mu,\nu)$ in $\cl{A}_2$ (as in \eqref{eq:hv2}).
To check that the map is well defined and that it is a bijection,
let $(\mu,\nu)$ be such that $\mu:U\ra U_1$, $\nu: U\ra G$, and $s_G
\nu =f_1\mu$, $t_G \nu =f$. We claim that $\nu \cdot(c_1\mu)=c$ if
and only if $\widehat{a}_1\mu=\widehat{a}$,
$\widehat{b}_1\mu=\widehat{b}$, and the diagram \eqref{eq:diagmorp}
commutes. To verify that the condition $\nu\cdot(c_1\mu)=c$ implies
that $\widehat{a}_1\mu=\widehat{a}$, first notice that the
$G$-action on $E$ (see \eqref{eq:GE}) is on the fibers of $t_p a$.
Then we have
$$
\widehat{a}_1 \mu = t_p a c_1 \mu = t_p a (\nu \cdot c_1\mu)=t_p a
c=\widehat{a}.
$$
The fact that the condition $\nu\cdot(c_1\mu)=c$ implies that
$\hat{b}_1\mu=\hat{b}$ follows, similarly, from the fact that the
$G$-action on $E$ is on the fibers of $b$. It remains to prove that
if $\widehat{a}_1\mu=\widehat{a}$ and $\widehat{b}_1\mu=\widehat{b}$
hold, then $\nu \cdot(c_1\mu)=c$ is equivalent to the commutativity
of \eqref{eq:diagmorp}. We do that by writing the two maps from
$\hat{b}^*X$ to $\hat{a}_1^*X\cdot f_1^*G$ in diagram
\eqref{eq:diagmorp} in terms of the sections $\sigma$ and $\sigma_1$
that induce $\varphi^{-1}$ and $\varphi_1^{-1}$. Using the
identifications $\widehat{a}^*X\cdot f^*G=U\times_{Y_0} E$ and
$\widehat{a}_1^*X\cdot f_1^*G=U_1\times_{Y_0} E$, the two maps read:
$$
(u,x)\mapsto (\mu(u), c_1(\mu(u))x),\;\;\;\; (u,x)\mapsto
(\mu(u),\nu(u)^{-1}c(u)x),
$$
from which the claim follows. Here we use that the maps on the
square \eqref{eq:diagmorp} are given as follows: the vertical right
map is $(u,x)\mapsto (\mu(u),x)$, the map $\varphi^{-1}$ is
$\varphi^{-1}(u,x)=\sigma(u)x= (u,c(u)x)$, the map $\varphi^{-1}_1$
is $\varphi^{-1}_1(u_1,x)=\sigma_1(u_1)x= (u_1,c_1(u_1)x)$, and the
vertical left map is $(u,e)\mapsto (\mu(u), \nu(u)^{-1}e)$.

The conclusion is that the assignments on objects and morphisms just
described define a functor $\cl{A}_1\to \cl{A}_2$ which is a strict
isomorphism in $\CFG_\cl{C}$.

\subsubsection*{The functor $\Phi:\cl{A}_2\ra\cl{A}_3$}

Recall that $\cl{A}_3 =X_0\times_{BX\pq BG}X_0$. The functor
$\Phi:\cl{A}_2\ra\cl{A}_3$, at the level of objects, is defined by
%We want to define a functor $\Phi: \cl{C}\ra \cl{D}$ over the
%category of manifolds.
%On the objects $\Phi$ is defined by:
$$
\Phi(U,\hat{a},\hat{b},f,\varphi)=(\hat{a},[f^*G,\varphi],\hat{b}).
$$
At the level of morphisms, $\Phi$ sends a morphism $(\mu,\nu)$ in
$\cl{A}_2$, from $(U,\widehat{a},\widehat{b},f,\varphi)$ to
$(U_1,\widehat{a}_1,\widehat{b}_1,f_1,\varphi_1)$, to the pair
$(\mu,\mu)$; to verify that $(\mu,\mu)$ is indeed a morphism between
the corresponding images in $\cl{A}_3$, we need to show the
commutativity of the diagram
$$
\xymatrix{
\hat{a}^*X\ar[rr]^-{[f^*G,\varphi]}\ar[d]&&\hat{b}^*X\ar[d]\\
\hat{a}_1^*X\ar[rr]_-{[f_1^*G,\varphi_1]}&&\hat{b}_1^*X
}
$$
in $BX\pq BG$, where the vertical maps are induced by $\mu$. This
follows from the commutativity of \eqref{eq:diagmorp} and the higher
coherences $(x1g)$ and $(xg1)$ of the action.

One may check that $\Phi$ is a morphism of categories fibred in
groupoids, and we now verify that it is a monomorphism and an
epimorphism.

\begin{proposition} The following holds:
\begin{itemize}
\item[(a)] The morphism $\Phi:\cl{A}_2\ra \cl{A}_3$ is an epimorphism.

\item[(b)] If the action of $BG$ on $BX$ is 1-free then $\Phi:\cl{A}_2\ra
\cl{A}_3$ is a monomorphism, i.e., its restriction to fibers
$\Phi:\cl{A}_2(U)\ra \cl{A}_3(U)$ over any manifold $U$ is fully
faithful.

\end{itemize}
\end{proposition}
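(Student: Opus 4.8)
The plan is to prove the two assertions separately, unwinding the definitions of epimorphism and monomorphism from Section~\ref{subsec:cfg} in terms of the explicit descriptions of $\cl{A}_2$ and $\cl{A}_3$. The only structural difference between a general object of $\cl{A}_3$ and one in the image of $\Phi$ is that image objects use principal $G$-bundles of the special form $f^*G$ (pullbacks of $G\to G_0$ along $f:U\to G_0$), whereas $\cl{A}_3$ allows an arbitrary bundle $Q$; this observation governs both parts.

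For part (a), recall that $\Phi$ is an epimorphism precisely when every object of $\cl{A}_3$ over a manifold $U$ becomes, after passing to an open cover, isomorphic to an object in the image of $\Phi$. Fix $(\hat{a},[Q,\varphi],\hat{b})$ in $\cl{A}_3(U)$. Since the atlas $G_0\to BG$ is an epimorphism, the object $Q\in BG(U)$ admits, by the very definition of epimorphism, an open cover $(U_\alpha)$ of $U$ together with maps $f_\alpha:U_\alpha\to G_0$ and isomorphisms of principal $G$-bundles $f_\alpha^*G\cong Q|_{U_\alpha}$. The identity $p\hat{a}=tf_\alpha$ holds on $U_\alpha$ because both sides equal the moment map of $Q|_{U_\alpha}$, which is forced by $\hat{a}^*X\cdot Q$ being defined. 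Transporting $\varphi$ through this isomorphism yields $\varphi_\alpha$, and $(U_\alpha,\hat{a}|_{U_\alpha},\hat{b}|_{U_\alpha},f_\alpha,\varphi_\alpha)$ is then an object of $\cl{A}_2$ whose image under $\Phi$ is isomorphic to $(\hat{a},[Q,\varphi],\hat{b})|_{U_\alpha}$ in $\cl{A}_3$, the isomorphism being exactly $f_\alpha^*G\cong Q|_{U_\alpha}$ read off through the equivalence relation \eqref{eq:triang}. This proves (a), and notably uses no freeness hypothesis.

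For part (b), fix a manifold $U$; as $\cl{A}_2(U)$ and $\cl{A}_3(U)$ are groupoids, all morphisms lie over $\mr{id}_U$, so a morphism of $\cl{A}_2(U)$ is a pair $(\mr{id}_U,\nu)$ with $\nu:U\to G$, while $\Phi$ sends it to $(\mr{id}_U,\mr{id}_U)$, discarding $\nu$ entirely. Faithfulness is therefore the crux, and is exactly where $1$-freeness enters. Given two morphisms $(\mr{id}_U,\nu),(\mr{id}_U,\nu'):A\to A_1$ in $\cl{A}_2(U)$, both satisfy the commuting square \eqref{eq:diagmorp} with the same $\varphi,\varphi_1$, so that $\varphi_1\circ(\mr{id}_{\hat{a}^*X}\cdot \ell_\nu)=\varphi=\varphi_1\circ(\mr{id}_{\hat{a}^*X}\cdot \ell_{\nu'})$, where $\ell_\nu,\ell_{\nu'}:f^*G\to f_1^*G$ denote the induced bundle morphisms $(u,g)\mapsto(u,\nu(u)^{-1}g)$ over $\mr{id}_U$. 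Cancelling the isomorphism $\varphi_1$ gives $\mr{id}_{\hat{a}^*X}\cdot \ell_\nu=\mr{id}_{\hat{a}^*X}\cdot \ell_{\nu'}$; since both $\ell_\nu,\ell_{\nu'}$ cover $\mr{id}_U$ their targets under $\tar$ are identities, so the $1$-freeness hypothesis (Def.~\ref{def1free}) forces $\ell_\nu=\ell_{\nu'}$, whence $\nu=\nu'$. I expect this cancellation-plus-$1$-freeness step to be the main obstacle, as it requires matching the concrete bundle morphisms $\ell_\nu$ with the abstract $1$-morphisms $j$ of Def.~\ref{def1free} and verifying the target condition on $\tar$.

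Fullness, by contrast, should be automatic and independent of $1$-freeness. A morphism in $\cl{A}_3(U)$ between $\Phi(A)$ and $\Phi(A_1)$ forces $\hat{a}=\hat{a}_1$ and $\hat{b}=\hat{b}_1$ and amounts, via the relation \eqref{eq:triang} defining morphisms in $BX\pq BG$, to an isomorphism $j:f^*G\to f_1^*G$ of principal $G$-bundles over $\mr{id}_U$ with $\tar(j)=\mr{id}$ and $\varphi_1\circ(\mr{id}_{\hat{a}^*X}\cdot j)=\varphi$. The remaining step is to observe that $f^*G=U\times_{f,G_0,t}G$ carries the canonical section $u\mapsto(u,1_{f(u)})$, so that $G$-equivariance determines $j$ by its value on this section; hence $j=\ell_\nu$ for a unique $\nu:U\to G$ with $s_G\nu=f_1$ and $t_G\nu=f$. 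Setting this $\nu$ produces a morphism $(\mr{id}_U,\nu)$ in $\cl{A}_2(U)$ with $\Phi(\mr{id}_U,\nu)$ equal to the given morphism, establishing that $\Phi_U$ is full and completing (b).
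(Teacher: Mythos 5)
Your proposal is correct and follows essentially the same route as the paper: part (a) by locally trivializing $Q$ to reduce to bundles of the form $f_\alpha^*G$, and part (b) by identifying bundle isomorphisms $f^*G\to f_1^*G$ over $\mr{id}_U$ with maps $\nu:U\to G$ and invoking $1$-freeness only for the uniqueness/faithfulness half. The paper merely phrases part (b) via the observation that $\cl{A}_3(U)$ is discrete, which is equivalent to your fullness-plus-faithfulness split.
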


\begin{proof}
To show that $\Phi$ is an epimorphism, let
$(\hat{a},[Q,\varphi],\hat{b})$ be an object of $\cl{A}_3$ over $U$.
Then $Q$ is a principal right $G$-bundle over $U$ along some $\rho:
Q\to G_0$. Let $(\iota_\alpha: U_\alpha\ra U)_\alpha$  be an open
cover of $U$ trivializing $Q$. There are trivializations of $Q$
induced by maps $\sigma_\alpha: U_\alpha\ra Q$ such that $\pi_Q
\sigma_\alpha=\iota_\alpha$, where $\pi_Q : Q\ra U$ is the
projection. Define $f_\alpha:=\rho \sigma_\alpha:U_\alpha\ra G_0$.
The pullback of $\varphi:\hat{a}^*X\cdot Q\ra \hat{b}^*X$ via
$\iota_\alpha$ defines a morphism in $BX(U_\alpha)$,
$$
\varphi_{|U_\alpha}:(\hat{a}\iota_\alpha)^*X\cdot f_\alpha^*G\ra
(\hat{b}\iota_\alpha)^*X,
$$
and the restriction of $(\hat{a},[Q,\varphi],\hat{b})$ to $U_\alpha$
is $(\hat{a}\iota_\alpha,[f_\alpha^*G,
\varphi_{|U_\alpha}],\hat{b}\iota_\alpha)=
\Phi(U_\alpha,\hat{a}\iota_\alpha, \hat{b}\iota_\alpha, f_\alpha,
\varphi_{|U_\alpha})$.

To prove part $(b)$, let $A=(U,\hat{a},\hat{b},f,\varphi)$ and
$A_1=(U_1,\hat{a}_1,\hat{b}_1,f_1,\varphi_1)$ be objects of
$\cl{A}_2$ over $U$. The functor $\Phi$ induces a map
$$
\mr{Hom}_{\cl{A}_2(U)}(A,A_1)\map
\mr{Hom}_{\cl{A}_3(U)}(\Phi(A),\Phi(A_1)),
$$
and we must verify that this is a bijection. Note that $\cl{A}_3(U)$
is a set, i.e., all its morphisms are the identities. So it is
enough to prove the following two assertions:
\begin{itemize}
\item[(i)] If $\Phi(A)\neq\Phi(A_1)$, then there are no morphisms $A\ra
A_1$ over $U$.
\item[(ii)] If $\Phi(A)=\Phi(A_1)$, then there exists a unique morphism
$A\ra A_1$ over $U$.
\end{itemize}

The claim in (i) is straightforward: if there is a morphism $A\ra
A_1$ over $U$ then its image via $\Phi$ is a morphism
$\Phi(A)\ra\Phi(A_1)$ over $U$. Hence $\Phi(A)=\Phi(A_1)$, since
$\cl{A}_3(U)$ is a set.

We now prove (ii). Assume that $\Phi(A)=\Phi(A_1)$. This is
equivalent to having $\hat{a},\hat{b}:U\ra X_0$ and $f,f_1:U\ra G_0$
such that $tf=p\hat{a}=tf_1$, and morphisms $\varphi:\hat{a}^*X\cdot
f^*G\ra\hat{b}^*X$, $\varphi_1:\hat{a}^*X\cdot f_1^*G\ra\hat{b}^*X$
in $BX(U)$ such that $[f^*G,\varphi]=[f_1^*G,\varphi_1]$ as
morphisms $\hat{a}^*X\ra\hat{b}^*X$ in $BX\pq BG(U)$.

Since a morphism $A\ra A_1$ over $U$ is of the form
$(\mr{id}_U,\nu)$, we have to prove that there exists a unique
$\nu:U\ra G$ such that $s_G \nu=f_1$, $t_G \nu=f$, and the diagram
\begin{equation}\label{eq:diagbeta}
\xymatrix{ \hat{a}^*X\cdot
f^*G\ar[r]^-{\varphi}\ar[d]_-{\mr{id}\cdot \eta}
& \hat{b}^*X\ar[d]^-{\mr{id}}\\
\hat{a}^*X\cdot f_1^*G\ar[r]_-{\varphi_1}& \hat{b}^*X
}
\end{equation}
commutes, where  $\eta:f^*G\ra f_1^*G$ is the map described in the
definition of $\cl{A}_2$: $\eta(u,g)=(u,\nu(u)^{-1}g)$. The fact
that $[f^*G,\varphi]=[f_1^*G,\varphi_1]$ implies the existence of a
morphism $\eta:f^*G\ra f_1^*G$ of principal bundles over $U$ making
\eqref{eq:diagbeta} commute, and having such $\eta$ is equivalent to
having $\nu$ with the required properties, so that the existence of
$\nu$ is proved. For uniqueness, given $\nu, \nu_1$ with the desired
properties it follows that $\mr{id}\cdot
\eta=\varphi_1^{-1}\varphi=\mr{id}\cdot \eta_1$. Hence, since the
$BG$ action is 1-free, we deduce that $\eta_1=\eta$, and we conclude
that $\nu_1 = \nu$.
\end{proof}

%%%%%%%%%%%%%%%%%%%%%%%%%%%%%%%%%%%%%%%%%%%%%%%%%%%%%%%%%%%%%%%%
\subsection{Application: tensor product of stacky bundles}\label{SubsecTensorBundles}

As seen in Section~\ref{subsec:LG}, if $G$ is a Lie groupoid, $P$ is
a manifold carrying a principal right $G$-action, and $Q$ is
manifold carrying a left $G$-action, we can construct a new manifold
$P\otimes_G Q$. This construction is key in the definition of Morita
equivalence of Lie groupoids; in particular, it is used in showing
that Morita bibundles can be composed. We now extend this discussion
to the context of stacky Lie groupoids.

Let us consider the diagram
\begin{equation}\label{eq:Gaction}
\xymatrix{
& \cl{G}\ar@<0.5ex>[d]\ar@<-0.5ex>[d]&\\
\cl{X}\ar[r]&M &\ar[l]\cl{Y}, }
\end{equation}
where $M$ is a manifold, and $\cl{G}$ is a stacky Lie groupoid that
acts on the right on a differentiable stack $\cl{X}$ and on the left
on a differentiable stack $\cl{Y}$.  We assume that one of the maps
$\X\ra M$ or $\cl{Y}\ra M$ is a submersion, so that
$\X\times_M\cl{Y}$ is also a differentiable stack. Following
Example~\ref{ActionXX} (cf. Prop.~ \ref{prop:invact}) there is an
induced right action of $\cl{G}$ on $\cl{X}\times_M \cl{Y}$ given
(on objects) by
\begin{equation}\label{eq:inducedaction}
(x,y)\cdot g=(xg, g^{-1}y).
\end{equation}

Let us now discuss the property of weak representability of these
actions.

\begin{lemma}\label{lem:HSfacts3}
Let $E$ and $E'$ be Hilsum-Skandalis bibundles for the action maps
$\X \times_M \G\to \X$ and $\cl{Y}\times_M \G \to \cl{Y}$,
respectively. Then $E\times_{G_0}E'$ is a Hilsum-Skandalis bibundle
for the diagonal action $(\X \times_M \cl{Y})\times_M \G \to
\X\times_M \cl{Y}$.
\end{lemma}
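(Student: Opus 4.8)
The plan is to derive the statement from Lemma~\ref{lem:HSfacts2}. Write $\cl{W}:=(\X\times_M\cY)\times_M\G$ for the source of the diagonal action map, and consider the two morphisms $F_1\colon\cl{W}\to\X$, $(x,y,g)\mapsto xg$, and $F_2\colon\cl{W}\to\cY$, $(x,y,g)\mapsto g^{-1}y$. By construction (see Example~\ref{ActionXX}) the diagonal action map $\cl{W}\to\X\times_M\cY$ is exactly the morphism induced by $F_1$ and $F_2$; the compatibility needed to form this induced map is $\ma_\X\scirc F_1=\ma_\cY\scirc F_2$, which holds since both sides equal $\sour\scirc\mathrm{pr}_\G$ (indeed $\ma_\X(xg)=\sour(g)$ and $\ma_\cY(g^{-1}y)=\tar(g^{-1})=\sour(g)$). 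Denoting by $\act_\X\colon\X\times_M\G\to\X$ and $\act_\cY\colon\cY\times_M\G\to\cY$ the two action maps, with Hilsum-Skandalis bibundles $E$ and $E'$, I would then identify the bibundles of $F_1$ and $F_2$ and compute the fibred product supplied by Lemma~\ref{lem:HSfacts2}.

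To apply Lemma~\ref{lem:HSfacts2} I first check its submersion hypothesis. Note that $F_1$ factors as $\cl{W}\lmap{\mathrm{pr}_{13}}\X\times_M\G\lmap{\act_\X}\X$ and $F_2$ as $\cl{W}\lmap{\mathrm{pr}_{23}}\cY\times_M\G\lmap{\act_\cY}\cY$. The action maps $\act_\X,\act_\cY$ are submersions (each differs, by the automorphism $(x,g)\mapsto(xg,g^{-1})$ of its source, from a projection that is a base change of $\tar$ or $\sour\colon\G\to M$), while $\mathrm{pr}_{13}$ (resp.\ $\mathrm{pr}_{23}$) is a base change of the projection $\X\times_M\cY\to\X$ (resp.\ $\to\cY$), which is a submersion precisely when $\cY\to M$ (resp.\ $\X\to M$) is. Hence, by Prop.~\ref{prop:propsubm}, the assumption that one of $\X\to M$, $\cY\to M$ is a submersion ensures that one of $F_1$, $F_2$ is a submersion, as required.

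The core of the proof is the bibundle bookkeeping. The projections $\mathrm{pr}_{13}$ and $\mathrm{pr}_{23}$ are presented by honest groupoid morphisms out of the presenting groupoid $W=(X\times_M Y)\times_M G$, namely those forgetting the $\cY$- and the $\X$-factor. Composing the Hilsum-Skandalis bibundle of such a groupoid morphism (cf.\ \eqref{eq:HSbb}) with $E$ (resp.\ $E'$) amounts to pulling $E$ (resp.\ $E'$) back along the induced map on units; thus the bibundle of $F_1$ is $E_1=W_0\times_{X_0\times_M G_0}E$ and that of $F_2$ is $E_2=W_0\times_{Y_0\times_M G_0}E'$, where $W_0=X_0\times_M Y_0\times_M G_0$ and the fibred products are taken along the left moment maps $a\colon E\to X_0\times_M G_0$ and $a'\colon E'\to Y_0\times_M G_0$. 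Lemma~\ref{lem:HSfacts2} then yields that $E_1\times_{W_0}E_2$ is a Hilsum-Skandalis bibundle for the diagonal action map. Finally, unwinding the fibre conditions shows that an element of $E_1\times_{W_0}E_2$ is the same datum as a pair $(e,e')\in E\times E'$ whose images under $E\to G_0$ and $E'\to G_0$ agree, so that $E_1\times_{W_0}E_2\cong E\times_{G_0}E'$, and one checks that the induced right $X\times_M Y$-action is the diagonal one.

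I expect the main obstacle to be this last identification: one must keep careful track of the left and right moment maps of $E_1$ and $E_2$, verify that the composite of the groupoid-morphism bibundle with $E$ (resp.\ $E'$) is genuinely the claimed pullback, and confirm that the fibred product over $W_0$ collapses to the fibred product over $G_0$. Once the moment maps are correctly matched, checking that the resulting action coincides with the diagonal action of Example~\ref{ActionXX} is routine.
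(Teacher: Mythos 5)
Your proposal is correct and follows essentially the same route as the paper: both factor the diagonal action map through $F_1(x,y,g)=xg$ and $F_2(x,y,g)=g^{-1}y$, compute their Hilsum-Skandalis bibundles by composing the bibundles of the projections with $E$ and $E'$, and then invoke Lemma~\ref{lem:HSfacts2}; your pullbacks $W_0\times_{X_0\times_M G_0}E$ and $W_0\times_{Y_0\times_M G_0}E'$ are exactly the unwound forms of the paper's tensor products $(X\times_M G\times_M Y_0)\otimes_{X\times_M G}E$ and $(X_0\times_M G\times_M Y)\otimes_{Y\times_M G}E'$, and both collapse to $E\times_{G_0}E'$ in the same way. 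Your explicit verification of the submersion hypothesis of Lemma~\ref{lem:HSfacts2} and of the compatibility $\ma_{\X}F_1=\ma_{\cl{Y}}F_2$ is a welcome addition that the paper leaves implicit.
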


\begin{proof}
Let $\X=BX$, $\cl{Y}=BY$, $\G=BG$. A Hilsum-Skandalis bibundle  for
the projection $BX\times_M BY \times_M BG \to BX\times_M BG$ is
$X\times_M G \times_M Y_0$, and a Hilsum-Skandalis bibundle for the
projection $BX\times_M BY \times_M BG \to BY\times_M BG$ is
$X_0\times_M G \times_M Y$ (see Lemma~\ref{lem:HSfacts1}). Hence a
Hilsum-Skandalis bibundle for the composition with the action map,
$$(\X \times_M \cl{Y})\times_M \G \to \X\times_M \cl{G} \to \X,
$$
is $(X\times_M G\times_M Y_0)\otimes_{X\times_M G} E$; similarly, a
Hilsum-Skandalis bibundle for $(\X \times_M \cl{Y})\times_M \G \to
\cl{Y}\times_M \cl{G} \to \cl{Y}$ is $(X_0\times_M G\times_M
Y)\otimes_{Y\times_M G} E'$. By Lemma~\ref{lem:HSfacts2}, a
Hilsum-Skandalis bibundle for the diagonal action is
$$
\big ((X\times_M G\times_M Y_0)\otimes_{X\times_M G} E\big )
\times_{X_0\times_M G_0 \times_M Y_0} \left ( (X_0\times_M G\times_M
Y)\otimes_{Y\times_M G} E'\right ) \cong E\times_{G_0}E',
$$
via $([(1,1,y_0),e],[(x_0,1,1),e'])\mapsto (e,e')$. In the last
formula it is understood that given elements $[(x,g,y_0), e]\in
(X\times_M G\times_M Y_0)\otimes_{X\times_M G} E$ and
$[(x_0,g,y),e']\in (X_0\times_M G\times_M Y)\otimes_{Y\times_M G}
E'$ we can always find representatives of the form $[(1,1,y_0),e]$
and $[(x_0,1,1),e']$, respectively.
\end{proof}

\begin{lemma}\label{LemmaProductWeakRep}
If the action-projection map of the action on $\cl{X}$ is weakly
representable, or the action-projection map of the action on
$\cl{Y}$ is weakly representable, then so is the action-projection
map of the action on $\cl{X}\times_M\cl{Y}$.
\end{lemma}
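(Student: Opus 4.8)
The plan is to reduce weak representability to a representability question about a quotient stack of the relevant Hilsum--Skandalis bibundle, exactly as in the proof of Theorem~\ref{TheDiagonalThenPrincipal}(a), and then to exploit the product structure of that bibundle supplied by Lemma~\ref{lem:HSfacts3}. Write $\X=BX$, $\cl{Y}=BY$, $\G=BG$, and let $E$ and $E'$ be the Hilsum--Skandalis bibundles for the action maps $\X\times_M\G\to\X$ and $\cl{Y}\times_M\G\to\cl{Y}$, equipped with their induced $G$-actions \eqref{eq:GE}. The first observation I would record is that the argument in the proof of Theorem~\ref{TheDiagonalThenPrincipal}(a) (via Lemmas~\ref{lemmaXGXEFiberedProduct} and \ref{lemmaIsoXtimesGE}) identifies, for any right action of $\G$ on a differentiable stack presented by $Z\rra Z_0$ with Hilsum--Skandalis bibundle $\cl{E}$ for the action map, the base change of the action-projection map $\Delta$ along the atlas $Z_0\times Z_0\to (BZ)\times(BZ)$ with the quotient stack $[G\bs\cl{E}]$. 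Hence, by Definition~\ref{def:weakep} and Proposition~\ref{prop:wrep}, $\Delta$ is weakly representable if and only if $[G\bs\cl{E}]$ is representable.

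Applying this to the three actions at once, the hypothesis that $\Delta$ for the $\X$-action (say) is weakly representable translates into $[G\bs E]$ being representable, and the desired conclusion---weak representability of the action-projection map of the diagonal action on $\X\times_M\cl{Y}$---translates, using Lemma~\ref{lem:HSfacts3}, into representability of $[G\bs(E\times_{G_0}E')]$, where $G$ acts diagonally on $E\times_{G_0}E'$ (this diagonal form of the $G$-action being read off from \eqref{eq:GE} under the identification of Lemma~\ref{lem:HSfacts3}). So the whole lemma reduces to the following purely manifold-level statement: if $[G\bs E]$ is representable, then so is $[G\bs(E\times_{G_0}E')]$; the case where instead $[G\bs E']$ is representable is entirely symmetric after applying groupoid inversion to exchange the two factors.

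To prove this reduced statement I would argue as follows. If $[G\bs E]$ is representable by a manifold $V$, then by Example~\ref{ex:princBG} the atlas $E\to V=[G\bs E]$ is a principal $G$-bundle over a manifold, so the $G$-action on $E$ is free and proper. Projecting onto the first factor shows that the diagonal $G$-action on $E\times_{G_0}E'$ is again free and proper: freeness is immediate, and properness is inherited from that of the action on $E$. Consequently the quotient $(E\times_{G_0}E')/G$ is a manifold; concretely it is the tensor product $E\otimes_G E'$ of the principal $G$-bundle $E$ with the $G$-space $E'$ reviewed in Section~\ref{subsec:LG}, whose total space is a manifold precisely because $E$ is principal. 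Therefore $[G\bs(E\times_{G_0}E')]$ is isomorphic to this manifold, i.e.\ representable, which completes the argument.

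The main obstacle is not conceptual but bookkeeping: I must verify carefully that the $G$-action on the product bibundle $E\times_{G_0}E'$ coming from \eqref{eq:GE} is indeed the diagonal action (acting through the $E$- and $E'$-factors separately), so that freeness and properness genuinely descend from the single factor $E$; this requires unwinding the identification in Lemma~\ref{lem:HSfacts3} between the $(X\times_M Y\times_M G)$-bibundle structure on $E\times_{G_0}E'$ and the individual structures on $E$ and $E'$. Once this identification is in place, the representability of $[G\bs(E\times_{G_0}E')]$ follows formally from the principality of $E$, and no transversality or further submersion hypotheses beyond those already assumed (one of $\X\to M$, $\cl{Y}\to M$ a submersion, and $G_0\to M$ a submersion) are needed.
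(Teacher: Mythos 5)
Your proposal is correct and follows essentially the same route as the paper's proof: reduce weak representability of each action-projection map to representability of the quotient $[G\backslash(\cdot)]$ of the corresponding Hilsum--Skandalis bibundle via Lemmas~\ref{lemmaXGXEFiberedProduct} and \ref{lemmaIsoXtimesGE}, identify the bibundle of the diagonal action as $E\times_{G_0}E'$ by Lemma~\ref{lem:HSfacts3}, and conclude because freeness and properness of the diagonal $G$-action are inherited from the single factor whose quotient is a manifold. The symmetry reduction to the case where the hypothesis is on $\X$ (via groupoid inversion) also matches the paper's argument.
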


\begin{proof}
Since the inversion map on $\G$ is an isomorphism, so is the map
$\cl{Y}\times_{M,\tar}\cl{G}\map \cl{G}\times_{\sour,M}\cl{Y}$,
$(y,g)\mapsto (g^{-1},y)$. So the action-projection map $\Delta_l:
\G \times_M \cl{Y}\to \cl{Y}\times \cl{Y}$, $\Delta_l(g,y)=(gy, y)$,
of the left action on $\cl{Y}$ is weakly representable if and only
if the action-projection map $\Delta_r$, $(y,g)\mapsto (y,g^{-1}y)$,
of the induced right action is weakly representable as a consequence
of the commutativity of the following diagram:
$$
\xymatrix{ \cl{Y}\times_{M}\cl{G}\ar[r]\ar[d]_-{\Delta_r}&
\cl{G}\times_{M}\cl{Y}\ar[d]^-{\Delta_l}\\
\cl{Y}\times\cl{Y}\ar[r]& \cl{Y}\times\cl{Y}, }
$$
where the lower map is the isomorphism defined by $(y,y')\mapsto
(y',y)$.

Hence it is enough to prove the assertion in the lemma with the weak
representability assumption for $\cl{X}$. In this case, by Lemmas
\ref{lemmaXGXEFiberedProduct} and \ref{lemmaIsoXtimesGE}, we have
that $[G\bs E]$ is representable, where $E$ is a Hilsum-Skandalis
bibundle representing  the action map on $\cl{X}$, and $G\rra G_0$
is a groupoid presenting $\cl{G}$. In particular the action of $G$
on $E$ is free and proper. If $E'$ is a Hilsum-Skandalis bibundle of
the morphism $\cl{Y}\times_M\cl{G} \rightarrow \cl{Y}$ that sends
$(y,g)$ to $g^{-1}y$, then a Hilsum-Skandalis bibundle of the action
on $\cl{X}\times_M \cl{Y}$ is given by $E\times_{G_0}E'$ (see
Lemma~\ref{lem:HSfacts3}). The induced $G$-action on $E\times_{G_0}
E'$ is the diagonal one, and since the action on the factor $E$ is
free and proper, the induced action is free and proper as well.
Hence $[G\bs (E\times_{G_0}E')]$ is representable and the proof is
complete once we apply Lemmas~\ref{lemmaXGXEFiberedProduct} and
\ref{lemmaIsoXtimesGE} to the diagonal action.
\end{proof}

We now define the tensor product of stacky bundles.

\begin{proposition}\label{prop:tensor}
Let $\cl{G}\rra M$ be a stacky Lie groupoid acting on the
differentiable stacks $\cl{X}$ (on the right) and $\cl{Y}$ (on the
left). If the $\G$-action on $\X$ (or $\cl{Y}$) is principal (see
Def.~\ref{def:principal})
then the $\G$-action on $\cl{X}\times_M\cl{Y}$ makes it into a
principal $\G$-bundle over the quotient
$$
\cl{X}\otimes_\cl{G}\cl{Y}:=(\cl{X}\times_M \cl{Y})/\cl{G}.
$$
\end{proposition}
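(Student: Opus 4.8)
The plan is to reduce the statement to a combination of the main characterization theorem (Theorem~\ref{TheDiagonalThenPrincipal}) and the weak-representability transfer result (Lemma~\ref{LemmaProductWeakRep}), so that the proposition follows with essentially no new computation. The key observation is that Definition~\ref{def:principal} together with Theorem~\ref{TheDiagonalThenPrincipal} says that an action is principal precisely when its action-projection map is weakly representable; this turns the hypothesis into a statement about weak representability that Lemma~\ref{LemmaProductWeakRep} is designed to propagate to the diagonal action.

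First I would translate the hypothesis. If the right $\G$-action on $\X$ is principal, then by definition $\X$ is a principal $\G$-bundle over $\X/\G$, so Theorem~\ref{TheDiagonalThenPrincipal}(b) gives that the action-projection map $\Delta_\X\colon \X\times_M\G\to\X\times\X$ is weakly representable. If instead the left $\cl{G}$-action on $\cl{Y}$ is principal, I would pass to the induced right action $y\cdot g := g^{-1}y$ (the same one used to define the diagonal action in \eqref{eq:inducedaction}); the left action is principal if and only if this right action is, and Theorem~\ref{TheDiagonalThenPrincipal}(b) then yields weak representability of the right action-projection map $\Delta_r\colon \cl{Y}\times_M\G\to\cl{Y}\times\cl{Y}$, $(y,g)\mapsto(y,g^{-1}y)$, which, via the inversion isomorphism already exploited inside the proof of Lemma~\ref{LemmaProductWeakRep}, is equivalent to weak representability of the left action-projection map $\Delta_l$.

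Next I would feed this into Lemma~\ref{LemmaProductWeakRep}: in either case one of the two action-projection maps is weakly representable, so the lemma immediately gives that the action-projection map of the diagonal action on $\cl{X}\times_M\cl{Y}$ is weakly representable. Finally, Theorem~\ref{TheDiagonalThenPrincipal}(a) applied to this diagonal action concludes that the $\G$-action on $\cl{X}\times_M\cl{Y}$ is principal, i.e.\ that $(\cl{X}\times_M\cl{Y})/\G = \cl{X}\otimes_\G\cl{Y}$ is a differentiable stack and $\cl{X}\times_M\cl{Y}$ is a principal $\G$-bundle over it.

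The main obstacle is not computational but conceptual bookkeeping around the left/right asymmetry: one must be careful that ``principal'' for the left action on $\cl{Y}$ is correctly matched with the weak-representability hypothesis of Lemma~\ref{LemmaProductWeakRep}, which is phrased in terms of the action-projection maps appearing there. Once the symmetry between the left action-projection map $\Delta_l$ and the induced right one $\Delta_r$ (already established inside the proof of that lemma) is invoked, everything lines up and no further work is required, since the entire content has been absorbed into the two cited results.
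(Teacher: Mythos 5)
Your proposal is correct and follows essentially the same route as the paper, whose proof of Proposition~\ref{prop:tensor} is precisely the one-line reduction to Lemma~\ref{LemmaProductWeakRep} and Theorem~\ref{TheDiagonalThenPrincipal}; you have simply spelled out the translation of the principality hypothesis into weak representability via part (b) of the theorem (including the left/right bookkeeping handled by Prop.~\ref{PropLeftPrincRightPrinc} and inside the proof of the lemma) and the return trip via part (a). No gaps.
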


\begin{proof}
It directly follows from Lemma \ref{LemmaProductWeakRep} and Theorem
\ref{TheDiagonalThenPrincipal}.
\end{proof}

%%%%%%%%%%%%%%%%%%%%%%%%%%%%%%%%%%%%%%%%%%%%%%%%%%%%%%%%%%%%%%%%%%%%%%%%%%%%%%%%%%%

\section{Morita equivalence of stacky Lie groupoids}\label{sec:morita}

%\comment{introduced next paragraph}

As recalled in Section~\ref{subsec:LG}, Morita equivalence plays a key role in groupoid theory; e.g. it identifies groupoids that present isomorphic stacks, and a similar role should be played by Morita equivalence of higher groupoids in connection with higher stacks. In this section, we show how our results on principal actions find applications in the theory of Morita equivalence of stacky Lie groupoids, which we develop through stacky bibundles (a parallel theory for 2-groupoids is presented in \cite[Sec.~6]{duli}, relying on combinatorial and simplicial methods).

%%%%%%%%%%%%%%%%%%%%%%%%%%%%%%%%%%%%%%%%%%%%%%%%%%%%%%%%%%%%%%%%%%%%%%%%
\subsection{Stacky bibundles and Morita equivalence}

In order to define Morita equivalence of stacky Lie
groupoids, we start with the notion of bibundle.

\begin{definition}\label{DefBibundle}
Let $\cl{G}_i\rra M_i$, for $i=1,2$, be cfg-groupoids. A
$\cl{G}_1$-$\cl{G}_2$-\textbf{bibundle} is an object $\cl{X}$ in
$\CFG_{\cl{C}}$ equipped with maps $\ma_1:\X\to M_1$ and $\ma_2:
\X\to M_2$,
$$
\xymatrix{
\cl{G}_1\ar@<0.5ex>[d]\ar@<-0.5ex>[d]&&  \cl{G}_2\ar@<0.5ex>[d]\ar@<-0.5ex>[d]\\
M_1 &\cl{X} \ar[l]^-{\ma_1}\ar[r]_-{\ma_2}& M_2, }
$$
together with a left action $\act_1$ of $\cl{G}_1$ on  the fibers of
$\ma_2$, a right action $\act_2$ of $\cl{G}_2$ on the fibers of
$\ma_1$, and a 2-isomorphism $\tau: \act_1(\mr{id}\times
\act_2)\lmap{\sim} \act_2(\act_1\times \mr{id})$ that makes the two
actions commute. Moreover, $\tau$ is assumed to satisfy the
following higher coherence conditions:
$$ (g_1g_1'xg_2):\qquad\qquad\xymatrix{
((g_1g_1')x)g_2&(g_1g_1')(xg_2)\ar[l]_-{\tau}
& g_1(g_1'(xg_2))\ar[l]_-{\beta_1}\ar[d]^-{\mr{id}\cdot \tau}\\
(g_1(g_1'x))g_2\ar[u]^-{\beta_1\cdot
\mr{id}}&&g_1((g_1'x)g_2)\ar[ll]^-{\tau} }$$
$$(g_1xg_2g_2'):\qquad\qquad\xymatrix{
((g_1x)g_2)g_2'&(g_1x)(g_2g_2')\ar[l]_-{\beta_2}
& g_1(x(g_2g_2'))\ar[l]_-{\tau}\ar[d]^-{\mr{id}\cdot \beta_2}\\
(g_1(xg_2))g_2'\ar[u]^-{\tau\cdot
\mr{id}}&&g_1((xg_2)g_2')\ar[ll]^-{\tau} }$$
$$(1xg_2):\qquad \xymatrix{
 1(xg_2)\ar[r]^-{\tau}\ar[d]_-{\varepsilon_1}
&(1x)g_2\ar[ld]^-{\varepsilon_1\cdot \mr{id}}\\xg_2&}\qquad
(g_1x1):\qquad \xymatrix{
 g_1(x1)\ar[r]^-{\tau}\ar[d]_-{\mr{id}\cdot \varepsilon_2}
 &(g_1x)1\ar[ld]^-{\varepsilon_2}\\
 g_1x&
}
$$
where $g_1,g_1' \in \cl{G}_1$, $x \in \cl{X}$, $g_2,g_2' \in
\cl{G}_2$ are such that the compositions make sense, $1$ is the
appropriate groupoid identity, $\beta_1,\beta_2$ are the
associativity 2-isomorphisms of the two actions, and $\varepsilon_1,
\varepsilon_2$ are the identity 2-isomorphisms of the two actions
(see Def.~\ref{defaction}).
\end{definition}

\begin{definition} A $\cl{G}_1$-$\cl{G}_2$-bibundle $\cl{X}$ is \textbf{biprincipal}
if $\cl{G}_1$ and $\cl{G}_2$ are stacky Lie groupoids, $\cl{X}$ is a
differentiable stack, $\cl{X}\ra M_2$ is a principal left
$\cl{G}_1$-bundle, and $\cl{X}\ra M_1$ is a principal right
$\cl{G}_2$-bundle.
\end{definition}

\begin{remark}\label{rem:sHS}
Clearly, one could also consider just right-principal bibundles as
generalizations of Hilsum-Skandalis bibundles to the context of
stacky Lie groupoids (cf. Section~\ref{subsec:LG}).
\end{remark}

\begin{definition}\label{def:ME}
Two stacky Lie groupoids $\cl{G}_1$ and $\cl{G}_2$ are
\textbf{Morita equivalent} if there exists a biprincipal
$\cl{G}_1$-$\cl{G}_2$-bibundle.
\end{definition}

We may also refer to a biprincipal bibundle as a {\bf Morita
bibundle}.

%\comment{remark added}

\begin{remark} In groupoid theory, another approach to Morita equivalence, alternative to bibundles, is through
``zig-zags'' of weak equivalences (see e.g. \cite[Sec.~5.4]{MM}). The extension of this theory to higher groupoids in various categories (though not encompassing higher Lie groupoids) has been recently presented in \cite{bg}, and a parallel construction for higher Lie (Banach) groupoids is considered in \cite{RZ}. The first steps towards
checking the equivalence of both approaches  have been taken in \cite[Sect 6.4]{duli}, where it is shown that, for Lie 2-groupoids, a Morita bibundle arising from a strict morphism is exactly a weak equivalence, hence Morita
equivalence via zig-zag of weak equivalences gives rise to Morita equivalence via bibundles, as in this paper.
\end{remark}

\begin{example}\label{ExmGGG}
For any stacky Lie groupoid $\cl{G}$, we have that $\cl{G}$ itself
is a biprincipal $\cl{G}$-$\cl{G}$-bibundle. In particular, $\cl{G}$
is Morita equivalent to itself.
\end{example}

One can find interesting examples of Morita equivalence by
considering \'etale stacky Lie groupoids $\G\rra M$ associated with
non-integrable Lie algebroids, see Example~\ref{ex:integration}.

\begin{example}\label{ex:transitive}
Let us assume, for simplicity, that $\G$ is an \'etale stacky Lie
groupoid that is transitive (in the sense that it satisfies the
property in Corollary~\ref{cor:onto}). In this case,
Examples~\ref{ex:restric1} and \ref{ex:restric2} imply that, for
$x\in M$, $\sour^{-1}(x)$ is a biprincipal
$\cl{G}$-$\cl{G}_x$-bibundle (the commutativity of left and right
actions follows from the commutativity of left and right
multiplications on $\G$):
\begin{equation}\label{eq:ME}
\xymatrix{
\cl{G}\ar@<0.5ex>[d]\ar@<-0.5ex>[d]&&  \cl{G}_x\ar@<0.5ex>[d]\ar@<-0.5ex>[d]\\
M &\sour^{-1}(x) \ar[l]^-{\tar}\ar[r]_-{}& \{x\}. }
\end{equation}
Hence $\G$ is Morita equivalent to the stacky Lie group $\G_x$,
\end{example}

The previous example extends the well-known property that transitive
Lie groupoids are Morita equivalent to their isotropy groups.

The following section presents a concrete Morita equivalence of the
type given by \eqref{eq:ME}.

%%%%%%%%%%%%%%%%%%%%%%%%%%%%%%%%%%%%%%%%%%%%%%%%%%%%%%%%%%%%%%%
\subsection{Example from non-integrable Lie algebroids}\label{subsec:ex}

We consider here stacky Lie groupoids arising from the simplest
examples of non-integrable Lie algebroids.

Let $\omega\in \Omega^2(M)$ be a closed 2-form, and consider the Lie
algebroid $A_\omega = TM\oplus \mathbb{R}$, with anchor given by the
natural projection on $TM$ and bracket on $\Gamma(A_\omega) =
\mathcal{X}(M)\oplus C^\infty(M)$ given by
$$
[(X,f),(Y,g)]=([X,Y], \mathcal{L}_Xg - \mathcal{L}_Yf +
\omega(X,Y)).
$$
We assume for simplicity that $M$ is connected and simply-connected.

The integrability of this Lie algebroid is measured by the group of
periods
$$
\mathrm{Per}(\omega) := \left \{ \int_\sigma \omega, \; \sigma \in
\pi_2(M) \right \} \subset \mathbb{R}.
$$
As proven in \cite{cf}, $A_\omega$ is integrable if and only if this
group is discrete. Assuming that this is the case and setting
$S_\omega:=\mathbb{R}/\mathrm{Per}(\omega)$, it is shown in
\cite{crainic} that $\omega$ determines a principal
$S_\omega$-bundle $P_\omega$ over $M$ (when
$\mathrm{Per}(\omega)=\mathbb{Z}$, this is the $S^1$-bundle known as
the {\em prequantization} of $\omega$); the associated gauge
groupoid $(P_\omega\times P_\omega)/S_\omega$ is the
source-simply-connected Lie groupoid integrating $A_\omega$, denoted
by $G_\omega$, and we have a Morita bibundle
\begin{equation}\label{eq:gaugeME}
\xymatrix{
G_\omega\ar@<0.5ex>[d]\ar@<-0.5ex>[d]&&  S_\omega\ar@<0.5ex>[d]\ar@<-0.5ex>[d]\\
M &P_\omega \ar[l]^-{}\ar[r]_-{}& \{\ast\} . }
\end{equation}

Following \cite{crainic}, we recall the approach to integrate
$A_\omega$ via path spaces \cite{cf,severa}. We consider the Banach
manifold $P(M)$ of paths (of class $C^2$), and in the product
$P(M)\times \mathbb{R}$ we define an equivalence relation
$(\gamma,r) \sim (\overline{\gamma}, \overline{r})$ by the condition
that there is a homotopy $\sigma$ (with fixed end points) from
$\gamma$ to $\overline{\gamma}$ such that
$$
\overline{r}-r = \int_\sigma \omega.
$$
This equivalence relation defines a regular foliation ${F}_\omega$
on $P(M)\times \mathbb{R}$ whose leaf space is a smooth manifold
exactly when $A_\omega$ is integrable; moreover,
\begin{equation}\label{eq:Gw}
G_\omega = (P(M)\times\mathbb{R})/\sim.
\end{equation}
To complete the diagram \eqref{eq:gaugeME} from this perspective, we
fix $x\in M$ and consider the (Banach) submanifolds $P(M)_x\times
\mathbb{R}$ and $L(M)_x\times \mathbb{R}$ of $P(M)\times
\mathbb{R}$, where $P(M)_x$ is given by paths starting at $x$ and
$L(M)_x$ is given by loops based on $x$. Both submanifolds  are
saturated by the leaves of the foliation ${F}_\omega$; when
$A_\omega$ is integrable, the corresponding leaf spaces are smooth
and
\begin{equation}\label{eq:PL}
P_\omega = (P(M)_x\times \mathbb{R})/\sim\,,\;\;\; S_\omega =
(L(M)_x\times \mathbb{R})/\sim\, ,
\end{equation}
and the left and right actions on \eqref{eq:gaugeME} are induced by
concatenation of paths.

We will see how this picture extends to the non-integrable case,
i.e., we no longer assume that $\mathrm{Per}(\omega)\subset
\mathbb{R}$ is discrete (so it can be dense). In this case one may
still view the leaf space \eqref{eq:Gw} as the differentiable stack
$$
\G_\omega := B \mathrm{Hol}_\omega,
$$
where $\mathrm{Hol}_\omega \rra P(M)\times \mathbb{R}$ is the
holonomy groupoid of the foliation ${F}_\omega$ (more precisely, one
should restrict the holonomy groupoid to a complete transversal of
$F_\omega$, so as to obtain a finite-dimensional Lie groupoid);
moreover, it is shown in \cite{tz} that $\mathcal{G}_\omega$ is an
\'etale stacky Lie groupoid over $M$.

The fact that $M$ is connected implies that $\mathcal{G}_\omega$ is
transitive, so we are in the situation of
Example~\ref{ex:transitive}. By fixing $x\in M$, we have a stacky
Morita bibundle
\begin{equation}\label{eq:stackbb}
\xymatrix{
\mathcal{G}_\omega\ar@<0.5ex>[d]\ar@<-0.5ex>[d]&&  \mathcal{S}_\omega\ar@<0.5ex>[d]\ar@<-0.5ex>[d]\\
M &\mathcal{P}_\omega \ar[l]^-{}\ar[r]_-{}& \{x\}, }
\end{equation}
where $\mathcal{P}_\omega := \sour^{-1}(x)$ and $\mathcal{S}_\omega:
= (\mathcal{G}_\omega)_x$ is the isotropy stacky Lie group at $x$.
Note that when $A_\omega$ is integrable, all differentiable stacks
in the previous diagram are representable, and we recover
\eqref{eq:gaugeME}. We will now provide an explicit description of
the stacky Lie group $\cl{S}_\omega$.

\begin{lemma}
The holonomy groups of the foliation ${F}_\omega$ are trivial, i.e.,
any two paths in a leaf joining the same points have the same
holonomy.
\end{lemma}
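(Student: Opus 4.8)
The plan is to compute the holonomy of $F_\omega$ explicitly and to reduce its triviality to the closedness of $\omega$ together with the simple-connectedness of $M$. Recall that the leaf through $(\gamma,r)$ consists of the pairs $(\bar\gamma,\bar r)$ with $\bar\gamma$ homotopic to $\gamma$ rel endpoints and $\bar r = r+\int_\sigma\omega$ for a homotopy $\sigma$ from $\gamma$ to $\bar\gamma$; in particular $\gamma(0)$ and $\gamma(1)$ are constant along the leaf, and the $\mathbb{R}$-coordinate is transported, along a path in the leaf traced by a homotopy $\sigma=(\gamma_s)_{s\in[0,1]}$, by $r_s=r_0+\int_{\sigma|_{[0,s]}}\omega$. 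A transversal $T$ at $(\gamma_0,r_0)$ can be built from the endpoint data together with the value of $r$: I would parametrize $T$ by a neighbourhood of $(\gamma_0(0),\gamma_0(1),r_0)$ in $M\times M\times\mathbb{R}$, sending $(a,b,t)$ to a reference path from $a$ to $b$ close to $\gamma_0$ (constructed in local charts) together with the value $t$. This matches the local identification of the leaf space with $M\times M\times\mathbb{R}$ coming from the groupoid maps (target, source) and the one-dimensional isotropy $\mathcal{S}_\omega$.

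First I would describe the holonomy transport along a path in a leaf given by a homotopy $\sigma$ from $\gamma_0$ to $\gamma_1$. Given a nearby transversal point, corresponding to a path $\gamma_0'$ with endpoints near those of $\gamma_0$ and a value $r_0'$, one transports it by choosing a ``parallel'' homotopy $\sigma'=(\gamma_s')$ starting at $\gamma_0'$, obtained from the same deformation in local charts, ending at a path $\gamma_1'$ near $\gamma_1$, and setting $r_1'=r_0'+\int_{\sigma'}\omega$. Thus the holonomy map is, in the coordinates of $T$, the product of the endpoint transport $(a,b)\mapsto(a',b')$ induced by following the curves $s\mapsto\gamma_s'(0)$ and $s\mapsto\gamma_s'(1)$ in $M$, and the shift $t\mapsto t+\int_{\sigma'}\omega$ on the $\mathbb{R}$-factor. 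To prove the lemma it suffices to treat the case of a loop in the leaf, i.e. $\gamma_1=\gamma_0$ and $r_1=r_0$, and to show that the corresponding holonomy germ is the identity.

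For such a loop the endpoint curves $\alpha(s)=\gamma_s(0)$ and $\beta(s)=\gamma_s(1)$ are loops in $M$; following a loop in $M$ returns a point to itself, so the endpoint transport is the identity on the $M\times M$ factor, and using $\pi_1(M)=0$ I may further homotope the loop, within its leaf, to one with fixed endpoints, so that $\sigma$ becomes a genuine based $2$-sphere in $M$. Since a leaf loop satisfies $r_1=r_0$, the defining equivalence relation forces $\int_\sigma\omega=0$. It remains to show that the $\mathbb{R}$-shift vanishes for every nearby transversal point, i.e. that $\int_{\sigma'}\omega=0$. Here is the crux: the deformation connecting the chosen point of $T$ to the base point sweeps the spheres $\sigma'$ through a homotopy of closed $2$-cycles, so by Stokes and $d\omega=0$ the integral $\int_{\sigma'}\omega$ is locally constant and hence equals $\int_\sigma\omega=0$. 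Therefore the holonomy germ is the identity, and the holonomy groups are trivial.

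The step I expect to be the main obstacle is the rigorous treatment of the ``parallel'' transport and of the Stokes argument in the Banach setting: one must construct the transported homotopies $\sigma'$ depending smoothly on the transversal parameters, verify that for a leaf loop these are genuinely closed $2$-cycles (capping off the contractible endpoint loops via $\pi_1(M)=0$), and justify the local constancy of $\int_{\sigma'}\omega$ from the closedness of $\omega$, i.e. that two nearby such cycles cobound a $3$-chain over which $\int d\omega=0$. Once this local constancy is established, triviality of the holonomy is immediate; the remaining verifications (independence of the auxiliary local choices and smoothness of the transversal) are routine.
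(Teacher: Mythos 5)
Your strategy is essentially the one the paper uses: model the transversal on $M\times M\times\mathbb{R}$, observe that the holonomy acts only on the $\mathbb{R}$-factor as a translation by a period-type integral of $\omega$, and kill that translation for a leaf loop using $r_1=r_0$ together with a Stokes argument showing the shift is the same at nearby transversal points. Two imprecisions are worth fixing, though. First, the leaf relation is defined by homotopies with \emph{fixed} endpoints, so the endpoint curves $\alpha(s)=\gamma_s(0)$, $\beta(s)=\gamma_s(1)$ along a path in a leaf are constant, not merely loops; no capping via $\pi_1(M)=0$ is needed there, and the $M\times M$ factor is untouched for free. Second, and more substantively, the transported homotopy $\sigma'$ from $\gamma_0'$ to $\gamma_1'$ is \emph{not} a closed $2$-cycle, since $\gamma_1'$ need not return to $\gamma_0'$; the honest holonomy shift is $\int\omega$ over $\sigma'$ concatenated with a correction homotopy from $\gamma_1'$ back to the reference path of the transversal, and it is this closed-up cycle to which Stokes applies. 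The paper handles exactly this by fixing reference homotopies $\sigma^\gamma_\alpha$ from each nearby path $\gamma$ to the chart's center $\gamma_\alpha$ and packaging the transverse coordinate as $r+\int_{\sigma^\gamma_\alpha}\omega$, after which the chart-to-chart holonomy is visibly $(x,y,r)\mapsto(x,y,r+\int_\sigma\omega)$ with $\sigma$ independent of the transversal point; once you set up your ``parallel transport'' with such a correction, your local-constancy step goes through and the proof closes as you describe.
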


\begin{proof}
Consider the submersion $q: P(M)\to M\times M$, $\gamma\mapsto
(\gamma(0),\gamma(1))$, and let $\gamma_\alpha \in P(M)$, with
endpoints $q(\gamma_\alpha)=(x_\alpha, y_\alpha)$. We may consider a
neighborhood $V_\alpha$ of $\gamma_\alpha$ in $P(M)$ which is
diffeomorphic to the product $U^0_\alpha \times U^1_\alpha \times
W_\alpha$, where $U^0_\alpha$ and $U^1_\alpha$ are open balls in $M$
centered at $x_\alpha$ and $y_\alpha$, and $W_\alpha$ is a convex
subset of a Banach space, in such a way that upon this
identification $q$ becomes the natural projection $U^0_\alpha \times
U^1_\alpha \times W_\alpha \to U^0_\alpha \times U^1_\alpha$.

Given $\gamma\in V_\alpha$, we can always find a free homotopy
$\sigma^\gamma_\alpha(s,t)$ from $\gamma(t)$ to $\gamma_\alpha(t)$
in such a way that the paths spanned by the endpoints,
$\sigma^\gamma_\alpha(s,0)$ and $\sigma^\gamma_\alpha(s,1)$, follow
the straight lines in $U_\alpha^0$ and $U_\alpha^1$ linking
$\gamma(0)$ to $\gamma_\alpha(0)$ and $\gamma(1)$ to
$\gamma_\alpha(1)$, respectively. Note that given another such
homotopy $\bar{\sigma}^\gamma_\alpha(s,t)$, with the same boundary
conditions, by the Stokes theorem we have that
$$
\int_{\sigma^\gamma_\alpha}\omega =
\int_{\bar{\sigma}^\gamma_\alpha}\omega.
$$

We now consider the submersion
$$
\psi_\alpha: V_\alpha \times \mathbb{R} \to M \times M\times
\mathbb{R}, \;\; (\gamma,r)\mapsto
(\gamma(0),\gamma(1),r+\int_{\sigma^\gamma_\alpha} \omega),
$$
where $\sigma^\gamma_\alpha$ is a homotopy from $\gamma$ to
$\gamma_\alpha$ as above.

\smallskip

\noindent{\bf Claim:} Two points in $V_\alpha \times \mathbb{R}$ are
in the same fiber of $\psi_\alpha$ if and only if they lie in the
same connected component of the intersection of a leaf of $F_\omega$
with $V_\alpha \times \mathbb{R}$.

\smallskip

To verify the claim, we first observe  that the condition for two
points $(\gamma_0,r_0)$ and $(\gamma_1, r_1)$ to belong to the same
connected component of the intersection of a leaf of $F_\omega$ with
$V_\alpha \times \mathbb{R}$ is equivalent to the existence of a
path $s\mapsto (\gamma_s,r_s)$ in $V_\alpha\times \mathbb{R}$ from
$(\gamma_0,r_0)$ to $(\gamma_1,r_1)$ such that
\begin{equation}\label{eq:rs}
r_s = r_0 + \int_{\sigma(\gamma_s)} \omega,
\end{equation}
where, for each $s$, $\sigma(\gamma_s)$ denotes the homotopy (with
fixed endpoints) from $\gamma_0$ to $\gamma_s$ defined by $s'\mapsto
\gamma_{s'}$, for $0\leq s'\leq s$. In this case, noticing that
$$
\int_{\sigma_\alpha^{\gamma_0}}\omega =
\int_{\sigma(\gamma_s)}\omega +
\int_{\sigma_\alpha^{\gamma_s}}\omega,
$$
we see that $\psi_\alpha(\gamma_s,r_s)$ is constant. On the other
hand, $\psi_\alpha(\gamma_0,r_0)=\psi_\alpha(\gamma_1,r_1)$ if and
only if $\gamma_0$ and $\gamma_1$ have the same endpoints (so they
are homotopic through a homotopy $\sigma$ lying in $V_\alpha$) and
$$
r_1 = r_0 + \int_{\sigma^{\gamma_0}_\alpha}\omega -
\int_{\sigma^{\gamma_1}_\alpha}\omega = r_0 + \int_\sigma \omega.
$$
Defining the path $(\gamma_s,r_s)$ by $\gamma_s(t)=\sigma(s,t)$ and
$r_s=r_0 + \int_{\sigma(\gamma_s)}\omega$, we see that
$(\gamma_0,r_0)$ and $(\gamma_1,r_1)$ are in the same connected
component of the intersection of a leaf of $F_\omega$ with
$V_\alpha\times \mathbb{R}$. This completes the proof of the claim.

From the previous claim, we conclude that, for a given $r_\alpha\in
\mathbb{R}$, we may view $\psi_\alpha$ as the projection on the
transverse direction of a foliated chart of $F_\omega$ around
$(\gamma_\alpha, r_\alpha)$. Taking two such foliated charts, around
two nearby points $(\gamma_\alpha,r_\alpha)$ and
$(\gamma_\beta,r_\beta)$ on the same leaf, the corresponding
holonomy transformation is given by the transverse component of the
transition function from chart $\alpha$ to chart $\beta$, which we
explicitly compute to be
$$
(x,y, r) \mapsto (x,y, r + \int_{\sigma^\gamma_\beta}\omega -
\int_{\sigma^\gamma_\alpha}\omega ) = (x, y, r+
\int_{\sigma}\omega),
$$
where $\sigma$ is any homotopy (with fixed endpoints) from
$\gamma_\alpha$ to $\gamma_\beta$ in $V_\alpha$. If now $s\mapsto
\xi(s)=(\gamma_s,r_s)$ is a path in a leaf of $F_\omega$, from
$(\gamma_0,r_0)$ to $(\gamma_1,r_1)$,  by subdividing $\xi$ and
iterating the holonomy transformation of nearby points above we see
that its holonomy transformation is
$$
(x,y,r) \mapsto (x,y, r+\int_{\sigma(\gamma_1)}\omega),
$$
recalling that $\sigma(\gamma_1)$ is the homotopy with fixed
endpoints from $\gamma_0$ to $\gamma_1$ defined by $s\mapsto
\gamma_s$. If $\xi$ is a loop, then $r_1=
r_0+\int_{\sigma(\gamma_1)}\omega$ must equal $r_0$, so
$\int_{\sigma(\gamma_1)}\omega = 0$. Hence, the holonomy
transformation is trivial.
\end{proof}

It follows from the previous lemma that the holonomy groupoid
$\mathrm{Hol}_\omega$ is given by pairs $(\xi,\bar{\xi}) \in
(P(M)\times\mathbb{R})\times (P(M)\times\mathbb{R})$ such that $\xi
\sim \bar{\xi}$ (with the natural pair groupoid structure).

As a differentiable stack, $\mathcal{S}_\omega$ is presented by the
restriction of $\mathrm{Hol}_\omega$ to $L(M)_x\times \mathbb{R}$,
\begin{equation}\label{eq:restrict}
\{(\xi,\bar{\xi}) \in (L(M)_x\times \mathbb{R})\times (L(M)_x\times
\mathbb{R})\,|\, \xi\sim \bar{\xi} \} \rra L(M)_x\times \mathbb{R}.
\end{equation}
Since $M$ is simply-connected, we see that the map $\mathbb{R} \to
L(M)_x\times \mathbb{R}$, $r\mapsto (c_x,r)$, where $c_x$ is the
constant loop based on $x$, is a complete transversal to the
foliation in $L(M)_x\times \mathbb{R}$, whose leaves are the orbits
of \eqref{eq:restrict}. Upon restriction of \eqref{eq:restrict} to
this transversal, we obtain a Morita equivalent Lie groupoid
presenting $\cl{S}_\omega$, given by
$$
\{(r,s)\in \mathbb{R}^2\;|\; (c_x,r)\sim (c_x,s)\} \rra \mathbb{R}.
$$
Noticing that $(c_x,r)\sim (c_x,s)$ if and only if there exists
$\sigma \in \pi_2(M,x)$ such that $r-s = \int_\sigma \omega$, we see
that the previous Lie groupoid agrees with the action groupoid
$\mathrm{Per}(\omega)\ltimes \mathbb{R}\rra \mathbb{R}$. Hence, as a
differentiable stack, $\cl{S}_\omega$ is the stack quotient
$[\mathbb{R}/\mathrm{Per}(\omega)]$. Its stacky Lie group structure
is the one induced by the inclusion of abelian groups
$\mathrm{Per}(\omega)\to \mathbb{R}$, as in Example~\ref{ex:BA}. We
summarize the previous discussion in the following

%\comment{conclusion added to statement of Prop.}

%\comment{higher prequantization of nonintegral 2-form}

\begin{proposition}
The stacky Lie group $\cl{S}_\omega$ is the strict 2-group
$[\mathbb{R}/\mathrm{Per}(\omega)]$ defined by the action groupoid
$\mathrm{Per}(\omega)\ltimes \mathbb{R}\rra \mathbb{R}$ and the
direct product of abelian groups $\mathrm{Per}(\omega)\times
\mathbb{R}$.

It follows (see \eqref{eq:stackbb}) that the stacky Lie
groupoid $\G_\omega = B\mathrm{Hol}_\omega$ arising from the
integration construction for the Lie algebroid $A_\omega$ is Morita
equivalent to the 2-group $[\mathbb{R}/\mathrm{Per}(\omega)]$.
\end{proposition}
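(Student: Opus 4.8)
The plan is to assemble the explicit computations carried out just above into a single identification of stacky Lie groups, and then to deduce the Morita equivalence from the general statement of Example~\ref{ex:transitive}. First I would recall that, by definition, $\cl{S}_\omega = (\cl{G}_\omega)_x$ is the isotropy stacky Lie group of $\cl{G}_\omega = B\mathrm{Hol}_\omega$ at the fixed point $x$, and that it is presented by the restriction \eqref{eq:restrict} of the holonomy groupoid $\mathrm{Hol}_\omega$ to the saturated submanifold $L(M)_x \times \mathbb{R}$. The key reduction is to pass to the complete transversal $\mathbb{R} \to L(M)_x \times \mathbb{R}$, $r \mapsto (c_x, r)$, which is available precisely because $M$ is simply connected: restricting a foliation (holonomy) groupoid to a complete transversal yields a Morita equivalent Lie groupoid, hence one presenting the same differentiable stack $\cl{S}_\omega$.

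Second, I would identify this transverse restriction with an action groupoid. Using the lemma above (triviality of the holonomy of $F_\omega$), the restricted groupoid has arrows $\{(r,s)\in\mathbb{R}^2 \,:\, (c_x,r)\sim(c_x,s)\}$ over $\mathbb{R}$, and the defining equivalence relation unwinds, via the definition of $\sim$ together with the hypothesis that $M$ is simply connected, to the condition $r - s = \int_\sigma \omega$ for some $\sigma \in \pi_2(M,x)$; that is, $r - s \in \mathrm{Per}(\omega)$. This exhibits the transverse restriction as exactly the action groupoid $\mathrm{Per}(\omega)\ltimes\mathbb{R}\rra\mathbb{R}$, so that $\cl{S}_\omega$ is isomorphic, as a differentiable stack, to $[\mathbb{R}/\mathrm{Per}(\omega)]$.

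Third, I would promote this to an isomorphism of stacky Lie \emph{groups} and then read off the Morita equivalence. The group structure on $\cl{S}_\omega$ is inherited from the groupoid multiplication of $\cl{G}_\omega$, which on representatives is concatenation of based loops together with addition of the $\mathbb{R}$-coordinates; transported to the transversal, concatenating $(c_x,r)$ with $(c_x,s)$ collapses the loop factor and adds the real coordinates, giving addition on $\mathbb{R}$ compatible with the subgroup $\mathrm{Per}(\omega)$. This is precisely the crossed-module datum (the direct product $\mathrm{Per}(\omega)\times\mathbb{R}$) defining the $2$-group $[\mathbb{R}/\mathrm{Per}(\omega)]$ in Example~\ref{ex:BA}. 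With the first assertion established, the second is immediate: Example~\ref{ex:transitive} produces the biprincipal bibundle \eqref{eq:stackbb} witnessing that $\cl{G}_\omega$ is Morita equivalent (in the sense of Def.~\ref{def:ME}) to its isotropy group $(\cl{G}_\omega)_x = \cl{S}_\omega$, which we have just identified with $[\mathbb{R}/\mathrm{Per}(\omega)]$. The step demanding the most care is this last matching of structures: one must check that the Morita equivalence between the transversal groupoid and \eqref{eq:restrict} intertwines the multiplications, so that the identification is genuinely one of stacky Lie groups and not merely of their underlying differentiable stacks; the remaining ingredients — that \eqref{eq:stackbb} is biprincipal, hence a legitimate Morita bibundle — are supplied by Example~\ref{ex:transitive}, which itself rests on Propositions~\ref{prop:princ1} and \ref{prop:princ2}.
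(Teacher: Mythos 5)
Your proposal follows essentially the same route as the paper: present $\cl{S}_\omega$ by the restriction \eqref{eq:restrict} of $\mathrm{Hol}_\omega$ to $L(M)_x\times\mathbb{R}$, pass to the complete transversal $r\mapsto(c_x,r)$ (using simple connectedness of $M$), unwind the equivalence relation to obtain the action groupoid $\mathrm{Per}(\omega)\ltimes\mathbb{R}\rra\mathbb{R}$, and invoke Example~\ref{ex:transitive} together with the bibundle \eqref{eq:stackbb} for the Morita equivalence. Your extra emphasis on checking that the transversal reduction is compatible with the group structure (concatenation of based loops plus addition in $\mathbb{R}$) is a point the paper handles only by assertion, so the argument is correct and matches the paper's.
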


%\comment{group structure?}

Hence this last proposition extends the smooth situation depicted in
\eqref{eq:gaugeME}. In particular, $\mathcal{P}_\omega$ is a principal $[\mathbb{R}/\mathrm{Per}(\omega)]$-bundle
over $M$ that generalizes the prequantum $S^1$-bundle when $\omega$ is not integral.

%\comment{issue of $P_0A$ (of tseng-zhu) versus $PA$ (used in
%\cite{crainic} for the construction above)?}

%%%%%%%%%%%%%%%%%%%%%%%%%%%%%%%%%%%%%%%%%%%%%%%%%%%%%%%%%%%%%%%%%%%%%%%%%%%%%
\subsection{Properties of Morita equivalence}

We now verify two key properties of Morita equivalence of stacky Lie
groupoids: (1) It is an equivalence relation; (2) Representability
of stacky Lie groupoids is a Morita invariant (in particular, when
restricted to Lie groupoids, Def.~\ref{def:ME} agrees with the usual
notion of Morita equivalence in this setting).

%%%%%%%%%%%%%%%%%%%%%%%%%%%%%%%%%%%%%%%%%%%%%%%%%%%%%%%%%%%%%%%%%%

\subsection*{Morita equivalence is an equivalence relation}

\begin{theorem}\label{CorMorEqRel}
Morita equivalence between stacky Lie groupoids is an equivalence
relation.
\end{theorem}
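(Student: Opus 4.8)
The plan is to establish the three defining properties of an equivalence relation — reflexivity, symmetry, and transitivity — for the relation of Definition~\ref{def:ME}. Reflexivity is already available: by Example~\ref{ExmGGG}, any stacky Lie groupoid $\cl{G}$ is a biprincipal $\cl{G}$-$\cl{G}$-bibundle, so $\cl{G}$ is Morita equivalent to itself. It therefore remains to prove symmetry and transitivity.

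For symmetry, suppose $\cl{X}$ is a biprincipal $\cl{G}_1$-$\cl{G}_2$-bibundle, with moment maps $\ma_1:\cl{X}\to M_1$, $\ma_2:\cl{X}\to M_2$, a left action of $\cl{G}_1$ and a right action of $\cl{G}_2$. I would produce a biprincipal $\cl{G}_2$-$\cl{G}_1$-bibundle on the same underlying stack $\cl{X}$ by reversing the handedness of each action using the inverse morphisms $\inv$ of $\cl{G}_1$ and $\cl{G}_2$: set $g_2\cdot x := x\cdot g_2^{-1}$ and $x\cdot g_1 := g_1^{-1}\cdot x$. The 2-isomorphisms witnessing the action axioms and the commutation axiom of Definition~\ref{DefBibundle} are reassembled from $\beta_i,\varepsilon_i,\tau$ together with the associativity and inversion 2-isomorphisms of {\bf (g3)}; here the fact that $\inv$ is an isomorphism of stacks is what makes this possible. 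Biprincipality is preserved because, exactly as in the first part of the proof of Lemma~\ref{LemmaProductWeakRep}, precomposition with the isomorphism $(x,g)\mapsto (x,g^{-1})$ intertwines the action-projection map of a left action with that of the associated right action; hence weak representability, and with it principality (Theorem~\ref{TheDiagonalThenPrincipal}), is unaffected.

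For transitivity, let $\cl{X}$ be a biprincipal $\cl{G}_1$-$\cl{G}_2$-bibundle and $\cl{Y}$ a biprincipal $\cl{G}_2$-$\cl{G}_3$-bibundle, and consider the diagonal right $\cl{G}_2$-action $(x,y)\cdot g=(xg,g^{-1}y)$ on $\cl{X}\times_{M_2}\cl{Y}$ (cf. Example~\ref{ActionXX}). Since $\cl{X}\to M_1$ is a principal right $\cl{G}_2$-bundle, Corollary~\ref{cor:principal} identifies $M_1\cong \cl{X}/\cl{G}_2$, so the $\cl{G}_2$-action on $\cl{X}$ is principal in the sense of Definition~\ref{def:principal}; Proposition~\ref{prop:tensor} then shows that $\cl{Z}:=\cl{X}\otimes_{\cl{G}_2}\cl{Y}=(\cl{X}\times_{M_2}\cl{Y})/\cl{G}_2$ is a differentiable stack over which $\cl{X}\times_{M_2}\cl{Y}$ is a principal $\cl{G}_2$-bundle. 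The commuting left $\cl{G}_1$- and right $\cl{G}_3$-actions on $\cl{X}\times_{M_2}\cl{Y}$ (acting on the first, resp. second, factor) are $\cl{G}_2$-equivariant, so by Proposition~\ref{PropfoverG} and Remark~\ref{RemIsoIso}(b) they descend to commuting actions on $\cl{Z}$; together with the induced moment maps $\cl{Z}\to M_1$ and $\cl{Z}\to M_3$ this exhibits $\cl{Z}$ as a $\cl{G}_1$-$\cl{G}_3$-bibundle.

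It remains to check that $\cl{Z}$ is biprincipal, and by the symmetry just established it suffices to show that $\cl{Z}\to M_1$ is a principal right $\cl{G}_3$-bundle. That $\cl{Z}\to M_1$ is an epimorphism and a submersion follows from Proposition~\ref{prop:propsubm}(c), since the composite $\cl{X}\times_{M_2}\cl{Y}\to \cl{X}\to M_1$ is an epimorphic submersion factoring through the epimorphic submersion $\cl{X}\times_{M_2}\cl{Y}\to\cl{Z}$; that $\cl{G}_3$ acts on its fibres is immediate. The main obstacle is condition 3 of Definition~\ref{DefPrincBundle}, namely that $\Psi:\cl{Z}\times_{M_3}\cl{G}_3\to \cl{Z}\times_{M_1}\cl{Z}$, $([x,y],g)\mapsto([x,y],[x,yg])$, is an isomorphism. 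Here I would build the inverse from the two biprincipality isomorphisms already at hand: a point of $\cl{Z}\times_{M_1}\cl{Z}$ represented by $([x,y],[x',y'])$ with $\ma_1(x)=\ma_1(x')$ yields, by the isomorphism $\cl{X}\times_{M_2}\cl{G}_2\cong\cl{X}\times_{M_1}\cl{X}$ for the principal bundle $\cl{X}\to M_1$, a unique $g_2$ with $x'=xg_2$, whence $[x',y']=[x,g_2y']$ in $\cl{Z}$; then the isomorphism $\cl{Y}\times_{M_3}\cl{G}_3\cong\cl{Y}\times_{M_2}\cl{Y}$ for the principal bundle $\cl{Y}\to M_2$ produces a unique $g_3$ with $g_2y'=yg_3$, giving $[x',y']=[x,y]\cdot g_3$. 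The delicate point — the heart of the whole argument — is to promote this fibrewise recipe to a genuine morphism of stacks inverse to $\Psi$: one must verify that the composite of the two isomorphisms is well defined on $\cl{G}_2$-orbits (independently of the representative $(x,y)$) and respects the fibred-category structure, which I would do by passing to presentations $X\rra X_0$, $Y\rra Y_0$, $G_i\rra (G_i)_0$ and the associated Hilsum--Skandalis bibundles, thereby reducing condition 3 to the classical fact that the composition of biprincipal bibundles of Lie groupoids is biprincipal. Alternatively, weak representability of the action-projection map of $\cl{G}_3$ on $\cl{Z}$ can be extracted from Lemma~\ref{LemmaProductWeakRep} and Theorem~\ref{TheDiagonalThenPrincipal}, after which the identification $\cl{Z}/\cl{G}_3\cong M_1$ follows from Corollary~\ref{cor:principal}.
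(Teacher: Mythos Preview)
Your outline for reflexivity and symmetry matches the paper (Example~\ref{ExmGGG}, and inversion of actions as in Proposition~\ref{PropLeftPrincRightPrinc}), and the setup of transitivity---tensor product via Proposition~\ref{prop:tensor}, descent of the outer actions, and the epi/submersion argument for $\cl{Z}\to M_1$ via Proposition~\ref{prop:propsubm}(c)---is also the same. You correctly isolate condition~3 of Definition~\ref{DefPrincBundle} as the crux, and your pointwise inverse is exactly the right heuristic.

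The gap is in how you propose to make that heuristic rigorous. Route~(a), ``pass to presentations and reduce to the classical fact that composition of biprincipal bibundles of Lie groupoids is biprincipal,'' does not work: the $\cl{G}_i$ are stacky Lie groupoids, and presenting them by $G_i\rra (G_i)_0$ does \emph{not} turn the stacky actions into classical Lie-groupoid actions---the action maps become Hilsum--Skandalis bibundles, not groupoid morphisms, so there is no classical biprincipal bibundle to compose. Route~(b) is also problematic: Lemma~\ref{LemmaProductWeakRep} concerns the diagonal $\cl{G}_2$-action on $\cl{X}\times_{M_2}\cl{Y}$, not the $\cl{G}_3$-action on the quotient $\cl{Z}$, and invoking Corollary~\ref{cor:principal} to identify $\cl{Z}/\cl{G}_3\cong M_1$ presupposes that $\cl{Z}\to M_1$ is already principal, which is what you are trying to prove. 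The paper resolves this by developing a ``quotient in stages'' framework (Lemma~\ref{LemXGmodG}, Lemma~\ref{Fact2new1}, Proposition~\ref{prop:quotstages}): it exhibits both $\cl{Z}\times_{M_1}\cl{Z}$ and $\cl{Z}\times_{M_3}\cl{G}_3$ as $(\cl{G}_2\times\cl{G}_2)$-quotients of auxiliary stacks related by an equivariant isomorphism built precisely from the two biprincipality isomorphisms you wrote down pointwise (these are the maps $F_1$ and $F_3$ in the proof of Proposition~\ref{PropMorEqRel}). That appendix-worth of machinery is what actually promotes your fibrewise recipe to a morphism of stacks.
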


Since Example~\ref{ExmGGG} shows that Morita equivalence is a
reflexive relation, the result will follow from Lemma
\ref{LemmaMorEquivSym} and Prop.~\ref{PropMorEqRel} below, which
show that it is also symmetric and transitive. Some technical work
developed in Appendix~\ref{app:stages} will be used to verify
transitivity.

\begin{lemma}\label{LemmaMorEquivSym}
Morita equivalence between stacky Lie groupoids is a symmetric
relation.
\end{lemma}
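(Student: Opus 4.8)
The plan is to exhibit, from a biprincipal $\cl{G}_1$-$\cl{G}_2$-bibundle $\cl{X}$, a biprincipal $\cl{G}_2$-$\cl{G}_1$-bibundle supported on the \emph{same} differentiable stack $\cl{X}$, obtained by interchanging the roles of the two groupoids and using the inverse maps to convert a right action into a left action and a left action into a right action. So suppose $\cl{X}$ is biprincipal as in Def.~\ref{DefBibundle}, with moment maps $\ma_1:\cl{X}\to M_1$, $\ma_2:\cl{X}\to M_2$, left $\cl{G}_1$-action $\act_1$ (with $\beta_1,\varepsilon_1$), right $\cl{G}_2$-action $\act_2$ (with $\beta_2,\varepsilon_2$), and commutativity $2$-isomorphism $\tau$. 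First I would define a left $\cl{G}_2$-action along $\ma_2$ by $g_2\cdot x:=x\cdot\inv(g_2)=\act_2(x,\inv(g_2))$, and a right $\cl{G}_1$-action along $\ma_1$ by $x\cdot g_1:=\inv(g_1)\cdot x=\act_1(\inv(g_1),x)$. That these are genuine actions, together with associativity and unit $2$-isomorphisms produced from $\beta_1,\varepsilon_1,\beta_2,\varepsilon_2$ and the inversion coherences $\i_l,\i_r$ of {\bf (g3)}--{\bf (g4)}, is exactly the content of the inverse-action construction (cf.\ Prop.~\ref{prop:invact}). Note that the moment maps and the maps onto which the actions are fibred are literally the old ones, with their roles relabelled: the new left $\cl{G}_2$-action is on the fibres of $\ma_1$, and the new right $\cl{G}_1$-action is on the fibres of $\ma_2$.

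Next I would check that these two new actions make $\cl{X}$ into a $\cl{G}_2$-$\cl{G}_1$-bibundle, i.e.\ that they commute in the sense of Def.~\ref{DefBibundle}. Unravelling the definitions, both $g_2\cdot(x\cdot g_1)$ and $(g_2\cdot x)\cdot g_1$ reduce to $\inv(g_1)\cdot x\cdot\inv(g_2)$, so the required commutativity $2$-isomorphism $\tau'$ is defined by evaluating the original $\tau$ at the inverted arguments $(\inv(g_1),x,\inv(g_2))$, combined with the associativity isomorphisms that identify the two bracketings. The four higher coherence diagrams of Def.~\ref{DefBibundle} for $\tau'$ (with $\cl{G}_1$ and $\cl{G}_2$ swapped) then follow from the corresponding coherences for $\tau$ together with the inversion coherence $(gg^{-1}g)$ of {\bf (g4)}. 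This diagram chase is purely formal but somewhat lengthy, entirely parallel to the verification of the coherences for the inverse actions themselves, and I expect it to be the main bookkeeping obstacle in the proof.

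Finally I would verify biprincipality. Since the projections $\ma_1,\ma_2$ and the underlying stack $\cl{X}$ are unchanged, conditions $1.$ (epimorphism and submersion) and $2.$ (acting on the fibres, inherited from the inverse-action construction) of Def.~\ref{DefPrincBundle} hold for the new actions exactly as they did for the old ones. For condition $3.$, the key point is that $\inv$ is an isomorphism of stacks: as in the proof of Lemma~\ref{LemmaProductWeakRep}, the isomorphism $\cl{X}\times_{M_2}\cl{G}_2\to\cl{G}_2\times_{M_2}\cl{X}$, $(x,g_2)\mapsto(\inv(g_2),x)$, intertwines the map $\cl{X}\times_{M_2}\cl{G}_2\to\cl{X}\times_{M_1}\cl{X}$ of Def.~\ref{DefPrincBundle}$(3)$ for the right $\cl{G}_2$-action with the analogous map for the new left $\cl{G}_2$-action (up to the flip of the two $\cl{X}$-factors). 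As $\cl{X}\to M_1$ is a principal right $\cl{G}_2$-bundle, the former map is an isomorphism, hence so is the latter, so $\cl{X}\to M_1$ is a principal left $\cl{G}_2$-bundle; the symmetric argument, using that $\cl{X}\to M_2$ is a principal left $\cl{G}_1$-bundle, shows $\cl{X}\to M_2$ is a principal right $\cl{G}_1$-bundle. Therefore $\cl{X}$ is a biprincipal $\cl{G}_2$-$\cl{G}_1$-bibundle, and Morita equivalence is symmetric. Everything beyond the coherence check for $\tau'$ is formal once the inverse-action machinery of Prop.~\ref{prop:invact} is in place.
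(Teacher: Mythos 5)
Your proposal is correct and follows essentially the same route as the paper: invert both actions via the inverse-action construction (Prop.~\ref{prop:invact}), transport $\tau$ by evaluating at inverted arguments, deduce the four bibundle coherences from the original ones with the roles of the two groupoids swapped, and obtain biprincipality from the equivalence of left and right principality under inversion (Prop.~\ref{PropLeftPrincRightPrinc}). The only small imprecision is that the new commutativity $2$-isomorphism is $\bar{\tau}(g_2,x,g_1)=\tau(g_1^{-1},x,g_2^{-1})^{-1}$ --- one must take the \emph{inverse} of the transported $2$-morphism, since $\tau$ at the inverted arguments points in the wrong direction, and no extra re-bracketing isomorphisms are needed.
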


\begin{proof}
Let $\cl{X}$ be a biprincipal $\cl{G}_1$-$\cl{G}_2$-bibundle. We
will turn it into a $\cl{G}_2$-$\cl{G}_1$-bibundle. By inverting the
actions, as in Prop.~\ref{prop:invact}, we make $\cl{G}_1$ act on
$\cl{X}$ on the right, and $\cl{G}_2$ act on $\cl{X}$ on the left.
By Proposition~\ref{PropLeftPrincRightPrinc} we see that these new
actions are principal as well. We are left to checking that the new
actions commute.

Let $\tau$ be the 2-isomorphism associated with the commutativity of
the two original actions, as in Definition~\ref{DefBibundle},
$$
\tau(g_1,x,g_2): g_1(xg_2)\lmap{\sim}(g_1x)g_2,
$$
where $x\in\cl{X}$ and $g_i\in \cl{G}_i$, for $i=1,2$, are suitably
composable. We define a 2-isomorphism $\bar{\tau}(g_2,x,g_1):
g_2(xg_1)\lmap{\sim}(g_2x)g_1$ by
$$
\bar{\tau}(g_2,x,g_1):=\tau(g_1^{-1},x,g_2^{-1})^{-1}.
$$
The higher coherences $(g_1g_1'xg_2)$, $(g_1xg_2g_2')$, $(1xg_2)$
and $(g_1x1)$ of Definition \ref{DefBibundle} for $\bar{\tau}$
follow, respectively, from conditions $(g_1xg_2g_2')$,
$(g_1g_1'xg_2)$, $(g_1x1)$, and $(1xg_2)$ for $\tau$.
\end{proof}

In order to show transitivity of Morita equivalence, we first need
two lemmas. The first is a special case of the construction in
Prop.~\ref{prop:tensor}.

\begin{lemma}\label{LemXGmodG}
Let a stacky groupoid $\G\rra M$ act (on the right) on a stack $\X$.
Consider the induced action of $\G$ on $\cl{X}\times_M \G$ by
$(x,g)\cdot\bar{g}=(x\bar{g},\bar{g}^{-1}g)$. Then the original
action $\act:\cl{X}\times_M \G\ra \X$, $(x,g)\mapsto x\cdot g$,
induces an identification of $(\cl{X}\times_M \G)/\G$ with $\cl{X}$.
\end{lemma}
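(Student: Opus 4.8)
The plan is to trivialize the twisted diagonal action by an explicit $\G$-equivariant isomorphism and then compute the quotient of the resulting trivial bundle. First I would introduce the morphism
$$\Theta:\cl{X}\times_{\ma,M,\tar}\G\lra \cl{X}\times_{\ma,M,\tar}\G,\qquad (x,g)\mapsto (xg,g^{-1}),$$
which is well defined because $\ma(xg)=\sour(g)=\tar(g^{-1})$. I view the source as equipped with the diagonal action $(x,g)\cdot\bar g=(x\bar g,\bar g^{-1}g)$ (moment map $\ma\scirc\mathrm{pr}_1=\tar\scirc\mathrm{pr}_2$) and the target as equipped with the trivial action $(x,g)\cdot\bar g=(x,g\bar g)$ (moment map $\sour\scirc\mathrm{pr}_2$), for which the first projection $\mathrm{pr}_1$ is invariant. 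The heart of the argument is to check that $\Theta$ is $\G$-equivariant and that $\mathrm{pr}_1\scirc\Theta\cong\act$. The equivariance $2$-isomorphism $\delta$ compares $\Theta((x,g)\bar g)=\bigl((x\bar g)(\bar g^{-1}g),(\bar g^{-1}g)^{-1}\bigr)$ with $\Theta(x,g)\cdot\bar g=(xg,g^{-1}\bar g)$; on the first factor it is assembled from $\beta$, the associator $\alpha$, the inverse $2$-isomorphism $\i_r$ (giving $\bar g\bar g^{-1}\cong\un_{\tar(\bar g)}$) and $\lambda$, while on the second factor it uses the canonical $2$-isomorphism $(\bar g^{-1}g)^{-1}\cong g^{-1}\bar g$ derived from the coherences {\bf (g3)}--{\bf (g4)}. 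The same formula $(x,g)\mapsto(xg,g^{-1})$ provides a quasi-inverse, so $\Theta$ is an isomorphism.

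Granting this, the fact that passage from equivariant morphisms to morphisms of quotients preserves isomorphisms (Prop.~\ref{PropfoverG} and Remark~\ref{RemIsoIso}) produces an isomorphism between $(\cl{X}\times_M\G)/\G$ taken for the diagonal action and $(\cl{X}\times_M\G)/\G$ taken for the trivial action, compatible with the morphisms to $\X$ induced by $\act$ and by $\mathrm{pr}_1$. It therefore remains to identify the quotient of the trivial bundle $\mathrm{pr}_1:\cl{X}\times_{\ma,M,\tar}\G\to\X$ with $\X$.

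For this last step I would invoke the universal property of the quotient (Cor.~\ref{CorolXGtoS}): since $\G$ acts on the fibers of $\mathrm{pr}_1$ (Remark~\ref{RemActFibXG}), there is an induced morphism $\Phi:(\cl{X}\times_M\G)/\G\to\X$ with $\Phi\q\cong\mathrm{pr}_1$. The candidate inverse is the unit section $\Sigma:\X\to(\cl{X}\times_M\G)/\G$, $x\mapsto\q(x,\un_{\ma(x)})$. One composite is immediate, $\Phi\Sigma\cong\mathrm{id}_\X$, since $\mathrm{pr}_1(x,\un_{\ma(x)})=x$; for the other, in the quotient one has $\q(x,g)\cong\q\bigl((x,g)\cdot g^{-1}\bigr)=\q(x,gg^{-1})\cong\q(x,\un_{\ma(x)})$ through the canonical fiber-action isomorphism $\gamma_0$ of Remark~\ref{ex:actfib} and $\i_r$, so that $\Sigma\Phi\cong\mathrm{id}$. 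Note that here the first factor is left unchanged, which is exactly what makes the trivial model convenient.

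The step I expect to be the main obstacle is not any single identity but the \emph{naturality} bookkeeping: one must check that the object-wise $2$-isomorphisms defining $\delta$ (equivariance of $\Theta$) and the composite $\Sigma\Phi\cong\mathrm{id}$ are compatible with arbitrary morphisms of $\cl{X}\times_M\G$ and with the equivalence relation defining the hom-sets of the prequotient in \eqref{eq:morpre}. Concretely this means verifying $(\gamma\beta)$ and $(\gamma\varepsilon)$ for the trivial fiber action and $(\delta\beta_1\beta_2)$, $(\delta\varepsilon_1\varepsilon_2)$ for $\Theta$, which is where the higher coherences {\bf (g4)} for $\alpha,\lambda,\rho,\i_l,\i_r$ and {\bf (a4)} for $\beta,\varepsilon$ are genuinely consumed; these are routine but lengthy pentagon- and triangle-chases.
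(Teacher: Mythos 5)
Your proposal is correct in outline, but it takes a genuinely different route from the paper. The paper works directly with the twisted diagonal action: it applies the universal property of the prequotient (Prop.~\ref{LemmaXGtoS}) to the map $\act$ together with the $2$-isomorphism $\gamma:xg\to(x\bar{g})(\bar{g}^{-1}g)$, obtaining $\Phi:(\cl{X}\times_M\G)\pq\G\to\X$, and then proves by hand that $\Phi$ is an epimorphism and a monomorphism — the fully-faithfulness step being an explicit construction (existence via $\bar{g}:=gg_1^{-1}$, uniqueness via an auxiliary $j_0$ built from the inversion $2$-isomorphisms) — before invoking Prop.~\ref{PropStackification}(iv). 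You instead untwist first, via $\Theta(x,g)=(xg,g^{-1})$, reducing to the trivial right-multiplication action on the second factor, and then identify the quotient of the trivial bundle using the unit section. Both strategies consume essentially the same coherence data ($\alpha$, $\lambda$, $\rho$, $\i_r$, the $\theta$ and $\inv\scirc\inv\cong\mr{id}$ isomorphisms of Appendix B, and the action coherences $(xghl)$, $(x1g)$, $(xg1)$); your version front-loads them into the equivariance data $(\delta\beta_1\beta_2)$, $(\delta\varepsilon_1\varepsilon_2)$ for $\Theta$, which is comparable in length to the paper's direct diagram chases but arguably more conceptual, since the trivial case is then transparent. Two points you should make explicit when writing this up: (i) to apply Remark~\ref{RemIsoIso}(a) you need the quasi-inverse of $\Theta$ to be equivariant and the $2$-isomorphism $\Theta\scirc\Theta\cong\mr{id}$ to satisfy condition $(\varsigma\delta)$ of Prop.~\ref{PropfoverG}(ii), not merely that $\Theta$ is an equivalence of underlying categories; and (ii) the object-wise isomorphism $\q(x,g)\cong\q(x,\un_{\ma(x)})$ only gives $\Sigma\Phi\q\cong\q$, so concluding $\Sigma\Phi\cong\mr{id}$ requires the uniqueness clause of Cor.~\ref{CorolXGtoS}(ii) applied to the pairs $(\Sigma\Phi,\cdot)$ and $(\mr{id},\cdot)$, which in turn requires checking the compatibility $(\varphi\gamma)$ for your object-wise isomorphism. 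You correctly flag both as the remaining bookkeeping, and neither is an obstruction.
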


\begin{proof}
Since the action is principal on the $\G$-factor, it is also in the
product, see Prop.~\ref{prop:tensor}. The induced action is on the
fibers of $\act: \cl{X}\times_M \G\ra \X$ via a natural
2-isomorphism
$$
\gamma:xg\ra (x\bar{g})(\bar{g}^{-1}g),
$$
given by a composition of structure maps that we leave to the reader
to write down. From the universal property of the prequotient (cf.
Prop.~\ref{LemmaXGtoS}), there is an induced map
$\Phi:(\cl{X}\times_M \G)\pq \G\map \X$, which reads $(x,g)\mapsto
xg$ at the level of objects. At the level of morphisms, $\Phi$ sends
$[\bar{g},(b,j)]:(x,g)\ra (x_1,g_1)$ to the composition $(b\cdot
j)\circ \gamma$:
$$ xg\lmap{\gamma} (x\bar{g})(\bar{g}^{-1}g)\lmap{b\cdot j} x_1g_1.$$
We will prove below that $\Phi$ is an epimorphism and a
monomorphism, so that, using  Prop.~\ref{PropStackification} (iv),
we conclude that the stackification $\Phi^{\sharp}:(\cl{X}\times_M
\G)/ \G\ra\X$ of $\Phi$ is an isomorphism, leading to the desired
identification.

Since any $x\in\Obj(\X)$ is isomorphic to $x\cdot 1=\Phi(x,1)$ for
an appropriate identity $1\in\Obj(\G)$, we see that $\Phi$ is
essentially surjective, hence an epimorphism.

We outline the proof of the fact that $\Phi$ is a monomorphism. We
fix a manifold $U$, and we work with objects and morphisms over $U$.
We have to prove that the section $\Phi_U$ is fully faithful; in
other words, given a morphism $b:xg\ra x_1g_1$ in $\X$, we have to
prove that there exists a unique morphism
$[\bar{g},(\bar{b},j)]:(x,g)\ra (x_1,g_1)$ in $(\cl{X}\times_M
\G)\pq \G$ such that $b=(\bar{b}\cdot j)\circ \gamma$.

For the existence  of  $[\bar{g},(\bar{b},j)]$ we define
$\bar{g}:=gg_1^{-1}$, and we take $j:
\bar{g}^{-1}g=(gg_1^{-1})^{-1}g\ra g_1$ to be a composition of
structure 2-isomorphisms of $\G$ that ``eliminate" $g$. Define
$\bar{b}:x\cdot\bar{g}\ra x_1$ to be the composition:
$$ x(gg_1^{-1})\map (xg)g_1^{-1}\lmap{b\cdot\mr{id}}(x_1g_1)g_1^{-1}\map
x_1$$ where the unspecified maps are structure 2-isomorphisms. The
reader can verify that the naturality of the structure
2-isomorphisms and their higher coherences imply that
$b=(\bar{b}\cdot j)\circ \gamma$.

For the uniqueness of $[\bar{g},(\bar{b},j)]$, we pick another
$[\bar{g}_1,(\bar{b}_1,j_1)]:(x,g)\ra (x_1,g_1)$ such that
$b=(\bar{b}_1\cdot j_1)\circ \gamma_1$. We have to prove that there
exists $j_2:\bar{g}\ra\bar{g}_1$ such that the triangle in
$\cl{X}\times_M\cl{G}$,
$$
\xymatrix{
(x,g)\cdot\bar{g}\ar[rd]^-{(\bar{b},j)}\ar[d]_-{\mr{id}\cdot j_2}&\\
(x,g)\cdot\bar{g}_1\ar[r]_-{(\bar{b}_1,j_1)}& (x_1,g_1), }
$$
commutes, which is equivalent to asking for the following two
triangles, in $\G$ and $\X$ respectively, to commute:
\begin{equation}\label{eq2triang}
\xymatrix{
\bar{g}^{-1}g \ar[rd]^-{j}\ar[d]_-{\inv(j_2)\cdot\mr{id}}&\\
\bar{g}^{-1}_1 g\ar[r]_-{j_1}& g_1 }\qquad\qquad \xymatrix{
x\bar{g} \ar[rd]^-{\bar{b}}\ar[d]_-{\mr{id}\cdot j_2}&\\
x\bar{g}_1\ar[r]_-{\bar{b}_1}& x_1 .}
\end{equation}
We define an auxiliary $j_0:\bar{g}^{-1}\ra \bar{g}^{-1}_1$ by the
composition
$$
\bar{g}^{-1}\map (\bar{g}^{-1}g)g^{-1}\lmap{j\cdot\mr{id}}g_1g^{-1}
\lmap{(j_1\cdot\mr{id})^{-1}}(\bar{g}^{-1}_1g)g^{-1}\map
\bar{g}^{-1}_1,
$$
where the unlabeled maps are compositions of structure
2-isomorphisms. Finally, define $j_2$ as the composition
$$
\bar{g}\map
(\bar{g}^{-1})^{-1}\lmap{\inv(j_0)}(\bar{g}_1^{-1})^{-1}\map
\bar{g}_1,
$$
that is, $j_2$ is essentially $\inv(j_0)$. The reader may check that
the two triangles in \eqref{eq2triang} commute: using higher
coherences, the commutativity of the left triangle follows from the
definition of $j_2$, and the commutativity of the right one follows
from the hypothesis $(\bar{b}\cdot j)\circ \gamma=(\bar{b}_1\cdot
j_1)\circ \gamma_1$.
\end{proof}

\begin{remark}\label{RemIdentifEquiv}
With the assumptions of the last lemma, if we let $\G$ act on
$\X\times_M \G$ by multiplying on the right on the $\G$ factor, then
this action descends to the quotient $(\X\times \G)/\G$ and the
identification $\Phi:\X\times_M \G\ra \X$ of the previous lemma
turns out to be $\G$-equivariant (the action on $\X$ is the original
one). The reader may consult Lemmas \ref{lem:inducedact2} and
\ref{lem:equivarianceofstackyf} to see how an action descends to a
quotient (observe that we could have started with the $\G\times \G$
action $(x,g)\cdot(\tilde{g},\bar{g})=(x\tilde{g},
(\tilde{g}^{-1}g)\bar{g})$, so that Lemma \ref{lem:inducedact2}
applies directly). As far as the equivariance of $\Phi$ is
concerned, we just remark that at the level of objects we have
$(\Phi(x,g))\bar{g}=(xg)\cdot\bar{g}\lmap{\sim} x(g\bar{g})=
\Phi(x,g\bar{g})=\Phi((x,g)\bar{g})$, and further details are left
to the reader.
\end{remark}

\begin{lemma}\label{Fact2new1}
Let the cfg-groupoid $\cl{G}_i\rra M_i$ act (on the right) on the
category fibred in groupoids $\X_i$, and let
$\cl{Z}_i:=\X_i\pq\G_i$, $i=1,2$. Suppose that we are given maps
$\cl{Z}_i\ra M$, where $M$ is a manifold. Then the product groupoid
$\cl{G}_1\times \cl{G}_2$ acts naturally on $\X_1\times_M\X_2$, this
action is on the fibers of the natural map $\X_1\times_M\X_2\ra
\cl{Z}_1\times_M\cl{Z}_2$, and the induced map
$(\X_1\times_M\X_2)\pq (\G_1\times\G_2)\ra\cl{Z}_1\times_M\cl{Z}_2$
is an isomorphism. In other words, we have an identification
\begin{equation}\label{eqnIdProdQuot}
(\X_1\pq\G_1)\times_M (\X_2\pq\G_2) \simeq (\X_1\times_M\X_2)\pq
(\G_1\times\G_2).
\end{equation}
\end{lemma}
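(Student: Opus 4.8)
The plan is to realize the product action explicitly, recognize it as being on the fibers of the natural projection to $\cl{Z}_1\times_M\cl{Z}_2$, identify the induced map out of the prequotient via the universal property of Prop.~\ref{LemmaXGtoS}, and finally check fiberwise that this map is an isomorphism. First I would set up the $\G_1\times\G_2$-action on $\X_1\times_M\X_2$: the moment map is $(\ma_1,\ma_2)\colon \X_1\times_M\X_2\to M_1\times M_2$, the action is componentwise, $(x_1,x_2)(g_1,g_2)=(x_1g_1,x_2g_2)$, and the associativity and unit $2$-isomorphisms are $(\beta_1,\beta_2)$ and $(\varepsilon_1,\varepsilon_2)$, with all higher coherences of {\bf (a4)} following directly from those of each factor (this is the fibered-product analogue of Example~\ref{ActionXX}, now for the product groupoid rather than a diagonal action). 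Well-definedness on $\X_1\times_M\X_2$ is immediate: since each $\G_i$ acts on the fibers of $\q_i\colon\X_i\to\cl{Z}_i$ (Remark~\ref{ex:actfib}) and $\cl{Z}_i\to M$, the image of $x_ig_i$ in $M$ equals that of $x_i$, so the action preserves the condition that the two images in $M$ agree.

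Next I would show this action is on the fibers of the natural map $r\colon \X_1\times_M\X_2\to \cl{Z}_1\times_M\cl{Z}_2$ induced by $\q_1$ and $\q_2$. The required $2$-isomorphism is $\gamma:=\gamma_0^1\times\gamma_0^2$, where $\gamma_0^i(x_i,g_i)=[g_i,\mr{id}_{x_ig_i}]$ is the fiber-action datum for $\q_i$ from Remark~\ref{ex:actfib}; the coherences $(\gamma\beta)$ and $(\gamma\varepsilon)$ of Def.~\ref{DefActOnFibers} hold because they hold in each component. Applying the universal property of the prequotient (Prop.~\ref{LemmaXGtoS}) to the morphism $r$ then yields a pair $(\Phi,\varphi)$ with $\Phi\colon (\X_1\times_M\X_2)\pq(\G_1\times\G_2)\to \cl{Z}_1\times_M\cl{Z}_2$ and $\varphi\colon\Phi\q\lmap{\sim}r$; explicitly $\Phi$ is the identity on objects and $\Phi[(g_1,g_2),(b_1,b_2)]=r(b_1,b_2)\,\scirc\,\gamma$.

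Finally I would prove that $\Phi$ is an isomorphism using the criterion that a morphism of categories fibered in groupoids is an isomorphism iff its restriction $\Phi_U$ over every manifold $U$ is an equivalence of categories. On objects $\Phi_U$ is a bijection, since both $\Obj\big((\X_1\times_M\X_2)\pq(\G_1\times\G_2)\big)=\Obj(\X_1\times_M\X_2)$ and $\Obj(\cl{Z}_1\times_M\cl{Z}_2)$ consist of pairs $(x_1,x_2)$ over $U$ with matching image in $M$ (here $M$ being fibered in sets forces the gluing isomorphism in the fibered product to be an identity). On morphisms, using the identity $[g,b]=\q(b)[g,\mr{id}]$ recorded earlier, one computes $\Phi[(g_1,g_2),(b_1,b_2)]=([g_1,b_1],[g_2,b_2])$. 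The crux of the argument, and the step I expect to require the most care, is verifying that this assignment is a bijection of Hom-sets: because source, target, multiplication and inversion in $\G_1\times\G_2$ are all componentwise, the equivalence relation $\sim$ of \eqref{eq:morpre} defining morphisms of the prequotient factors as the product of the corresponding relations for $\G_1$ and $\G_2$; and because $M$ is fibered in sets, a morphism of $\cl{Z}_1\times_M\cl{Z}_2$ over $\mr{id}_U$ carries no data beyond a pair $([g_1,b_1],[g_2,b_2])$ with matching base maps. These two facts together show $\Phi_U$ is fully faithful, and combined with the bijection on objects, an equivalence; hence $\Phi$ is an isomorphism, establishing the identification \eqref{eqnIdProdQuot}.
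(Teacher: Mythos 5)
Your proposal is correct and follows essentially the same route as the paper's proof: the componentwise action, the identification of $\Phi$ via the universal property of the prequotient, the observation that $\Phi$ is the identity on objects, and the explicit formula $[(g_1,g_2),(b_1,b_2)]\mapsto([g_1,b_1],[g_2,b_2])$ checked to be a bijection on Hom-sets. Your added justifications (that the equivalence relation of \eqref{eq:morpre} factors componentwise, and that $M$ being fibred in sets trivializes the gluing data in the fibred product) are exactly the points the paper leaves to the reader.
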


\begin{proof}
Use the identification
\begin{equation}
(\cl{X}_1\times_M \cl{X}_2)\times_{(M_1\times M_2)}
(\cl{G}_1\times\cl{G}_2) \simeq
(\cl{X}_1\times_{M_1}\cl{G}_1)\times_M
(\cl{X}_2\times_{M_2}\cl{G}_2)
\end{equation}
to induce the action map
$$(\cl{X}_1\times_M \cl{X}_2)\times_{(M_1\times M_2)} (\cl{G}_1\times\cl{G}_2)
\map \cl{X}_1\times_M \cl{X}_2,\qquad (x_1,x_2)\cdot
(g_1,g_2)=(x_1\cdot g_1,x_2\cdot g_2).
$$
Similarly, all the other data for an action are induced
``componentwise", and the higher coherences are checked likewise.
Moreover, there is an induced projection $\Proj_1\times\Proj_2:
\cl{X}_1\times_M\cl{X}_2\ra\cl{Z}_1\times_M\cl{Z}_2$, and the fact
that the product groupoid acts on the fibers of this map is easily
checked componentwise as well.

We are left with checking that the induced map $\Phi:
(\X_1\times_M\X_2)\pq (\G_1\times\G_2)\ra\cl{Z}_1\times_M\cl{Z}_2$
(cf. Prop.~\ref{LemmaXGtoS}) is an isomorphism. At the level of
objects $\Phi$ is the identity, so that we have just to check that
the fibers of $\Phi$ over a manifold $U$ are fully faithful. So take
$(x_1,x_2)$ and $(\bar{x}_1,\bar{x}_2)$, objects in
$\cl{X}_1\times_M\cl{X}_2$, where here and in the following all the
objects are over $U$, and all the morphisms are over the identity of
$U$. Consider now $g_1,g_2,b_1,b_2$, where $g_i$ is an object of
$\G_i$ such that the composition $x_i\cdot g_i$ makes sense, and
$b_i:x_i\cdot g_i\ra \bar{x}_i$ is a morphism in $\X_i$. Then
$[(g_1,g_2),(b_1,b_2)]$ is a morphism $(x_1,x_2)\ra
(\bar{x}_1,\bar{x}_2)$ in $(\X_1\times_M\X_2)\pq (\G_1\times\G_2)$,
and $([g_1,b_1],[g_2,b_2])$ is a morphism $(x_1,x_2)\ra
(\bar{x}_1,\bar{x}_2)$ in $\cl{Z}_1\times_M\cl{Z}_2$. One can check
that the action of $\Phi$ on morphisms $(x_1,x_2)\ra
(\bar{x}_1,\bar{x}_2)$ is given by
$$[(g_1,g_2),(b_1,b_2)]\mapsto ([g_1,b_1],[g_2,b_2])$$
and that this is a bijection, as desired.

\end{proof}

\begin{remark}\label{RemFact2new1}
With the notation of the previous lemma, assume that the $\G_i$'s
are stacks, the $\X_i$'s are prestacks, and that the actions are
1-free. From Prop. \ref{prop1freeThenPrestack} it follows that the
prequotients $\X_i\pq\G_i$ are prestacks. By stackyfing the
identification (\ref{eqnIdProdQuot}), and recalling that the
stackification of the product of prestacks is the product of the
stackifications (compare with the first lines in the proof of Lemma
\ref{lem:equivarianceofstackyf}),  we see that the result of Lemma
\ref{Fact2new1} is still true, under the new assumptions, if we
replace the prequotients by the quotients.
\end{remark}

\begin{remark}\label{InducedActionXY}
The following is a special case of the previous lemma that will be
useful. Let $\cl{G}\rra M$ be a cfg-groupoid, and $\cl{X},\cl{Y} \in
\Obj(\CFG_{\cl{C}})$. Assume that $\cl{G}$ acts on the left on the
fibers of a morphism $\cl{X}\ra N$, where $N$ is a manifold, and
that we are given a morphism $\cl{Y}\ra N$. Consider the induced
left action of $\cl{G}$ on the fibers of $\cl{X}\times_N \cl{Y}\ra
\cl{Y}$. Then there is a canonical identification
$$
(\cl{X}\times_N \cl{Y})\pq \cl{G}\lmap{\sim}
(\cl{X}\pq\cl{G})\times_N \cl{Y},
$$
given by the identity on objects, and $ [g,(b,c)]\mapsto ([g,b],c) $
on morphisms, where $g\in\G$, $b:xg\ra\bar{x}$ is a morphism in
$\X$, and $c:y\ra\bar{y}$ is a morphism in $\cl{Y}$.
\end{remark}

We are now ready to prove the transitivity of Morita equivalence.

\begin{proposition}\label{PropMorEqRel}
Let $\cl{G}_i\rra M_i$, $i=1,2$, and $\cl{G}\rra M$ be stacky Lie
groupoids, and let $\cl{X}$ be a biprincipal
$\cl{G}_1$-$\cl{G}$-bibundle and $\cl{Y}$ be a biprincipal
$\cl{G}$-$\cl{G}_2$-bibundle. Then the induced $\G$-action on
$\cl{X}\times_M\cl{Y}$ (via \eqref{eq:inducedaction}) is principal,
and the quotient $\cl{X}\otimes_\G \cl{Y} = (\cl{X}\times_M
\cl{Y})/\G$ inherits the structure of a biprincipal
$\G_1$-$\G_2$-bibundle.
\end{proposition}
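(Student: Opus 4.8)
The plan is to obtain the principality of the $\G$-action directly from the tensor-product construction, then transport the two residual actions to the quotient by descent, and finally deduce biprincipality from a Fubini-type manipulation of iterated quotients combined with the weak-representability criterion. Write the moment maps of $\cl{X}$ as $\ma_1:\cl{X}\to M_1$ and $\ma_2:\cl{X}\to M$, and those of $\cl{Y}$ as $\mb_1:\cl{Y}\to M$ and $\mb_2:\cl{Y}\to M_2$, so that $V:=\cl{X}\times_M\cl{Y}$ is formed along $\ma_2$ and $\mb_1$ and carries the induced action $(x,y)g=(xg,g^{-1}y)$. Since $\cl{X}$ is a biprincipal $\cl{G}_1$-$\G$-bibundle, its right $\G$-action is principal (a principal right $\G$-bundle over $M_1$), so Proposition~\ref{prop:tensor} applies and shows at once that the $\G$-action on $V$ is principal; in particular $V$ is a principal $\G$-bundle over $\cl{W}:=\cl{X}\otimes_\G\cl{Y}=V/\G$.

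On $V$ one has a left $\G_1$-action $g_1\cdot(x,y)=(g_1x,y)$ and a right $\G_2$-action $(x,y)\cdot g_2=(x,yg_2)$. Each commutes with the $\G$-action up to a coherent $2$-isomorphism assembled from the bibundle $2$-isomorphism $\tau$ of $\cl{X}$ (comparing $g_1(xg)$ with $(g_1x)g$) and of $\cl{Y}$ (comparing $(g^{-1}y)g_2$ with $g^{-1}(yg_2)$); the coherence diagrams of Definition~\ref{DefBibundle} provide exactly the higher coherences required by the descent lemmas. Hence, by Lemmas~\ref{lem:inducedact2} and~\ref{lem:equivarianceofstackyf} (cf.\ Remark~\ref{RemIdentifEquiv}), both actions descend to commuting $\G_1$- and $\G_2$-actions on $\cl{W}$ for which the quotient map $V\to\cl{W}$ is equivariant. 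Moreover $\ma_1$ is $\G$-invariant (the right $\G$-action on $\cl{X}$ is on the fibres of $\ma_1$) and $\mb_2$ is $\G$-invariant (the left $\G$-action on $\cl{Y}$ is on the fibres of $\mb_2$), so they descend to moment maps $\ma_1:\cl{W}\to M_1$ and $\mb_2:\cl{W}\to M_2$ for the two descended actions.

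It remains to show that $\mb_2:\cl{W}\to M_2$ is a principal left $\G_1$-bundle; the right $\G_2$-principality of $\ma_1:\cl{W}\to M_1$ is completely symmetric. Applying the base-change Proposition~\ref{prop:princ1} to the principal left $\G_1$-bundle $\ma_2:\cl{X}\to M$ along $\mb_1:\cl{Y}\to M$ shows that $V\to\cl{Y}$ is a principal left $\G_1$-bundle, so $V/\G_1\cong\cl{Y}$ by Corollary~\ref{cor:principal}, the residual right $\G$-action on $\cl{Y}$ being $y\cdot g=g^{-1}y$. Interchanging the two commuting quotients by the Fubini-type identifications (Remark~\ref{InducedActionXY} and its quotient version, Remark~\ref{RemFact2new1}) gives $\cl{W}/\G_1=(V/\G)/\G_1\cong(V/\G_1)/\G\cong\cl{Y}/\G$; and since $\cl{Y}\to M_2$ is a principal left $\G$-bundle, inverting the action (Propositions~\ref{prop:invact} and~\ref{PropLeftPrincRightPrinc}) makes it a principal right $\G$-bundle for $y\cdot g=g^{-1}y$, whence $\cl{Y}/\G\cong M_2$ and therefore $\cl{W}/\G_1\cong M_2$.

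The main obstacle is to upgrade this identification of quotient stacks into the assertion that $\cl{W}\to M_2$ is a \emph{genuine} principal $\G_1$-bundle. I would do this through Theorem~\ref{TheDiagonalThenPrincipal}(a), that is, by proving that the left $\G_1$-action-projection map $\G_1\times_{M_1}\cl{W}\to\cl{W}\times\cl{W}$ is weakly representable. My strategy is to descend weak representability along the principal $\G$-bundle $V\to\cl{W}$: the $\G_1$-action-projection map on $V$ is weakly representable, because $V\to\cl{Y}$ is principal and Theorem~\ref{TheDiagonalThenPrincipal}(b) applies, and I would transfer this to $\cl{W}$ using the Hilsum--Skandalis bibundle computations of Section~\ref{subsecTechnicalLemmas} (Lemmas~\ref{lemmaXGXEFiberedProduct} and~\ref{lemmaIsoXtimesGE}), reducing weak representability to the representability of a quotient $[G_1\bs E_1]$ and checking that the ambient $G_1$-action is free and proper---properties inherited from the principal $\G_1$-bundle $\cl{X}$. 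Once principality of $\cl{W}\to M_2$ is established and combined with the symmetric statement for $\G_2$, the stack $\cl{W}$ is a biprincipal $\G_1$-$\G_2$-bibundle, as claimed.
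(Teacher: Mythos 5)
Your opening two steps coincide with the paper's: principality of the $\G$-action on $V=\cl{X}\times_M\cl{Y}$ via Proposition~\ref{prop:tensor}, and descent of the commuting $\G_1$- and $\G_2$-actions to $\cl{W}=(\cl{X}\times_M\cl{Y})/\G$. After that you diverge, and the divergence contains a genuine gap at precisely the step you flag as the main obstacle. The paper does not re-enter Theorem~\ref{TheDiagonalThenPrincipal}(a) here: it verifies condition~3 of Definition~\ref{DefPrincBundle} directly, exhibiting $\cl{Z}\times_{M_2}\cl{G}_2\to\cl{Z}\times_{M_1}\cl{Z}$ as the map induced on quotients by an equivariant isomorphism, via the quotient-in-stages machinery (Lemma~\ref{LemXGmodG}, Proposition~\ref{prop:quotstages}); that is the entire content of the diagram with $F_1,F_2,F_3$ in its proof. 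Your route instead identifies $\cl{W}/\G_1\cong M_2$ by a Fubini argument (acceptable, modulo $1$-freeness of the combined action and compatibility of the identification with the structure map $\cl{W}\to M_2$) and then must establish weak representability of the $\G_1$-action-projection map on $\cl{W}$.

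The justification you sketch for that weak representability does not work. The reductions of Lemmas~\ref{lemmaXGXEFiberedProduct} and \ref{lemmaIsoXtimesGE} lead to the representability of $[G_1\bs E_1]$, where $E_1$ is the Hilsum--Skandalis bibundle of the action map $\G_1\times_{M_1}\cl{W}\to\cl{W}$ computed with respect to a presentation of $\cl{W}$; but a presentation of $\cl{W}=V/\G$ has strictly more arrows than one of $V$ (its arrow manifold is $V_0\times_{\cl{W}}V_0\cong(V\times_M\G)\times_{V\times V}(V_0\times V_0)$), so $E_1$ is \emph{not} a product containing the bibundle of the $\G_1$-action on $V$ (or on $\cl{X}$) as a factor. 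That factor-with-diagonal-action structure is exactly the mechanism by which freeness and properness are ``inherited'' in Corollary~\ref{cor:ginot1} and Lemma~\ref{LemmaProductWeakRep}; without it, properness of the $G_1$-action on $E_1$ is not a formal consequence of principality of the $\G_1$-action on $\cl{X}$, and you give no argument for it. Your architecture can be salvaged by a different computation: pulling $\Delta_{\G_1}^{\cl{W}}$ back along an atlas $V_0\times V_0$, the isomorphisms $V\times_{\cl{W}}V\cong V\times_M\G$ and $\G_1\times_{M_1}V\cong V\times_{\cl{Y}}V$ (principality of $V\to\cl{Y}$) together with the commutativity $2$-isomorphism identify this pullback with a pullback of the action-projection map of the inverted right $\G$-action on $\cl{Y}$ along $V_0\times V_0$, which is representable since $\cl{Y}\to M_2$ is $\G$-principal (Theorem~\ref{TheDiagonalThenPrincipal}(b) combined with Propositions~\ref{prop:invact}, \ref{PropLeftPrincRightPrinc} and \ref{prop:wrep}). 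As written, however, the crucial step is missing.
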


\begin{proof}
Let us consider the induced right $\cl{G}$-action on
$\cl{X}\times_M\cl{Y}$, as in \eqref{eq:inducedaction}. By
Prop.~\ref{prop:tensor}, the quotient $\cl{Z}:= (\cl{X}\times_M
\cl{Y})/\cl{G}$ is a differentiable stack.
We have to show that this stack is a biprincipal
$\cl{G}_1$-$\cl{G}_2$-bibundle,
$$
\xymatrix{
\cl{G}_1\ar@<0.5ex>[d]\ar@<-0.5ex>[d]&&  \cl{G}_2\ar@<0.5ex>[d]\ar@<-0.5ex>[d]\\
M_1 &\cl{Z} \ar[l]^-{}\ar[r]_-{}& M_2. }
$$

Similarly to what happens in the context of smooth manifolds, the
actions of $\cl{G}_1$ on $\cl{X}$ and of $\cl{G}_2$ on $\cl{Y}$ lift
to (strictly) commuting actions on $\cl{X}\times_M \cl{Y}$.
Moreover, these actions descend to the quotient $\cl{Z}$. The proof
of this fact is very similar to the proof of Lemma
\ref{lem:inducedact2}, and relies on the commutativity of the
actions. For instance, the commutativity of the actions of $\G_1$
and $\G$ over $\X$ implies that the action map
$\G_1\times_{M_1}\cl{X}\times_M\cl{Y}\map \cl{X}\times_M\cl{Y}$ is
$\G$-equivariant, and one applies Remark \ref{RemIsoIso}(b) to
induce the desired action $\G_1\times_{M_1}\cl{Z}\map \cl{Z}$. The
coherence 2-isomorphisms and their higher coherence properties are
induced and proved as in Lemma \ref{lem:inducedact2}. Let us write
explicitly how the induced action operates at the level of morphisms
(at the levels of objects it acts as the original action:
$g_1\cdot(x,y)=(g_1\cdot x, y)$). Let $(j_1,[g,(b_1,b_2)]):(g_1,x,y)
\ra (\bar{g}_1,\bar{x},\bar{y})$ be a morphism in
$\G_1\times_{M_1}\cl{Z}$. In particular, $j_1:g_1\ra \bar{g}_1$ is a
morphism in $\G_1$, $g$ is an object of $\G$, $b_1: xg\ra\bar{x}$ is
a morphism in $\X$, and $b_2: g^{-1}y\ra \bar{y}$ is a morphism in
$\cl{Y}$. We have $j_1\cdot [g,(b_1,b_2)]= [g,(j_1\cdot
b_1)\circ\tau_1^{-1},b_2)]$ where $\tau_1: g_1(xg)\lmap{\sim}
(g_1x)g$ is the commutativity 2-isomorphism of the original actions.
Finally, it is easy to see that the induced action (strictly)
commute.

It remains to show that $\cl{Z}\ra M_2$ is a principal
$\cl{G}_1$-bundle and that $\cl{Z}\ra M_1$ is a  principal
$\cl{G}_2$-bundle. By symmetry, it is enough to prove the assertion
for the $\G_2$-action.

We know that $\cl{Y}\ra M$ is a  submersion and an epimorphism, and
since these properties are stable under base change (see
Prop.~\ref{prop:propsubm}), we conclude that
$\cl{X}\times_M\cl{Y}\ra \cl{X}$ is a submersion and an epimorphism.
Composing with $\cl{X}\ra M_1$, which is a  submersion and an
epimorphism, we get a map $\cl{X}\times_M\cl{Y}\ra M_1$ with the
same properties. Since the triangle
$$
\xymatrix{
\cl{X}\times_M\cl{Y}\ar[r]\ar[rd]&\cl{Z}\ar[d]\\
& M_1 }
$$
commutes, it follows that $\cl{Z}\ra M_1$ is an epimorphism. By
Propositions \ref{propXtoX/Gepi/Gprestackthen}$(a)$ and
\ref{prop:propsubm}$(c)$, it follows that $\cl{Z}\ra M_1$ is a
submersion.

Lastly, we must prove that the map
$\Delta_\cl{Z}:\cl{Z}\times_{M_2}\cl{G}_2\map
\cl{Z}\times_{M_1}\cl{Z}$, $\Delta_\cl{Z}(z,g_2)=(z,z\cdot g_2)$, is
an isomorphism. Let us denote by
$\Proj:\cl{X}\times_M\cl{Y}\ra\cl{Z}$ the projection and consider
the diagram
$$\xymatrix{
& \cl{Y}\times_M(\cl{X}\times_M\cl{G})\times_M
\cl{Y}\ar[r]^-{F_1}\ar[d]_-{F_2}&
(\cl{X}\times_M\cl{Y})\times_{M_1}(\cl{X}\times_M \cl{Y})\ar[dd]^-{\Proj\times\Proj}\\
\cl{X}\times_M\cl{Y}\times_{M_2}\cl{G}_2
\ar[r]^-{F_3}\ar[d]_-{\Proj\times\mr{id}_{\cl{G}_2}}&
\cl{Y}\times_M\cl{X}\times_{M}\cl{Y}&\\
\cl{Z}\times_{M_2}\cl{G}_2\ar[rr]_-{\Delta_\cl{Z}}&&
 \cl{Z}\times_{M_1}\cl{Z}.
}
$$
Here the fibred product
$\cl{Y}\times_M(\cl{X}\times_M\cl{G})\times_M \cl{Y}$ is defined by
the map
$$
\cl{X}\times_M\cl{G}\ra M,\;\;\; (x,g)\mapsto \sour(g)
$$
on the right, and
$$
\cl{X}\times_M\cl{G}\ra M,\;\;\;  (x,g)\mapsto \ma(x)
$$
on the left, where $\ma:\X\ra M$ is the moment map for the
$\cl{G}$-action on $\X$. The general strategy of this part of the
proof is as follows. We define below the maps $F_1,F_2, F_3$, and
appropriate actions in such a way that: the diagram 2-commutes,
$F_1$ and $F_3$ are equivariant isomorphisms, and the vertical maps
are quotients. As we will see (using Prop.~\ref{prop:quotstages}),
the map $\Delta_\cl{Z}$ is induced from the equivariant map $F_1$ on
quotients, from where it follows that $\Delta_\cl{Z}$ is an
isomorphism (cf. Remark \ref{RemIsoIso}).

The map $F_1$ is defined by $F_1(y,x,g,\bar{y})=(x,y,xg,\bar{y})$,
the map $F_2$ is given by $F_2(y,x,g,\bar{y})=(g^{-1}y,xg,\bar{y})$,
while $F_3$ is given by $F_3(x,y,g_2)=(y,x,yg_2)$.

As for the actions, we define
\begin{itemize}
\item a $(\cl{G}\times\cl{G})$-action on
$\cl{Y}\times_M(\cl{X}\times_M\cl{G})\times_M \cl{Y}$  by
$$
(y,x,g,\bar{y})\cdot (\tilde{g},\bar{g})=
(\tilde{g}^{-1}y,x\tilde{g},(\tilde{g}^{-1}g)\bar{g},
\bar{g}^{-1}\bar{y}),
$$
\item a $(\cl{G}\times\cl{G})$-action on
$(\cl{X}\times_M\cl{Y})\times_{M_1}(\cl{X}\times_M \cl{Y})$ by
$$
(x,y,\bar{x},\bar{y})\cdot
(\tilde{g},\bar{g})=(x\tilde{g},\tilde{g}^{-1}y,
\bar{x}\bar{g},\bar{g}^{-1}\bar{y}),
$$
\item a $\cl{G}$-action on $\cl{Y}\times_M\cl{X}\times_{M}\cl{Y}$ by
$$
(y,x,\bar{y})\cdot
\bar{g}=(\bar{g}^{-1}y,x\bar{g},\bar{g}^{-1}\bar{y}),
$$
\item a $\cl{G}$-action on $\cl{X}\times_M\cl{Y}\times_{M_2}\cl{G}_2$
by
$$
(x,y,g_2)\cdot \bar{g}=(x\bar{g},\bar{g}^{-1}y,g_2).
$$
\end{itemize}

The proof now goes as follows:

\noindent{\bf Step 1}: $F_1$ is $(\cl{G}\times\cl{G})$-equivariant
and an isomorphism -- here we use the hypothesis that the map
$\cl{X}\times_M\cl{G}\ra
 \cl{X}\times_{M_1}\cl{X}$,
$(x,g)\mapsto (x,xg)$, is an isomorphism.

\smallskip

\noindent{\bf Step 2}: $F_3$ is $\cl{G}$-equivariant and an
isomorphism -- here we use the hypothesis that the map
$\cl{Y}\times_{M_2}\cl{G}_2\ra \cl{Y}\times_{M}\cl{Y}$,
$(y,g_2)\mapsto (y,yg_2)$, is an isomorphism.

\smallskip

\noindent{\bf Step 3}: $\cl{Z}\times_{M_1}\cl{Z}$ is the quotient of
$(\cl{X}\times_M\cl{Y})\times_{M_1}(\cl{X}\times_M \cl{Y})$ by
$\cl{G}\times\cl{G}$ (by Remark \ref{RemFact2new1}, noticing that
each $\G$-action is principal, so 1-free).

\smallskip

\noindent{\bf Step 4}: We now observe that $F_2$ and
$\Proj\times\mr{id}_{\cl{G}_2}$ are quotients, and that
$\cl{Z}\times_{M_2}\cl{G}_2$ is the quotient of
$\cl{Y}\times_M(\cl{X}\times_M\cl{G})\times_M \cl{Y}$ by
$\cl{G}\times\cl{G}$. Indeed, we can apply Lemma \ref{LemXGmodG}
(and Remark~\ref{InducedActionXY}) to conclude that $F_2$ is the
quotient by the $\cl{G}$-action of the first factor of $\G\times \G$
(in the sense of Lemma \ref{lem:inducedact2}). Moreover, one can
check (see Remark \ref{RemIdentifEquiv}) that the induced
$\G$-action of the second factor on the quotient
$\cl{Y}\times_M\times\cl{X}\times_M\cl{Y}$ coincides with the one
given above. Also, $\Proj\times\mr{id}_{\cl{G}_2}$ is the quotient
by the $\G$-action by Remark~\ref{InducedActionXY}. Since $F_3$ is
an equivariant isomorphism, and noticing that the action of
$\G\times \G$ on $\cl{Y}\times_M(\cl{X}\times_M\cl{G})\times_M
\cl{Y}$ is 1-free, we can use Proposition \ref{prop:quotstages} to
conclude that $\cl{Z}\times_{M_2}\cl{G}_2$ is the desired quotient.

\smallskip

\noindent{\bf Step 5}: The diagram is 2-commutative in the sense
that, choosing a quasi-inverse $F_4$ of $F_3$, there is a
2-isomorphism from $(\Proj\times\Proj)\circ F_1$ to
$\Delta_\cl{Z}\circ (\Proj\times\mr{id}_{\cl{G}_2})\circ F_4\circ
F_2$. To prove this fact we argue as follows. Choose an object
$(y,x,g,\bar{y})$ in
$\cl{Y}\times_M\cl{X}\times_M\cl{G}\times_M\cl{Y}$, and call
$(x_1,y_1,g_2)$ its image via $F_4\circ F_2$; we have to define a
natural isomorphism $(x,y,xg,\bar{y})\ra (x_1,y_1,x_1,y_1g_2)$
between the images of these two objects in
$\cl{Z}\times_{M_1}\cl{Z}$. Since $F_4$ and $F_3$ are
quasi-inverses, we have a natural isomorphism $(b_1,b_2,b_3):
(g^{-1}y,xg,\bar{y})\ra (y_1,x_1,y_1g_2)$ between the images of the
above two objects in $\cl{Y}\times_M\cl{X}\times_M\cl{Y}$. Observe
that $[g,(b_2,b_1)]:(x,y)\ra (x_1,y_1)$ is an isomorphism in
$\cl{Z}$. Moreover, we can consider the image
$[1,(b_2,b_3)\circ\vep]:(xg,\bar{y})\ra (x_1,y_1g_2)$ in $\cl{Z}$ of
the isomorphism $(b_2,b_3)$ in $\cl{X}\times_M\cl{Y}$, see Equation
(\ref{eqnProjExpl}). We conclude the argument by taking
$([g,(b_2,b_1)],[1,(b_2,b_3)\circ\vep])$ as the desired isomorphism
in $\cl{Z}\times_{M_1}\cl{Z}$.
%\end{itemize}
\end{proof}

\begin{remark} The previous proposition in fact shows that one can
compose right-principal stacky bibundles, extending the usual
composition of Hilsum-Skandalis bibundles between Lie groupoids.
Just as Lie groupoids, Hilsum-Skandalis bibundles and bibundle morphisms form a (non-strict) 2-category,
a natural question is whether these ``higher Hilsum-Skandalis bibundles'' of stacky Lie groupoids, along
with suitable notions of 2-morphisms and 3-morphisms, form a 3-category. Initial steps in verifying this were made in \cite[Sections~6, 7]{duli} (other ways to build a higher category for higher groupoids are considered in \cite{bg,RZ}).
\end{remark}

%%%%%%%%%%%%%%%%%%%%%%%%%%%%%%%%%%%%%%%%%%%%%%%%%%%%%%%%%%%%%%%%%%%%%%%%
\subsection*{Relation with Morita equivalence of Lie groupoids}

Theorem \ref{CorMorEqRel} shows that the notion Morita equivalence
of Lie groupoids can be extended to the realm of stacky Lie
groupoids. We now observe that this more general equivalence
relation does not relate representable stacky Lie groupoids (i.e.,
ordinary Lie groupoids) to nonrepresentable ones, nor does it
provide new Morita equivalences between Lie groupoids.  In
particular, when restricted to ordinary Lie groupoids, it recovers
exactly the usual notion of Morita equivalence.

\begin{lemma}\label{LemmaHH}
Let $\cl{G}\rra M$ be a stacky Lie groupoid, and let $K\rra M$ be a
Lie groupoid. Suppose that the diagram
$$
\xymatrix{
K\ar@<0.5ex>[d]\ar@<-0.5ex>[d]&&  \cl{G}\ar@<0.5ex>[d]\ar@<-0.5ex>[d]\\
M &K \ar[l]^-{t}\ar[r]_-{s}& M }
$$
is a biprincipal bibundle (i.e., a Morita equivalence). Then
$\cl{G}$ is canonically isomorphic to $K$ as a stacky Lie groupoid.
\end{lemma}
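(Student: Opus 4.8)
The plan is to build a canonical isomorphism $\Psi\colon\cl{G}\to K$ of stacky Lie groupoids directly from the bibundle data, using the unit section of $K$ to absorb the right $\cl{G}$-action. Throughout I write the right $\cl{G}$-action along $s\colon K\to M$ as $(k,g)\mapsto k\cdot g$ and the left $K$-action along $t\colon K\to M$ as ordinary groupoid multiplication $(k_0,k)\mapsto k_0k$, and I denote by $u\colon M\to K$, $u(x)=1_x$, the unit section of the Lie groupoid $K$. The hypotheses then provide: $t(k\cdot g)=t(k)$ and $s(k\cdot g)=\sour(g)$ (the right $\cl{G}$-action has moment map $s$ and lies on the fibres of $t$); a commutativity $2$-isomorphism $\tau\colon k_0\cdot(k\cdot g)\lmap{\sim}(k_0k)\cdot g$ as in Def.~\ref{DefBibundle}; and, since $K\to M$ via $t$ is a principal right $\cl{G}$-bundle with $K/\cl{G}\cong M$ (Cor.~\ref{cor:principal}), an isomorphism $\Theta\colon K\times_M\cl{G}\lmap{\sim} K\times_{t,M,t}K$, $(k,g)\mapsto(k,k\cdot g)$ (condition $3$ of Def.~\ref{DefPrincBundle}).

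I would define $\Psi$ as the composite $\cl{G}\lmap{\langle u\tar,\,\mr{id}\rangle} K\times_{s,M,\tar}\cl{G}\lmap{\act}K$, that is $\Psi(g)=u(\tar(g))\cdot g=1_{\tar(g)}\cdot g$ on objects; this is a morphism of stacks (a composite of such), and it is well defined because $s\circ u=\mr{id}_M$ forces $s(1_{\tar(g)})=\tar(g)$, so the fibred product is formed along the correct maps. The structural compatibilities are then immediate: $t\Psi=\tar$ and $s\Psi=\sour$ follow from $t(k\cdot g)=t(k)$ and $s(k\cdot g)=\sour(g)$; and the identity coherence $\varepsilon$ of the action gives $\Psi(\un(x))=1_x\cdot\un(x)\cong 1_x=u(x)$, so $\Psi$ carries units to units.

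For multiplicativity, recall $\tar(g_1g_2)=\tar(g_1)$, so $\Psi(g_1g_2)=1_{\tar(g_1)}\cdot(g_1g_2)$. The associativity $2$-isomorphism $\beta$ of the $\cl{G}$-action gives $1_{\tar(g_1)}\cdot(g_1g_2)\cong(1_{\tar(g_1)}\cdot g_1)\cdot g_2=\Psi(g_1)\cdot g_2$. On the other hand, since $s(\Psi(g_1))=\sour(g_1)=\tar(g_2)=t(1_{\tar(g_2)})$, the right unit law in $K$ yields $\Psi(g_1)\,1_{\tar(g_2)}=\Psi(g_1)$, and applying $\tau$ gives $\Psi(g_1)\,\Psi(g_2)=\Psi(g_1)\,(1_{\tar(g_2)}\cdot g_2)\cong(\Psi(g_1)\,1_{\tar(g_2)})\cdot g_2=\Psi(g_1)\cdot g_2$. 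Chaining these, $\Psi(g_1g_2)\cong\Psi(g_1)\,\Psi(g_2)$, so $\Psi$ is multiplicative up to a coherent $2$-isomorphism; preservation of inverses is then automatic for a unit- and multiplication-preserving functor.

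It remains to see that $\Psi$ is an isomorphism of stacks, and here the point is that $\Psi$ is exactly the base change of $\Theta$ (viewed as a morphism over $K$ via the first projections, which it respects) along $u\colon M\to K$ inserted into the first factor. Indeed, pulling back the first factor of $K\times_M\cl{G}$ along $u$ identifies it with $\{(1_x,g)\colon\tar(g)=x\}\cong\cl{G}$, pulling back the first factor of $K\times_{t,M,t}K$ identifies it with $\{(1_x,k')\colon t(k')=x\}\cong K$, and under these identifications $\Theta$ restricts to $g\mapsto(1_{\tar(g)},\,1_{\tar(g)}\cdot g)\mapsto\Psi(g)$. As isomorphisms are stable under base change, $\Psi$ is an isomorphism of differentiable stacks; together with the compatibilities above this makes it an isomorphism of stacky Lie groupoids, and since it is manufactured solely from the bibundle data it is canonical. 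I expect the main obstacle to be the bookkeeping in the multiplicativity step, where $\beta$, $\tau$ and the unit laws of $K$ must be assembled into a single coherent $2$-isomorphism and matched against the higher coherences of Def.~\ref{DefBibundle}; by contrast, the isomorphism property itself drops out formally from the base-change observation.
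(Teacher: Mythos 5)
Your map is exactly the one the paper constructs ($g\mapsto 1_{\tar(g)}\cdot g$), and your verification that it respects source, target, units, multiplication and inverses matches the paper's computation step for step (including the key chain $1_{\tar(g_1)}\cdot(g_1g_2)\cong(1_{\tar(g_1)}\cdot g_1)\cdot g_2=(F(g_1)1_{\tar(g_2)})\cdot g_2\cong F(g_1)(1_{\tar(g_2)}\cdot g_2)$, with all $2$-isomorphisms collapsing to equalities because $K$ is a manifold). Where you genuinely diverge is in proving that the map is an isomorphism of stacks. The paper observes that $\Theta\colon(k,g)\mapsto(k,kg)$ is equivariant for the left $K$-actions $k\cdot(k',g)=(kk',g)$ and $k\cdot(k_1,k_2)=(kk_1,kk_2)$, notes that $K\times_M\cl{G}\to\cl{G}$, $(k,g)\mapsto g$, and $K\times_M K\to K$, $(k_1,k_2)\mapsto k_1^{-1}k_2$, are principal left $K$-bundles, and invokes the functoriality of quotients (Remark~\ref{RemIsoIso}) to descend $\Theta$ to an isomorphism $\cl{G}\to K$. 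You instead base-change $\Theta$ along the unit section $u\colon M\to K$ in the first factor, using that $\Theta$ strictly commutes with the first projections and that $K$ is fibred in sets so the pullbacks are the strict ones $\{(1_x,g):\tar(g)=x\}\cong\cl{G}$ and $\{(1_x,k'):t(k')=x\}\cong K$; stability of isomorphisms under base change then finishes the argument. Both routes are valid and yield the same canonical map; yours is more elementary and self-contained (no appeal to the quotient machinery of Section~\ref{sec:Quotients}), while the paper's fits the statement into its general framework of equivariant maps descending to quotients of principal bundles. The only point you should make explicit is that, since the target of $\Psi$ is a manifold, the coherent $2$-isomorphism witnessing multiplicativity is automatically an identity, which is what lets you conclude preservation of inverses in the strict sense the paper uses.
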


\begin{proof}
Suppose that $K$ is biprincipal $K$-$\cl{G}$-bibundle as in the
diagram. Notice that, since all spaces involved are manifolds except
for $\cl{G}$, one only has 2-isomorphisms associated with $\cl{G}$
itself. By assumption, the canonical map
\begin{equation}\label{eq:actH}
K\times_{s,M,\tar} \cl{G}\map K\times_{t,M,t} K, \qquad (k,g)\mapsto
(k,kg),
\end{equation}
is an isomorphism. There are left actions of $K$ on $K\times_{M}
\cl{G}$ and $K\times_{M} K$:
$$
k\cdot(k',g)=(kk',g), \qquad k\cdot(k_1,k_2)=(kk_1,kk_2),
$$
and the map \eqref{eq:actH} is equivariant with respect to these
actions. One may verify that the projections
$$
K\times_{M} \cl{G} \map \cl{G}, \qquad (k,g)\mapsto g,
$$
(compare with Remark \ref{InducedActionXY}) and
$$
K\times_{M} K \map K, \qquad (k_1,k_2)\mapsto k_1^{-1}k_2,
$$
are principal left $K$-bundles. From Remark \ref{RemIsoIso},
\eqref{eq:actH} induces an isomorphism
$$
F:\cl{G}\map K,
$$
given by $g\mapsto 1\cdot g$, where $1=1_{\tar(g)}$  is the identity
element of $K$ on which $g$ can act.

In order to complete the proof we have to check that $F$ preserves
the groupoid structure. The morphism $F$ commutes with the source
and the identity by definition of the action, and it commutes with
the target because $\cl{G}$ acts on the fibers of $t:K\ra M$. To
check that $F$ preserves the multiplication, let $g_1$ and $g_2$ be
objects in $\cl{G}$ with $\sour(g_1)=\tar(g_2)$. Then
$$
F(g_1g_2)=1_{\tar(g_1)}(g_1g_2)
=(1_{\tar(g_1)}g_1)g_2=(F(g_1)1_{\tar(g_2)})g_2=F(g_1)(1_{\tar(g_2)}g_2)=
F(g_1)F(g_2).
$$
As for ordinary Lie groupoids, preservation of multiplication and
identity yields preservation of the inverse: $F(g^{-1})=F(g)^{-1}$.
Indeed, since there is an isomorphism in $\cl{G}$ between $gg^{-1}$
and $1_{\tar(g)}$ and since $K$ is a manifold, the images
$F(gg^{-1})$ and $F(1_{\tar(g)})$ under $F$ are equal.
\end{proof}

\begin{proposition}\label{prop:equivusual}
Let $\cl{G}_i\rra M_i$ be stacky Lie groupoids, for $i=1,2$, and let
$\cl{X}$ be a biprincipal $\cl{G}_1$-$\cl{G}_2$-bibundle. If
$\cl{G}_1$ is a Lie groupoid, then $\cl{X}$ is a manifold and
$\cl{G}_2$ is a Lie groupoid.
\end{proposition}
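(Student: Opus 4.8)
The plan is to establish representability of $\cl{X}$ first, and then deduce that of $\cl{G}_2$. Throughout I record the left principal structure of $\cl{X}\to M_2$ as the isomorphism $\Phi\colon \cl{G}_1\times_{\sour,M_1,\ma_1}\cl{X}\lmap{\sim}\cl{X}\times_{M_2}\cl{X}$, $(g,x)\mapsto(gx,x)$ (Def.~\ref{DefPrincBundle}(3), left version), and the right principal structure of $\cl{X}\to M_1$ as $\Psi\colon\cl{X}\times_{\ma_2,M_2,\tar}\cl{G}_2\lmap{\sim}\cl{X}\times_{M_1}\cl{X}$, $(x,h)\mapsto(x,xh)$. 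The first step is to show that $\cl{X}$ has trivial inertia. Under $\Phi$, the relative diagonal $\Delta_{\cl{X}/M_2}\colon\cl{X}\to\cl{X}\times_{M_2}\cl{X}$ corresponds to the unit section $x\mapsto(\un_{\ma_1(x)},x)$, and I would identify this section as the base change of the unit map $\un\colon M_1\to\cl{G}_1$ along the ``division map'' $\delta\colon\cl{X}\times_{M_2}\cl{X}\lmap{\Phi^{-1}}\cl{G}_1\times_{M_1}\cl{X}\to\cl{G}_1$. Here $\delta$ is a submersion because $\cl{G}_1\times_{M_1}\cl{X}\to\cl{G}_1$ is the base change of the submersion $\ma_1$ (the projection of the right principal bundle). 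Since $\un$ is an embedding, hence a monomorphism, and monomorphisms are stable under base change, $\Delta_{\cl{X}/M_2}$ is a monomorphism; composing with the monomorphism $\cl{X}\times_{M_2}\cl{X}\to\cl{X}\times\cl{X}$ shows that the full diagonal $\Delta_\cl{X}$ is a monomorphism.

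Next I would upgrade trivial inertia to representability. Presenting $\cl{X}$ by a Lie groupoid $X\rra X_0$ with atlas $X_0\to\cl{X}$, the monomorphism property of $\Delta_\cl{X}$ gives that $(\sour_X,\tar_X)\colon X\to X_0\times X_0$ is injective, so $X\rra X_0$ has trivial isotropy. Pulling back the identification of $\Delta_{\cl{X}/M_2}$ as a base change of $\un$ along the atlas $X_0\times_{M_2}X_0\to\cl{X}\times_{M_2}\cl{X}$ exhibits $X$ as $\delta_0^{-1}(\un(M_1))$, where $\delta_0\colon X_0\times_{M_2}X_0\to\cl{G}_1$ is the induced submersion of manifolds. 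As $\un(M_1)$ is an embedded submanifold of $\cl{G}_1$, this realizes $X$ as an embedded submanifold of $X_0\times_{M_2}X_0$, and hence, since $M_2$ is Hausdorff (so that $X_0\times_{M_2}X_0$ is closed in $X_0\times X_0$), as an embedded submanifold of $X_0\times X_0$. Thus $X\rra X_0$ is an embedded equivalence relation with submersive source and target, and Godement's criterion yields that $\cl{X}=[X_0/X]$ is represented by a manifold.

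For $\cl{G}_2$, now that $\cl{X}$ is a manifold, $\Phi$ shows that the action--projection map of the $\cl{G}_1$-action on $\cl{X}$ is a diffeomorphism onto $\cl{X}\times_{M_2}\cl{X}$, which is closed in $\cl{X}\times\cl{X}$ (again using that $M_2$ is Hausdorff); hence the $\cl{G}_1$-action on $\cl{X}$ is free and proper. I would then use right principality, Prop.~\ref{prop:princ1} (the pullback of $\cl{X}\to M_2$ along $\tar\colon\cl{G}_2\to M_2$ is a principal $\cl{G}_1$-bundle) and Cor.~\ref{cor:principal} to identify $\cl{G}_2$ with the quotient of the manifold $\cl{X}\times_{M_2}\cl{G}_2\cong\cl{X}\times_{M_1}\cl{X}$ by $\cl{G}_1$; a direct check, using the commutativity $2$-isomorphism $\tau$ of the bibundle, shows that under $\Psi$ this $\cl{G}_1$-action becomes the \emph{diagonal} action $g\cdot(x,w)=(gx,gw)$ on $\cl{X}\times_{M_1}\cl{X}$. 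Since the diagonal of a free proper action is again free and proper, the quotient $(\cl{X}\times_{M_1}\cl{X})/\cl{G}_1$ is a manifold, i.e. $\cl{G}_2$ is representable; being a stacky Lie groupoid with representable space of arrows, it is an ordinary Lie groupoid.

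The main obstacle I expect is the second step: trivial inertia alone does \emph{not} force a differentiable stack to be representable (the irrational-flow stack $[T^2/\mathbb{R}]$ has trivial inertia yet is not a manifold), so the real work is to promote ``monomorphic diagonal'' to ``embedded equivalence relation'' so that Godement's theorem applies. The left principal structure is precisely what supplies this, by presenting the isotropy relation $X$ as the embedded preimage of the units of the \emph{honest} Lie groupoid $\cl{G}_1$ under a submersion, while the Hausdorffness of $M_2$ (built into the definition of a stacky Lie groupoid) provides the closedness needed afterwards. The analogous ``properness'' input for $\cl{G}_2$ is encoded in the hypothesis that the base $M_2=\cl{X}/\cl{G}_1$ is a manifold, which is exactly what makes the $\cl{G}_1$-action on $\cl{X}$ proper.
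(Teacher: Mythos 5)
Your proposal is correct in substance but takes a genuinely different route from the paper's in both halves. For the representability of $\cl{X}$, the paper avoids Godement entirely: it chooses local sections $U_\alpha\to\cl{X}$ of the surjective submersion $\cl{X}\to M_2$ and uses left principality to compute
$U_\alpha\times_{M_2}\cl{X}= U_\alpha\times_{\cl{X}}(\cl{X}\times_{M_2}\cl{X})= U_\alpha\times_{\cl{X}}(\cl{G}_1\times_{M_1}\cl{X})= U_\alpha\times_{M_1}\cl{G}_1$,
a manifold because $\cl{G}_1$ is representable; these cover $\cl{X}$, so $\cl{X}$ is a manifold. This is shorter than your ``embedded equivalence relation plus Godement'' argument and sidesteps the one delicate point in yours: the paper allows non-Hausdorff manifolds, so $\un(M_1)$ need not be closed in $\cl{G}_1$ and the closedness of $X_0\times_{M_2}X_0$ in $X_0\times X_0$ does not by itself make $X$ closed; you must invoke the locally-closed version of Godement (which suffices, since the quotient is only required to be a possibly non-Hausdorff manifold), so this is a presentational wrinkle rather than a gap. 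For $\cl{G}_2$, the two arguments converge on the same object: your $(\cl{X}\times_{M_1}\cl{X})/\cl{G}_1$ with the diagonal action is precisely the paper's gauge groupoid $\cl{X}\otimes_{\cl{G}_1}\cl{X}$. The paper cites \cite{MM} for the fact that this is a Lie groupoid and then identifies it with $\cl{G}_2$ by composing biprincipal bibundles (Prop.~\ref{PropMorEqRel}) and applying Lemma~\ref{LemmaHH}, whereas you identify $\cl{G}_2$ directly as the quotient of the pulled-back principal bundle $\cl{X}\times_{M_2}\cl{G}_2\to\cl{G}_2$ via Prop.~\ref{prop:princ1} and Cor.~\ref{cor:principal}, and then verify representability of that quotient by hand through freeness and properness of the diagonal action. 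Your route is more self-contained (it does not need the transitivity theorem), while the paper's reuses machinery already built for Morita equivalence; the properness step in your version quietly assumes Hausdorffness of $\cl{X}$ and $\cl{G}_1$, which is the same hypothesis hidden in the paper's citation of \cite{MM}.
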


\begin{proof}
Let $X\ra \cl{X}$ be an atlas. Composing with the map $\ma_2:
\cl{X}\ra M_2$ (along which $\G_2$ acts), we obtain a surjective
submersion $X\ra M_2$. Then there is an open cover $(U_\alpha)$ of
$M_2$ such that, for each $\alpha$, there is a section $U_\alpha\ra
X$, and we obtain local sections $U_\alpha \ra \cl{X}$ of $\cl{X}\ra
M_2$. For each $\alpha$, we have isomorphisms
$$
U_\alpha\times_{M_2}
\cl{X}=U_\alpha\times_{\cl{X}}\cl{X}\times_{M_2}\cl{X}=U_\alpha
\times_\cl{X} (\cl{G}_1\times_{M_1} \cl{X})=U_\alpha \times_{M_1}
\cl{G}_1,
$$
so that each $U_\alpha\times_{M_2} \cl{X}$ is a manifold (since
$\G_1$ is a Lie groupoid). The manifolds $ V_\alpha := U_\alpha
\times_{M_2} \cl{X}$ cover $\cl{X}$, in the sense that, for any
morphism $Z \to \X$, where $Z$ is a manifold, the family $(Z_\alpha
= V_\alpha \times_\X Z)_\alpha$ is an open cover of $Z$. It follows
that $\X$ is a manifold.

Let $\cl{X}\otimes_{\cl{G}_1}\cl{X} = (\cl{X}\times_{M_1}
\cl{X})/\G_1 \rra M_2$ be the gauge groupoid of the principal left
$\cl{G}_1$-bundle $\cl{X}\ra M_2$ (see e.g. \cite[Sec.~5.1]{MM}),
which is a Lie groupoid. Then we have biprincipal bibundles
$$
\xymatrix{
\cl{X}\otimes_{\cl{G}_1}\cl{X}\ar@<0.5ex>[d]\ar@<-0.5ex>[d]&&
 \cl{G}_1\ar@<0.5ex>[d]\ar@<-0.5ex>[d]
&&\cl{G}_2 \ar@<0.5ex>[d]\ar@<-0.5ex>[d]\\
M_2 &\cl{X} \ar[l]^-{}\ar[r]_-{}& M_1 & \cl{X} \ar[l]^-{}\ar[r]_-{}&
M_2 }
$$
By Proposition \ref{PropMorEqRel}, we see that
$\cl{X}\otimes_{\cl{G}_1}\cl{X}$ inherits the structure of a
biprincipal bibundle establishing a Morita equivalence between
$\cl{X}\otimes_{\cl{G}_1}\cl{X}$ and $\G_2$. By Lemma \ref{LemmaHH}
we conclude that $\cl{G}_2$ is a Lie groupoid.
\end{proof}

%%%%%%%%%%%%%%%%%%%%%%%%%%%%%%%%%%%%%%%%%%%%%%%%%%%%%%%%%%%%%%%%%%%%%%%%%%%%%
\appendix

\section{Appendices}

\subsection{Proofs of properties of
submersions}\label{app:morphisms}

We will need the following lemma, which is just \cite[Lem.~2.2]{bx}:

\begin{lemma}\label{LemmaBX}
Let $F:\cl{X}\ra \cl{Y}$ be a morphism of stacks, and $Y\ra \cl{Y}$
be an epimorphism from a manifold $Y$. If $Y\times_\cl{Y} \cl{X}$ is
representable and the induced morphism of manifolds $Y\times_\cl{Y}
\cl{X}\ra Y$ is a submersion, then $F$ is representable.
\end{lemma}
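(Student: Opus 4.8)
Looking at Lemma~\ref{LemmaBX}, I must show that a morphism $F:\X\to\cl{Y}$ of stacks is representable, given an epimorphism $Y\to\cl{Y}$ from a manifold such that $Y\times_\cl{Y}\X$ is representable and the induced map $Y\times_\cl{Y}\X\to Y$ is a submersion. The author notes this is just \cite[Lem.~2.2]{bx}, so the plan is to reconstruct the standard argument. Let me sketch how I would prove it.

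\textbf{The approach.} To show $F$ is representable, I must check that for an arbitrary manifold $T$ and morphism $T\to\cl{Y}$, the fibred product $T\times_\cl{Y}\X$ is representable. The strategy is to pull everything back along $Y\to\cl{Y}$ and use that representability and the submersion property descend along surjective submersions of manifolds. First I would form the manifold $P:=T\times_\cl{Y}Y$. Since $Y\to\cl{Y}$ is an epimorphism and $T$ is a manifold, the projection $P\to T$ is a surjective submersion of manifolds (this uses that being an epimorphism is stable under base change, as recalled in Section~\ref{subsec:cfg}, together with the characterization of epimorphisms between manifolds as maps admitting local sections). In particular $P$ is a manifold.

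\textbf{Key steps.} Next I would use the pasting law for 2-cartesian squares to compute
\begin{equation*}
P\times_\cl{Y}\X = (T\times_\cl{Y}Y)\times_\cl{Y}\X \cong P\times_Y(Y\times_\cl{Y}\X).
\end{equation*}
Since $Y\times_\cl{Y}\X$ is representable by hypothesis, say by a manifold $W$ with submersion $W\to Y$, the right-hand side is the fibred product of manifolds $P\times_Y W$ along the submersion $W\to Y$, hence is itself a manifold; call it $Q$. Moreover $Q\to P$ is the base change of the submersion $W\to Y$, so it is a submersion of manifolds. At this point I have that $P\times_\cl{Y}\X=Q$ is representable. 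Now I observe that the projection $Q\to T\times_\cl{Y}\X$ is the base change of $P\to T$ along $T\times_\cl{Y}\X\to T$; since $P\to T$ is a surjective submersion (hence a representable epimorphism of manifolds, thus an atlas-like covering), the map $Q\to T\times_\cl{Y}\X$ is a representable epimorphism. The final step is descent: a category fibred in groupoids admitting a representable epimorphism from a manifold $Q$, whose associated groupoid $Q\times_{T\times_\cl{Y}\X}Q$ is itself a manifold, is a differentiable stack, and here I must upgrade ``differentiable stack'' to ``representable'' by verifying the relation $Q\times_{T\times_\cl{Y}\X}Q\rightrightarrows Q$ is the banal (unit) groupoid, forcing the quotient to be a manifold.

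\textbf{The main obstacle.} The delicate point is the last descent step: concluding that $T\times_\cl{Y}\X$ is \emph{representable} (a manifold), not merely a differentiable stack. The cleanest route is to note that since $F$ being representable is a local condition on $\cl{Y}$ and $P\to T$ is a surjective submersion, one can work locally; choosing local sections of $P\to T$ over an open cover $(T_\beta)$ of $T$ gives maps $T_\beta\to Y$, and the squares show $T_\beta\times_\cl{Y}\X\cong T_\beta\times_Y W$ is a manifold with a submersion to $T_\beta$. These manifolds glue, by the stack axioms (A1)--(A2) applied to $\X$ and the uniqueness of gluing, to a manifold representing $T\times_\cl{Y}\X$, with the induced map to $T$ a submersion. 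I would carry out this gluing carefully, as it is where the submersion hypothesis on $Y\times_\cl{Y}\X\to Y$ is genuinely used to ensure the local pieces are honest manifolds that patch. Since the result is quoted verbatim from \cite{bx}, I would likely just cite it, but the above is the argument I would give if a proof were required.
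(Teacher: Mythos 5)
The paper offers no proof of this lemma at all---it is quoted verbatim from \cite[Lem.~2.2]{bx}---so there is nothing to compare against except the standard argument, which is the local-section argument you give in your final paragraph. That paragraph is correct and self-contained: choose an open cover $(T_\beta)$ of $T$ with lifts $T_\beta\to Y$ of $T_\beta\to\cl{Y}$ (using only that $Y\to\cl{Y}$ is an epimorphism), identify $T_\beta\times_\cl{Y}\X\cong T_\beta\times_Y W$ with $W=Y\times_\cl{Y}\X$, observe that this is a manifold because $W\to Y$ is a submersion, and glue the representable open substacks $T_\beta\times_\cl{Y}\X$ of $T\times_\cl{Y}\X$; this is exactly the mechanism the paper itself uses in the proof of Proposition~\ref{prop:wrep}.

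Your intermediate ``key steps,'' however, contain a genuine gap. You assert that $P:=T\times_\cl{Y}Y$ is a manifold and that $P\to T$ is a surjective submersion, justifying this by stability of epimorphisms under base change and by the characterization of epimorphisms between manifolds as maps with local sections. Neither claim follows: the hypothesis is only that $Y\to\cl{Y}$ is an \emph{epimorphism}, not that it is representable (i.e.\ not that it is an atlas), so the fibred product $T\times_\cl{Y}Y$ need not be representable at all; and even an epimorphism between manifolds need not be a submersion (it admits local sections through every point of the target, which forces surjectivity of the differential only along the images of those sections). Consequently the objects $P$ and $Q$ and the concluding descent step built on them are not available, and the ``banal groupoid'' argument you sketch there would in any case be circular, since deciding whether the quotient of that equivalence relation is a manifold is essentially the question being asked. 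The fix is simply to discard that global route and keep only the local one: the whole point of assuming merely an epimorphism is that one works with local lifts $T_\beta\to Y$ rather than with the global fibred product $T\times_\cl{Y}Y$.
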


\noindent {\bf Proof of Prop.~\ref{prop:charactsub}}. It is clear
that either $(b)$, $(c)$ or $(d)$ imply $(a)$. We now check that
$(a)$ implies each one of them.

\

\noindent $\boldsymbol{(a) \then (b)}$: Assume that
$\cl{X}\ra\cl{Y}$ is a submersion and consider atlases $X\ra\cl{X}$
and $Y\ra\cl{Y}$ such that the induced map of manifolds
$Y\times_{\cl{Y}} X\ra Y$ is a submersion. By Lemma \ref{LemmaBX} it
follows that $X\ra \cl{Y}$ is representable.

\

\noindent $\boldsymbol{(a)\then (c)}$: We have to prove that in the
cartesian diagram
$$ \xymatrix{
Z\ar[r]\ar[d]_-{}&U\ar[d]^-{}\\
\cl{X}'\ar[r]^-{}\ar[d]_-{}&\cl{X}\ar[d]^-{}\\
V\ar[r]_-{}&\cl{Y} }
$$
the map $Z\ra V$ is a submersion. By assumption, there is a
cartesian diagram
$$ \xymatrix{
X_1\ar[r]\ar[d]_-{}&X\ar[d]^-{}\\
\cl{X}_1\ar[r]^-{}\ar[d]_-{}&\cl{X}\ar[d]^-{}\\
Y\ar[r]_-{}&\cl{Y} }
$$
in which $X\ra\cl{X}$ and $Y\ra\cl{Y}$ are atlases, $X_1$ is a
manifold, and $X_1\ra Y$ is a submersion.

We consider the cartesian diagram
 $$\xymatrix@=10pt{
  W_1\ar[rr]
  \ar[rd]\ar[dd]&&
  W\ar[rd]\ar'[d][dd]&\\
  & X_1\ar[dd]\ar[rr]&&
  X\ar[dd] \\
  \cl{X}_1'\ar'[r][rr]\ar[rd]\ar[dd]&&
  \cl{X}'\ar[rd]\ar'[d][dd]&\\
  & \cl{X}_1\ar[dd]\ar[rr]&&
  \cl{X}\ar[dd] \\
  V_1\ar[rd]\ar'[r][rr] &&
  V\ar[rd]&\\
  & Y\ar[rr]  &&
  \cl{Y}
 }
 $$
in which $W, W_1, V_1$ are manifolds. We now verify that $W\ra V$ is
a submersion. Indeed, $W_1\ra V_1$ is a submersion (being the base
change of $X_1\ra Y$), $V_1\ra V$ is a surjective submersion (being
the base change of $Y\ra \cl{Y}$), and $W_1\ra W$ is surjective
(being the base change of $V_1\ra V$).

By considering the fibred product
$$
\xymatrix{
U'\ar[r]\ar[d]_-{}&X\ar[d]^-{}\\
U\ar[r]_-{}&\cl{X} }
$$
where $U'$ is a manifold, $U'\ra X$ is a submersion, and $U'\ra U$
is a surjective submersion, we get cartesian diagrams
$$
\xymatrix{
W'\ar[r]\ar[d]&U'\ar[d]\\
W\ar[r]\ar[d]_-{}&X\ar[d]^-{}\\
\cl{X}'\ar[r]^-{}\ar[d]_-{}&\cl{X}\ar[d]^-{}\\
V\ar[r]_-{}&\cl{Y} }\qquad\qquad \xymatrix{
W'\ar[r]\ar[d]&U'\ar[d]\\
Z\ar[r]\ar[d]_-{}&U\ar[d]^-{}\\
\cl{X}'\ar[r]^-{}\ar[d]_-{}&\cl{X}\ar[d]^-{}\\
V\ar[r]_-{}&\cl{Y} }
$$
in which $W'$ is a  manifold, $W'\ra W$ is a submersion, and $W'\ra
Z$ is a surjective submersion. Since $W\ra V$ is a submersion,  so
is $Z\ra V$, and we are done. (In the last step we are using that
the compositions $U'\ra\cl{X}$ in the two diagrams are isomorphic,
hence the compositions $W'\ra V$ in the two diagrams are equal.)

\

\noindent $\boldsymbol{(a)\then (d)}$: Since $(a)$ and $(b)$ are
equivalent, we may assume that there exists an atlas $X\ra\cl{X}$
such that the composition $X\ra\cl{Y}$ is representable. If we take
the fibred product
$$
\xymatrix{
U'\ar[r]^-{b}\ar[d]^-{c}&X\ar[d]^-{d}\\
U\ar[r]_-{e}&\cl{X} }
$$
then $U'$ is a manifold, $c$ is a surjective submersion and $b$ is a
submersion. Hence $Fec\simeq Fdb$ is representable. We have to prove
that $Fe$ is representable. Let us consider an atlas $Y\ra\cl{Y}$
and the cartesian diagram
$$ \xymatrix{
W'\ar[r]\ar[d]_-{f}&U'\ar[d]^-{c}\\
W\ar[r]^-{}\ar[d]_-{g}&U\ar[d]^-{Fe}\\
Y\ar[r]_-{}&\cl{Y} }$$ where $W$ and $W'$ are manifolds, $f$ is
surjective and $gf$ is a submersion. It follows that $g$ is a
submersion and, by Lemma \ref{LemmaBX}, $Fe$ is representable.

\

\noindent{\bf Proof of Prop.~\ref{prop:propsubm}}. To prove $(a)$,
let $\cl{X}\ra\cl{Y}$ and $\cl{Y}\ra\cl{Z}$ be submersions. By
Prop.~\ref{prop:charactsub} (b), there exists an atlas $X\ra\cl{X}$
such that $X\ra\cl{Y}$ is representable. By Prop.~
\ref{prop:charactsub} (d), $X\ra \cl{Z}$ is representable. Using
Prop.~\ref{prop:charactsub} (b) again, we see that $\X \to \cl{Z}$
is a submersion.

For $(b)$, we have to prove that, if in the cartesian diagram of
differentiable stacks
$$
\xymatrix{\cl{X}'\ar[r]\ar[d]&\cl{X}\ar[d]\\
\cl{Y}'\ar[r]&\cl{Y}}
$$
the morphism $\cl{X}\ra\cl{Y}$ is a submersion, then so is $\cl{X}'
\ra\cl{Y}'$. Since $\cl{X}\ra\cl{Y}$ is a submersion, there exists
an atlas $X\ra \cl{X}$ such that $X\ra \cl{Y}$ is representable. Let
$X'=\cl{X}'\times_\cl{X} X$. Then $X'\ra\cl{X}'$ is an atlas and
$X'\ra\cl{Y}'$ is representable. We conclude that
$\cl{X}'\ra\cl{Y}'$ is a submersion.

To prove $(c)$, let $f:X\ra\cl{X}$ be an atlas. Since $F$ is a
submersion and an epimorphism, $Ff$ is an atlas of $\cl{Y}$. Since
$F'F$ is a submersion, then $F' Ff$ is representable, and we
conclude that $F'$ is a submersion.

%%%%%%%%%%%%%%%%%%%%%%%%%%%%%%%%%%%%%%%%%%%%%%%%%%%%%%%%%%%%%%%%%%%%%%%%%%%%%%%%%
\subsection{Inversion and actions}

Given a left (resp. right) action of a cfg-groupoid $\G$ on a
category fibred in groupoids $\X$, there is a natural way to use the
groupoid inversion to turn it into a right (resp. left) action. This
section discusses some technical aspects of this procedure.

\begin{proposition}\label{prop:invact}
Let the cfg-groupoid  $\cl{G}\rra M$ act on the left on a category
fibred in groupoids $\cl{X}$. Then the inversion on $\G$ defines a
right action of $\cl{G}$ on $\cl{X}$ by
$$
x\cdot g:= g^{-1}\cdot x.
$$
\end{proposition}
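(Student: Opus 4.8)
The plan is to verify that the formula $x \cdot g := g^{-1} \cdot x$, together with suitable moment map and coherence 2-isomorphisms inherited from the left action, satisfies all the axioms {\bf (a1)}--{\bf (a4)} of a right action as in Definition~\ref{defaction}. First I would set up the data. If the left action has moment map $\ma: \X \to M$ with $\ma(g \cdot x) = \tar(g)$, then for the right action I take the \emph{same} moment map $\ma$, but now the composability constraint for $x \cdot g = g^{-1} \cdot x$ requires $\ma(x) = \tar(g^{-1}) = \sour(g)$, which is exactly the condition {\bf (a1)} demands for a right action (recall $\sour\inv = \tar$ and $\tar\inv = \sour$ from {\bf (g2)}). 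One then checks {\bf (a2)}: $\ma(x\cdot g) = \ma(g^{-1} x) = \tar(g^{-1}) = \sour(g)$, as required. The action map is the composition $\act_r(x,g) = \act_l(\inv(g), x)$, where $\act_l$ is the left action map, precomposed with the evident isomorphism $\X\times_{\ma,M,\sour}\G \to \G\times_{\sour,M,\ma}\X$, $(x,g)\mapsto (g^{-1},x)$.

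The core of the argument is constructing the associativity and unit 2-isomorphisms $\beta_r$ and $\varepsilon_r$ for the right action out of the left-action data $\beta_l, \varepsilon_l$ and the groupoid's own structure 2-isomorphisms. For the unit, I would define $\varepsilon_r: x\cdot 1 = 1^{-1}\cdot x \to x$ by first using the structure isomorphism $\i_l$ (or the fact that $1^{-1}\cong 1$) to replace $1^{-1}$ by $1$, then applying $\varepsilon_l: 1\cdot x \to x$. For associativity, I must produce $\beta_r: x\cdot(gh) \to (x\cdot g)\cdot h$, i.e.\ $\beta_r: (gh)^{-1}\cdot x \to h^{-1}\cdot(g^{-1}\cdot x)$. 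The key input is the ``reversal of inverses'' isomorphism $(gh)^{-1}\cong h^{-1}g^{-1}$, which is itself a consequence of the groupoid coherences {\bf (g3)}--{\bf (g4)} (it is the standard fact that in any coherent groupoid one has a canonical $2$-isomorphism $\inv\mult \cong \mult(\inv\times\inv)\circ\mathrm{swap}$); composing this with the left associativity $\beta_l: (h^{-1}g^{-1})\cdot x \to h^{-1}\cdot(g^{-1}\cdot x)$ gives $\beta_r$.

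The main obstacle will be verifying the higher coherence diagrams $(xghl)$, $(x1g)$, $(xg1)$ of {\bf (a4)}. These do not follow by formal nonsense alone: each requires pasting the left-action coherence $(lhgx)$, $(g1x)$, $(1gx)$ together with several instances of the groupoid pentagon $(kghl)$, the triangle identities $(1gh)$, $(g1h)$, $(gh1)$, and the inversion coherence $(gg^{-1}g)$, all under the substitution $g\mapsto g^{-1}$, $h\mapsto h^{-1}$ (with the order reversed). The bookkeeping is delicate because the canonical isomorphism $(gh)^{-1}\cong h^{-1}g^{-1}$ enters nontrivially and its compatibility with the pentagon must be invoked. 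I would organize this by first proving the reversal isomorphism as a standalone lemma, then checking that it is compatible with $\alpha$ in the precise sense needed (a single ``hexagon-type'' diagram), after which each of the three right-action coherences reduces to a diagram chase combining that lemma with the corresponding left coherence.

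Because the statement is about categories fibred in groupoids rather than genuine stacks, no sheaf-theoretic input is needed: everything is a finite diagram of $2$-isomorphisms, and I would present the proof as a sequence of commuting diagrams, deferring the most routine pasting verifications to the reader (as the paper does elsewhere, e.g.\ in Lemma~\ref{LemXGmodG}). The upshot is that inversion is an equivalence between the $2$-categories of left and right $\G$-actions, of which Proposition~\ref{prop:invact} records the object-level assignment.
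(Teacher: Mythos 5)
Your proposal follows essentially the same route as the paper: the paper isolates the reversal 2-isomorphism $\theta:(gh)^{-1}\to h^{-1}g^{-1}$ and the unit comparison $\chi:1^{-1}\to 1$ as standalone lemmas (proved, following Laplaza, by exploiting that right multiplication by $g$ is an equivalence with quasi-inverse $R_{g^{-1}}$, so diagrams can be checked after multiplying by $g$), establishes their compatibility with $\alpha$, $\lambda$, $\rho$ in exactly the ``hexagon'' and triangle diagrams you anticipate, and then derives each right-action coherence $(xghl)$, $(x1g)$, $(xg1)$ from the corresponding left-action coherence plus these lemmas. The only slip is in your composability check: $g^{-1}\cdot x$ requires $\ma(x)=\sour(g^{-1})=\tar(g)$ rather than $\tar(g^{-1})=\sour(g)$, which is precisely what \textbf{(a1)} for a right action demands; the two interchanges cancel and the argument is unaffected.
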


The proof follows from a few observations. First, note that we have
natural 2-isomorphisms
\begin{equation}\label{eq:thetachi}
\theta: (hg)^{-1}\ra g^{-1}h^{-1},\qquad \chi: 1^{-1}\ra 1,
\end{equation}
see Lemma~\ref{lemtheta} and \eqref{eq:chi} below for definitions.
We will need to check the commutativity of a few diagrams involving
these 2-isomorphisms, and in order to simplify matters we will
follow some ideas from \cite{Laplaza}.

Let $\cl{G}\rra M$ be a cfg-groupoid. For all $g\in \cl{G}$,
multiplication on the right by $g$ defines a functor
$$
R_g: \sour^{-1}(\tar(g))\ra \sour^{-1}(\sour(g)).
$$
One can prove, as in \cite[Prop.~1.1]{Laplaza}, that $R_g$ is an
equivalence of categories with quasi-inverse $R_{g^{-1}}$. In
particular, $R_g$ is fully faithful, and one can deduce the
commutativity of a diagram in $\sour^{-1}(\tar(g))$ by multiplying
on the right by $g$ and checking the commutativity of the resulting
diagram. Similar statements hold for multiplication on the left.

\begin{lemma}\label{lemtheta}
For all $g,h\in \G$ with $\sour(g)=\tar(h)$  there exists a unique
isomorphism $\theta_{g,h}: (gh)^{-1}\ra h^{-1}g^{-1}$, natural in
$g,h$, for which the following diagram commutes:
$$ \xymatrix{
((gh)^{-1}g)h\ar[rr]^-{(\theta\cdot\mr{id})\mr{id}}&& ((h^{-1}g^{-1})g)h\\
(gh)^{-1}(gh)\ar[d]_-{i_l}\ar[u]^{\alpha}&
&(h^{-1}( g^{-1}g))h\ar[d]^-{\i_l\cdot\mr{id}}\ar[u]^{\alpha\cdot\mr{id}}\\
1&h^{-1}h\ar[l]^-{\i_l}&(h^{-1}1) h.\ar[l]^-{\rho\cdot\mr{id}} }$$
\end{lemma}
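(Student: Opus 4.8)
The plan is to derive the entire statement from the fact, recalled immediately above, that right multiplication by an object of $\cl{G}$ is fully faithful, in the spirit of \cite{Laplaza}. Both $(gh)^{-1}$ and $h^{-1}g^{-1}$ lie in the fibre $\sour^{-1}(\tar(gh))$: indeed $\sour((gh)^{-1})=\tar(gh)=\tar(g)=\sour(g^{-1})=\sour(h^{-1}g^{-1})$, and similarly both have target $\sour(h)$. Right multiplication $R_{gh}$ therefore sends them to $(gh)^{-1}(gh)$ and $(h^{-1}g^{-1})(gh)$, and, being fully faithful, it identifies
$$\mr{Hom}\big((gh)^{-1},h^{-1}g^{-1}\big)\;\xrightarrow{\ \sim\ }\;\mr{Hom}\big((gh)^{-1}(gh),(h^{-1}g^{-1})(gh)\big),\qquad \theta\longmapsto \theta\cdot\mr{id}_{gh}.$$

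First I would rewrite the hexagon as a single equation. Since every edge is an isomorphism, commutativity of the diagram is equivalent to the identity $(\theta\cdot\mr{id}_g)\cdot\mr{id}_h=\Xi$, where $\Xi:((gh)^{-1}g)h\to((h^{-1}g^{-1})g)h$ is the composite of the remaining five edges along the bottom of the diagram. Naturality of the associator $\alpha$ in its first argument, applied to the morphism $(\theta,\mr{id}_g,\mr{id}_h)$ of the triple fibred product, yields
$$(\theta\cdot\mr{id}_g)\cdot\mr{id}_h=\alpha_{h^{-1}g^{-1},g,h}\circ(\theta\cdot\mr{id}_{gh})\circ\alpha_{(gh)^{-1},g,h}^{-1}$$
for every $\theta$ over the identity, so the hexagon commutes precisely when $\theta\cdot\mr{id}_{gh}=\Phi$, where $\Phi:=\alpha_{h^{-1}g^{-1},g,h}^{-1}\circ\Xi\circ\alpha_{(gh)^{-1},g,h}$. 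The two associators at the ends of $\Xi$ cancel against those defining $\Phi$, exhibiting $\Phi$ as an explicit composite of the structure $2$-isomorphisms $\alpha,\rho,\i_l$ with domain $(gh)^{-1}(gh)$ and codomain $(h^{-1}g^{-1})(gh)$. Because $\Phi$ is a morphism between the two images of $R_{gh}$, the displayed bijection furnishes a unique $\theta_{g,h}$ with $\theta_{g,h}\cdot\mr{id}_{gh}=\Phi$; this single $\theta_{g,h}$ is therefore the unique solution of the hexagon, settling both existence and uniqueness at once. Naturality of $\theta_{g,h}$ in $(g,h)$ will then be automatic: $\Phi$ is built from the natural transformations $\alpha,\rho,\i_l$, hence is natural in $(g,h)$, and the bijection transports this naturality back to $\theta_{g,h}$.

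The difficulty here is entirely in the bookkeeping rather than in any conceptual point. I must check that each object occurring in the hexagon lies over the correct point of $M$, so that every composite is defined and $\sour^{-1}(\tar(gh))$ is indeed the fibre in which the remark applies; and I must keep the two distinct associators straight — the reassociation $\alpha_{(gh)^{-1},g,h}$ on the left edge versus $\alpha_{h^{-1},g^{-1},g}\cdot\mr{id}_h$ on the right edge — when carrying out the cancellation that defines $\Phi$. Notably, none of the higher coherences of {\bf (g4)} are needed for this lemma; they will enter only in the companion results (the construction of $\chi$, the coherence properties of $\theta$, and ultimately Proposition~\ref{prop:invact}).
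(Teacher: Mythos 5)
Your argument is correct and is exactly the route the paper intends: it only cites the remark preceding the lemma (full faithfulness of $R_{gh}$ on the fibre $\sour^{-1}(\tar(gh))$, following Laplaza) and leaves the transport of the hexagon to an equation $\theta\cdot\mr{id}_{gh}=\Phi$ implicit, which is precisely what you carry out. The one loose phrase — that ``the two associators at the ends of $\Xi$ cancel'' — is harmless since only the left-hand $\alpha_{(gh)^{-1},g,h}$ actually cancels while the right-hand edge is the different associator $\alpha_{h^{-1},g^{-1},g}\cdot\mr{id}_h$, a point you yourself flag in the closing paragraph, and in any case all that matters is that $\Phi$ is a fixed composite of structure $2$-isomorphisms independent of $\theta$.
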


The proof of this lemma follows \cite{Laplaza} (see comments after
Prop.~1.7 in this reference).

\begin{lemma}\label{MonsterDiagram}
For all composable $g,h,l\in \cl{G}$ the following diagram commutes:
$$
\xymatrix{ (g\cdot hl)^{-1}\ar[r]^-{\theta}\ar[d]_-{\inv(\alpha)}&
(hl)^{-1}g^{-1}\ar[r]^-{\theta\cdot\mr{id}}&
l^{-1}h^{-1}\cdot g^{-1}\\
(gh\cdot l)^{-1}\ar[r]_-{\theta}&
l^{-1}(gh)^{-1}\ar[r]_-{\mr{id}\cdot\theta}& l^{-1}\cdot
h^{-1}g^{-1}\ar[u]_-{\alpha}}
$$
where $\inv$ is the inverse map of the groupoid.
\end{lemma}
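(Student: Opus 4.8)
The plan is to reduce the commutativity of the hexagon in Lemma~\ref{MonsterDiagram} to the higher coherences \textbf{(g4)} by the same device used in the proof of Lemma~\ref{lemtheta}, following \cite{Laplaza}. Both composites around the diagram are isomorphisms $(g\cdot hl)^{-1}\to l^{-1}h^{-1}g^{-1}$ lying in a single fiber groupoid of $\cl{G}$ (over the manifold on which $g,h,l$ live), so to prove that they coincide it suffices to prove that they agree after applying a fully faithful functor. Since $\tar(g\cdot hl)=\tar(g)=\sour\big((g\cdot hl)^{-1}\big)$, right multiplication $R_{g\cdot hl}$ is defined on both objects, and by the analog of \cite[Prop.~1.1]{Laplaza} recalled before Lemma~\ref{lemtheta} it is an equivalence of categories, hence faithful. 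Thus it is enough to check the equality of the two composites after multiplying everything on the right by $g\cdot hl$.

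After this multiplication the source object becomes $(g\cdot hl)^{-1}(g\cdot hl)$, which the inversion isomorphism $\i_l$ (together with an associator) identifies with the unit $1$, while the target object $(l^{-1}h^{-1}g^{-1})(g\cdot hl)$ is brought to $1$ by successively inserting associators and cancelling the pairs $g^{-1}g$, $h^{-1}h$ and $l^{-1}l$ via $\i_l$ and $\lambda$. Unpacking each of the three occurrences of $\theta$ through its defining square in Lemma~\ref{lemtheta} then turns the hexagon into a diagram whose edges are built only from $\alpha,\lambda,\rho,\i_l,\i_r$ and their multiples by identities. I would then exhibit its commutativity by tiling it with elementary cells, each of which is either a naturality square for one of these structure $2$-isomorphisms or an instance of a condition in \textbf{(g4)}: the associativity pentagon $(kghl)$ for the reassociations, the unit triangles $(1gh)$, $(g1h)$, $(gh1)$, and the condition $(gg^{-1}g)$ relating $\i_l$ and $\i_r$.

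The main obstacle is entirely organizational: once $\theta$ is expanded three times the resulting diagram is large, and the difficulty is to tile it by coherence cells in the correct orientation. The systematic remedy, as in \cite{Laplaza}, is to first use the pentagon $(kghl)$ repeatedly to bring both composites to a common normal form in which all associators are grouped to one side, and only afterwards to cancel the inverse pairs; the naturality of $\theta$ in each of its two arguments keeps these cancellations compatible with the outer factor $g^{-1}$. I do not expect any coherence beyond those listed in \textbf{(g4)} to be required, since the equality is forced once the normal form is reached, which is precisely the content of the coherence theorem for categorical groups in the sense of \cite{Laplaza}.
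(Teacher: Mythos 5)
Your proposal is correct and follows essentially the same route as the paper: reduce to the image under the (fully faithful) right-multiplication functor by the triple product, then expand each $\theta$ via its defining square in Lemma~\ref{lemtheta} and tile the resulting diagram with naturality squares and the coherences of {\bf (g4)}. The only cosmetic difference is that you multiply by $g\cdot hl$ where the paper uses $(gh)l$, which changes nothing since the two are related by an associator.
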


The idea of the proof of this lemma is to first note, as mentioned
above, that it is enough to check the commutativity of the diagram
obtained after multiplying on the right by $(gh)l$. Moving
parentheses and canceling terms of the form $x^{-1}x$ produces a
large diagram, the commutativity of which can be checked applying
the higher coherences of the groupoids and the commutativity of the
diagram in Lemma \ref{lemtheta}.

The 2-isomorphism $\chi$ in \eqref{eq:thetachi} is given by the
composition
\begin{equation}\label{eq:chi}
\xymatrix{ 1^{-1}\ar@{-->}[rd]_-{\chi}& 1^{-1}1\ar[l]_-{\rho}
\ar[d]^-{\i_l}\\
& 1. }
\end{equation}

We will need to use the following compatibilities between $\theta$
and $\chi$.

\begin{lemma}\label{lemchitheta}
For all $g\in\cl{G}$ the following diagrams commute:
$$
\xymatrix{ (1g)^{-1}\ar[r]^{\theta}\ar[d]_-{\inv(\lambda)}&
g^{-1}1^{-1}\ar[d]^-{\mr{id}\cdot\chi}\\
g^{-1}& g^{-1}1\ar[l]^-{\rho} }\qquad \qquad \xymatrix{
(g1)^{-1}\ar[r]^{\theta}\ar[d]_-{\inv(\rho)}&
1^{-1}g^{-1}\ar[d]^-{\chi\cdot\mr{id}}\\
g^{-1}& 1g^{-1}.\ar[l]^-{\lambda} }$$
\end{lemma}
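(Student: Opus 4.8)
The plan is to use the device, recalled just before Lemma~\ref{lemtheta}, that for each $g\in\cl{G}$ the right-multiplication functor $R_g\colon \sour^{-1}(\tar(g))\to\sour^{-1}(\sour(g))$ is an equivalence, hence fully faithful: to check that a diagram of isomorphisms sitting in a single source-fibre commutes, it suffices to multiply all its objects and arrows on the right by a conveniently chosen element and verify commutativity of the image diagram. This is exactly the mechanism behind the proofs of Lemmas~\ref{lemtheta} and \ref{MonsterDiagram}; indeed the two squares of Lemma~\ref{lemchitheta} are the \emph{unit} compatibilities of $\theta$, playing for $\chi$ the role that the associativity square of Lemma~\ref{MonsterDiagram} plays for the associator $\alpha$.

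For the first square I would first note that all four objects $(1g)^{-1}$, $g^{-1}1^{-1}$, $g^{-1}1$, $g^{-1}$ (with $1=1_{\tar(g)}$) have source $\tar(g)$ and target $\sour(g)$, so they lie in $\sour^{-1}(\tar(g))$ and the square may be tested after right multiplication by $g$. The key input is that $\theta_{1,g}$ is, by its defining diagram in Lemma~\ref{lemtheta} (specialized to $(g,h)=(1,g)$), the unique isomorphism whose image under $R_g$ is a prescribed composite of instances of $\alpha$, $\i_l$ and $\rho$ passing through the cancellation $(1g)^{-1}(1g)\xrightarrow{\i_l}1$. I would then expand $\chi$ using its definition \eqref{eq:chi}, i.e.\ $\chi=\i_l\circ\rho^{-1}$ on $1^{-1}$, and expand $\inv(\lambda)$ by functoriality of the inversion morphism $\inv$. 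What remains is an identity among the structural $2$-isomorphisms $\alpha,\lambda,\rho,\i_l$, which is forced by the higher coherences $(1gh)$, $(g1h)$, $(gh1)$ and $(gg^{-1}g)$ of \textbf{(g4)} together with the naturality of these $2$-isomorphisms; full faithfulness of $R_g$ then yields the square.

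The second square is the left--right mirror of the first (exchange $\lambda\leftrightarrow\rho$, $\i_l\leftrightarrow\i_r$, and the two tensor factors), and I would prove it by the identical argument, now specializing Lemma~\ref{lemtheta} to $(g,h)=(g,1)$ and expanding $\inv(\rho)$ and the left-factor $\chi$ as above. I expect the only genuine difficulty to be organizational rather than conceptual: once the multiplier is fixed and $\theta$, $\chi$, $\inv(\lambda)$ (resp.\ $\inv(\rho)$) are unwound, the verification is a purely formal chase through the pentagon and unit triangles of \textbf{(g4)}, of exactly the kind already performed (and left to the reader) in Lemmas~\ref{lemtheta} and \ref{MonsterDiagram}; the main source of error is keeping track of the reparenthesizations and of the $x^{-1}x$ cancellations rather than any missing coherence.
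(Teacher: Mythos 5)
Your overall strategy is exactly the paper's: test each square after right multiplication by $g$, using that $R_g$ is fully faithful, then unwind $\theta$ via its defining diagram in Lemma~\ref{lemtheta}, unwind $\chi$ via \eqref{eq:chi} and $\inv(\lambda)$, $\inv(\rho)$ via functoriality of $\inv$, and close the resulting diagram with the coherences of {\bf (g4)} and naturality. For the first square this matches the paper's (one-line) proof and your account is correct.

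The one place your proposal is too quick is the claim that the second square is the formal left--right mirror of the first and so follows by ``the identical argument.'' That mirror symmetry is not available: both $\theta$ (Lemma~\ref{lemtheta}) and $\chi$ (\eqref{eq:chi}) are defined asymmetrically, using $\i_l$ and $\rho$ only --- there is no mirrored $\theta$ built from $\i_r$, and $\chi$ is $\i_l\circ\rho^{-1}$, not $\i_r\circ\lambda^{-1}$. Consequently the coherence chase for the second square does not close using only the listed axioms of {\bf (g4)}; it needs the additional derived identity $\rho=\lambda:1\cdot 1\to 1$, obtained by a separate Kelly-style argument, and this is precisely the ingredient the paper singles out in its proof of the second diagram. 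This is a small but real omission; everything else in your plan agrees with the intended argument.
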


\begin{proof}
One can prove the commutativity of both diagrams by multiplying them
on the right by $g$, and using Lemma \ref{lemtheta} and the higher
coherences of the groupoid. (For the second diagram, one uses the
fact that $\rho=\lambda:1\cdot 1\ra 1$, an equality that can be
proven as in \cite[Thm.~3']{Kelly}.)
\end{proof}

\begin{remark}
In this paper, we have chosen to check the commutativity of all the
necessary diagrams directly, by using higher coherence conditions. A
more general alternative would be to prove a coherence theorem in
the spirit of \cite{Laplaza}.
\end{remark}

To complete the proof of Prop.~\ref{prop:invact}, take the
associativity and identity 2-isomorphisms of the new action as
$$
x\cdot gh\lmap{\beta^*} xg\cdot h\qquad\qquad x1\lmap{\varepsilon^*}
x,
$$
where
$$
\beta^*:\quad  x\cdot
gh=(gh)^{-1}x\quad\lmap{\theta\cdot\mr{id}}\quad h^{-1}g^{-1}\cdot
x\quad \lmap{\beta} \quad h^{-1}\cdot g^{-1}x = xg\cdot h
$$
and
$$ \varepsilon^*:\quad x1=1^{-1}x\quad\lmap{\chi\cdot\mr{id}}\quad 1x\quad
\lmap{\varepsilon}\quad x.
$$
Here $\beta$ and $\varepsilon$ are the 2-isomorphisms of the
original action, and $\theta$ and $\chi$ are as in
\eqref{eq:thetachi}.

The higher coherence $(xghl)$ for $\beta^*$ follows from condition
$(lhgx)$ for $\beta$, and by the commutativity of the diagram in
Lemma~\ref{MonsterDiagram}. The higher coherence $(x1g)$ for the new
action follows from $(g1x)$ for the original action and the
commutativity of the first diagram in Lemma \ref{lemchitheta}.
Similarly, the higher coherence $(xg1)$ for the new action follows
from $(1gx)$ for the original action and the commutativity of the
second diagram in Lemma \ref{lemchitheta}.

Finally, we remark that, as expected, one can interchange left and
right principal bundles by inverting actions.

\begin{proposition}\label{PropLeftPrincRightPrinc}
Let $\cl{G}$ be a stacky Lie groupoid and $\Proj:\cl{X}\ra\cl{S}$ be
a morphism of differentiable stacks. Let $\cl{G}$ act on the left on
$\cl{X}$, and let $x\cdot g:=g^{-1}\cdot x$ be the corresponding
right action. Then $\Proj$ makes $\X$ into a principal
$\cl{G}$-bundle for the left action if and only if it does for the
right action.
\end{proposition}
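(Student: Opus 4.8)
The plan is to unwind Definition~\ref{DefPrincBundle} for each of the two actions and check that the three defining conditions transfer across the correspondence $x\cdot g = g^{-1}\cdot x$. Since the morphism $\Proj:\cl{X}\to\cl{S}$ is the \emph{same} in both cases, condition~1 (that $\Proj$ is an epimorphism and a submersion) is literally identical for the two actions, so there is nothing to prove there. The content is therefore entirely in conditions~2 and~3, and the key observation is that the inversion map $\inv:\cl{G}\to\cl{G}$ is an isomorphism (it is a structure morphism of a stacky Lie groupoid, invertible up to the coherences in {\bf (g3)}), so that any fibred product or isomorphism built using the right action can be transported to the corresponding one for the left action through $\inv$.

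First I would treat condition~2, that $\cl{G}$ acts on the fibers of $\Proj$. For the left action this means there is a 2-isomorphism $\gamma_l:\Proj\,\scirc\,\mathrm{pr}\to\Proj\,\scirc\,\act_l$ (where $\act_l$ is the left action map on $\cl{G}\times_{\sour,M,\ma}\cl{X}$) satisfying the coherences $(\gamma\beta)$ and $(\gamma\varepsilon)$ of Definition~\ref{DefActOnFibers}; for the right action one needs $\gamma_r$ on $\cl{X}\times_{\ma,M,\tar}\cl{G}$. The plan is to define $\gamma_r(x,g)$ as the composite $\Proj(x)\lmap{\gamma_l(g^{-1},x)}\Proj(g^{-1}x)=\Proj(xg)$, i.e. to set $\gamma_r := \gamma_l \,\scirc\,(\inv\times\mr{id})$ precomposed with the swap, and then verify that the right-action coherences $(\gamma\beta)$, $(\gamma\varepsilon)$ follow from the left-action ones together with the compatibilities of $\beta^*,\varepsilon^*$ with $\beta,\varepsilon$ recorded in the construction of Prop.~\ref{prop:invact} (the explicit formulas $\beta^*=\beta\circ(\theta\cdot\mr{id})$ and $\varepsilon^*=\varepsilon\circ(\chi\cdot\mr{id})$, together with Lemmas~\ref{MonsterDiagram} and~\ref{lemchitheta}). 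This is a diagram chase of exactly the same flavour as the coherence checks already carried out for $\beta^*$ and $\varepsilon^*$ in the proof of Prop.~\ref{prop:invact}.

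Next I would handle condition~3, that the natural map $\cl{X}\times_M\cl{G}\to\cl{X}\times_{\cl{S}}\cl{X}$ of \eqref{eq:actfibre} is an isomorphism. For the right action this map is $(x,g)\mapsto(x,\gamma_r(x,g),xg)$, and for the left action it is $(g,x)\mapsto(gx,\gamma_l(g,x),x)$. The key point is that these two maps fit into a commuting square whose vertical (or horizontal) legs are isomorphisms: the swap $\cl{X}\times_M\cl{G}\to\cl{G}\times_M\cl{X}$ combined with $\inv$ gives an isomorphism $\cl{X}\times_{\ma,M,\tar}\cl{G}\lmap{\sim}\cl{G}\times_{\sour,M,\ma}\cl{X}$, $(x,g)\mapsto(g^{-1},x)$ (this is precisely the isomorphism used at the top of the proof of Lemma~\ref{LemmaProductWeakRep}), while on the target one uses the swap isomorphism $\cl{X}\times_{\cl{S}}\cl{X}\to\cl{X}\times_{\cl{S}}\cl{X}$, $(z,z')\mapsto(z',z)$. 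Granting that the definition of $\gamma_r$ above makes this square 2-commute, the right-action map is an isomorphism if and only if the left-action map is, since both other edges are isomorphisms. Running the argument with left and right interchanged, and using that $(\inv)^{-1}=\inv$ up to coherence, gives the converse, completing the ``if and only if''.

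I expect the main obstacle to be purely bookkeeping: verifying that the replacement $\gamma_r:=\gamma_l\,\scirc\,(\inv\times\mr{id})\circ(\text{swap})$ genuinely satisfies the higher coherences of Definition~\ref{DefActOnFibers}, and that the comparison square for \eqref{eq:actfibre} 2-commutes on the nose with the chosen 2-isomorphisms. These are not conceptually hard, but they require invoking the correct instances of $\theta$, $\chi$, $\beta^*$, $\varepsilon^*$ and the coherence lemmas (Lemmas~\ref{lemtheta}, \ref{MonsterDiagram}, \ref{lemchitheta}) in the right order; the situation is fully parallel to — and no harder than — the coherence verifications already performed in the proof of Prop.~\ref{prop:invact}. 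As in the rest of the paper, I would carry these out directly via the higher coherence conditions rather than appealing to a general coherence theorem, and I would state explicitly that condition~1 is symmetric in the two actions so that only conditions~2 and~3 need the inversion argument.
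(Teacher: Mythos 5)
Your proposal is correct and follows essentially the same route as the paper: define $\gamma'(x,g):=\gamma(g^{-1},x)$, transfer the coherences $(\gamma\beta)$ and $(\gamma\varepsilon)$ via $\theta$, naturality, and the 2-isomorphism $\inv\circ\inv\to\mr{id}$, and compare the two maps of \eqref{eq:actfibre} through the commuting square built from $(x,g)\mapsto(g^{-1},x)$ and the swap on $\cl{X}\times_{\cl{S}}\cl{X}$. The only small precision worth adding is that the swap on the target must also invert the 2-isomorphism component, i.e.\ it is $(x,\sigma,y)\mapsto(y,\sigma^{-1},x)$, exactly as in the paper's diagram.
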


\begin{proof}
We must check conditions $1.$, $2.$ and $3.$ of Definition
\ref{DefPrincBundle}. Condition $1.$ is immediate. To analyze $2.$,
let $\gamma$  be the 2-isomorphism
$$
\gamma(g,x): \Proj(x)\lmap{\sim} \Proj(gx),
$$
and let $\gamma'$ be the corresponding 2-isomorphism for the right
action:
$$
\gamma'(x,g):=\gamma(g^{-1},x):\quad \Proj(x)\lmap{\sim} \Proj(xg).
$$
We have to show that $\gamma$  makes $\cl{G}$ act on the fibers of
$\Proj$ via the left action if and only if $\gamma'$ makes $\cl{G}$
act on the fibers of $\Proj$ via the right action. We refer to
$(\gamma\beta)$ and $(\gamma\vep)$ as the higher coherences for the
left action, and $(\gamma\beta)'$ and $(\gamma\vep)'$ the analogous
conditions for the right action. Consider the diagram
$$
\xymatrix{ \Proj(x)\ar[d]_-{\gamma}\ar[r]^-{\Proj(\mr{id})}&
\Proj(x)\ar[d]^-{\gamma}\ar[r]^-{\gamma}&
\Proj(g^{-1}x)\ar[d]^-{\gamma}\\
\Proj((gh)^{-1}x)\ar[r]_-{\Proj(\theta\cdot\mr{id})}&
\Proj((h^{-1}g^{-1})x)\ar[r]_-{\Proj(\beta)}& \Proj(h^{-1}(g^{-1}x))
}
$$
where $\theta:(gh)^{-1}\ra h^{-1}g^{-1}$ is the isomorphism in
\eqref{eq:thetachi}. The outer rectangle is the one of condition
$(\gamma\beta)'$, while the right square is the one of
$(\gamma\beta)$. Moreover, the left square is commutative by
naturality of $\gamma$. Hence, condition $(\gamma\beta)$ implies
$(\gamma\beta)'$. For the converse, one has to substitute $g$ and
$h$ in the above diagram with $g^{-1}$ and $h^{-1}$. In order to
prove that $(\gamma\beta)'$ implies $(\gamma\beta)$, one has to use
that there is a 2-isomorphism $\inv \circ \inv \ra \mr{id}_\cl{G}$,
where $\inv$ is the inverse of the groupoid, as well as the
naturality of $\beta$ and $\gamma$. With a similar argument the
reader can check the equivalence between $(\gamma\vep)$ and
$(\gamma\vep)'$.

For condition $3.$, we notice the commutativity of the following
diagram:
$$\xymatrix{
\cl{X}\times_M\cl{G}\ar[r]\ar[d]&
\cl{X}\times_\cl{S}\cl{X}\ar[d]\\
\cl{G}\times_M\cl{X}\ar[r]& \cl{X}\times_\cl{S}\cl{X} }$$ where the
left vertical map is the isomorphism $(x,g)\mapsto (g^{-1},x)$, the
right vertical one is the isomorphism $(x,\sigma,y)\mapsto
(y,\sigma^{-1},x)$, the upper horizontal one is $(x,g)\mapsto
(x,\gamma',xg)$, and the lower horizontal one is $(g,x)\mapsto
(gx,\gamma^{-1},x)$.
\end{proof}

%%%%%%%%%%%%%%%%%%%%%%%%%%%%%%%%%%%%%%%%%%%%%%%%%%%%%%%%%%%%%%%%%%%%%%%%%%%%%%%%%%%%%%%%
\subsection{The prequotient construction}\label{app:preq}

Here we present the {\bf proof of Prop.~\ref{PropPreqCfg}}.

We first show that $\cl{X}\pq\cl{G}$ is a category, which involves
proving that the associativity and the identity axioms for
composition of morphisms hold. In order to illustrate the general
proof method, we give a detailed account of the associativity axiom
for morphisms in $\cl{X}\pq\cl{G}$, leaving the identity axioms to
the reader. We remark that this is a point where the higher
coherence conditions ${\bf (a4)}$ for the action of $\G$ on $\X$
must be used. Indeed, the associativity for composition of morphisms
relies on $(xghl)$, while $(xg1)$  and $(x1g)$ are used to prove the
left and right identity axioms, respectively.

\textit{Associativity of the composition of morphisms in
$\cl{X}\pq\cl{G}$:} Consider three morphisms in $\cl{X}\pq\cl{G}$,
$$
x\lmap{[g,b]} y\lmap{[h,c]}z\lmap{[k,d]} w.
$$
In order to perform the various compositions we arbitrarily choose
pull backs in $\cl{G}$:
$$
\xymatrix{ \mu^*h\ar[r]^-{\mu_h}\ar@{-}[d]&
h\ar@{-}[d]&\nu^*k\ar[r]^-{\nu_k}\ar@{-}[d]&k\ar@{-}[d]
&(\nu\mu)^*k\ar@{-}[d]\ar[r]^-{(\nu\mu)_k}& k\ar@{-}[d]\\
U\ar[r]_-{\mu} &V& V\ar[r]_-{\nu}&W & U\ar[r]_-{\nu\mu}&W }
$$
where $\mu:=\pi_\X(b)$ and $\nu:= \pi_\X(c)$. Then we have
\begin{align*}
[h,c][g,b]&=[g\cdot \mu^*h, \; c\, \scirc\,  (b\cdot
\mu_h)\,\scirc\, \beta(x,g,
\mu^*h)],\\
[k,d][h,c]&=[h\cdot \nu^*k, \; d \, \scirc\,  (c\cdot \nu_k) \,
\scirc\, \beta(y,h, \nu^*k)].
\end{align*}
In order to perform compositions using three morphisms, we have to
choose pullbacks in $\cl{G}$:
$$\xymatrix{
 \mu^*(h\cdot \nu^*k)\ar[r]^-{\mu_{h\cdot \nu^*k}}\ar@{-}[d]& h\cdot \nu^*k\ar@{-}[d]&
\mu^*\nu^*k\ar[r]^-{\mu_{\nu^*k}} \ar@{-}[d]& \nu^*k\ar@{-}[d]\\
U\ar[r]_-{\mu}&V& U\ar[r]_-{\mu}&V, }
$$
but this time we do not make arbitrary choices; instead, we set
$\mu^*(h\cdot \nu^*k):= \mu^*h\cdot (\nu\mu)^*k$, and $\mu_{h\cdot
\nu^*k}:=\mu_h\cdot \mu_{\nu^*k}$, and $\mu^*\nu^*k:=(\nu\mu)^*k$,
and $\mu_{\nu^*k}$ to be the unique map over $\mu$ such that
$\nu_k\mu_{\nu^*k}=(\nu\mu)_k$. Then the compositions read as
follows:
$$
([k,d][h,c])[g,b]=\left[g\cdot (\mu^*h\cdot (\nu\mu)^*k), d \,
\scirc\,  (c\cdot \nu_k) \, \scirc\,  \beta(y,h, \nu^*k) \, \scirc\,
( b\cdot (\mu_h\cdot \mu_{\nu^*k})) \, \scirc\,
\beta(x,g,\mu^*h\cdot (\nu\mu)^*k)\right]
$$
$$
[k,d]([h,c] [g,b])=\left[(g\cdot \mu^*h)\cdot (\nu\mu)^*k, d \,
\scirc\, \Big(\big(c \, \scirc\, (b\cdot \mu_h) \, \scirc\,
\beta(x,g, \mu^*h)\big)\cdot (\nu\mu)_k\Big) \, \scirc\,
\beta(x,g\cdot \mu^*h, (\nu\mu)^*k) \right].
$$
In order to prove the equality between the two ways of composing the
three arrows, we consider the associativity isomorphism in $\cl{G}$
defined in ${\bf (g3)}$:
$$
\alpha:=\alpha(g, \mu^*h, (\nu\mu)^*k): g\cdot (\mu^*h\cdot
(\nu\mu)^*k) \lmap{\sim }      (g\cdot \mu^*h)\cdot (\nu\mu)^*k.
$$
Since $\pi_\G(\alpha)=\mr{id}_U$, it follows that
$\tar(\alpha)=\mr{id}_{\ma(x)}$.

Note that, by the functoriality of the multiplication $\mult$ on
$\G$, the following holds:
$$
(\overline{j}_2\circ \overline{j}_1)\cdot (j_2\circ j_1) =
(\overline{j}_2\cdot j_2)\circ (\overline{j}_1\cdot j_1),
$$
where $j_1,j_2,\overline{j}_1,\overline{j}_2$ are morphisms
$$
g_1\lmap{j_1} g_2 \lmap{j_2} g_3,\;\;\;\;\;
\overline{g}_1\lmap{\overline{j}_1} \overline{g}_2
\lmap{\overline{j}_2} \overline{g}_3
$$
for which the compositions and multiplications above make sense.
Considering the expression for $([k,d][h,c]) [g,b]$, we then have
$$
 \big(c \, \scirc\, (b\cdot \mu_h)
\, \scirc\,  \beta(x,g, \mu^*h)\big)\cdot (\nu\mu)_k=(c\cdot \nu_k)
\, \scirc\, ((b\cdot \mu_h)\cdot \mu_{\nu^*k}) \, \scirc\,
(\beta(x,g, \mu^*h)\cdot \mr{id}_{(\nu\mu)^*k}).
$$
Then proving associativity boils down to proving that $\alpha$ makes
the following diagram (which is \eqref{eq:triang} in our context)
commute:
$$
\xymatrix{ x\cdot\big(g\cdot(\mu^*h\cdot
(\nu\mu)^*k)\big)\ar[r]^-{\beta} \ar[d]_-{\mr{id} \cdot \alpha}&
(x\cdot g)\cdot(\mu^*h\cdot (\nu\mu)^*k)\ar[rr]^-{b\cdot(\mu_h\cdot
 \mu_{\nu^*k})}\ar[dd]_-{\beta}&&
y\cdot (h\cdot \nu^*k)\ar[dd]^-{\beta}&\\
x\cdot ( (g\cdot \mu^*h)\cdot (\nu\mu)^*k)\ar[d]_-{\beta}&&&&w\\
(x\cdot (g\cdot \mu^*h))\cdot (\nu\mu)^*k\ar[r]_-{\beta\cdot
\mr{id}}& ((x\cdot g)\cdot \mu^*h)\cdot (\nu\mu)^*k\ar[rr]_-{(b\cdot
\mu_h)\cdot \mu_{\nu^*k}}&&(y\cdot h)\cdot \nu^*k\ar[r]^-{c\cdot
\nu_k} &z\cdot k \ar[u]_-{d} }
$$
The commutativity of the left square is due to the higher coherence
$(xghl)$, whereas the commutativity of the right square follows from
the naturality of $\beta$. This finishes the proof of the
associativity of the composition of morphisms $\cl{X}\pq\cl{G}$. As
already mentioned, the identity axioms are proven similarly, and we
conclude that $\cl{X}\pq\cl{G}$ is a category.

The fact that $\pi_{\cl{X}\pq\cl{G}}$ in \eqref{eq:projpre} is a
functor follows from definitions \eqref{eq:comppre},\eqref{eq:idpre}
and \eqref{eq:func}, and from the fact that, by the definition of
natural transformation of fibred categories in groupoids,
$\pi_\X(\beta)$ and $\pi_\X(\varepsilon)$ are sent to identities in
$\cl{C}$.

Finally, we must prove that $\cl{X}\pq\cl{G}$ is fibred in groupoids
over $\cl{C}$. We outline the proof, leaving the details to the
reader.
%\textit{$\cl{X}\pq\cl{G}$ is fibred in groupoids over $\cl{C}$:}
Given $\mu: U\to V$ in $\cl{C}$ and $y\in\cl{X}$ with $\pi_\X(y)=V$,
we first have to produce a cartesian arrow $x\rightarrow y$ in
$\cl{X}\pq\cl{G}$ over $\mu$; this proves that $\cl{X}\pq\cl{G}$ is
fibred over $\cl{C}$. To do that we take an arrow $b:x \rightarrow
y$ in $\cl{X}$ over $\mu$ and define an arrow in $\cl{X}\pq\cl{G}$
over $\mu$ by
$$
[\un_{\ma(x)}, b\circ \varepsilon (x)]:x \map y.
$$
A direct computation shows that this arrow is cartesian; we remark
that the higher coherence $(xg1)$ in ${\bf (a4)}$ has to be used.

In order to prove that $\cl{X}\pq\cl{G}$ is fibred in groupoids, we
have to check that any morphism $[g,b]:x\rightarrow y$ in
$\cl{X}\pq\cl{G}$ over an identity of $\cl{C}$ is an isomorphism. We
claim that the inverse is given by
$$
[g,b]^{-1}= [g^{-1}, \varepsilon(x)\circ (\mr{id}_x\cdot
\i_r(g))\circ\beta (x,g,g^{-1})^{-1}\circ (b^{-1}\cdot
\mr{id}_{g^{-1}})].
$$
The proof of the claim is direct; we remark that in proving that
$[g,b][g,b]^{-1}=\mr{id}_y$ we use the higher coherences $(xghl)$,
$(x1g)$, $(xg1)$ of ${\bf (a4)}$, and $(gg^{-1}g)$ of ${\bf (g4)}$.

This concludes the proof of Prop.~\ref{PropPreqCfg}.
\\

We will need the next observation in
Prop.~\ref{prop1freeThenPrestack}.

\begin{remark}\label{rempullbackmorphisminXG}
Given an arrow $[g,b]$ in $\cl{X}\pq\cl{G}$ over an object $V$ of
$\cl{C}$ and an arrow $\mu:U\rightarrow V$ in $\cl{C}$ then
$\mu^*[g,b]=[\mu^*g, \mu^*b].$
\end{remark}

%%%%%%%%%%%%%%%%%%%%%%%%%%%%%%%%%%%%%%%%%%%%%%%%%%%%%%%%%%%%%%%%%%%%%%%%%%%%%%%%%%%%%%%%%
\subsection{Quotient in stages}\label{app:stages}

In this section we consider actions by products of cfg-groupoids.
The main result, used in  Prop.~\ref{PropMorEqRel}, asserts that in
this case quotients can be taken in stages, i.e., with respect to
one factor at a time.

\begin{lemma}\label{lem:equivarianceofstackyf} Let $\cl{Y}$ be a prestack
that carries a (right) action of a stacky groupoid $\G$ along
$\ma:\cl{Y}\ra M$. There is an induced $\G$-action on the
stackification $\cl{Y}^\sharp$ in such a way that the stackification
map $\stack:\cl{Y}\ra\cl{Y}^\sharp$ is $\G$-equivariant. Moreover,
if the $\G$-action on $\cl{Y}$ is 1-free then so is the action on
$\cl{Y}^\sharp$.
\end{lemma}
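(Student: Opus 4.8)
The plan is to transport the entire action structure along the stackification morphism $\stack:\cl{Y}\to\cl{Y}^\sharp$, the crucial observation being that stackification turns the fibred products appearing in the action axioms into the fibred products one expects. Concretely, since $\G$ and $M$ are already stacks, the base-change map $\stack\times_M\mr{id}_\G:\cl{Y}\times_M\G\to\cl{Y}^\sharp\times_M\G$ is a monomorphism and an epimorphism (it is the base change of $\stack$, which has these properties by Prop.~\ref{PropStackification}(iii) because $\cl{Y}$ is a prestack, and both properties are stable under base change). As $\cl{Y}\times_M\G$ is again a prestack and $\cl{Y}^\sharp\times_M\G$ is a stack, Prop.~\ref{PropStackification}(iv) yields a canonical isomorphism $(\cl{Y}\times_M\G)^\sharp\cong\cl{Y}^\sharp\times_M\G$ under which the universal map $\stack_{\cl{Y}\times_M\G}$ corresponds to $\stack\times_M\mr{id}_\G$. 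The same argument applied to the iterated fibred product gives $(\cl{Y}\times_M\G\times_M\G)^\sharp\cong\cl{Y}^\sharp\times_M\G\times_M\G$, and these identifications are the backbone of the construction.

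With these in hand I would define the induced action as follows. The moment map $\ma^\sharp:\cl{Y}^\sharp\to M$ is obtained by factoring $\ma:\cl{Y}\to M$ through $\stack$ via Prop.~\ref{PropStackification}(i) ($M$ being a stack). For the action map, I apply Prop.~\ref{PropStackification}(i) to the morphism $\stack\circ\act:\cl{Y}\times_M\G\to\cl{Y}^\sharp$ out of the prestack $\cl{Y}\times_M\G$, obtaining a morphism out of $(\cl{Y}\times_M\G)^\sharp\cong\cl{Y}^\sharp\times_M\G$, which I take to be $\act^\sharp:\cl{Y}^\sharp\times_M\G\to\cl{Y}^\sharp$. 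By construction $\act^\sharp\circ(\stack\times_M\mr{id}_\G)\cong\stack\circ\act$, and this $2$-isomorphism is exactly the equivariance datum $\delta$ making $\stack$ into a $\G$-equivariant morphism. The associativity and unit $2$-isomorphisms $\beta^\sharp,\varepsilon^\sharp$ are then produced from $\beta,\varepsilon$ by the uniqueness part Prop.~\ref{PropStackification}(ii): the source and target of each are morphisms (out of $\cl{Y}^\sharp$, resp. $\cl{Y}^\sharp\times_M\G\times_M\G$) that restrict along the appropriate $\stack$-map to the corresponding morphisms for $\cl{Y}$, so $\beta,\varepsilon$ descend uniquely.

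It remains to verify the higher coherence diagrams $(xghl)$, $(x1g)$, $(xg1)$ of {\bf (a4)} for $\beta^\sharp,\varepsilon^\sharp$, together with the equivariance coherences of Def.~\ref{DefGequivariant} for $\stack$. Each such diagram is an equation between two $2$-isomorphisms of morphisms into the stack $\cl{Y}^\sharp$ defined over an iterated fibred product; by the identifications of the first paragraph each of these morphisms is the stackification of a morphism out of a prestack, so the two sides agree as soon as they agree after whiskering with the relevant $\stack$-map (this reflection of equalities of $2$-isomorphisms is again an instance of the uniqueness in Prop.~\ref{PropStackification}(ii)). That whiskered identity is precisely the corresponding coherence for the original action on $\cl{Y}$, which holds by hypothesis. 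I expect this bookkeeping --- keeping track of all the canonical identifications so that ``whisker with $\stack$'' is literally applicable --- to be the most laborious, though not conceptually hard, part.

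Finally, for the $1$-freeness claim I would argue pointwise. Fix a manifold $U$, an object $\bar y\in\cl{Y}^\sharp_U$, and morphisms $j,j':g\to\bar g$ in $\G$ over $\mr{id}_U$ with $\tar(j)=\tar(j')=\mr{id}_{\ma^\sharp(\bar y)}$ satisfying $\mr{id}_{\bar y}\cdot j=\mr{id}_{\bar y}\cdot j'$ (using $\act^\sharp$). Since $\cl{Y}$ is a prestack, $\stack$ is an epimorphism, so there is an open cover $(U_\alpha\to U)$ and objects $y_\alpha\in\cl{Y}_{U_\alpha}$ with $\stack(y_\alpha)\cong\bar y|_{U_\alpha}$. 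Transporting the equality along these isomorphisms and through the equivariance $2$-isomorphism $\delta$, it becomes $\stack(\mr{id}_{y_\alpha}\cdot j|_{U_\alpha})=\stack(\mr{id}_{y_\alpha}\cdot j'|_{U_\alpha})$; since $\stack$ is a monomorphism, hence faithful, this forces $\mr{id}_{y_\alpha}\cdot j|_{U_\alpha}=\mr{id}_{y_\alpha}\cdot j'|_{U_\alpha}$ in $\cl{Y}$. The assumed $1$-freeness of the $\G$-action on $\cl{Y}$ (Def.~\ref{def1free}) then gives $j|_{U_\alpha}=j'|_{U_\alpha}$ for every $\alpha$, and since $\G$ is a stack (so in particular satisfies (A1)) these local equalities glue to $j=j'$. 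This establishes $1$-freeness of the induced action and completes the proof.
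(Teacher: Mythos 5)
Your proposal is correct and follows essentially the same route as the paper's proof: identify $\stack\times_M\mr{id}_{\G}$ as the stackification map of the prestack $\cl{Y}\times_M\G$, descend the moment map, action map, $\beta$, $\varepsilon$ and all higher coherences via the existence and uniqueness clauses of Prop.~\ref{PropStackification}, and prove 1-freeness by passing to a cover where the object of $\cl{Y}^\sharp$ lifts to $\cl{Y}$, using that $\stack$ is a monomorphism and epimorphism and gluing with axiom (A1) for the stack $\G$. The only (cosmetic) difference is that you justify the monomorphism/epimorphism property of $\stack\times_M\mr{id}_{\G}$ by base-change stability, where the paper leaves it as a direct check.
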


\begin{proof} First of all we observe that
$\stack\times \mr{id}: \cl{Y}\times_M\cl{G}\ra \cl{Y}^\sharp\times_M
\cl{G}$ is a stackification map. Indeed, $\cl{Y}\times_M\cl{G}$ is a
prestack, $\cl{Y}^\sharp\times_M\cl{G}$ is a stack, and one can
check that $\stack\times\mr{id}$ is a monomorphism and an
epimorphism, hence the observation follows from
Proposition~\ref{PropStackification} (iv). (Here we use that
$\cl{Y}$ is a prestack and not merely a cfg.)

One obtains all the data and higher coherences for the
$\cl{G}$-action on $\cl{Y}^\sharp$ from the universal property of
the stackification. We sketch the main steps leaving the details to
the reader. There is an induced moment map
$\ma^\sharp:\cl{Y}^\sharp\ra M$ such that $\ma^\sharp\stack=\ma$.
Applying the universal property of the stackification map
$\stack\times\mr{id}$ (part (i) of Prop.~\ref{PropStackification})
to the solid diagram:
$$\xymatrix{
\cl{Y}\times_M\cl{G}\ar[r]^-{\act}\ar[d]_-{\stack\times\mr{id}}&
\cl{Y}\ar[d]^-{\stack}\\
\cl{Y}^\sharp\times_M\cl{G}\ar@{-->}[r]_-{\act^\sharp}&
\cl{Y}^\sharp, }
$$
where $\act$ is the original action, one gets a pair
$(\act^\sharp,\delta)$, where
$\delta:\act^\sharp(\stack\times\mr{id})\ra \stack\act$. The
2-isomorphism $\delta$ is the one that makes $\stack$ equivariant.
The associativity 2-isomorphism
$\beta^\sharp:\act^\sharp(\mr{id}\times
\mult)\ra\act^\sharp(\act^\sharp\times\mr{id})$ of the new action is
induced by the $\beta:\act(\mr{id}\times
\mult)\ra\act(\act\times\mr{id})$ of the original action by applying
the universal property of the stackification map
$\stack\times\mr{id}\times \mr{id}$ (part (ii) of
Prop.~\ref{PropStackification}) to the diagram:
$$\xymatrix{
\cl{Y}\times_M\cl{G}\times_M\cl{G} \ar[rr]^-{\act(\mr{id}\times
\mult)}_-{\act(\act\times\mr{id})}
\ar[d]_-{\stack\times\mr{id}\times\mr{id}}&&
\cl{Y}\ar[d]^-{\stack}\\
\cl{Y}^\sharp\times_M\cl{G}\times_M\cl{G}
\ar[rr]^-{\act^\sharp(\mr{id}\times
\mult)}_-{\act^\sharp(\act^\sharp \times\mr{id})}&& \cl{Y}^\sharp. }
$$
The uniqueness part of (ii) of Prop.~\ref{PropStackification}
implies the equivariance higher coherence $(\delta\beta_1\beta_2)$,
see Def.~\ref{DefGequivariant}. The higher coherence $(xghl)$ for
the new action (the ``pentagon") follows from the same coherence for
the old action, the already proven $(\delta\beta_1\beta_2)$, and the
universal property of the stackification map
$\stack\times\mr{id}\times\mr{id}\times\mr{id}$ (the uniqueness part
of Prop.~\ref{PropStackification}, part (ii)). The same ideas can be
used to construct the identity 2-isomorphism $\vep^\sharp$ for the
new action, and to prove first the equivariance higher coherence
$(\delta\vep_1\vep_2)$, and then the action higher coherences
$(x1g)$ and $(xg1)$.

For the last assertion of the lemma, recall that the stackification
map $\stack:\cl{Y}\ra\cl{Y}^\sharp$ the stackification map is a
monomorphism and an epimorphism (see Prop.~\ref{PropStackification}
(iii)), and that it is $\G$-equivariant. Take $y\in\cl{Y}^\sharp$
and $j,j': g\ra \bar{g}$ in $\cl{G}$, where the objects are over the
same manifold $U$, and the morphisms are over $\mr{id}_U$. Assuming
that $\mr{id}_y\cdot j=\mr{id}_y\cdot j'$, we have to prove that
$j=j'$. There exist an open cover $(U_\alpha)$ of $U$, objects
$y_\alpha\in \cl{Y}_{U_\alpha}$, and isomorphisms
$\stack(y_\alpha)\ra y_{|U_\alpha}$. We have
$\stack(\mr{id}_{y_\alpha}\cdot j_{|U_\alpha})=
\mr{id}_{\stack(y_\alpha)}\cdot j_{|U_\alpha}=
\mr{id}_{\stack(y_\alpha)}\cdot j'_{|U_\alpha}=
\stack(\mr{id}_{y_\alpha}\cdot j'_{|U_\alpha})$, where the second
equality follows from $\mr{id}_{y_{|U_\alpha}}\cdot j_{|U_\alpha}=
\mr{id}_{y_{|U_\alpha}}\cdot j'_{|U_\alpha}$ using the isomorphisms
$\stack(y_\alpha)\ra y_{|U_\alpha}$. Hence, $\mr{id}_{y_\alpha}\cdot
j_{|U_\alpha}= \mr{id}_{y_\alpha}\cdot j'_{|U_\alpha}$ ($\stack$
being a monomorphism), and since the $\cl{G}$-action on $\cl{Y}$ is
1-free, it follows that $j_{|U_\alpha}=j'_{|U_\alpha}$. Since
$\cl{G}$ is a stack, we conclude that $j=j'$, as needed.
\end{proof}

Let $\cl{G}_i$ be a cfg-groupoid over $M_i$, $i=1,2$. Suppose that
$\cl{X}$ is a category fibred in groupoids carrying a (right) action
of the product $\cl{G}:=\cl{G}_1\times \cl{G}_2$ along the map
$\ma=(\ma_1,\ma_2): \cl{X}\to M_1\times M_2$.

\begin{lemma}\label{lem:inducedact2}
The $\cl{G}$-action on $\cl{X}$ restricts to an action of $\cl{G}_1$
on $\cl{X}$ (on the fibers of $\ma_2$), and there is an induced
action of $\cl{G}_2$ on $\cl{X}\pq \cl{G}_1$. (Clearly, the same
holds for $\cl{G}_1$ and $\cl{G}_2$ interchanged.)
\end{lemma}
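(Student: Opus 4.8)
The plan is to obtain both the restricted $\cl{G}_1$-action and the descended $\cl{G}_2$-action from the product action by composing with unit sections and then invoking the functoriality of the prequotient construction. For Part~1 I define the $\cl{G}_1$-action on $\cl{X}$ (along $\ma_1$) by $x\cdot_1 g_1:=x\cdot(g_1,\un(\ma_2(x)))$, where $\un$ is the unit of $\cl{G}_2$; the axiom (a2) is immediate from $\ma\act=\sour n_\ma$ together with $\sour\un=\mr{id}$, which simultaneously shows $\ma_2(x\cdot_1 g_1)=\ma_2(x)$, i.e. that the action is on the fibres of $\ma_2$. The associativity 2-isomorphism $\beta_1$ is assembled by composing the product-action $\beta$ applied to $(g_1,\un)(g_1',\un)$ with the identity 2-isomorphism $\un\cdot\un\lmap{\sim}\un$ of $\cl{G}_2$ (coming from $\lambda$, or $\rho$), while $\varepsilon_1$ is just $\varepsilon$, since the unit of $\cl{G}_1\times\cl{G}_2$ is the pair of units. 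The higher coherences (a4) for $\cl{G}_1$ then reduce to the coherences (a4) for the product action together with the groupoid coherences (g4) of $\cl{G}_2$; this is mechanical diagram-chasing that I would carry out once. The $\cl{G}_2$-action on $\cl{X}$ (along $\ma_2$, on the fibres of $\ma_1$) is defined symmetrically, and a short computation with $\beta$ and the structure 2-isomorphisms yields a commutativity 2-isomorphism $\tau$ between the two restricted actions, expressing that both $(x\cdot_1 g_1)\cdot_2 g_2$ and $(x\cdot_2 g_2)\cdot_1 g_1$ agree with $x\cdot(g_1,g_2)$.

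For Part~2 the key observation is that \emph{every} piece of the $\cl{G}_2$-action data on $\cl{X}$ is $\cl{G}_1$-equivariant. Since $\cl{G}_1$ acts on the fibres of $\ma_2:\cl{X}\to M_2$ and $M_2$ is a manifold, the universal property of the prequotient (Prop.~\ref{LemmaXGtoS}, with $\gamma=\mr{id}$ forced by Remark~\ref{RemActFibXG}(a)) produces the descended moment map $\bar{\ma}_2:\cl{X}\pq\cl{G}_1\to M_2$. The action map $\act_2:\cl{X}\times_{M_2}\cl{G}_2\to\cl{X}$ is $\cl{G}_1$-equivariant in the sense of Def.~\ref{DefGequivariant}, where $\cl{G}_1$ acts on $\cl{X}\times_{M_2}\cl{G}_2$ through the first factor; its equivariance 2-isomorphism is precisely the commutativity 2-isomorphism $\tau$ from Part~1. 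Applying Prop.~\ref{PropfoverG}(i) to this equivariant map and invoking the canonical identification
$$(\cl{X}\times_{M_2}\cl{G}_2)\pq\cl{G}_1 \;\simeq\; (\cl{X}\pq\cl{G}_1)\times_{M_2}\cl{G}_2$$
(the right-action instance of Lemma~\ref{Fact2new1}/Remark~\ref{InducedActionXY}, with $N=M_2$ and $\cl{Y}=\cl{G}_2$ via $\tar$), I obtain the candidate action map $\overline{\act}_2:(\cl{X}\pq\cl{G}_1)\times_{M_2}\cl{G}_2\to\cl{X}\pq\cl{G}_1$.

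To complete the $\cl{G}_2$-action on $\cl{X}\pq\cl{G}_1$, I descend the remaining data. The associativity 2-isomorphism $\beta_2$ and the identity 2-isomorphism $\varepsilon_2$ are $\cl{G}_1$-equivariant 2-isomorphisms between $\cl{G}_1$-equivariant morphisms, so by Prop.~\ref{PropfoverG}(ii) (taking $\varsigma=\beta_2$, resp. $\varsigma=\varepsilon_2$) together with the compatibility of the induced-map construction with composition (Prop.~\ref{PropEquivComp}) they descend to 2-isomorphisms $\overline{\beta}_2$, $\overline{\varepsilon}_2$ after the identifications above, iterated over the two $\cl{G}_2$-factors. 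The higher coherences (a4) for the triple $(\overline{\act}_2,\overline{\beta}_2,\overline{\varepsilon}_2)$ then follow from those for $(\act_2,\beta_2,\varepsilon_2)$ by the uniqueness clause in Prop.~\ref{PropfoverG}(ii): two descended 2-isomorphisms that become equal after precomposition with the (epimorphic) quotient map $\q_1:\cl{X}\to\cl{X}\pq\cl{G}_1$ must coincide.

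The main obstacle is the functoriality bookkeeping in Part~2: one must keep precise track of the several identifications of the form $(\cl{X}\times_{M_2}\cl{G}_2)\pq\cl{G}_1\simeq(\cl{X}\pq\cl{G}_1)\times_{M_2}\cl{G}_2$ and their iterates over two copies of $\cl{G}_2$, and verify that the descended morphism and 2-isomorphisms assemble into the diagrams (a3)/(a4) under these identifications. By contrast, the computations in Part~1 are routine, but they must be performed once to produce the commutativity 2-isomorphism $\tau$ that supplies the equivariance driving the descent in Part~2.
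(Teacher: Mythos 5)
Your proposal is correct and follows essentially the same route as the paper: restriction of the product action by pairing with units, the commutativity 2-isomorphism (your $\tau$, the paper's $\delta$ from the displayed composition in its proof) serving as the equivariance datum for the action map $\cl{X}\times_{M_2}\cl{G}_2\to\cl{X}$, descent via Prop.~\ref{PropfoverG}(i), the identification of Remark~\ref{InducedActionXY}, and the existence/uniqueness clauses of Prop.~\ref{PropfoverG}(ii) to produce $\overline{\beta}_2$, $\overline{\varepsilon}_2$ and their higher coherences. The only presentational difference is that the paper spells out $\beta_{\cl{Y}}$ via the explicit pair of maps $F_1(x,g,\bar g)=x(g\bar g)$, $F_2(x,g,\bar g)=(xg)\bar g$, which you subsume into the same functoriality bookkeeping you correctly identify as the main burden.
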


\begin{proof} The action of $\cl{G}_1$ on $\cl{X}$ is defined in terms of the
action of $\cl{G}_1\times\cl{G}_2$ by
$$
x\cdot g_1=x\cdot (g_1, 1_{\ma_2(x)}),
$$
and this restriction process applies to all the data and higher
coherences of an action; moreover, since $\ma_2(x\cdot
g_1)=\sour(1_{\ma_2(x)})=\ma_2(x)$, the action is on the fibers of
$\ma_2:\X\ra M_2$.

Define $\cl{Y}:=\cl{X}\pq \cl{G}_1$, and let $\q_1:\cl{X}\ra\cl{Y}$
be the quotient map. By the universal property of the prequotient
(Prop.~\ref{LemmaXGtoS}) we have an induced map $\cl{Y}\ra M_2$.
Consider the map $\q_1\times \mr{id}_{\cl{G}_2}: \cl{X}\times_{M_2}
\cl{G}_2\ra \cl{Y}\times_{M_2}\cl{G}_2$. In the sequel we use Remark
\ref{InducedActionXY} to obtain an action of $\cl{G}_1$ on
$\cl{X}\times_{M_2}\cl{G}_2$, and to identify
$(\cl{X}\times_{M_2}\cl{G}_2)\pq \cl{G}_1 $ with
$\cl{Y}\times_{M_2}\cl{G}_2$.
 The action
map $\cl{X}\times_{M_2}\cl{G}_2\ra\cl{X}$ is $\cl{G}_1$-equivariant;
indeed, the equivariance 2-isomorphism $\delta$ is given by the
composition
\begin{eqnarray}\label{eqdeltag2g1}
\xymatrix{ (xg_2)g_1=[x\cdot(1,g_2)]\cdot(g_1,1)\ar[d]_-{\delta}&
x\cdot[(1,g_2)\cdot (g_1,1)]=x\cdot(1g_1,g_21)\ar[l]_-{\beta}
\ar[r]^-{\mr{id}(\lambda_1,\rho_2)}&
x\cdot(g_1,g_2)\ar[d]^-{\mr{id}}\\
(xg_1)g_2=[x\cdot(g_1,1)]\cdot(1,g_2)& x\cdot[(g_1,1)\cdot
(1,g_2)]=x\cdot(g_11,1g_2)\ar[l]^-{\beta}
\ar[r]_-{\mr{id}(\rho_1,\lambda_2)}&
x\cdot(g_1,g_2),\\
}
\end{eqnarray}
and we leave the verification of the equivariance higher coherences
to the reader. It follows from Prop.~\ref{PropfoverG}, part (i),
that there is an induced morphism
$\act_\cl{Y}:\cl{Y}\times_{M_2}\cl{G}_2\ra\cl{Y}$, together with a
2-isomorphism
$$
\varphi: \q(x)\cdot g_2\lmap{\sim} \q(x\cdot g_2),\qquad (x,g_2)\in
\cl{X}\times_{M_2}\cl{G}_2.
$$
The uniqueness property of the pair $(\act_\cl{Y},\varphi)$ (see
Prop. \ref{PropfoverG}, part (ii)) induces the extra data making
$\act_\cl{Y}$ into an action. We will illustrate how this works for
the associativity 2-isomorphism. Note that the techniques involved
are very similar to the ones in Lemma
\ref{lem:equivarianceofstackyf}, due to the formal similarity
between the universal property of the stackification (Prop.
\ref{PropStackification}) and the universal properties of the
quotients (Prop. \ref{LemmaXGtoS}) and of equivariant maps (Prop.
\ref{PropfoverG}). Before moving on, we recall that
$\Obj(\cl{X})=\Obj(\cl{X}\pq\cl{G}_1)$ and observe that the action
of $\G_2$ on $\X\pq \G_1$ agrees with the action on $\X$ on objects.

Consider the diagram
$$\xymatrix{
\cl{X}\times_{M_2}\cl{G}_2\times_{M_2}\cl{G}_2\ar[r]^-{F_1,F_2}\ar[d]&
\cl{X}\ar[d]\\
\cl{Y}\times_{M_2}\cl{G}_2\times_{M_2}\cl{G}_2\ar[r]_-{\Phi_1,\Phi_2}&
\cl{Y} }$$ where $F_1(x,g,\bar{g})=x(g\bar{g})$,
$F_2(x,g,\bar{g})=(xg)\bar{g}$, $\Phi_1(y,g,\bar{g})=y(g\bar{g})$,
$\Phi_2(y,g,\bar{g})=(yg)\bar{g}$. We have 2-isomorphisms (induced
by $\varphi$)
$$
\varphi_1:\q(x)\cdot (g\bar{g})\lmap{\sim} \q(x\cdot
(g\bar{g})),\qquad \varphi_2:(\q(x)g)\cdot \bar{g}\lmap{\sim}
\q((xg)\cdot\bar{g}).
$$
Similarly to the equivariance of the action map
$\cl{X}\times_{M_2}\cl{G}_2\ra\cl{X}$, one verifies that $F_1$ and
$F_2$ are $\cl{G}_1$-equivariant, and that the pairs
$(\Phi_i,\varphi_i)$ ($i=1,2$) satisfy condition
$(\varphi\gamma\delta)$ of Prop. \ref{PropfoverG} (i). Applying
Prop. \ref{PropfoverG} (ii) (the existence part), for
$\varsigma=\beta: F_1\ra F_2$ (the associativity 2-isomorphism of
the $\cl{G}_2$ action on $\cl{X}$), we get
$\beta_\cl{Y}:\Phi_1\ra\Phi_2$ (the associativity 2-isomorphism of
$\act_{\cl{Y}}$). %the $\cl{G}_2$ action on $\cl{Y}$).
The higher coherence $(xghl)$ (the pentagon) for $\beta_\cl{Y}$
follows from the one for $\beta$ by applying the uniqueness part of
Prop. \ref{PropfoverG} (ii) to the appropriate 2-isomorphisms that
relate the five obvious maps
$\cl{X}\times_{M_2}\cl{G}_2\times_{M_2}\cl{G}_2\times_{M_2}\cl{G}_2\ra\X$.

%The same argument gives a $\G_2$-action on $\cl{X}$.
\end{proof}

\begin{remark}\label{remexpindac}
The proof of Lemma~\ref{lem:inducedact2} hides the explicit form of
the action induced on the prequotient. At the level of objects, as
already mentioned, we have that $xg_2$ in $\cl{X}\pq\cl{G}_1$ is the
same as in $\cl{X}$. We now write how morphisms are multiplied. Let
$[g_1,b]:x\ra \bar{x}$ and $j_2:g_2\ra \bar{g}_2$ be morphisms in
$\cl{X}\pq\cl{G}_1$ and $\G_2$ respectively (above the same morphism
in $M_2$). In particular, $b:xg_1\ra \bar{x}$ is a morphism in $\X$.
Looking at the proof of Prop.~\ref{LemmaXGtoS} and at Remark
\ref{InducedActionXY}, one can verify that
$$
[g_1,b]\cdot j_2=[g_1, (b\cdot j_2)\circ \delta]
$$
where $\delta$ is defined in (\ref{eqdeltag2g1}).
\end{remark}

\begin{lemma}\label{lem:2stagisopre}
There is a canonical isomorphism $(\X\pq\G_1)\pq
\G_2\lmap{\sim}\X\pq \G$.
\end{lemma}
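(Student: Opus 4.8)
The plan is to construct an explicit functor $\Psi:(\X\pq\G_1)\pq\G_2\to\X\pq\G$ and verify that it is an isomorphism of categories fibred in groupoids. The key structural fact to exploit is that all three prequotients share the same objects, namely $\Obj(\X)$: by \eqref{eq:objpre} we have $\Obj(\X\pq\G_1)=\Obj(\X)$, and applying this again together with the identification $\Obj(\X\pq\G)=\Obj(\X)$, we see that $\Psi$ should be the identity on objects. Thus the entire content of the lemma is the bijective correspondence between morphisms, which I would extract from the explicit description of the induced $\G_2$-action on $\X\pq\G_1$ recorded in Remark~\ref{remexpindac}.

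First I would write down, using \eqref{eq:morpre}, what a morphism $x\to y$ in $(\X\pq\G_1)\pq\G_2$ is: it is an equivalence class $[g_2,[g_1,b]]$ where $g_2\in\G_2$ with $\ma_2(x)=\sour(g_2)$ (via the induced moment map $\X\pq\G_1\to M_2$), and $[g_1,b]$ is a morphism $x\cdot g_2\to y$ in $\X\pq\G_1$, so $g_1\in\G_1$ with $\ma_1(x\cdot g_2)=\tar(g_1)$ and $b:(x\cdot g_2)\cdot g_1\to y$ in $\X$. On the other side, a morphism $x\to y$ in $\X\pq\G$ is a class $[(h_1,h_2),c]$ with $(h_1,h_2)\in\G=\G_1\times\G_2$, $\ma(x)=\tar(h_1,h_2)=(\tar(h_1),\tar(h_2))$, and $c:x\cdot(h_1,h_2)\to y$. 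The natural assignment is
\begin{equation}\label{eq:Psidef}
\Psi\big([g_2,[g_1,b]]\big):=[(g_1,g_2),\, b\circ\theta],
\end{equation}
where $\theta:x\cdot(g_1,g_2)\lmap{\sim}(x\cdot g_2)\cdot g_1$ is the structure isomorphism built from $\beta$, $\lambda$ and $\rho$ that relates acting by the product $(g_1,g_2)$ to acting by $(1,g_2)$ then $(g_1,1)$ — precisely the $2$-isomorphism $\delta$ appearing in \eqref{eqdeltag2g1} of Lemma~\ref{lem:inducedact2}. I would then check that $\Psi$ is well defined, i.e.\ independent of the two layers of representatives: an equivalence of the outer pair $[g_2,-]$ supplies an isomorphism $j_2:g_2\to\bar g_2$ in $\G_2$ over an identity, while an equivalence of the inner data supplies $j_1:g_1\to\bar g_1$ in $\G_1$; together these assemble into $(j_1,j_2):(g_1,g_2)\to(\bar g_1,\bar g_2)$ in $\G$ witnessing the equality of the images in $\X\pq\G$, the requisite triangle \eqref{eq:triang} commuting by naturality of $\beta$ and the higher coherences $(x1g)$, $(xg1)$.

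Next I would exhibit the inverse on morphisms, sending $[(h_1,h_2),c]\mapsto[h_2,[h_1,c\circ\theta^{-1}]]$, and check that the two assignments are mutually inverse and compatible with composition and identities. Compatibility with composition is the computational heart: I would unwind the composition rule \eqref{eq:comppre} on both sides, using Remark~\ref{remexpindac} (the formula $[g_1,b]\cdot j_2=[g_1,(b\cdot j_2)\circ\delta]$) to handle how the $\G_2$-factor acts on inner morphisms, and reduce the required identity to an instance of the pentagon-type coherence $(xghl)$ together with the product decomposition $(g_1,g_2)\cdot(g_1',g_2')=(g_1g_1',g_2g_2')$ of multiplication in $\G=\G_1\times\G_2$. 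Finally, since $\Psi$ is the identity on objects and commutes with the projections to $\cl C$, it is automatically a morphism of categories fibred in groupoids, and being bijective on morphisms it is an isomorphism. The main obstacle I anticipate is precisely the bookkeeping in the composition check: one must carefully track the pullbacks $\mu^* (-)$ in $\G_1$ and $\G_2$ and verify that the coherence isomorphism $\theta$ threads consistently through the two-stage composite, so that all discrepancies are absorbed by $(xghl)$; this is routine but diagram-heavy, and is the step where the higher coherences ${\bf(a4)}$ are genuinely used.
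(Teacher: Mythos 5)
Your proposal is correct and follows essentially the same route as the paper: the functor is the identity on objects, and on morphisms it is exactly the assignment $[g_2,[g_1,b]]\mapsto[(g_1,g_2),\,b\circ\xi_1]$ with $\xi_1$ the coherence isomorphism from \eqref{eqdeltag2g1}, whose bijectivity on hom-sets is the whole content. The only difference is that the paper obtains the functor from the universal property of the prequotient (Prop.~\ref{LemmaXGtoS}), so compatibility with composition and identities comes for free, whereas you build it by hand and must carry out the diagram-heavy functoriality check you correctly flag as the main bookkeeping burden.
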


\begin{proof} The universal property of the
prequotient (Prop.~\ref{LemmaXGtoS}) induces the maps $\Phi_1$ and
$\Phi$ in the diagram
$$
\xymatrix{
\cl{X}\ar[d]\ar[rd]&\\
\X\pq\G_1\ar[r]_-{\Phi_1}\ar[d]&
\X\pq \G\\
(\X\pq\G_1)\pq\G_2\ar[ru]_-{\Phi}& }
$$
The canonical map $\Phi$ can be described explicitly and the
following argument shows that it is an isomorphism. Note that, at
the level of objects, all the arrows in the above diagram are
identities, so that we just have to check that $\Phi$ is fully
faithful. Let $x,y\in\Obj(\X)$. We consider triples $(g_1,g_2,b)$
where $g_i\in\G_i$, $\tar(g_i)=\ma_i(x)$, and $b:(x\cdot g_2)\cdot
g_1\ra y$ is a morphism in $\X$. We denote by $\xi_1: x\cdot
(g_1,g_2)\lmap{\sim} (x\cdot g_2)\cdot g_1$ the isomorphism of the
first line of \eqref{eqdeltag2g1}. Associated with such a triple, we
have a morphism
 $[g_2,[g_1,b]]:x\ra y$ in
$(\X\pq\G_1)\pq\G_2$, and a morphism $[(g_1,g_2), b\circ\xi_1]:x\ra
y$ in $\X\pq\G$. Given another triple
$(\bar{g}_1,\bar{g}_2,\bar{b})$, one can check that
$[g_2,[g_1,b]]=[\bar{g}_2,[\bar{g}_1,\bar{b}]]$ if and only if
$[(g_1,g_2), b\circ\xi_1]=[(\bar{g}_1,\bar{g}_2),
\bar{b}\circ\xi_1]$ (the condition for both equalities being the
existence of $j_1:g_1\ra\bar{g}_1$ and $j_2:g_2\ra\bar{g}_2$ making
the appropriate diagrams commute, compare with Eq.
(\ref{eq:triang})). It follows that the formula defining the action
of $\Phi$ on morphisms, given by
$$
[g_2,[g_1,b]]\quad\mapsto\quad [(g_1,g_2), b\circ\xi_1],
$$
is a bijection, hence $\Phi$ is fully faithful, as we wanted to
verify. (One can check that the previous formula indeed gives the
action of $\Phi$ by looking at the proof of Prop.~\ref{LemmaXGtoS}.)
\end{proof}

\begin{lemma}\label{lem:free2}
If the $\cl{G}$-action on $\cl{X}$ is 1-free (Def. \ref{def1free}),
then:
\begin{itemize}
\item[(a)] The induced
action of $\cl{G}_1$ on $\cl{X}$ is 1-free.
\item[(b)] The induced
action of $\cl{G}_2$ on $\cl{X}\pq \cl{G}_1$ is 1-free.
\end{itemize}
\end{lemma}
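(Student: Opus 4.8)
The two parts both reduce to the $1$-freeness of the given $\cl{G}$-action, exploiting the product structure $\cl{G}=\cl{G}_1\times\cl{G}_2$.

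For part (a), recall from the proof of Lemma~\ref{lem:inducedact2} that the $\cl{G}_1$-action on $\cl{X}$ is the restriction of the $\cl{G}$-action along $g_1\mapsto (g_1,1_{\ma_2(x)})$; on morphisms this reads $\mr{id}_x\cdot j_1=\mr{id}_x\cdot (j_1,\mr{id}_{1_{\ma_2(x)}})$ for any $j_1\colon g_1\to\bar g_1$ in $\cl{G}_1$ over the identity. Hence, given $j_1,j_1'$ with $\tar(j_1)=\tar(j_1')=\mr{id}_{\ma_1(x)}$ and $\mr{id}_x\cdot j_1=\mr{id}_x\cdot j_1'$, I would immediately obtain $\mr{id}_x\cdot(j_1,\mr{id})=\mr{id}_x\cdot(j_1',\mr{id})$ in $\cl{X}$. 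Since $\tar\big((j_1,\mr{id}_{1_{\ma_2(x)}})\big)=\mr{id}_{\ma(x)}$, the $1$-freeness of the $\cl{G}$-action forces $(j_1,\mr{id})=(j_1',\mr{id})$, i.e.\ $j_1=j_1'$.

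For part (b), fix $x\in\Obj(\cl{X}\pq\cl{G}_1)=\Obj(\cl{X})$, write $a:=\ma_1(x)$, and take $j_2,j_2'\colon g_2\to\bar g_2$ in $\cl{G}_2$ with $\tar(j_2)=\tar(j_2')=\mr{id}_{\ma_2(x)}$ and $\mr{id}_x\cdot j_2=\mr{id}_x\cdot j_2'$ in $\cl{X}\pq\cl{G}_1$. First I would make both sides explicit: since $\mr{id}_x=[\un_a,\varepsilon_1(x)]$, Remark~\ref{remexpindac} gives $\mr{id}_x\cdot j_2=[\un_a,(\varepsilon_1(x)\cdot j_2)\circ\delta]$ and similarly for $j_2'$, with the \emph{same} equivariance isomorphism $\delta$ of \eqref{eqdeltag2g1}. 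By the equivalence relation defining morphisms in the prequotient (cf.\ \eqref{eq:triang}), equality of these two classes produces an isomorphism $j_1\colon \un_a\xrightarrow{\sim}\un_a$ in $\cl{G}_1$, over the identity (note $\tar(j_1)=\mr{id}_a$ because the $\cl{G}_2$-action preserves $\ma_1$), making the triangle $\bar b\circ(\mr{id}_{xg_2}\cdot j_1)=b$ commute, where $b=(\varepsilon_1(x)\cdot j_2)\circ\delta$ and $\bar b=(\varepsilon_1(x)\cdot j_2')\circ\delta$.

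The heart of the argument is to convert this triangle into a single identity for the $\cl{G}$-action. Using the naturality of $\delta$ in both variables, the functoriality (interchange) of the action functors, and the natural isomorphism $x\cdot(g_1,g_2)\cong (xg_2)g_1$ built from $\beta$ and $\lambda_1,\rho_2$ (the first line of \eqref{eqdeltag2g1}), I would rewrite the commuting triangle as the equality $\mr{id}_x\cdot(j_1,j_2)=\mr{id}_x\cdot(\mr{id}_{\un_a},j_2')$ of morphisms $x\cdot(\un_a,g_2)\to x\cdot(\un_a,\bar g_2)$ in $\cl{X}$. Once this is established, $1$-freeness of the $\cl{G}$-action—applied to the two morphisms $(j_1,j_2)$ and $(\mr{id}_{\un_a},j_2')$, both having target $\mr{id}_{\ma(x)}$—yields $(j_1,j_2)=(\mr{id}_{\un_a},j_2')$, and in particular $j_2=j_2'$ (the equality $j_1=\mr{id}_{\un_a}$ coming along for free). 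The main obstacle is exactly this translation step: matching the prequotient triangle, which is phrased through $\delta$ and the unit/associativity $2$-isomorphisms of the iterated action, with the product-action equation via the relevant naturality squares. This is routine but delicate coherence bookkeeping, resting on the higher coherences of the action (Def.~\ref{defaction}) together with the explicit description of the induced $\cl{G}_2$-action in Remark~\ref{remexpindac} and Lemma~\ref{lem:inducedact2}; no new idea beyond the $1$-freeness of the $\cl{G}$-action enters.
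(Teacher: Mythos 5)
Your proposal is correct and follows essentially the same route as the paper's proof: part (a) via restriction along $g_1\mapsto(g_1,1)$ and 1-freeness of the $\cl{G}$-action, and part (b) by unwinding the prequotient identification through Remark~\ref{remexpindac} to extract $j_1:\un_a\to\un_a$, then using naturality of $\xi_1$, the coherence $(x1g)$ (which shows $(\vep_1\cdot\mr{id})\circ\xi_2$ is an identity), and $\rho_1=\lambda_1$ to reduce the commuting triangle to a single equation $\mr{id}_x\cdot(j_1,j_2)=\mr{id}_x\cdot(\mr{id},j_2')$ for the product action, to which 1-freeness applies. The paper carries out the coherence bookkeeping you defer as "routine but delicate" via an explicit rectangle diagram, but the ingredients and the logic are the same.
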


\begin{proof}
The action of $\G$ being 1-free means that $\mr{id}_x\cdot(j_1,j_2)=
\mr{id}_x\cdot(j'_1,j'_2)$ implies that $j_1=j'_1$ and $j_2=j'_2$.

In order to prove that the $\G_1$-action on $\X$ is 1-free, assume
that $\mr{id}_x\cdot j_1=\mr{id}_x \cdot j'_1$. Using the definition
of the induced action we have $\mr{id}\cdot(j_1,1_2)=\mr{id}_x\cdot
j_1=\mr{id}_x\cdot j'_1= \mr{id}\cdot(j'_1,1_2)$, and the hypothesis
of 1-freeness of the $\G$-action implies that $j_1=j'_1$, as needed.

We now prove that the $\G_2$-action on the prequotient
$\cl{X}\pq\cl{G}_1$ is 1-free. Assume that
$\widetilde{\mr{id}}_x\cdot j_2= \widetilde{\mr{id}}_x\cdot j'_2$,
where the tilde is used to remind us that
$\widetilde{\mr{id}}_x=[1_1, \vep_1] $ is an identity arrow in
$\cl{X}\pq\cl{G}_1$ rather than $\X$. Here $1_1$ and $\vep_1$ are
the appropriate identity (neutral element) and identity
2-isomorphism of $\G_1$. Moreover, $j_2,j'_2:g_2\ra\bar{g}_2$.

From Remark \ref{remexpindac} we see that $[1_1,(\vep_1\cdot
j_2)\circ\delta]=\widetilde{\mr{id}}_x\cdot j_2=
\widetilde{\mr{id}}_x\cdot j_2'= [1_1,(\vep_1\cdot
j_2')\circ\delta]$ as morphisms $xg_2\ra x\bar{g}_2$ in $\X\pq\G_1$.
Hence there exists an isomorphism $j_1:1_1\ra 1_1$ in $\G_1$ such
that the outer rectangle
$$
\xymatrix{
(xg_2)1_1\ar[r]^-{\xi_1}\ar[d]_-{(\mr{id}_{x}\mr{id}_{g_2}) j_1}&
x(1_1,g_2)\ar[r]^-{\xi_2}\ar[d]_-{\mr{id}_x(j_1,\mr{id}_{g_2})}&
(x1_1)g_2\ar[r]^-{\vep_1\mr{id}_{g_2}}& xg_2\ar[r]^-{\mr{id}_xj_2}&
x\bar{g}_2\ar[d]_-{\mr{id}}\\
(xg_2)1_1\ar[r]_-{\xi_1}& x(1_1,g_2)\ar[r]_-{\xi_2}&
(x1_1)g_2\ar[r]_-{\vep_1\mr{id}_{g_2}}& xg_2\ar[r]_-{\mr{id}_xj_2'}&
x\bar{g}_2 }$$
%$$\xymatrix{
%(xg_2)\cdot 1_1\ar[rrd]^-{(\mr{id}_xj_2)\circ\vep_1}
%\ar[d]_-{\mr{id}_{xg_2}\cdot j_1}&&\\
%(xg_2)\cdot 1_1\ar[rr]_-{(\mr{id}_xj_2)\circ\vep_1}&& x\bar{g}_2
%}$$
commutes, where $\xi_1,\xi_2$ are defined by the first and second
line of \eqref{eqdeltag2g1}, respectively (one has
$\xi_2\xi_1=\delta$). Since the left rectangle commutes ($\xi_1$ is
a natural transformation) then the right one commutes as well. One
can show that $(\vep_1\cdot\mr{id}_{g_2})\circ \xi_2$ is the
identity of $x(1_1,g_2)=xg_2$ (one uses higher coherence $(x1g)$ of
the original $\G$-action on $\X$, and $\rho_1=\lambda_1:1_1\cdot
1_1\ra 1_1$). As a result, the above right rectangle becomes the
(commutative) triangle
$$\xymatrix{
x(1_1,g_2)\ar[rrd]^-{\mr{id}_x(\mr{id}_{1_1},j_2)}
\ar[d]_-{\mr{id}_x(j_1,\mr{id}_{g_2})}&&\\
x(1_1,g_2)\ar[rr]_-{\mr{id}_x(\mr{id}_{1_1},j'_2)}&&
x(1_1,\bar{g}_2) }$$ from which it follows that
$\mr{id}_x(j_1,j'_2)=\mr{id}_x(\mr{id}_{1_1},j_2)$. By the
assumption of 1-freeness of the original $\G$-action we conclude
that $j_2=j'_2$ (and $j_1=\mr{id}_{1_1}$), as needed.
\end{proof}

\begin{proposition}\label{prop:quotstages}
Assume that  $\cl{X}$ is a prestack, $\G_1,\G_2$ are stacky
groupoids, and the $\G$-action on $\X$ is 1-free, where
$\G=\G_1\times \G_2$. Then there is a canonical isomorphism
$$
(\X/\G_1)/\G_2\lmap{\sim} \X/(\G_1\times \G_2).
$$
\end{proposition}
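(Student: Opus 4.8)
The plan is to reduce everything to the prequotient-level statement of Lemma~\ref{lem:2stagisopre} together with the universal property of stackification, Proposition~\ref{PropStackification}(iv). Recall that by Definition~\ref{def:pquot} a quotient is the stackification of the corresponding prequotient, so the target is $\X/\G = (\X\pq\G)^\sharp$, while the source is $(\X/\G_1)/\G_2 = \big((\X\pq\G_1)^\sharp \pq \G_2\big)^\sharp$. Write $\cl{Y} := \X\pq\G_1$. By Lemma~\ref{lem:free2}(a) the induced $\G_1$-action on $\X$ is $1$-free, so Proposition~\ref{prop1freeThenPrestack} (using that $\G_1$ is a stacky groupoid and $\X$ is a prestack) guarantees that $\cl{Y}$ is a prestack; moreover Lemma~\ref{lem:inducedact2} endows $\cl{Y}$ with a $\G_2$-action, which is $1$-free by Lemma~\ref{lem:free2}(b).

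First I would apply Lemma~\ref{lem:equivarianceofstackyf} to the prestack $\cl{Y}$ with its $\G_2$-action. This produces the $\G_2$-action on $\cl{Y}^\sharp = \X/\G_1$ appearing in the statement, makes the stackification map $\stack\colon \cl{Y}\to \cl{Y}^\sharp$ into a $\G_2$-equivariant morphism, and ensures that the $\G_2$-action on $\cl{Y}^\sharp$ is again $1$-free. Since $\stack$ is $\G_2$-equivariant, Proposition~\ref{PropfoverG} yields an induced morphism of prequotients
$$
\Phi\colon \cl{Y}\pq\G_2 \;\longrightarrow\; \cl{Y}^\sharp\pq\G_2 .
$$
Both the source and the target are prestacks, by Proposition~\ref{prop1freeThenPrestack} applied to the two $1$-free $\G_2$-actions (on $\cl{Y}$ and on the stack $\cl{Y}^\sharp$). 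Because $\stack$ is a monomorphism and an epimorphism (Proposition~\ref{PropStackification}(iii)), Proposition~\ref{prop:equivmonoepi} shows that $\Phi$ is likewise a monomorphism and an epimorphism.

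Now the key input is Proposition~\ref{PropStackification}(iv): a morphism of prestacks stackifies to an isomorphism precisely when it is mono and epi. Applying this to $\Phi$ gives an isomorphism $\Phi^\sharp\colon (\cl{Y}\pq\G_2)^\sharp \lmap{\sim} (\cl{Y}^\sharp\pq\G_2)^\sharp = (\X/\G_1)/\G_2$. On the other hand, Lemma~\ref{lem:2stagisopre} provides a canonical isomorphism $\cl{Y}\pq\G_2 = (\X\pq\G_1)\pq\G_2 \lmap{\sim} \X\pq\G$ of categories fibred in groupoids; stackifying it gives $(\cl{Y}\pq\G_2)^\sharp \cong (\X\pq\G)^\sharp = \X/\G$. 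Composing with $(\Phi^\sharp)^{-1}$ produces the desired canonical isomorphism $(\X/\G_1)/\G_2 \lmap{\sim} \X/(\G_1\times\G_2)$. The steps are mostly a careful chaining of the cited results, and the conceptual content is light once they are assembled. The main obstacle I anticipate is the bookkeeping of the various induced actions: one must confirm that the $\G_2$-action on $\X/\G_1$ used to form the iterated quotient genuinely coincides with the one produced by Lemma~\ref{lem:equivarianceofstackyf} out of the action of Lemma~\ref{lem:inducedact2}, and that stackifying the canonical cfg-isomorphism of Lemma~\ref{lem:2stagisopre} is compatible (up to canonical $2$-isomorphism) with these equivariant structures, so that the composite isomorphism is truly canonical. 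Everything beyond this reduces to invoking property (iv).
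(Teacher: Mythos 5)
Your argument is correct and follows essentially the same route as the paper's proof: the paper also passes through the zig-zag $(\X/\G_1)\pq\G_2 \leftarrow (\X\pq\G_1)\pq\G_2 \rightarrow \X\pq(\G_1\times\G_2)$, where the left arrow (your $\Phi$, the paper's $\Psi$) is induced from the $\G_2$-equivariant stackification map via Proposition~\ref{PropfoverG} and shown to be mono and epi by Proposition~\ref{prop:equivmonoepi}, the right arrow is Lemma~\ref{lem:2stagisopre}, and both are stackified using Proposition~\ref{PropStackification}(iv). The chain of supporting lemmas you invoke (Lemma~\ref{lem:free2}, Proposition~\ref{prop1freeThenPrestack}, Lemmas~\ref{lem:inducedact2} and \ref{lem:equivarianceofstackyf}) is exactly the one used in the paper.
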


\begin{proof}
Consider the diagram
$$\xymatrix{
(\X/\G_1)\pq \G_2\ar[d]^-{\stack}& (\X\pq\G_1)\pq
\G_2\ar[r]^-{\Phi}\ar[l]_-{\Psi}\ar[d]^-{\stack}&
\X\pq(\G_1\times \G_2)\ar[d]^-{\stack}\\
(\X/\G_1)/ \G_2& (\X\pq\G_1)/
\G_2\ar[r]^-{\Phi^\sharp}\ar[l]_-{\Psi^\sharp}& \X/(\G_1\times \G_2)
}$$ where the vertical maps are stackifications. We first consider
the right square. From Proposition \ref{prop1freeThenPrestack} and
Lemma \ref{lem:free2}
 it follows that $\X\pq\G_1$,
$(\X\pq\G_1)\pq \G_2$ and $ \X\pq(\G_1\times \G_2)$ are prestacks.
The map $\Phi$ is the isomorphism of Lemma \ref{lem:2stagisopre}
(hence $\Phi^\sharp$ is an isomorphism as well). Let us look at the
map $\Psi$. The stackification map $\X\pq\G_1\ra \X/\G_1$
 is $\G_2$-equivariant by
Lemma \ref{lem:equivarianceofstackyf}; from general facts, see
Prop.~\ref{PropStackification} (iii), it is also a monomorphism and
an epimorphism. We define $\Psi$ to be the induced map between the
prequotients by $\G_2$ (Proposition \ref{PropfoverG}), and we use
Proposition \ref{prop:equivmonoepi} to deduce that $\Psi$ is a
monomorphism and an epimorphism. Next, it follows from Lemma
\ref{lem:equivarianceofstackyf} (and Prop.
\ref{prop1freeThenPrestack}) that $(\X/\G_1)\pq \G_2$ is a prestack;
hence, we can apply Prop.~\ref{PropStackification} (iv) to conclude
that $\Psi^\sharp$ is an isomorphism. Finally, the desired canonical
isomorphism is:
$$ \Phi^\sharp\circ (\Psi^\sharp)^{-1}:
(\X/\G_1)/\G_2\lmap{\sim} \X/(\G_1\times \G_2).$$

\end{proof}

%%%%%%%%%%%%%%%%%%%%%%%%%%%%%%%% References %%%%%%%%%%%%%%%%%%%%%%%%%%%%%%%%%%%%%%%%%%%%%%%%%%%%%%%%%%
\bibliographystyle{amsalpha}

\begin{footnotesize}

\end{footnotesize}

%\vspace{5mm}
%\today
\end{document}